\def \diam {\mathrm{diam}}
\title{Fully nonlinear singularly perturbed models with non-homogeneous degeneracy}
\author{Jo\~{a}o Vitor da Silva, \quad Elzon C. J\'{u}nior \quad and \quad Gleydson C. Ricarte}
\def \R {\mathbb{R}}
\def \supp {\mathrm{supp } }
\def \Div {\mathrm{div}}
\def \dist {\mathrm{dist}}
\def \diam {\mathrm{diam}}
\def \loc {\mathrm{loc}}
\def \suchthat {\ \big | \ }
\def \tr {\mathrm{tr}}
\def \Leb {\mathscr{L}^N}
\def \e {\epsilon}
\newcommand{\defeq}{\mathrel{\mathop:}=}
\newtheorem{theorem}{Theorem}[section]
\newtheorem{lemma}[theorem]{Lemma}
\newtheorem{proposition}[theorem]{Proposition}
\newtheorem{corollary}[theorem]{Corollary}
\theoremstyle{definition}
\newtheorem{definition}[theorem]{Definition}
\newtheorem{example}[theorem]{Example}
\theoremstyle{remark}
\newtheorem{remark}[theorem]{Remark}
\numberwithin{equation}{section}
\newcommand{\pe}{P_{\epsilon}}
\newcommand{\intav}[1]{\mathchoice {\mathop{\vrule width 6pt height 3 pt depth  -2.5pt
\kern -8pt \intop}\nolimits_{\kern -6pt#1}} {\mathop{\vrule width
5pt height 3  pt depth -2.6pt \kern -6pt \intop}\nolimits_{#1}}
{\mathop{\vrule width 5pt height 3 pt depth -2.6pt \kern -6pt
\intop}\nolimits_{#1}} {\mathop{\vrule width 5pt height 3 pt depth
-2.6pt \kern -6pt \intop}\nolimits_{#1}}}
\begin{document}

\begin{abstract}
This work is devoted to studying non-variational, nonlinear singularly perturbed elliptic models enjoying a double degeneracy character with prescribed boundary value in a domain. In its simplest form, for each $\varepsilon>0$ fixed, we seek a non-negative function $u^{\epsilon}$ satisfying
$$
\left\{
\begin{array}{rclcl}
\left[|\nabla u^{\varepsilon}|^p + \mathfrak{a}(x)|\nabla u^{\varepsilon}|^q \right] \Delta u^{\varepsilon} & = & \zeta_{\varepsilon}(x, u^{\varepsilon}) & \mbox{in} & \Omega,\\
u^{\varepsilon}(x) & = & g(x) & \mbox{on} & \partial \Omega,
\end{array}
\right.
$$
in the viscosity sense for suitable data $p, q \in (0, \infty)$, $\mathfrak{a}$, $g$, where $\zeta_{\varepsilon}$ one behaves singularly of order $\mbox{O} \left(\e^{-1} \right)$ near $\epsilon$-level surfaces. In such a context, we establish the existence of certain solutions. We also prove that solutions are locally (uniformly) Lipschitz continuous, and they grow in a linear fashion. Moreover, solutions and their free boundaries possess a sort of measure-theoretic and weak geometric properties. Particularly, for a restricted class of non-linearities, we prove the finiteness of the $(N-1)$-dimensional Hausdorff measure of level sets. At the end, we address a complete and in-deep analysis concerning the asymptotic limit as $\varepsilon \to 0^{+}$, which is related to one-phase solutions of inhomogeneous non-linear free boundary problems in flame propagation and combustion theory. Finally, we also present some fundamental regularity tools in the theory of doubly degenerate fully nonlinear elliptic PDEs, which may have their own mathematical interest.

\medskip

\noindent \textbf{Keywords:} Singular perturbation methods, doubly degenerate fully non-linear operators, geometric regularity theory.

\medskip

\noindent \textbf{AMS Subject Classifications:} 35B25, 35J60, 35J70.
\end{abstract}

\maketitle


\section{Introduction}

In this manuscript we shall develop an approach to study (locally) sharp and geometric estimates of one-phase solutions to singularly perturbed problems having a non-homogeneous double degeneracy, whose mathematical model is given by: Fixed a parameter $\varepsilon \in (0, 1)$, we would like to find
\begin{equation}\label{Equation Pe}\tag{$\pe$}
u^{\epsilon} \ge 0\,\,\,\text{viscosity solution to}\,\,\,
\left\{
	\begin{array}{rclcl}
		\mathcal{H}(x, \nabla u^{\epsilon})F(x, D^2 u^{\epsilon}) & = &  \zeta_{\epsilon}(x, u^{\epsilon})   & \mbox{in} & \Omega\\
u^{\varepsilon}(x) & = & g(x) & \mbox{on} & \partial \Omega,
	\end{array}
\right.
\end{equation}
for a bounded and open set $\Omega \subset \R^N$, where $0\le g \in C^0(\partial \Omega)$, and $F$ is a second order, fully non-linear (uniformly elliptic) operator, i.e., non-linear in its highest derivatives.

We will focus our attention to reaction-diffusion models with singular behavior of order $\mbox{O} \left( \e^{-1} \right)$ near $\epsilon$-\verb"level layers", i.e. $\{u_\varepsilon\sim \varepsilon\}$. Furthermore, the diffusion process is assumed to be anisotropic and doubly degenerate, thereby collapsing as $|\nabla u^\epsilon| \sim 0$.

In a few words, under the appropriated hypothesis on data, we show that, for $\varepsilon \to 0^{+}$, the family of solutions $\{u^{\varepsilon}\}_{\varepsilon>0}$ to \eqref{Equation Pe} are asymptotic approximations to a one-phase solution $u_0$ of an inhomogeneous non-linear free boundary problem (for short FBP), which arises in the mathematical formulation of some issues in flame propagation and combustion theory (stationary setting - cf. \cite{CK-AM04}, \cite{LVW01} and \cite{Weiss03}).

\subsection{Main assumptions}

From here we will assume the following structural assumptions:
\begin{itemize}
  \item[(A0)]({{\bf Continuity and normalization condition}})
  $$
  \text{Fixed}\,\, \Omega \ni x \mapsto F(x, \cdot) \in C^{0}(\text{Sym}(N)) \quad  \text{and} \quad F(\cdot, \text{O}_{N\times N}) = 0.
  $$
  \item[(A1)]({{\bf Uniform ellipticity}}) For any pair of matrices $\mathrm{X}, \mathrm{Y}\in Sym(N)$
$$
    \mathscr{M}^{-}_{\lambda,\Lambda}(\mathrm{X}-\mathrm{Y})\leq F(x, \mathrm{X})-F(x, \mathrm{Y})\leq \mathscr{M}^{+}_{\lambda,\Lambda}(\mathrm{X}-\mathrm{Y})
$$
where $\mathscr{M}^{\pm}_{\lambda,\Lambda}$ stand for \textit{Pucci's extremal operators} given by
\[
   \mathscr{M}_{\lambda, \Lambda}^-(\mathrm{X})\defeq \lambda\sum_{e_i>0}e_i(\mathrm{X})+\Lambda\sum_{e_i<0}e_i(\mathrm{X})\quad\textrm{ and }\quad \mathscr{M}_{\lambda, \Lambda}^+(X)\defeq \Lambda\sum_{e_i>0}e_i(\mathrm{X})+\lambda\sum_{e_i<0}e_i(\mathrm{X})
\]
for \textit{ellipticity constants} $0<\lambda\leq \Lambda< \infty$, where $\{e_i(\mathrm{X})\}_{i}$ are the eigenvalues of $\mathrm{X}$.

Moreover, for our Lipschitz estimates, we must require some sort of continuity assumption on coefficients:

  \item[(A2)]({{\bf $\omega-$continuity of coefficients}}) There exist a uniform modulus of continuity $\omega: [0, \infty) \to [0, \infty)$ and a constant $\mathrm{C}_{\mathrm{F}}>0$ such that
$$
\Omega \ni x, x_0 \mapsto \Theta_{\mathrm{F}}(x, x_0) \defeq \sup_{\substack{\mathrm{X} \in  Sym(N) \\ \mathrm{X} \neq 0}}\frac{|F(x, \mathrm{X})-F(x_0, \mathrm{X})|}{\|\mathrm{X}\|}\leq \mathrm{C}_{\mathrm{F}}\omega(|x-x_0|),
$$
which measures the oscillation of coefficients of $F$ around $x_0$. Finally, we define
$$
\|F\|_{C^{\omega}(\Omega)} \defeq \inf\left\{\mathrm{C}_{\mathrm{F}}>0: \frac{\Theta_{\mathrm{F}}(x, x_0)}{\omega(|x-x_0|)} \leq \mathrm{C}_{\mathrm{F}}, \,\,\, \forall \,\,x, x_0 \in \Omega, \,\,x \neq x_0\right\}.
$$
\end{itemize}

In our studies, the diffusion properties of the model \eqref{Equation Pe} degenerate along an \textit{a priori} unknown set of singular points of existing solutions:
$$
   \mathscr{S}_0(u, \Omega^{\prime}) \defeq \{x \in \Omega^{\prime} \Subset \Omega: |\nabla u(x)| = 0\}.
$$
For this reason, we will enforce that $\mathcal{H}:\Omega \times \R^N \to [0, \infty)$ one behaves as
\begin{equation}\label{1.2}
     L_1 \cdot \mathcal{K}_{p, q, \mathfrak{a}}(x, |\xi|) \leq \mathcal{H}(x, \xi)\leq L_2 \cdot \mathcal{K}_{p, q, \mathfrak{a}}(x, |\xi|)
\end{equation}
for constants $0<L_1\le L_2< \infty$, where
\begin{equation}\label{N-HDeg}\tag{\bf N-HDeg}
   \mathcal{K}_{p, q, \mathfrak{a}}(x, |\xi|) \defeq |\xi|^p+\mathfrak{a}(x)|\xi|^q, \,\,\,\text{for}\,\,\, (x, \xi) \in \Omega \times \R^N.
\end{equation}

In addition, for the non-homogeneous degeneracy \eqref{N-HDeg}, we suppose that the exponents $p, q$ and the modulating function $\mathfrak{a}(\cdot)$ fulfil
\begin{equation}\label{1.3}
   0< p \le q< \infty \qquad \text{and} \qquad \mathfrak{a} \in C^0(\Omega, [0, \infty)).
\end{equation}
Roughly speaking, $\mathcal{H}$ satisfies distinct growths at origin and at infinity:
$$
   \displaystyle \lim_{|\xi| \to 0^{+}} \frac{\mathcal{H}(x, \xi)}{|\xi|^p(1+\mathfrak{a}(x))} \in (0, \infty) \quad \text{and} \quad \lim_{|\xi| \to +\infty} \frac{\mathcal{H}(x, \xi)}{|\xi|^q(1+\mathfrak{a}(x))} \in [0, \infty) \,\,\, (\text{uniformly in}\,\, x \in \Omega).
$$
Moreover, (for $p \neq q$)
$$
    \displaystyle \lim_{|\xi| \to 0^{+}} \frac{\mathcal{H}(x, \xi)}{|\xi|^q} = +\infty
\quad \text{and} \quad \displaystyle \lim_{|\xi| \to +\infty} \frac{\mathcal{H}(x_0, \xi)}{|\xi|^p} =
\left\{
\begin{array}{lcl}
	\verb"finite" & \text{if} & \mathfrak{a}(x_0)=0 \\
+\infty & \text{if} & \mathfrak{a}(x_0)>0
	\end{array}
\right.
$$

In turn, in our research, the reaction term, i.e. $\zeta_\epsilon \colon \Omega \times \mathbb{R}_{+} \to \mathbb{R}_{+}$, represents the singular perturbation of the model. In this point, we are interested in a singular behaviour of order $\mbox{O} \left ( \frac{1}{\e} \right )$ along $\e$-level layers $\{u_\epsilon\sim \epsilon\}$. Hence, we are led to consider reaction terms fulfilling
 \begin{equation}\label{Cond1 zeta}
	 \mathcal{B}_0 \le \zeta_{\epsilon}(x, t) \le \frac{\mathcal{A}}{\epsilon} \chi_{(0,\epsilon)}(t) + \mathcal{B}, \quad \forall \,\,(x,t) \in \Omega \times \mathbb{R}_{+},
\end{equation}
for nonnegative constants $\mathcal{A}, \, \mathcal{B}_0, \,\mathcal{B} \ge 0$. Notice that $\zeta_\epsilon \equiv 0$ satisfies \eqref{Cond1 zeta}. Nevertheless, we shall also impose the following non-degeneracy assumption in order to ensure that such a reaction term enjoys an authentic singular character:
\begin{equation} \label{nondeg_RT}
	 \mathscr{I} \defeq \inf\limits_{\Omega \times [t_0, \mathrm{T}_0]}  \epsilon \zeta_\epsilon(x, \epsilon t) > 0,
\end{equation}
for some constants $0\le t_0< \mathrm{T}_0< \infty$, where $\mathscr{I}$ does not depend on $\epsilon$. Intuitively, \eqref{nondeg_RT} reads that the singular term behaves as $\sim \frac{1}{\e} \chi_{(0,\e)}$ plus a non-negative noise that stays uniformly controlled. Indeed, simpler cases covered by our analysis are singular reaction terms built up as a multiple of the approximation of unity plus a uniform bounded function
\begin{equation}\label{zeta}
	\zeta_{\epsilon}(x, t) \defeq \mathcal{Q}(x)\frac{1}{\epsilon} \zeta \left(\frac{t}{\epsilon}\right) + f_\epsilon(x).
\end{equation}
For such approximations, $0< \mathcal{Q} \in C^0(\overline{\Omega})$, $0\leq \zeta \in C^{\infty}(\R)$  with $\supp ~\zeta = [0,1]$, and $f_\epsilon$ is a non-negative continuous function bounded away from infinity. Finally, it is readily verifiable that the reaction term in \eqref{zeta} does fulfill \eqref{Cond1 zeta} and \eqref{nondeg_RT} with $\mathcal{A} = \|\mathcal{Q}\|_{L^{\infty}(\Omega)}\|\zeta\|_{L^{\infty}(\R_{+})}$, $\displaystyle \mathcal{B}_0 = \inf_{\Omega} f_{\e}(x)$ and $\mathcal{B} = \|f_{\e}\|_{L^{\infty}(\Omega)}$.

\subsection{Statement of main results}\label{DefPreRes}

\hspace{0.3cm} To formulate our main results, we need to introduce some definitions. We will start with the definition of the viscosity solution to
\begin{equation}\label{MainOperator}
  \mathcal{G}(x, \nabla u, D^2u) \defeq  \mathcal{H}(x, \nabla u)F(x, D^2 u).
\end{equation}

\begin{definition}[{\bf Viscosity solution}] A function $u \in C(\Omega)$ is called a viscosity sub-solution (super-solution) of
$$
\mathcal{G}(x, \nabla u(x), D^2u(x)) = f(x, u(x)) \quad \mbox{in} \quad \Omega,
$$
if whenever $\phi \in C^2(\Omega)$ and $u-\phi$ has a local maximum (minimum) at $x_0 \in \Omega$ there holds
$$
 \mathcal{G}(x, \nabla \phi(x_0), D^2 \phi(x_0)) \geq f(x_0, \phi(x_0)) \quad (\mbox{resp.} \leq f(x_0, \phi(x_0))).
$$
Finally, a function $u$ is a viscosity solution when it is a simultaneously a viscosity sub and super-solution.
\end{definition}

In order to prove some key geometric properties of solutions, it is pivotal to adopt a more appropriate notion of viscosity solution. As a matter of fact, \eqref{Equation Pe} has a lack of comparison principle, thus uniqueness assertions might not be true. Therefore, we shall make a particular election of solutions. For this reason, the \textit{least supersolution approach} takes place in our studies by way of \textit{Perron type solutions}.

\begin{definition}[{\bf Perron type solution}]
Throughout this manuscript we will work with Perron type solutions to the singularly perturbed problem \eqref{Equation Pe}. Precisely, fixed a viscosity sub-solution $u_\star$, and a viscosity super-solution  $u^\star$ to \eqref{Equation Pe}, fulling $u_\star \le u^\star,$ in $\Omega$, the Perron solution $u^\epsilon$ is given by
\begin{equation}\label{Perron0}
	u^\epsilon(x) = \inf \left \{ w(x) \suchthat w \text{ is a super-solution to } \eqref{Equation Pe}, \text{ and } u_\star \le w \le u^\star \right \}.
\end{equation}

It is worth noting that for each $\epsilon>0$ fixed, the existence of such a Perron's solution follows by sub/supersolutions methods, see \textit{e.g.} \cite[Theorem 4.1]{UserG}.  Therefore, from now on, by a solution $u^\epsilon$  to \eqref{Equation Pe}, we denote a Perron type solution built-up as in \eqref{Perron0}.
\end{definition}

We establish existence of Perron type solutions to \eqref{Equation Pe} in our first result.

\begin{theorem}[{\bf Existence of Perron solutions}]\label{ExistMinSol} Let $\Omega \subset \R^n$ be a bounded Lipschitz domain and let $0 \le g \in C(\partial \Omega)$ be a boundary datum. Then, for each fixed $\varepsilon>0$ there exists a non-negative viscosity solution $u^{\varepsilon} \in C(\overline{\Omega})$ to \eqref{Equation Pe}.
\end{theorem}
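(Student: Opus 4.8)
The plan is to realize $u^{\varepsilon}$ as the Perron infimum \eqref{Perron0}, so the crux is to exhibit an ordered pair $u_\star \le u^\star$ of viscosity sub/supersolutions matching the boundary datum $g$, and then invoke the sub/supersolution machinery of \cite[Theorem 4.1]{UserG}. First I would construct the supersolution. Since $\zeta_{\varepsilon}(x,t) \le \frac{\mathcal{A}}{\varepsilon}\chi_{(0,\varepsilon)}(t) + \mathcal{B} \le \frac{\mathcal{A}}{\varepsilon} + \mathcal{B} =: \mathrm{M}_{\varepsilon}$, it suffices to find a nonnegative function $u^\star$ with $\mathcal{H}(x,\nabla u^\star)F(x,D^2 u^\star) \le \mathrm{M}_{\varepsilon}$ in $\Omega$ and $u^\star = g$ on $\partial\Omega$. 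I would take $u^\star = g_{\mathrm{ext}} + v$, where $g_{\mathrm{ext}} \in C(\overline{\Omega})$ is a bounded harmonic-type (or merely continuous) extension of $g$ realized as a limit of barriers using the exterior-cone condition of the Lipschitz domain, and $v$ solves an auxiliary problem producing the required strict supersolution inequality; concretely one can use a large multiple of a smooth concave bump, e.g. $v(x) = \mathrm{A}(\mathrm{R}^2 - |x - x_0|^2)$ with $\Omega \subset B_{\mathrm{R}}(x_0)$, and check via (A1) that $F(x,D^2 v) \le \mathscr{M}^{+}_{\lambda,\Lambda}(D^2 v) = -2\mathrm{A}\lambda N < 0$, so that $\mathcal{H}(x,\nabla u^\star)F(x,D^2 u^\star) < 0 \le \mathrm{M}_{\varepsilon}$ wherever $\nabla u^\star \ne 0$; the degeneracy set $\{\nabla u^\star = 0\}$ is handled by the normalization $F(\cdot,\mathrm{O}_{N\times N}) = 0$ together with the viscosity formulation (one tests with $C^2$ functions and the degenerate points contribute a trivial inequality). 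For the subsolution, the simplest choice is $u_\star \equiv 0$, which is admissible because $\zeta_{\varepsilon} \ge \mathcal{B}_0 \ge 0$ and $\mathcal{H}(x,0)F(x,\mathrm{O}_{N\times N}) = 0 \le \mathcal{B}_0$ only if $\mathcal{B}_0 = 0$; in general one instead builds $u_\star$ as a small subsolution barrier from below (a convex bump $\mathrm{a}(|x-x_0|^2 - \rho^2)$ truncated at zero, with $F(x,D^2 u_\star) \ge \mathscr{M}^{-}_{\lambda,\Lambda}(D^2 u_\star) = 2\mathrm{a}\lambda N > 0$ and $\mathcal{H}$ bounded below by $L_1|\nabla u_\star|^p$), choosing the constant so small that $\mathcal{H}(x,\nabla u_\star)F(x,D^2 u_\star) \le \mathcal{B}_0 \le \zeta_{\varepsilon}(x,u_\star)$ and $u_\star \le 0 = \inf g$ if $g$ can vanish, or more robustly $u_\star = $ the lower envelope of affine functions below $g$ with a small negative correction, matching $g$ on $\partial\Omega$. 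One then notes $u_\star \le u^\star$ after possibly enlarging the constants.

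Once the ordered pair is in place, I would define $u^{\varepsilon}$ by \eqref{Perron0} and verify the two standard Perron assertions: (i) the infimum of the (nonempty) family of supersolutions pinched between $u_\star$ and $u^\star$ is itself a viscosity supersolution — this is the usual stability of viscosity supersolutions under infima, using the continuity (A0) of $F$ in $x$ and the structural bounds \eqref{1.2}–\eqref{N-HDeg} on $\mathcal{H}$, which guarantee that $\mathcal{G}$ is continuous off the degeneracy set and upper semicontinuous in the relevant sense; and (ii) $u^{\varepsilon}$ is also a viscosity subsolution — the classical Perron "bump" argument: if $u^{\varepsilon}$ failed to be a subsolution at some interior point, one could strictly decrease it on a small ball by adding a negative paraboloid, producing a smaller supersolution in the admissible class, contradicting minimality. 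Both (i) and (ii) are by now routine for degenerate operators of this form provided $F$ is continuous and proper; I would cite \cite[Theorem 4.1]{UserG} for the packaged statement and only indicate the adaptations forced by the gradient-degeneracy factor $\mathcal{H}$, namely that at points where the test function has vanishing gradient the equation degenerates to $0 = \zeta_{\varepsilon}$, which is consistent with the one-sided viscosity inequalities because $\zeta_{\varepsilon} \ge 0$ on the supersolution side is automatic and on the subsolution side one only needs $\mathcal{G} \le \zeta_{\varepsilon}$, i.e. $0 \le \zeta_{\varepsilon}$, again automatic.

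Finally, continuity up to the boundary, $u^{\varepsilon} \in C(\overline{\Omega})$ with $u^{\varepsilon} = g$ on $\partial\Omega$, follows by squeezing $u^{\varepsilon}$ between barriers at each boundary point: since $\Omega$ is Lipschitz it satisfies a uniform exterior cone condition, hence at every $\xi \in \partial\Omega$ there are local upper and lower barriers $w^{\pm}_{\xi}$ (built from powers of the distance to an exterior cone vertex, or from the fundamental-type solutions of the Pucci extremal operators, adjusted by the modulus of continuity of $g$) with $w^{-}_{\xi} \le u_\star \le u^{\varepsilon} \le u^\star \le w^{+}_{\xi}$ near $\xi$ and $w^{\pm}_{\xi}(\xi) = g(\xi)$; letting $x \to \xi$ forces $u^{\varepsilon}(x) \to g(\xi)$. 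Interior continuity of $u^{\varepsilon}$ is a consequence of the local Hölder/Lipschitz regularity theory for \eqref{Equation Pe} developed later in the paper (or, for this existence statement alone, of the fact that $u^{\varepsilon}$ is simultaneously sub- and supersolution, hence continuous by the comparison of its upper and lower semicontinuous envelopes against translates of itself). I expect the main obstacle to be the careful construction of the boundary barriers that are simultaneously compatible with the doubly degenerate factor $\mathcal{H}$ and the fully nonlinear uniformly elliptic $F$: one must ensure the barrier's gradient does not vanish where the supersolution inequality is needed, or else absorb the degenerate locus into the argument via the normalization $F(\cdot,\mathrm{O}_{N\times N}) = 0$; the uniformity of the exterior cone opening on a Lipschitz domain is what makes this work, and the bound $\zeta_{\varepsilon} \le \mathrm{M}_{\varepsilon}$ (with $\mathrm{M}_{\varepsilon}$ finite for each fixed $\varepsilon$) is exactly what one needs on the right-hand side to close the barrier estimates.
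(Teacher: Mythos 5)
Your overall strategy---produce an ordered sub/supersolution pair attaining $g$ on $\partial\Omega$ and invoke the Perron machinery of \cite[Theorem 4.1]{UserG}---is exactly the paper's (see the discussion around \eqref{G1} and Theorem \ref{Perron}). The difference lies in how the pair is built, and your explicit construction has genuine gaps. The paper sidesteps them by taking $\underline{u}^{\varepsilon}$ and $\overline{u}^{\varepsilon}$ to be solutions of the two auxiliary Dirichlet problems $\mathcal{G}(x,\nabla w,D^2w)=\sup_{\Omega\times[0,\infty)}\zeta_{\varepsilon}$ and $\mathcal{G}(x,\nabla w,D^2w)=\inf_{\Omega\times[0,\infty)}\zeta_{\varepsilon}$ with boundary datum $g$, whose solvability is quoted from \cite[Propositions 2 and 3]{BD07-2}; these are automatically a sub- and a supersolution of \eqref{Equation Pe}, both equal $g$ on $\partial\Omega$, and are ordered by the comparison principle.

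The concrete problems with your construction are the following. First, the subsolution: you have the viscosity inequality reversed at degenerate points. By the paper's definition a subsolution satisfies $\mathcal{G}\ge\zeta_{\varepsilon}$, so wherever the test function's gradient vanishes the requirement is $0=\mathcal{H}(x,0)F(x,\cdot)\ge\zeta_{\varepsilon}$, which fails whenever $\zeta_{\varepsilon}>0$ (and the non-degeneracy \eqref{nondeg_RT} is there precisely to prevent $\zeta_\varepsilon$ from vanishing). In particular the convex bump $\mathrm{a}(|x-x_0|^2-\rho^2)$ is \emph{not} a subsolution at its vertex, and $u_\star\equiv 0$ is one only if $\zeta_{\varepsilon}(\cdot,0)=0$; your remark that the degenerate locus is ``automatic'' on the subsolution side rests on this sign error. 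Second, the supersolution $u^\star=g_{\mathrm{ext}}+v$: if $g_{\mathrm{ext}}$ is merely continuous you cannot compute $D^2u^\star$ or invoke (A1) for the sum; and in any case $v=\mathrm{A}(\mathrm{R}^2-|x-x_0|^2)>0$ on $\partial\Omega$, so $u^\star>g$ there, which destroys the boundary matching that Theorem \ref{Perron} requires and makes your final sandwich ($u^\star\le w^{+}_{\xi}$ near $\xi$ with $w^{+}_{\xi}(\xi)=g(\xi)<u^\star(\xi)$) impossible by continuity. These are not cosmetic: repairing them essentially forces one back to solving the constant-right-hand-side Dirichlet problems with datum $g$, which is exactly what the paper does by citation.
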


We prove uniform gradient estimates, which supplies local compactness in the uniform convergence topology in our next result.

\begin{theorem}[{\bf Optimal Lipschitz estimate}]\label{thmreg}
Let $\{u^{\epsilon}\}_{\epsilon >0}$ be a solution of \eqref{Equation Pe}. Given $\Omega^{\prime} \Subset \Omega$, there exists a constant $C_0$ depending on dimension, ellipticity constants and on $\Omega^\prime$, but independent of $\epsilon>0$, such that
$$
	\|\nabla u^{\epsilon}\|_{L^{\infty}(\Omega^\prime)} \le \mathrm{C}_0.
$$
Additionally, if $\{u^{\epsilon}\}_{\epsilon >0}$ is a uniformly bounded family\footnote{Such a bound will be universal, i.e., it will depend only on data of the problem. Moreover, this statement is obtained via the application of Alexandroff-Bakelman-Pucci estimate adapted to our context.}, then it is pre-compact in the Lipschitz topology.
\end{theorem}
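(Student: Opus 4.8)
The plan is to localize, to establish a uniform-in-$\epsilon$ linear growth bound for $u^\epsilon$ off the $\epsilon$-layer $\{u^\epsilon\le\epsilon\}$, and then to feed this into the interior $C^{1,\alpha}$ estimate available for the doubly degenerate operator $\mathcal H F$ with bounded right-hand side (one of the regularity tools the paper develops). Fix $\Omega'\Subset\Omega''\Subset\Omega$; after a finite covering and an affine change of variables we may assume $\Omega''=B_1$, $\Omega'=B_{1/2}$. For each fixed $\epsilon$ the map $x\mapsto\zeta_\epsilon(x,u^\epsilon(x))$ is bounded, so $u^\epsilon\in C^{1,\alpha}_{\mathrm{loc}}$ by that same degenerate theory and $\|\nabla u^\epsilon\|_{L^\infty(\Omega')}$ is a priori finite — the whole point being uniformity in $\epsilon$. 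As a preliminary, record the uniform bound $\sup_{\Omega}u^\epsilon\le\|g\|_{L^\infty(\partial\Omega)}=:M$: since $\zeta_\epsilon\ge0$, \eqref{Cond1 zeta} and (A1) make $u^\epsilon$, in the viscosity sense (after the routine perturbation of test functions at the critical set, which removes the degeneracy), a subsolution of $\mathscr M^{+}_{\lambda,\Lambda}(D^2u^\epsilon)\ge0$, to which the maximum principle applies — this is the ABP-type bound mentioned in the footnote.

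\textbf{Step 1: Linear growth off the $\epsilon$-layer.} The core is the claim that there is $C_\star$, depending only on the data, such that whenever $x_0\in B_{3/4}$, $B_{2r}(x_0)\subset B_1$ and $u^\epsilon(x_0)\le\epsilon$,
\[
\sup_{B_r(x_0)}u^\epsilon\le C_\star\,(r+\epsilon).
\]
I would prove this by comparison with a two-scale barrier supersolution $\Psi_{x_0}$, the comparison being legitimized by the fact that $u^\epsilon$ is the \emph{least} supersolution: truncating $\Psi_{x_0}$ against $u^\epsilon$ itself yields an admissible competitor in the Perron infimum \eqref{Perron0}, so $u^\epsilon\le\Psi_{x_0}$ follows even though \eqref{Equation Pe} admits no comparison principle. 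On $\{u^\epsilon\ge\epsilon\}$ the cutoff in \eqref{Cond1 zeta} is inactive, the right-hand side is $\le\mathcal B$, and at unit scale affine functions with a small controlled quadratic perturbation are supersolutions there; at the $\epsilon$-scale one uses $v(y)\defeq\epsilon^{-1}u^\epsilon(x_0+\epsilon y)$, which solves an equation of \emph{exactly the same structural type}, namely $\widetilde{\mathcal H}_\epsilon(y,\nabla v)\widetilde F_\epsilon(y,D^2v)=\widetilde h_\epsilon(y)$ with $\widetilde F_\epsilon(y,X)\defeq\epsilon F(x_0+\epsilon y,X/\epsilon)$ still satisfying (A0)--(A2) with the same constants, $\widetilde{\mathcal H}_\epsilon$ still satisfying \eqref{1.2}--\eqref{1.3}, and $0\le\widetilde h_\epsilon(y)=\epsilon\,\zeta_\epsilon(x_0+\epsilon y,\epsilon v)\le\mathcal A+\mathcal B$ — so that the $\mathrm{O}(\epsilon^{-1})$ singularity is absorbed by the $\mathrm{O}(\epsilon^{-1})$ gain in curvature. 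Gluing the inner and outer barriers across $\partial B_\epsilon(x_0)$ (using $\mathcal H\ge L_1|\xi|^p$ and uniform ellipticity to control the degenerate operator, and $M$ to dominate $u^\epsilon$ on the outer sphere) yields the displayed bound. I expect this two-scale barrier — controlling the non-homogeneous doubly degenerate operator at both scales simultaneously — to be the main technical obstacle.

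\textbf{Step 2: From linear growth to the gradient bound.} Fix $x_0\in B_{1/2}$ with $u^\epsilon(x_0)>0$ (where $u^\epsilon=0$ the gradient vanishes, $u^\epsilon\ge0$ being minimized there) and set $d\defeq\dist\bigl(x_0,\{u^\epsilon\le\epsilon\}\bigr)\wedge c_0$ for a small universal $c_0$, so $B_{4d}(x_0)\subset B_1$; we may also assume $\epsilon<c_0$, since otherwise the right-hand side of \eqref{Equation Pe} is already $\le\mathcal A/c_0+\mathcal B$ and the interior estimate below applies verbatim to $u^\epsilon$. If $d\le\epsilon$, pick $\bar x$ with $u^\epsilon(\bar x)\le\epsilon$ and $|\bar x-x_0|<\epsilon$; by Step 1 at $\bar x$ one has $0\le u^\epsilon\le 3C_\star\epsilon$ on $B_\epsilon(x_0)$, so $v(y)=\epsilon^{-1}u^\epsilon(x_0+\epsilon y)$ solves on $B_1$ a doubly degenerate equation of the same type with right-hand side $\le\mathcal A+\mathcal B$ and $0\le v\le 3C_\star$. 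If instead $\epsilon<d$, then $w\defeq(u^\epsilon-\epsilon)^{+}$ satisfies $\mathcal H(x,\nabla w)F(x,D^2w)=\zeta_\epsilon(x,w+\epsilon)\in[\mathcal B_0,\mathcal B]$ on $B_d(x_0)\subset\{u^\epsilon>\epsilon\}$, and Step 1 applied at a point of $\{u^\epsilon\le\epsilon\}$ at distance $d$ from $x_0$ (or, when $d=c_0$, the bound $M$) gives $\mathrm{osc}_{B_d(x_0)}\,w\le Cd$; then the dilation $W(y)=d^{-1}w(x_0+dy)$ solves on $B_1$ a same-type equation with right-hand side $\le\mathcal B$ and $\|W\|_{L^\infty(B_1)}\le C$. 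In either regime I invoke the interior $C^{1,\alpha}$ estimate for the non-homogeneous doubly degenerate fully nonlinear operator with bounded right-hand side — valid under (A0)--(A2) and \eqref{1.2}--\eqref{1.3} — in the form $\|\nabla v\|_{L^\infty(B_{1/2})}\le C\bigl(\|v\|_{L^\infty(B_1)}+\|h\|_{L^\infty(B_1)}^{1/(1+p)}\bigr)$ for solutions of $\mathcal H(x,\nabla v)F(x,D^2v)=h$. Since the rescalings are dilations, $\nabla v(0)=\nabla W(0)=\nabla u^\epsilon(x_0)$, so in both cases $|\nabla u^\epsilon(x_0)|\le C_0$ with $C_0$ depending only on the data, on $M$ and on $\Omega'$, uniformly in $\epsilon$ and in $x_0\in B_{1/2}$; this is the asserted estimate.

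\textbf{Step 3: Pre-compactness.} If the family is uniformly bounded (automatic from the preliminary step, or assumed), then running Step 2 on an exhaustion of $\Omega$ by subdomains $\Omega'\Subset\Omega$ shows it is uniformly bounded and uniformly locally Lipschitz; by the Arzel\`a--Ascoli theorem it is pre-compact in $C^{0}_{\mathrm{loc}}(\Omega)$, hence in $C^{0,\beta}_{\mathrm{loc}}(\Omega)$ for every $\beta\in(0,1)$, and since the Lipschitz seminorm is lower semicontinuous under local uniform convergence, every limit point is locally Lipschitz with the same constant — i.e.\ pre-compactness in the Lipschitz topology.
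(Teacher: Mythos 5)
Your overall architecture --- rescale at scale $\epsilon$ on the layer and at scale $d=\dist\bigl(x_0,\{u^\epsilon\le\epsilon\}\bigr)$ off it, then invoke the interior $C^{1,\alpha}$ estimate for the doubly degenerate operator with bounded right-hand side --- is exactly the paper's, and Steps 2 and 3 would go through once the $L^\infty$ bounds on the rescaled functions are in hand. The genuine gap is Step 1. An upper bound obtained from the Perron minimality \eqref{Perron0} by truncating against a supersolution $\Psi_{x_0}$ never uses the hypothesis $u^\epsilon(x_0)\le\epsilon$: the only information about $u^\epsilon$ that enters the comparison is the global bound $M$ on the outer sphere $\partial B_{2r}(x_0)$, so if the argument worked it would give $\sup_{B_r(x_0)}u^\epsilon\le C_\star(r+\epsilon)$ at \emph{every} point $x_0$ and every admissible $r$, hence $u^\epsilon\le C_\star\epsilon$ throughout --- absurd for nontrivial $g$. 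The obstruction is visible in the barrier itself: a supersolution must satisfy $\mathcal{H}(x,\nabla\Psi)\,F(x,D^2\Psi)\le\zeta_\epsilon(x,\Psi)$, and on $\{\Psi>\epsilon\}$ the right-hand side may be as small as $\mathcal{B}_0$ (possibly $0$); a radial profile climbing from $O(\epsilon)$ on an inner sphere to $M$ on $\partial B_{2r}(x_0)$ with slope $A$ has tangential curvature $\sim A/\rho>0$, so $\mathscr{M}^{+}_{\lambda,\Lambda}(D^2\Psi)>0$ unless the radial concavity dominates, which forces $A^{p+1}\lesssim \mathcal{B}_0\,\rho$ --- incompatible with $A\gtrsim M/r$ for small $r$. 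The admissible radial supersolutions (profiles of the type $a-b\rho^{-\alpha}$ with $\alpha\ge(N-1)\Lambda/\lambda-1$) concentrate all their growth in a thin shell near the inner radius and give no linear control. Minimality plus barriers is the right tool for the \emph{lower} bound (Theorem \ref{cresc}); it cannot produce the upper one.

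What actually closes Step 1 in the paper is a different mechanism. First, for $x_0$ in the layer, the rescaling $v=\epsilon^{-1}u^\epsilon(x_0+\epsilon\,\cdot)$ satisfies $v(0)\le 1$ and solves a same-type equation with right-hand side bounded by $\mathcal{A}+\mathcal{B}$, so the Harnack inequality (Theorem \ref{ThmHarIneq}) --- not a barrier --- bounds $\|v\|_{L^\infty(B_{1/2})}$ and hence $|\nabla u^\epsilon|$ on $\{0\le u^\epsilon\le\epsilon\}\cap\Omega'$. Second, for $\hat x_0\in\{u^\epsilon>\epsilon\}$ with $r_0=\dist(\hat x_0,\Gamma_\epsilon)$, the smallness is propagated inward by the quantitative inhomogeneous Hopf lemma (Lemma \ref{lemma2.1}): at a nearest point $z_0\in\Gamma_\epsilon$ the already-established gradient bound controls the normal derivative of $(u^\epsilon-\epsilon)/r_0$ at the corresponding boundary point of the rescaled ball, and the Hopf lemma (a \emph{subsolution} barrier from below plus the comparison principle, followed by Harnack again in $\{u^\epsilon>\epsilon\}$, where the right-hand side is $\le\mathcal{B}$) converts this into $(u^\epsilon(\hat x_0)-\epsilon)/r_0\le c$ and then into the needed sup bound on $B_{r_0/2}(\hat x_0)$. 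If you replace your two-scale upper barrier by this Harnack-plus-Hopf scheme, the rest of your proof stands essentially as written.
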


We stress that a key ingredient in order to prove optimal Lipschitz regularity is the $C_{\text{loc}}^{1, \alpha}$ estimates addressed in \cite{daSR20}. Other important pieces of information are the versions of the Harnack inequality and inhomogeneous Hopf type result adapted to our double degenerate context (see, Appendices \ref{Append} for more details).

From now on, we will label the distance of a point in the non-coincidence set $x_0\in \Omega \cap \{u^\epsilon > 0\}$ to the approximating transition boundary, $\Gamma_\epsilon$, by
$$
d_{\epsilon}(x_0) \defeq \textrm{dist}(x_0, \{u^{\epsilon} \le \epsilon\}).
$$

Next, we prove that, inside $\{u^{\e}>\e\}$, solutions grow in a linear fashion away from $\e$-level surfaces.

\begin{theorem}[{\bf Linear growth}]\label{cresc}
Let $\{u^{\epsilon}\}_{\epsilon >0}$ be a Perron's solution of \eqref{Equation Pe}. There exists  $\mathrm{c}(\verb"universal parameters")>0$ such that, for $x_0 \in \{u^{\epsilon} > \epsilon\}$ and $0<\epsilon \ll d_\epsilon(x_0) \ll 1$, there holds
$$
 	u^{\epsilon}(x_0) \ge \mathrm{c} \cdot d_{\epsilon}(x_0).
$$
\end{theorem}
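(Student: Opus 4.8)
The plan is to prove the linear growth estimate by a contradiction/compactness argument, using the Harnack inequality for the doubly degenerate operator (Appendix) together with the scaling structure of \eqref{Equation Pe}. Fix $x_0 \in \{u^\epsilon > \epsilon\}$ with $0 < \epsilon \ll d_\epsilon(x_0) \ll 1$, and set $d \defeq d_\epsilon(x_0)$. Since $B_d(x_0) \subset \{u^\epsilon > \epsilon\}$, inside this ball $\zeta_\epsilon(x, u^\epsilon) \le \mathcal{B}$, so $u^\epsilon$ solves a bounded right-hand side equation there; moreover $u^\epsilon - \epsilon > 0$ in $B_d(x_0)$ is a positive (super)solution of a related degenerate equation. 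The natural normalization is
$$
v(y) \defeq \frac{u^\epsilon(x_0 + d y) - \epsilon}{u^\epsilon(x_0) - \epsilon}, \qquad y \in B_1.
$$
Then $v \ge 0$, $v(0) = 1$, and one checks that $v$ solves an equation of the same structural type \eqref{Equation Pe} on $B_1$ — with rescaled coefficients still satisfying (A0)--(A2) and \eqref{1.2} (with possibly altered but still universal constants $L_1, L_2$), and with a right-hand side that is controlled in $L^\infty(B_1)$ by a constant times $d^2/(u^\epsilon(x_0) - \epsilon)$ times the degeneracy factor. The idea: if $u^\epsilon(x_0) - \epsilon$ were much smaller than $d$, this right-hand side would be negligible, $v$ would be close to a nonnegative (normalized) solution of the homogeneous degenerate equation, and Harnack plus $v(0) = 1$ would force a uniform lower bound for $v$ on, say, $\overline{B_{1/2}}$ — which we then push to the boundary layer to contradict $u^\epsilon \le \epsilon$ on $\partial\{u^\epsilon > \epsilon\}$.

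More concretely, I would argue as follows. Suppose the conclusion fails: there is a sequence $\epsilon_j \to 0$, points $x_j \in \{u^{\epsilon_j} > \epsilon_j\}$ with $d_j \defeq d_{\epsilon_j}(x_j) \gg \epsilon_j$, $d_j \ll 1$, and $u^{\epsilon_j}(x_j) \le \tfrac{1}{j} d_j$. Consider $v_j$ defined as above with $d = d_j$, $x_0 = x_j$, $\epsilon = \epsilon_j$. From Theorem \ref{thmreg} (the uniform Lipschitz bound, applied after rescaling — note the Lipschitz estimate is scale-invariant in the right way for this linear-growth normalization) the family $\{v_j\}$ has uniformly bounded gradients on compact subsets of $B_1$, hence (up to subsequence) $v_j \to v_\infty$ locally uniformly in $B_1$, with $v_\infty \ge 0$, $v_\infty(0) = 1$. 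Since $u^{\epsilon_j}(x_j) - \epsilon_j \ge u^{\epsilon_j}(x_j) - \epsilon_j$ and $d_j/(u^{\epsilon_j}(x_j)) \to \infty$, the scaled right-hand side tends to $0$; by stability of viscosity solutions under the degenerate operators (the coefficients converge, the ellipticity is uniform), $v_\infty$ is a nonnegative viscosity supersolution of a limiting homogeneous equation $\tilde{\mathcal H}(y, \nabla v_\infty)\tilde F(y, D^2 v_\infty) \le 0$ on $B_1$. Now the Harnack inequality from the Appendix gives $\inf_{B_{1/2}} v_\infty \ge \mathrm{c}_H \sup_{B_{1/2}} v_\infty \ge \mathrm{c}_H v_\infty(0) = \mathrm{c}_H > 0$, hence $v_j \ge \tfrac{\mathrm{c}_H}{2}$ on $\overline{B_{1/2}}$ for $j$ large, i.e. $u^{\epsilon_j}(x) \ge \epsilon_j + \tfrac{\mathrm{c}_H}{2}(u^{\epsilon_j}(x_j) - \epsilon_j)$ for $x \in \overline{B_{d_j/2}(x_j)}$. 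Iterating this along a Harnack chain from $x_j$ toward a nearest point $z_j \in \partial\{u^{\epsilon_j} > \epsilon_j\}$, where $u^{\epsilon_j}(z_j) = \epsilon_j$ (continuity), one compares $u^{\epsilon_j}(z_j) - \epsilon_j = 0$ with a positive multiple of $u^{\epsilon_j}(x_j) - \epsilon_j > 0$, a contradiction.

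There is a cleaner, quantitative alternative avoiding full compactness: build an explicit radial barrier $\Phi$ on the annulus $B_d(x_0) \setminus B_{d/2}(x_0)$ (or work directly with the scaled $v$) such that $\mathcal H(x, \nabla \Phi) F(x, D^2\Phi) \ge \mathcal{B}$ there, with $\Phi = \epsilon$ on $\partial B_d(x_0)$ and $\Phi \le u^\epsilon(x_0)$-controlled on $\partial B_{d/2}(x_0)$, scaled so that Harnack on $B_{d/2}(x_0)$ (where $u^\epsilon > \epsilon$) gives $u^\epsilon \ge \mathrm{c}(u^\epsilon(x_0) - \epsilon) + \epsilon$ on $B_{d/4}(x_0)$; then comparison $u^\epsilon \ge \Phi$ on the annulus, evaluated near a free-boundary point, yields the bound. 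The main obstacle, in either approach, is handling the \emph{non-homogeneous, doubly degenerate} structure: the operator is not scale-invariant in the usual way because of the competition between the $|\xi|^p$ and $\mathfrak a(x)|\xi|^q$ terms, so I must track carefully how the degeneracy constants transform under the normalization $v$ and verify that the rescaled equations stay within the class \eqref{1.2}--\eqref{1.3} with constants that remain universal (in particular not deteriorating as $\epsilon \to 0$ or $d \to 0$); and I must confirm that the Harnack inequality quoted from the Appendix applies with the required uniformity to positive solutions of $\mathcal H(x,\nabla w)F(x,D^2 w) = h$ with $\|h\|_{L^\infty} \le \mathcal{B}$ on small balls. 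Once the scaling bookkeeping is pinned down, the contradiction/Harnack-chain step is routine.
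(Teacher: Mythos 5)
Your proposal has a genuine gap: it never uses the fact that $u^{\epsilon}$ is the \emph{least} supersolution, and this minimality is the indispensable ingredient. Inside $\{u^{\epsilon}>\epsilon\}$ the equation only tells you that $u^{\epsilon}-\epsilon$ is a nonnegative solution of a PDE with bounded (possibly vanishing) right-hand side; in the toy flame-propagation model $u^{\epsilon}$ is harmonic there, and a positive harmonic function on $B_d(x_0)$ vanishing at a boundary point can be arbitrarily small at $x_0$ (multiply any such function by $\delta\ll1$). So an argument based only on the interior equation, Harnack, and compactness cannot yield non-degeneracy — it would ``prove'' the statement for arbitrary viscosity solutions, which is false. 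Concretely, your final step fails: the Harnack chain from $x_j$ toward the nearest point $z_j\in\partial\{u^{\epsilon_j}>\epsilon_j\}$ loses a fixed factor at each step and needs infinitely many steps to reach $z_j$, so it is perfectly consistent with $u^{\epsilon_j}-\epsilon_j>0$ in the interior and $=0$ at $z_j$; no contradiction is produced. A secondary error: your scaling claim is backwards. With $a\defeq u^{\epsilon}(x_0)-\epsilon$ and $v(y)=(u^{\epsilon}(x_0+dy)-\epsilon)/a$, the normalized right-hand side is of order $d^{p+2}a^{-(p+1)}\zeta_{\epsilon}$, which \emph{grows} (does not become negligible) precisely in the contradiction regime $a\ll d$; also $\tilde{\mathfrak a}=(a/d)^{q-p}\mathfrak a$ degenerates, so the rescaled family does not converge to a clean homogeneous problem. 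The annulus-barrier ``alternative'' is circular for the same reason: comparison from below only returns a lower bound for $u^{\epsilon}$ in terms of $u^{\epsilon}(x_0)$, not a lower bound for $u^{\epsilon}(x_0)$ in terms of $d$.

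The paper's proof goes the other way around and is not a Harnack-chain argument. It builds the explicit $C^{1,1}$ radial supersolution $\Theta_{\mathrm{L}}$ of Section \ref{Sct Nondeg} (constant $=t_0$ near the origin, quadratic in an annulus, then of the form $\psi-\phi|x|^{-\alpha}$), scales it to $\Theta_{\epsilon}(x)=\epsilon\,\Theta_{\eta/(4\epsilon)}(x/\epsilon)$ with $\eta=d_{\epsilon}(x_0)/2$, and observes that if $\Theta_{\epsilon}>u^{\epsilon}$ on all of $\partial B_{\eta}$ then $\min\{\Theta_{\epsilon},u^{\epsilon}\}$ would be a supersolution of \eqref{Equation Pe} strictly below $u^{\epsilon}$, contradicting the definition \eqref{Perron0} of the Perron solution. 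This forces $\sup_{\overline{B_{\eta}}}u^{\epsilon}\ge\kappa_0\eta$, and only then is the Harnack inequality used once, on $B_{2\eta}$, to transfer that supremum bound into a bound at the center $x_0$. If you want to salvage your write-up, you must incorporate the barrier-plus-minimality step; the Harnack inequality alone cannot carry the proof.
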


The proof of Linear growth consists of combining the construction of an appropriate barrier function with the minimality of Perron solutions. Such an instrumental idea was first introduced in the last author's works \cite{ART17} and \cite{RT11} for the fully nonlinear scenario.

In a free boundary point of view, it is important highlighting that viscosity solutions of \eqref{Equation Pe} develop two ``distinct free boundaries''. The first one is the set of singular points of existing solutions $\mathscr{S}_0(u^{\varepsilon}, \Omega^{\prime})$, and the second one is the so-named ``physical transition '', i.e. $\Gamma_{\varepsilon} = \{u^{\varepsilon} \thicksim \varepsilon \}$ ($\varepsilon-\text{level surfaces}$). One of most the difficult tasks in our research consists in showing that these two free boundaries do not intersect in measure. As a matter of fact, we are able to obtain a uniform lower/upper control of $u^{\varepsilon}$ in terms of $\dist(\cdot, \Gamma_{\varepsilon})$:
$$
\dist(x_0, \Gamma_{\varepsilon}) \lesssim u^{\varepsilon}(x_0) \lesssim \dist(x_0, \Gamma_{\varepsilon}).
$$

Next, we prove that Perron type solutions are strongly non-degenerate near $\e$-level surfaces. Summarily, the maximum of $u^{\e}$ on the boundary of a ball $B_r(x_0)$, centered in $\{u^{\e}>\e\}$, is of the order of $r$.

\begin{theorem}[{\bf Strong Non-degeneracy}]\label{degforte}
Given $\Omega^{\prime}\Subset \Omega$, there exists a positive constant $\mathrm{c}(\verb"universal")$ such that, for $x_0 \in \{u^{\epsilon} > \epsilon\}$, $\epsilon \ll \rho \ll 1$,  there holds
$$
	\mathrm{c}\cdot\rho \le \sup_{B_{\rho}(x_0)}u^{\epsilon}(x) \le \mathrm{c}^{-1}\cdot(\rho + u^{\epsilon}(x_0)).
$$
\end{theorem}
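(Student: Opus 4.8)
The upper estimate $\sup_{B_\rho(x_0)}u^\epsilon\le\mathrm{c}^{-1}(\rho+u^\epsilon(x_0))$ is an immediate by-product of the optimal Lipschitz bound of Theorem~\ref{thmreg}: for every $x\in B_\rho(x_0)$ one has $u^\epsilon(x)\le u^\epsilon(x_0)+\mathrm{C}_0|x-x_0|\le u^\epsilon(x_0)+\mathrm{C}_0\rho$, and it remains to take the supremum. For the lower bound $\sup_{B_\rho(x_0)}u^\epsilon\ge\mathrm{c}\rho$ I would organise the argument around the distance $d_\epsilon(x_0)$. If $d_\epsilon(x_0)\ge\tfrac{\rho}{2}$, then $\epsilon\ll d_\epsilon(x_0)$ (since $\epsilon\ll\rho$), and the linear growth of Theorem~\ref{cresc} directly gives $\sup_{B_\rho(x_0)}u^\epsilon\ge u^\epsilon(x_0)\ge\mathrm{c}\,d_\epsilon(x_0)\ge\tfrac{\mathrm{c}}{2}\rho$ (the relevant scale being admissible because $x_0\in\Omega'\Subset\Omega$, $\rho\ll1$, so the transition set is within reach). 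If instead $d_\epsilon(x_0)<\tfrac{\rho}{2}$, choose $z_0$ with $u^\epsilon(z_0)=\epsilon$ and $|z_0-x_0|=d_\epsilon(x_0)$; then $B_{\rho/2}(z_0)\subset B_\rho(x_0)$, and it suffices to establish the non-degeneracy of $u^\epsilon$ \emph{at a transition point} --- namely $\sup_{B_r(z_0)}u^\epsilon\ge\mathrm{c}\,r$ whenever $u^\epsilon(z_0)=\epsilon$ and $\epsilon\ll r\ll1$ --- and to apply it with $r=\tfrac{\rho}{2}$.

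For this core estimate I would argue by contradiction. Assume $\sup_{B_r(z_0)}u^\epsilon<\theta r$ for a small universal $\theta>0$ to be fixed at the end, and rescale by $v(y)\defeq r^{-1}u^\epsilon(z_0+ry)$ on $B_1$, so that $0\le v<\theta$, $v(0)=\epsilon/r=:\delta\to0^+$, and --- using $F(x,\mathrm{O}_{N\times N})=0$, the uniform ellipticity (A1), the degeneracy law \eqref{N-HDeg} and the $1$-homogeneity of the Pucci extremal operators --- $v$ solves on $B_1$, in the viscosity sense, an equation of exactly the same structural class, $\tilde{\mathcal{H}}(y,\nabla v)\,\tilde F(y,D^2v)=\tilde\zeta_\delta(y,v)$, with the same ellipticity constants, the same degeneracy exponents $p,q$ and norm $\|\mathfrak{a}\|_{L^\infty(\Omega)}$, and with $\tilde\zeta_\delta$ a singularly perturbed reaction of order $\mathrm{O}(\delta^{-1})$ concentrated on the thin layer $\{v\sim\delta\}$; crucially, \eqref{nondeg_RT} survives the rescaling in the quantitative form $\inf_{\{v\sim\delta\}}\delta\,\tilde\zeta_\delta\ge\mathscr{I}>0$. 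The heart of the matter is then a barrier: near the transition set $\partial\{v>\delta t_0\}$ I would bound $v$ from below by a subsolution modelled on the one-dimensional flame profile (heuristically $\phi''\sim\tilde\zeta_\delta(\cdot,\phi)$, so that $(\phi')^{2}$ jumps by a definite amount across $\{v\sim\delta\}$), bent into a radial $C^{1,\alpha}$ subsolution of $\tilde{\mathcal{H}}(y,\nabla w)\,\mathscr{M}^{-}_{\lambda,\Lambda}(D^2w)=\tilde\zeta_\delta(y,w)$ whose gradient does not vanish on the annulus where it operates. The genuinely singular lower bound $\mathscr{I}/\delta$ forces a jump of size $\ge\mathrm{c}_0>0$, independent of $\delta$ and $\theta$, in the inner normal derivative of $v$ across the layer; the $C^{1,\alpha}$ estimate of \cite{daSR20} makes the comparison legitimate, while the Harnack inequality and the inhomogeneous Hopf-type lemma for doubly degenerate operators (cf.\ the Appendix) upgrade this jump to $\sup_{B_{1/2}}v\ge\mathrm{c}_0$. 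Taking $\theta<\mathrm{c}_0$ contradicts $v<\theta$, which establishes the core estimate, hence the theorem.

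The main obstacle is precisely the construction and comparison of this barrier under the \emph{non-homogeneous} double degeneracy. Since the diffusion $\mathcal{H}\cdot F$ collapses exactly on $\{|\nabla u^\epsilon|=0\}$, the comparison profile must be engineered so as to keep a quantitatively non-vanishing gradient on the region where it is used; moreover the law $|\xi|^p+\mathfrak{a}(x)|\xi|^q$ forces one to distinguish the regime in which $\mathfrak{a}$ is comparable to $\|\mathfrak{a}\|_{L^\infty(\Omega)}$ from the one in which it degenerates near $z_0$, and every constant has to be carried through the (only class-preserving) rescaling $v=r^{-1}u^\epsilon(z_0+r\,\cdot)$. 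Finally, one must make sure that neither the bounded ``noise'' $\mathcal{B}$ nor the lower term $\mathcal{B}_0$ of \eqref{Cond1 zeta} can absorb the $\mathrm{O}(\delta^{-1})$ contribution of the singular layer --- it is exactly that blow-up, quantified by \eqref{nondeg_RT}, which produces the \emph{linear} rate $\mathrm{c}\,\rho$, rather than the merely quadratic one that the lower-order terms would yield on their own.
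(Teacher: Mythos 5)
Your upper bound is exactly the paper's (Lipschitz estimate), and your first case ($d_\epsilon(x_0)\ge\rho/2$ via Theorem~\ref{cresc}) is sound. The problem is the ``core estimate'' at a transition point, which is where the entire content of the theorem lives and which you only sketch heuristically. The decisive obstruction is that your argument hinges on comparing the rescaled solution $v$ with a \emph{subsolution} barrier via the comparison principle. But the zeroth-order term $\zeta_\epsilon(x,u)\sim\epsilon^{-1}\chi_{(0,\epsilon)}(u)$ is (essentially) \emph{decreasing} in $u$, so Lemma~\ref{comparison principle} (which requires $h$ increasing, or nondecreasing with strictness) does not apply; the paper states explicitly that \eqref{Equation Pe} lacks a comparison principle and that this is precisely why one must work with Perron (least supersolution) solutions. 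Interior $C^{1,\alpha}$ regularity does not repair this: regularity of $v$ has no bearing on whether a subsolution stays below it. A second, smaller misuse: the inhomogeneous Hopf-type Lemma~\ref{lemma2.1} bounds $\sup v$ \emph{from above} in terms of the normal derivative at a zero of $v$; it cannot ``upgrade'' a derivative jump into the lower bound $\sup_{B_{1/2}}v\ge\mathrm{c}_0$ that you need.

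The missing idea is the one the paper uses, and it makes the proof a two-line scholium of Theorem~\ref{cresc}: take the \emph{supersolution} barrier $\Theta_\epsilon(x)=\epsilon\,\Theta_{\rho/(4\epsilon)}(x/\epsilon)$ already constructed in Section~\ref{Sct Nondeg}, centered at $x_0$. If $\Theta_\epsilon>u^\epsilon$ on all of $\partial B_\rho(x_0)$, then $\min\{\Theta_\epsilon,u^\epsilon\}$ is a supersolution of \eqref{Equation Pe} lying strictly below $u^\epsilon$ somewhere (since $\Theta_\epsilon(x_0)=t_0\epsilon<\epsilon<u^\epsilon(x_0)$), contradicting the minimality of the Perron solution. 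Hence $u^\epsilon(z_0)\ge\Theta_\epsilon(z_0)\ge\kappa_0\rho$ for some $z_0\in\partial B_\rho(x_0)$, by property \eqref{nondeg} of the barrier. Note that this replaces comparison (unavailable) by minimality within the Perron class, and it requires no case distinction on $d_\epsilon(x_0)$, no rescaling to $B_1$, and no flame-profile construction. If you insist on a genuinely barrier-from-below argument you would have to prove a comparison result tailored to non-monotone singular reaction terms, which is not in the paper and is a substantial undertaking in its own right.
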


As a consequence of Theorem \ref{thmreg} we get the following result:

\begin{theorem}[{\bf The limiting PDE}] Let $u^{\varepsilon}$ be a solution to \eqref{Equation Pe}, then for any sequence $\varepsilon_k \to 0^{+}$ there exist a subsequence $\varepsilon_{k_j} \to 0^{+}$ and $u_0\in C^{0,1}_{\loc}(\Omega)$ such that
\begin{enumerate}
	\item[(1)] $u^{\varepsilon_{k_j}} \to u_0$ locally uniformly in $\Omega$;
	\item[(2)] $u_0 \in [0, K_0]$ in $\overline{\Omega}$ for some constant $K_0(\verb"universal")>0$ (independent of $\varepsilon$);
	\item[(3)] $ \mathcal{G}(x, \nabla u_0, D^2 u_0) = f_0(x)$ in $\{u_0>0\}$, with $0\le f_0 \in L^{\infty}(\Omega)\cap C^0(\Omega)$.
\end{enumerate}
\end{theorem}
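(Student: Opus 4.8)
The plan is to obtain the limit $u_0$ by a compactness argument based on the uniform estimates already in hand, and then to identify the equation it solves through the stability of viscosity solutions under locally uniform convergence.

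First I would produce $u_0$. By Theorem~\ref{thmreg}, for every $\Omega' \Subset \Omega$ the family $\{\nabla u^{\varepsilon}\}_{\varepsilon>0}$ is bounded in $L^{\infty}(\Omega')$ uniformly in $\varepsilon$, hence $\{u^{\varepsilon}\}$ is locally equi-Lipschitz; moreover the Alexandroff--Bakelman--Pucci bound adapted to our setting (cf.\ the footnote in Theorem~\ref{thmreg}) together with $0 \le g \in C(\partial\Omega)$ makes $\{u^{\varepsilon}\}$ uniformly bounded in $\overline{\Omega}$ by a universal constant $K_0$. Given $\varepsilon_k \to 0^{+}$, an application of Arzel\`{a}--Ascoli on an exhaustion $\Omega_1 \Subset \Omega_2 \Subset \cdots \Subset \Omega$ followed by a diagonal extraction yields a subsequence $\varepsilon_{k_j}$ and a function $u_0$ with $u^{\varepsilon_{k_j}} \to u_0$ locally uniformly in $\Omega$; passing to the limit in the local Lipschitz bounds gives $u_0 \in C^{0,1}_{\loc}(\Omega)$, and from $0 \le u^{\varepsilon_{k_j}} \le K_0$ we obtain $0 \le u_0 \le K_0$ in $\overline{\Omega}$. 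This settles (1) and (2).

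Next I would handle (3). Fix $\Omega' \Subset \{u_0>0\}$ — an open set since $u_0$ is continuous — and set $m \defeq \inf_{\Omega'} u_0 > 0$. For $j$ large enough that $\varepsilon_{k_j} < m/2$ and $\|u^{\varepsilon_{k_j}}-u_0\|_{L^{\infty}(\Omega')} < m/2$ one has $u^{\varepsilon_{k_j}} > \varepsilon_{k_j}$ throughout $\Omega'$; hence $\chi_{(0,\varepsilon_{k_j})}(u^{\varepsilon_{k_j}}) \equiv 0$ there, and by \eqref{Cond1 zeta} the (continuous) right-hand sides $h_j(x) \defeq \zeta_{\varepsilon_{k_j}}(x, u^{\varepsilon_{k_j}}(x))$ satisfy $\mathcal{B}_0 \le h_j \le \mathcal{B}$ on $\Omega'$. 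In particular, in the model form \eqref{zeta} the singular part disappears and $h_j(x) = f_{\varepsilon_{k_j}}(x)$, so (after a further subsequence if necessary, or directly if the non-singular part of $\zeta_{\varepsilon}$ converges locally uniformly) $h_j \to f_0$ locally uniformly in $\{u_0>0\}$ with $0 \le f_0 \in C^{0}(\Omega)\cap L^{\infty}(\Omega)$. Finally, the operator $\mathcal{G}(x,p,X)=\mathcal{H}(x,p)F(x,X)$ is continuous in all of its variables (by (A0), the continuity of $\mathcal{H}$, and \eqref{1.3}) and monotone in $X$ — if $X\le Y$ then $F(x,X)\le F(x,Y)$ by (A1), and multiplying by $\mathcal{H}\ge 0$ preserves the inequality — so the standard stability theorem for viscosity solutions (see \cite{UserG}) allows us to pass to the limit in $\mathcal{G}(x,\nabla u^{\varepsilon_{k_j}}, D^2 u^{\varepsilon_{k_j}}) = h_j$ and conclude $\mathcal{G}(x,\nabla u_0, D^2 u_0) = f_0$ in $\Omega'$ in the viscosity sense. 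Since $\Omega' \Subset \{u_0>0\}$ was arbitrary, (3) follows.

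The hard part will be twofold. On the analytic side, one has to make sure the stability argument is not spoiled by the degeneracy of $\mathcal{H}$ along $\{\nabla u = 0\}$: at test points where the touching test function has vanishing gradient the equation formally collapses to $0 = f_0$, and one relies on the interpretation of \eqref{Equation Pe} as a \emph{proper} degenerate-elliptic equation for $\mathcal{G}$, rather than as the quotient equation $F = \zeta_{\varepsilon}/\mathcal{H}$, for the sub/supersolution inequalities to close up. On the estimates side, everything rests on the $\varepsilon$-independence of the Lipschitz bound of Theorem~\ref{thmreg}, which itself leans on the $C^{1,\alpha}_{\loc}$ theory of \cite{daSR20} and the Harnack and Hopf-type results of the appendices; absorbing the $\mathrm{O}(\varepsilon^{-1})$ forcing there is the genuinely delicate point. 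If one insists on working only with \eqref{Cond1 zeta}--\eqref{nondeg_RT} in full generality, the identification of the limiting source $f_0$ and its claimed continuity are cleanest under the model structure \eqref{zeta} with $f_{\varepsilon}\to f_0$ locally uniformly; otherwise one can only extract a weak-$\star$ limit in $L^{\infty}_{\loc}$.
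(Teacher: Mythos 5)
Your proposal is correct and follows essentially the same route the paper takes (the paper states the theorem as a direct consequence of Theorem \ref{thmreg} and only sketches the argument at the start of Section \ref{Sct AL}): equi-Lipschitz bounds plus the ABP-based uniform bound give compactness via Arzel\`a--Ascoli and a diagonal extraction, and on compact subsets of $\{u_0>0\}$ the singular part of $\zeta_{\varepsilon}$ switches off, so stability of viscosity solutions identifies the limiting equation. Your closing caveat is well taken: under \eqref{Cond1 zeta}--\eqref{nondeg_RT} alone one only extracts a weak-$\star$ limit of the source terms, and the continuity of $f_0$ asserted in item (3) implicitly presupposes additional structure such as \eqref{zeta} with $f_{\varepsilon}$ converging locally uniformly --- a point the paper itself leaves unaddressed.
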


Let us introduce the notation:
$$
   \mathfrak{F}(u_0, \mathcal{O}) \defeq \partial \{u_0>0\} \cap \mathcal{O}.
$$

\begin{theorem}[{\bf Asymptotic behavior close free boundary}]\label{limite} Let $\Omega^{\prime} \Subset \Omega$. Fix $x_0 \in \{u_0>0\} \cap \Omega^{\prime}$ such that $\dist(x_0, \mathfrak{F}(u_0, \Omega^{\prime}))\leq \frac{1}{2}\dist(\Omega^{\prime}, \partial \Omega)$. Then there exists a constant $\mathrm{C}>0$ independent of $\varepsilon$ such that
\begin{equation}\label{ocontrol}
	\mathrm{C}^{-1}\cdot \dist(x_0,\mathfrak{F}(u_0, \Omega^{\prime})) \le u_{0}(x_0) \le \mathrm{C}\cdot  \dist(x_0,\mathfrak{F}(u_0, \Omega^{\prime})).
\end{equation}
\end{theorem}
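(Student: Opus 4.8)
The plan is to prove the two inequalities in \eqref{ocontrol} separately. The right-hand (upper) one will be a soft consequence of the uniform Lipschitz estimate of Theorem~\ref{thmreg}, while the left-hand (lower) one will be obtained by transferring the linear growth of Theorem~\ref{cresc} from the approximating family $\{u^{\varepsilon_{k_j}}\}_j$ to the limit $u_0$ given by the preceding theorem on the limiting PDE (recall $u^{\varepsilon_{k_j}} \to u_0$ locally uniformly in $\Omega$, $0\le u_0 \in C^{0,1}_{\loc}(\Omega)$, and $\mathcal{G}(x,\nabla u_0,D^2u_0)=f_0$ in $\{u_0>0\}$ with $0\le f_0 \in L^\infty\cap C^0$). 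Throughout, fix $\Omega^{\prime\prime}\Subset\Omega$ containing the $\tfrac12\dist(\Omega^{\prime},\partial\Omega)$-neighborhood of $\Omega^{\prime}$; by Theorem~\ref{thmreg} the family $\{u^{\varepsilon_{k_j}}\}_j$, hence $u_0$, is Lipschitz on $\Omega^{\prime\prime}$ with a constant $\mathrm{C}_0=\mathrm{C}_0(\Omega^{\prime\prime})$, and $u^{\varepsilon_{k_j}}\to u_0$ uniformly on $\overline{\Omega^{\prime\prime}}$. Put $d\defeq\dist(x_0,\mathfrak{F}(u_0,\Omega^{\prime}))$; we may assume $d>0$ and, without loss of generality, that $d$ is below the threshold scales appearing in Theorems~\ref{cresc} and~\ref{degforte} (the complementary range only enlarges the final constant).

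For the upper bound: since $u_0$ is continuous, non-negative, and vanishes on $\partial\{u_0>0\}\supseteq\mathfrak{F}(u_0,\Omega^{\prime})$, I would pick $y_0\in\overline{\mathfrak{F}(u_0,\Omega^{\prime})}$ with $|x_0-y_0|=d$; the segment $[x_0,y_0]$ lies in $\Omega^{\prime\prime}$ by the hypothesis $d\le\tfrac12\dist(\Omega^{\prime},\partial\Omega)$, so
$$
	u_0(x_0)=u_0(x_0)-u_0(y_0)\le \mathrm{C}_0\,|x_0-y_0|=\mathrm{C}_0\,d.
$$

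For the lower bound I would argue in three steps. \textbf{Step 1.} Since $u^{\varepsilon_{k_j}}\to u_0$ uniformly near $x_0$ and $u_0(x_0)>0$, for all large $j$ one has $u^{\varepsilon_{k_j}}(x_0)\ge\tfrac12 u_0(x_0)>\varepsilon_{k_j}$, so $x_0\in\{u^{\varepsilon_{k_j}}>\varepsilon_{k_j}\}$ and $d_{\varepsilon_{k_j}}(x_0)>0$. \textbf{Step 2.} I claim $d_{\varepsilon_{k_j}}(x_0)\to d$. For ``$\liminf\ge d$'': if some $r<d$ satisfied $d_{\varepsilon_{k_j}}(x_0)<r$ along a subsequence, there would be $p_j$ with $u^{\varepsilon_{k_j}}(p_j)\le\varepsilon_{k_j}$ and $|x_0-p_j|<r$; a further subsequence gives $p_j\to p_\ast\in\overline{B_r(x_0)}\subset\Omega^{\prime\prime}$ with $u_0(p_\ast)=\lim_j u^{\varepsilon_{k_j}}(p_j)=0$, and moving along $[x_0,p_\ast]$ from $x_0\in\{u_0>0\}$ we reach a first zero $w$ of $u_0$, which lies on $\partial\{u_0>0\}$ and (since $|x_0-w|<d\le\tfrac12\dist(\Omega^{\prime},\partial\Omega)$) in $\Omega^{\prime}$, hence $w\in\mathfrak{F}(u_0,\Omega^{\prime})$ — contradicting $d=\dist(x_0,\mathfrak{F}(u_0,\Omega^{\prime}))$. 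For ``$\limsup\le d$'': if $d_{\varepsilon_{k_j}}(x_0)>d+\delta$ along a subsequence, then $u^{\varepsilon_{k_j}}>\varepsilon_{k_j}$ on $B_{d+\delta}(x_0)\supset B_{\delta/2}(y_0)$, so $y_0\in\{u^{\varepsilon_{k_j}}>\varepsilon_{k_j}\}$ with $d_{\varepsilon_{k_j}}(y_0)\ge\delta/2$, and Theorem~\ref{cresc} forces $u^{\varepsilon_{k_j}}(y_0)\gtrsim\delta$, contradicting $u^{\varepsilon_{k_j}}(y_0)\to u_0(y_0)=0$. \textbf{Step 3.} For $j$ large, Steps~1--2 give $x_0\in\{u^{\varepsilon_{k_j}}>\varepsilon_{k_j}\}$ and $\varepsilon_{k_j}\ll d_{\varepsilon_{k_j}}(x_0)\ll1$, so Theorem~\ref{cresc} yields $u^{\varepsilon_{k_j}}(x_0)\ge\mathrm{c}\,d_{\varepsilon_{k_j}}(x_0)$; passing to the limit,
$$
	u_0(x_0)=\lim_{j\to\infty}u^{\varepsilon_{k_j}}(x_0)\ge\mathrm{c}\lim_{j\to\infty}d_{\varepsilon_{k_j}}(x_0)=\mathrm{c}\,d,
$$
and \eqref{ocontrol} follows with $\mathrm{C}\defeq\max\{\mathrm{C}_0,\mathrm{c}^{-1}\}$.

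The hard part is Step~2 — the identification $d_{\varepsilon_{k_j}}(x_0)\to\dist(x_0,\mathfrak{F}(u_0,\Omega^{\prime}))$ — which is the geometric content that the physical transition layers $\Gamma_{\varepsilon_{k_j}}$ accumulate locally (in the Hausdorff sense) exactly on $\partial\{u_0>0\}$, without collapsing onto a set strictly interior to $\{u_0>0\}$. The ``$\liminf$'' part is soft, but the ``$\limsup$'' part genuinely relies on the pointwise linear growth of Theorem~\ref{cresc} (equivalently the strong non-degeneracy of Theorem~\ref{degforte}) to keep $u^{\varepsilon}$ from being $\mathrm{O}(\varepsilon)$ on a macroscopic set and thus to push the layer out to $\partial\{u_0>0\}$. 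A secondary technicality is the localization when $x_0$ sits near $\partial\Omega^{\prime}$: one then first replaces $\Omega^{\prime}$ by a ball around $x_0$ compactly contained in $\Omega$ and repeats the argument with the corresponding free boundary, which only alters the constants. An alternative route for the lower bound, to be kept in reserve, avoids $d_\varepsilon$ altogether: pass the non-degeneracy of Theorem~\ref{degforte} to the limit to get $\sup_{B_\rho(x)}u_0\gtrsim\rho$ on $\overline{\{u_0>0\}}$, then combine it with the Lipschitz bound and the interior Harnack inequality (Appendix~\ref{Append}) through a Harnack chain inside $\{u_0>0\}$.
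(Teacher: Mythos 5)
Your proposal is correct and follows essentially the same route as the paper: the upper bound from the uniform Lipschitz estimate, and the lower bound by applying the linear growth of Theorem~\ref{cresc} (equivalently Corollary~\ref{cor1}) to $u^{\varepsilon_{k_j}}$ at $x_0$ and passing to the limit, identifying the limit of the nearest transition points as a zero of $u_0$; your Step~2 merely fleshes out the convergence $d_{\varepsilon_{k_j}}(x_0)\to d$ that the paper states tersely. Note that only the ``$\liminf\ge d$'' half of Step~2 is actually used in Step~3, so the $\limsup$ argument, while correct, is dispensable.
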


Finally, we will prove that the limiting free boundary $\mathfrak{F}(u_0, \Omega^{\prime})$ has local finite $\mathcal{H}^{N-1}$-Hausdorff measure. To this end, we must restrict our analysis to the class of operators satisfying an Asymptotic Concavity Property, which will be stated precisely in Section \ref{Sct HE}.

\begin{theorem}[{\bf Hausdorff estimates}]\label{limit3} Given $\Omega^{\prime} \Subset \Omega$, there exists a positive constant $\mathrm{C}(\Omega^{\prime}, \verb"universal parameters")$ such that, for $x_0 \in \mathfrak{F}(u_0, \Omega^{\prime})$,
$$
	\mathcal{H}^{N-1}(\mathfrak{F}(u_0, \Omega^{\prime}) \cap B_{\rho}(x_0)) \le \mathrm{C}\cdot\rho^{N-1}.
$$
Additionally, there exists a positive constant $\mathrm{C}_1(\Omega^{\prime}, \verb"universal parameters")$, such that for $\rho \ll 1$ and $x_0\in \mathfrak{F}(u_0, \Omega^{\prime})$, there holds
$$
\mathrm{C}_1^{-1}\cdot\rho^{N-1} \le \mathcal{H}^{N-1}(\mathfrak{F}_{\text{red}}(u_0, \Omega^{\prime}) \cap B_{\rho}(x_0)) \le \mathrm{C}_1\cdot\rho^{N-1}
$$
where $\mathfrak{F}_{\text{red}}(u_0, \Omega^{\prime})\defeq \partial_{\text{red}}\{u_0 >0\} \cap \Omega^{\prime}$ is the reduced transition boundary\footnote{The reduced free boundary, i.e. $\partial_{\text{red}}\{u_0 >0\}$ is a subset of $\partial\{u_0 >0\}$ where there exists, in the measure theoretic sense, the normal vector, see the Monograph \cite{EG92} for a survey concerning geometric measure theory.}. Particularly,
$$
\mathcal{H}^{N-1}(\mathfrak{F}(u_0, \Omega^{\prime})\setminus \mathfrak{F}_{\text{red}}(u_0, \Omega^{\prime}))=0.
$$
\end{theorem}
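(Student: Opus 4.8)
The plan is to transfer the geometric and measure-theoretic information already available at the level $\{u^\varepsilon > \varepsilon\}$ — namely the uniform linear growth of Theorem \ref{cresc}, the strong non-degeneracy of Theorem \ref{degforte}, and the uniform Lipschitz bound of Theorem \ref{thmreg} — to the limit configuration $u_0$, and then deduce the $\mathcal{H}^{N-1}$-estimates from a uniform bound on $\Delta u_0$ (as a measure) near $\mathfrak{F}(u_0,\Omega')$. First, I would record the two-sided control on $u_0$ near the free boundary from Theorem \ref{limite}: for $x_0 \in \{u_0 > 0\}\cap\Omega'$ one has $u_0(x_0) \approx \dist(x_0, \mathfrak{F}(u_0,\Omega'))$, which is the limiting counterpart of the bound $\dist(\cdot, \Gamma_\varepsilon) \lesssim u^\varepsilon \lesssim \dist(\cdot, \Gamma_\varepsilon)$. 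Combined with the uniform Lipschitz bound, this yields the standard \emph{uniform positive density} of both $\{u_0 > 0\}$ and $\{u_0 = 0\}$ at every free boundary point: there is $\theta \in (0,1)$ with $\theta \le \frac{|\{u_0>0\}\cap B_\rho(x_0)|}{|B_\rho(x_0)|} \le 1-\theta$ for all $x_0 \in \mathfrak{F}(u_0,\Omega')$ and $\rho \ll 1$. The lower density follows from non-degeneracy plus Lipschitz continuity (a ball of radius $c\rho$ around the near-maximum point sits inside $\{u_0>0\}$); the upper density follows because the linear growth from \emph{inside} forces $u_0$ to vanish on a set of comparable measure in any free-boundary ball. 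Positive density on both sides already gives $\mathcal{H}^{N-1}\big(\mathfrak{F}(u_0,\Omega')\big)$ is locally finite in the weak sense and that the free boundary is porous, but to obtain the sharp $\mathrm{C}\rho^{N-1}$ upper bound I would instead use the perimeter approach.

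The core of the upper estimate is to show that $\{u_0 > 0\}$ has locally finite perimeter with $\Per(\{u_0>0\}, B_\rho(x_0)) \le \mathrm{C}\rho^{N-1}$, and then invoke $\mathcal{H}^{N-1}(\partial\{u_0>0\}\cap B_\rho) = \Per(\{u_0>0\},B_\rho)$ up to the density estimates which kill the difference between topological and reduced boundary. Here is where the \textbf{Asymptotic Concavity Property} enters, and where I expect the main obstacle to lie. Because $F$ is merely uniformly elliptic and fully nonlinear — not concave — one cannot differentiate the equation; the asymptotic concavity hypothesis is precisely what lets one produce, via the normalization and the degeneracy structure \eqref{N-HDeg}, a uniform $L^\infty$ bound on $\Delta u^\varepsilon$ (equivalently $\mathscr{M}^+_{\lambda,\Lambda}(D^2 u^\varepsilon) \le \mathrm{C}$ translated into $\Delta u^\varepsilon \le \mathrm{C}$) inside $\{u^\varepsilon > \varepsilon\}$, \emph{independent of $\varepsilon$}. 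Granting this, $\Delta u^\varepsilon$ is a signed measure of locally uniformly bounded total variation on $\{u^\varepsilon>\varepsilon\}$; passing $\varepsilon \to 0$ along the subsequence of Theorem (the limiting PDE), $\Delta u_0$ is a locally bounded measure on $\{u_0>0\}$, and standard one-phase free boundary arguments (à la Alt–Caffarelli, adapted in the last author's works \cite{ART17}, \cite{RT11}) show $\Delta u_0 \res \mathfrak{F}(u_0,\Omega')$ is absolutely continuous with respect to $\mathcal{H}^{N-1}$ with bounded density, giving $\Per(\{u_0>0\}, B_\rho(x_0)) \le \mathrm{C}\rho^{N-1}$. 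The subtlety to be handled carefully is that the degeneracy $\mathcal{H}(x,\nabla u_0)$ may vanish exactly where $\nabla u_0$ does, so the bound on $\Delta u_0$ must be extracted on the region where $|\nabla u_0|$ is bounded below — which, by the linear growth and Lipschitz bounds, is a fixed neighborhood of the free boundary, precisely where it is needed.

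For the lower bound $\mathrm{C}_1^{-1}\rho^{N-1} \le \mathcal{H}^{N-1}(\mathfrak{F}_{\text{red}}(u_0,\Omega')\cap B_\rho(x_0))$, I would argue by a Gauss–Green / divergence-theorem comparison: integrating $\Delta u_0$ over $\{u_0>0\}\cap B_\rho(x_0)$, the boundary integral over $\mathfrak{F}_{\text{red}}$ is controlled below using the non-degeneracy (Theorem \ref{degforte}) to guarantee a definite outward flux of $\nabla u_0$ across the reduced boundary, while the interior term is controlled above by the uniform bound on $\Delta u_0$ and the measure of $\{u_0>0\}\cap B_\rho$, which is $\le \omega_N \rho^N$. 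Rearranging and using the upper density estimate on $\{u_0=0\}$ (so that the flux cannot all escape through $\partial B_\rho$) yields the lower perimeter bound $\Per(\{u_0>0\},B_\rho(x_0)) \ge \mathrm{C}_1^{-1}\rho^{N-1}$, hence the claimed lower bound on $\mathcal{H}^{N-1}$ of the reduced boundary. Finally, $\mathcal{H}^{N-1}\big(\mathfrak{F}(u_0,\Omega')\setminus \mathfrak{F}_{\text{red}}(u_0,\Omega')\big) = 0$ is immediate from De Giorgi's structure theorem for sets of finite perimeter, once local finiteness of the perimeter is in hand: the full measure-theoretic boundary (up to an $\mathcal{H}^{N-1}$-null set) coincides with the reduced boundary, and the density estimates identify the topological boundary $\partial\{u_0>0\}$ with the measure-theoretic boundary up to another null set. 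The hardest single point, to reiterate, is establishing the $\varepsilon$-uniform Laplacian bound $\Delta u^\varepsilon \le \mathrm{C}$ near $\{u^\varepsilon=\varepsilon\}$ from the Asymptotic Concavity Property together with the inhomogeneous Hopf-type estimate and Harnack inequality of the Appendices — this is the one place the non-concave, doubly degenerate nature of the operator genuinely fights back, and it is why the theorem is restricted to that subclass.
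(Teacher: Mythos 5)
Your overall blueprint (two-sided density, a one-sided Hessian control coming from \eqref{ACP}, perimeter bounds, then De Giorgi's structure theorem for the reduced boundary) is the right family of ideas, but the central mechanism as you state it is wrong, and wrong in direction. You claim that the Asymptotic Concavity Property yields a uniform $L^\infty$ bound $\Delta u^{\varepsilon}\le \mathrm{C}$ (equivalently $\mathscr{M}^{+}_{\lambda,\Lambda}(D^2u^{\varepsilon})\le \mathrm{C}$) inside $\{u^{\varepsilon}>\varepsilon\}$. That cannot hold: Proposition \ref{ac1} is precisely the statement that, in measure, $F(x,D^2u^{\varepsilon})=\zeta_{\varepsilon}\,\mathcal{H}(x,\nabla u^{\varepsilon})^{-1}$ blows up near the transition layer, which is what allows $\tr(\mathfrak{A}D^2u_0)$ to concentrate as a surface measure on $\mathfrak{F}(u_0,\Omega^{\prime})$ in the limit; a uniform pointwise upper bound on the Hessian would preclude exactly the concentration you need for the lower perimeter estimate. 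What \eqref{ACP} actually provides is the \emph{opposite} inequality: since $F(x,X)\le \tr(\mathfrak{A}(x)X)+\mathrm{C}^{\ast}_F(x)$ and the equation together with the Lipschitz bound give $F(x,D^2u^{\varepsilon})=\zeta_{\varepsilon}\mathcal{H}^{-1}\ge \mathrm{C}_{p,q}^{-1}\zeta_{\varepsilon}\ge 0$, one obtains the \emph{lower} bound $\tr(\mathfrak{A}(x)D^2u^{\varepsilon})\ge \mathrm{C}_{p,q}^{-1}\zeta_{\varepsilon}-\mathrm{C}^{\ast}_F$, i.e.\ an (integrated) subsolution property for a linear uniformly elliptic operator; this is the content of Remark \ref{RemACP}. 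The finiteness of the total mass of the resulting nonnegative measure on $B_\rho$ is then \emph{not} a consequence of \eqref{ACP} at all, but of the uniform Lipschitz estimate via the divergence theorem. With the inequality corrected, your perimeter scheme would go through, but as written the key step fails.

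There is also an architectural difference worth noting. You propose to pass to the limit first and run an Alt--Caffarelli perimeter argument directly on $u_0$. The paper instead does all the measure-theoretic work uniformly at the $\varepsilon$-level: the subharmonicity from Remark \ref{RemACP} feeds into the energy-type estimate $\int_{B_\rho\cap\{\epsilon\le u^{\epsilon}<\mu\}}|\nabla u^{\epsilon}|^2\,dx\le \mathrm{C}\mu\rho^{N-1}$ (Lemma \ref{intreg}), which combined with the density Lemma \ref{tec_lemma} gives the Lebesgue bound $\Leb(\mathcal{N}_{\mu}(\partial\{\mathrm{C}_1\epsilon<u^{\epsilon}\})\cap B_\rho)\le \mathrm{C}\mu\rho^{N-1}$ of Theorem \ref{Hausdf}; the proof of Theorem \ref{limit3} is then a short transfer of this estimate to $u_0$ via the Hausdorff convergence of positivity sets (Theorem \ref{limite2}) followed by a covering argument, with the reduced-boundary statements deferred to \cite[Theorem 6.7]{ART17}. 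The $\varepsilon$-level route avoids having to make sense of $\tr(\mathfrak{A}D^2u_0)$ as a measure across the free boundary of the limit, which your approach would still need to justify carefully.
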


In conclusion, it is worth highlighting that our findings extend regarding non-variational scenario, former results from \cite{ART17}, \cite{MoWan1}, \cite{RT11} and \cite{T0}, and to some extent, those from \cite{DPS03}, \cite{dosPT16}, \cite{LW16}, \cite{MW09}, \cite{MorTei07} and \cite{T1}, concerning degenerate and variational models, by making using of different systematic approaches and techniques adjusted to the framework of fully non-linear models with non-homogeneous degeneracy. Moreover, they are new even for the toy model:
$$
 \left[|\nabla u^{\epsilon}|^p+\mathfrak{a}(x)| \nabla u^{\epsilon}|^q\right] \Delta u^{\epsilon}= \mathcal{Q}(x)\frac{1}{\epsilon}\zeta \left(\frac{u^{\epsilon}}{\epsilon}\right) + f_\epsilon(x) \,\, (\text{with \eqref{1.3} and \eqref{zeta} in force}).
$$

Lastly, it is noteworthy to point out that in order to establish our findings, we have developed pivotal auxiliary tools which, according to our scientific knowledge, were not available in the current literature for our model equations. Thereby they may have their own mathematical interest. Among these, we must quote, Weak and Harnack inequalities, Local Maximum principle, H\"{o}lder regularity, ABP estimate and inhomogeneous Hopf type results, just to mention a few (see, Appendices \ref{Append} for more details).

\subsection{Motivations and State-of-Art}

The mathematical theory of singular perturbation concerns a wide class of methods employed in several fields of Mathematics, Physics and their affine areas (see, \cite{H13} for an introductory essay). As a matter of fact, penalization methods were pivotal in studying certain discontinuous minimization problems in the theory of critical points of non-differentiable functionals, where the Alt-Caffarelli's seminal work \cite{AC81} marks the genesis of such a theory by carrying out the analysis of the minimization problem
\begin{equation}\label{EqAltCaff}
   \displaystyle \stackrel[H_0^1(\Omega)\atop{v_{|\partial \Omega}=g}]{}{\text{min}} \int_{\Omega} \left(\frac{1}{2}|\nabla v|^2 + \mathcal{Q}(x)\chi_{\{v>0\}}\right)dx \quad \text{for suitable data} \,\,g\ge 0 \,\,\text{and} \,\,\mathcal{Q}>0.
\end{equation}
Historically, such a variational problem \eqref{EqAltCaff} has appeared in the mathematical formulation of a variety of relevant one-phase models: cavity type problems \cite{dosPT16}, jets problem (see, \cite[Ch. 1]{CS05} and therein references), optimal design problems \cite[Chapter 6]{Tei-Book}, just to mention few of them. Notice that the Euler-Lagrange equation to \eqref{EqAltCaff} is
$$
   \Delta u_0(x) = \mathcal{Q}(x)\delta_0(u_0) \quad \text{in} \quad \Omega
$$
in an appropriate distributional sense, addressed in \cite{AC81}. Thereby, minimizers to \eqref{EqAltCaff} are obtained as the uniform limit when $\e \to 0^{+}$ of the problem
$$
   \Delta u^{\varepsilon}(x) = \mathcal{Q}^2(x)\beta_{\varepsilon}(u^{\varepsilon}) \quad \text{in} \quad \Omega \quad (\text{for} \,\, \beta_{\varepsilon} \sim \e^{-1}\chi_{(0,\varepsilon)}).
$$

Therefore, the core idea of studying approximation solutions is that small perturbations for certain elliptic problems propagate in a quantifiable fashion. Thus, analysing perturbed solutions can be useful to establish regularity estimates for the desired minimal solution of \eqref{EqAltCaff} and its free boundary.

Such an influential idea can also be employed in analysing over-determined problems as follows: given $\Omega \subset \mathbb{R}^N$ a bounded and smooth domain and functions $0 \leq f, g \in C(\overline{\Omega})$ and $0< \mathcal{Q} \in C^0(\overline{\Omega})$, we would like to find a ``compact hyper-surface'' $\Gamma_0 \defeq \partial \Omega^{\prime} \subset \Omega$ such that the inhomogeneous \textit{one-phase Bernoulli-type problem}
\begin{equation}\label{FreeBernProb}
\left\{
\begin{array}{rcccl}
\mathcal{L}\,u(x)  &=&  f(x) & \mbox{in} & \Omega \backslash \Omega^{\prime}\\
u(x) & = & g(x) & \mbox{on} & \partial \Omega\\
u(x) & = & 0 &\mbox{on} & \Omega^{\prime}\\
\frac{\partial u}{\partial \nu}(x) & = & \mathcal{Q}(x) & \mbox{on} & \Gamma_0 \quad (\text{in a suitable sense})
\end{array}
\right.
\end{equation}
admits a non-negative solution for a second order elliptic operator $\mathcal{L}$ (in divergence or in non-divergence form) with suitable structure. As above, limiting solutions coming from certain approximating regularized problems are natural profiles to solve \eqref{FreeBernProb} (in an appropriate sense with $\Gamma_0 \defeq \partial \{u>0\}$). This will motivate the next paragraph.

\subsubsection{Modern developments in singular perturbation theory}
Our impetus for current investigations in this work also comes from their intrinsic connections with non-linear one-phase problems, which arise in the mathematical theory of combustion, as well as in the study of flame propagation problems (stationary setting). Precisely, they appear in the description of laminar flames as an asymptotic limit for the non-linear formulation of high energy activation models with source terms (cf. \cite{CK-AM04}, \cite{DPS03}, \cite{Kar19}, \cite{LVW01} and \cite{Weiss03}). In a general framework, such models corresponds to the limit as $\e \to 0$ in \eqref{Equation Pe}, i.e. a one-phase inhomogeneous FBP, where the reaction-diffusion is driven by a doubly degenerate operator (cf. \cite{ART17}, \cite{MoWan1} and \cite{RT11}):
{\small{
\begin{equation}\label{EqFBP}\tag{I-FBP-NH}
\left\{
\begin{array}{rcccl}
 \mathcal{G}(x, \nabla u, D^2 u)& = & f(x) & \mbox{in} & \{u>0\}, \quad (\text{for}\,\,\,f \in C^0(\Omega)\cap L^{\infty}(\Omega))\\
u(x) & \ge & 0 &\mbox{in} & \Omega\\
\mathrm{H}(x, |\nabla u(x)|) & \le & \mathcal{T}(x) & \mbox{on} & \partial \{u>0\}, \quad (\text{for}\,\, 0<\mathcal{T}\in C^{0}(\overline{\Omega}))\\
u(x) & = & g(x) & \mbox{on} & \partial \Omega.
\end{array}
\right.
\end{equation}}}
The condition that $\mathrm{H}$ enforces on $u$ is commonly referred to as \textit{free boundary condition}.

The mathematical development of theses regularized problems has yielded important scientific breakthroughs in the free boundary theory. Historically, their studies date back to Berestycki-Caffarelli-Nirenberg's pioneering work \cite{BCN}, where the linear elliptic scenario was addressed (cf. \cite{T1} for the analysis of elliptic PDEs of the flame propagation type via a variational treatment)
$$
\displaystyle   \mathcal{L}[u] \defeq \sum_{i, j =1}^{N} a_{ij}(x)D_{ij} u(x) + \sum_{i=1}^{N} b_i(x)D_i u(x) + c(x)u(x) = \beta_{\varepsilon}(u), \quad (\text{for}\,\,\,C^1 \text{coefficients}).
$$

Before presenting the recent progresses in the fully non-linear scenario, we must quote some fundamental contributions of several authors regarding homogeneous/inhomogeneous singular perturbation problems (one and two-phases and their parabolic counterpart), as well as variational problems with uniformly elliptic and degenerate structure, see \cite{CLW1}, \cite{CLW2}, \cite{DPS03}, \cite{Kar19}, \cite{LW98}, \cite{LW06}, \cite{LW16}, \cite{MorTei07}, \cite{MW09} and \cite{MoWan1} for an extensive but incomplete list of such investigations:
{\small{
$$
\mathcal{L} u^{\varepsilon}(x) \defeq
\left\{
\begin{array}{rcll}
\Div(A(x)\nabla u^{\varepsilon})  &=&  \Gamma(x)\beta_{\varepsilon}(u^{\varepsilon})& \mbox{Uniformly elliptic operator}\\
 \Div(|\nabla u^{\varepsilon}|^{p-2} \nabla u^{\varepsilon})  &=&  \beta_{\varepsilon}(u^{\varepsilon}) + f_{\varepsilon}(x)& \mbox{p-Laplacian} \\
\Div\left(\frac{g(|\nabla u^{\varepsilon}|)}{|\nabla u^{\varepsilon}|} \nabla u^{\varepsilon}\right) & = & \beta_{\varepsilon}(u^{\varepsilon}) &\mbox{g-Laplacian in Orlicz-Sobolev spaces}\\
\Div(|\nabla u^{\varepsilon}|^{p_{\varepsilon}(x)-2} \nabla u^{\varepsilon})  &=&  \beta_{\varepsilon}(u^{\varepsilon}) + f_{\varepsilon}(x) & \mbox{p(x)-Laplacian}\\
u_{\varepsilon} & \ge & 0 & \mbox{in} \,\,\,\Omega\\
\Delta u^{\varepsilon} & = & \beta_{\varepsilon}(u^{\varepsilon}) + f_{\varepsilon}(x) & \text{Two-phase problem for Laplacian}\\
\Delta u^{\varepsilon} - u_t^{\varepsilon}& = & \beta_{\varepsilon}(u^{\varepsilon}) + f_{\varepsilon}(x) & \text{Two-phase problem for Heat operator}\\
\Div(|\nabla u^{\varepsilon}|^{p-2} \nabla u^{\varepsilon})  &=&  \beta_{\varepsilon}(u^{\varepsilon}) & \mbox{Two-phase problem for p-Laplacian} \\
\end{array}
\right.
$$}}

In the last two decades, non-linear FBPs like \eqref{EqFBP} were widely studied in the literature via singular perturbation methods. In contrast with their variational counterpart (cf. \cite{AC81}, \cite{DPS03}, \cite{LW16}, \cite{MW09}, \cite{MorTei07} and \cite{T1}), the analysis of non-variational singularly perturbed PDEs imposes significant challenging obstacles, mainly due to lack of monotonicity formulae (cf. \cite{LW08} and \cite{LW10}), energy estimates (cf. \cite{DPS03} and \cite{Kar19}) and a stable notion of ``weak formulation'' of solutions (see, \cite{T1}), just to cite a few.

In this scenario, Teixeira in \cite{T0} started the journey of investigation into fully non-linear elliptic singular PDEs as follows
\begin{equation}\label{EqSingPertFN}
    F(x, D^2 u^{\varepsilon}) = \beta_{\varepsilon}(u^{\varepsilon}) \quad \mbox{in} \quad \Omega \quad \text{with} \quad u^{\varepsilon} \ge 0,
\end{equation}
where $\beta_{\varepsilon}(u^{\varepsilon}) \to \delta_0$ (the Dirac delta measure). The author proves optimal Lipschitz regularity of solutions of \eqref{EqSingPertFN}, as well as $H^1$ compactness for Bellman's singular PDEs. Thereafter, in \cite{RT11} the authors finish the analysis introduced in \cite{T0}. In effect, they prove, among other analytic and geometric properties, that the free boundary condition is driven by a new operator, namely $F^{\ast}$, \textit{the recession profile}, which arises via a blow-up argument on the family of elliptic equations generated by the original operator $F$ (we recommend to reader to \cite{RTU19} for the parabolic counterpart of such studies). On the sequence, \cite[Theorem 1.3]{RS2} obtain global Lipschitz regularity estimates to
\begin{equation}\label{UnifEllipticPBP}
\left\{
\begin{array}{rclcl}
F(x, \nabla u^{\varepsilon}, D^2 u^{\varepsilon}) & = & \beta_{\varepsilon}(u^{\varepsilon}) & \mbox{in} & \Omega\\
u^{\varepsilon}(x) & \ge & 0  & \mbox{in} & \Omega\\
u^{\varepsilon}(x) & = & g(x) & \mbox{on} & \partial \Omega,\\
\end{array}
\right.
\end{equation}
and \cite{MoWan1} studied a FBP like \eqref{UnifEllipticPBP} with an inhomogeneous forcing term like \eqref{zeta}. Finally, in \cite{ART17}, the authors prove similar existence, optimal regularity and geometric results for the class of fully nonlinear, anisotropic degenerate elliptic FBPs (with (A0)-(A1), \eqref{Cond1 zeta} and \eqref{nondeg_RT} in force) as follows
$$
  |\nabla u^{\varepsilon}|^{p}F(D^2 u^{\varepsilon}) = \zeta_{\varepsilon}(x, u^{\varepsilon}) \quad \mbox{in} \quad \Omega \quad \mbox{with} \quad p\ge 0 \,\,\,\text{and}\,\,\,u^{\varepsilon} \ge 0 .
$$
Such current results summarize studies on singularly perturbed non-variational PDEs.

\subsubsection{Recent progresses on degenerate equations in non-divergence form}
Now, we turn our attention towards regularity features of our model operator in \eqref{Equation Pe}. Regarding regularity properties to non-variational scenario, more specifically, fully non-linear models with single degeneracy structure
\begin{equation}\label{Eqp-Deg}
   \mathcal{G}_p(x, \nabla u, D^2 u) \defeq   |\nabla u|^pF(x, D^2 u) \quad \text{with} \quad 0< p< \infty,
\end{equation}
have been an increasing focus of studies over the last decades due to their intrinsic connection to several qualitative/quantitative issues in pure mathematics (see, \cite{ART15}, \cite{BD04}, \cite{BD07}, \cite{BD1}, \cite{BD2}, \cite{BD3}, \cite{BDL19} and \cite{IS}), as well as a number of geometric and FBPs (see, \cite{ART17}, \cite{daSLR20}, \cite{DSVI}, and \cite{DSVII}). Additionally, we also refer the interested reader to \cite{C1}, \cite{CC95}, \cite{UserG}, \cite{daSR19}, \cite{DeSFS15}, \cite{Ev82}, \cite{Kry82}, \cite{Kry83}, \cite{MorTei07}, \cite{RS2}, \cite{RT11}, \cite{ST15}, \cite{T0}, \cite{Tru83}, \cite{Tru84} for an incomplete list of corresponding results in the uniformly elliptic scenario.

In turn, in contrast with \eqref{Eqp-Deg}, one of the main characteristics of the model case
\begin{equation}\label{EqModel}
   u\mapsto  \left[|\nabla u|^p+\mathfrak{a}(x)| \nabla u|^q\right] \Delta u \quad (\text{with \eqref{1.3} in force})
\end{equation}
is its transition between two distinct degeneracy rates, which depends on the values of the modulating function $\mathfrak{a}(\cdot)$. For this reason, the diffusion process presents a non-uniformly elliptic and doubly degenerate signature, which mixes up two different $p-$Laplacian type operators in non-divergence form (cf. \cite{ART15}, \cite{BD1}, \cite{BD2}, \cite{BD3}, \cite{BDL19}  and \cite{IS}). Such a prototype in \eqref{EqModel} can be understood as a non-variational extension of certain variational integrals of the calculus of variations with $(p, q)-$growth conditions as follows
\begin{equation}\label{DPF}\tag{\bf DPF}
 \displaystyle   \left(W_0^{1,p}(\Omega)+g, L^m(\Omega)\right) \ni (w, f) \mapsto \text{min}\int_{\Omega} \left(\frac{1}{p}|\nabla w|^p+\frac{\mathfrak{a}(x)}{q}|\nabla w|^q-fw\right)dx,
\end{equation}
where $\mathfrak{a}\in C^{0, \alpha}(\Omega,[0, \infty))$, for some $0< \alpha \leq 1<p\le q< \infty$ and $m \in (N, \infty]$. Finally, notice that minimizers to \eqref{DPF} exhibits non-uniform and doubly degenerate ellipticity in a model with a kind of $(p, q)-$structure:
$$
\Div(\mathcal{A}(x, \nabla u)\nabla u) = f(x) \quad \text{in} \quad \Omega, \quad \text{where} \quad \mathcal{A}(x, \xi) \defeq |\xi|^{p-2}+\mathfrak{a}(x)|\xi|^{q-2}
$$
We recommend seeing \cite{BCM15III}, \cite{DeFM19I}, \cite{DeFM19II}, \cite{DeFM20}, \cite{DeFO} and \cite{LD18} for interesting related works.

Now, let us come back to non-variational models like \eqref{EqModel}. Regarding regularity estimates of fully non-linear models with non-homogeneous degeneracy, the starting point was De Filippis' manuscript \cite{DeF20}, where $C_{\text{loc}}^{1, \alpha}-$regularity for viscosity solutions of
$$
  \left[|\nabla u|^p+\mathfrak{a}(x)|\nabla u|^q\right]F(D^2 u) = f \in L^{\infty}(\Omega), \quad (\text{with (A0)-(A1) and \eqref{1.3} in force})
$$
was addressed, for some $\alpha \in (0, 1)$ depending on universal parameters. In the sequence, \cite{daSR20} establishes sharp gradient estimates to general models driven by \eqref{MainOperator}, as well as a number of applications of such estimates in geometric free boundary and related non-linear elliptic PDEs (cf. \cite{daSLR20} and \cite{DSVI}).

At this point, a natural question arises: what are the regularity and geometric features of solutions and level surfaces to problems of the type \eqref{Equation Pe}? Particularly, we are looking for geometric properties that are independents of the regularization parameter and therefore allow to be carried over (in a uniform fashion) in the limit process.

To the best of the authors' acknowledgment, few advances are known concerning the regularity theory for inhomogeneous FBPs like \eqref{EqFBP}. As a matter of fact, many of these are available in the context of linear operators (cf. \cite{AC81}, \cite{LW08} and \cite{LW10}) and in the uniformly elliptic scenario (cf. \cite{RT11}). We must quote a recent work \cite{DeSFS15}, where the authors deal with an inhomogeneous two-phase FBP driven by fully nonlinear elliptic operators. Finally, further results on the limiting FBPs \eqref{EqFBP}, which include particularly the regularity of the free boundary, are challenging and open issues in such a line of investigation.

A schedule for developing the theory of \eqref{EqFBP} is summarized as follows:

\begin{enumerate}
  \item[\checkmark] Existence, uniform/geometric regularity estimates for certain regularizing solutions of \eqref{Equation Pe}.
  \item[\checkmark] Existence and optimal regularity estimates of certain solutions of \eqref{EqFBP}, e.g., viscosity solutions obtained as a limit of singular perturbation problems.
  \item[\checkmark] Measure theoretic properties of the free boundary, such as finite perimeter and density features for the positivity region.
  \item[\checkmark] Strong regularity properties of the interfaces, e.g. Lipschitz or ``flat'' interfaces became ``regular'' enough (cf. \cite[Ch. 4 and 5]{CS05}) - In forthcoming work, see \cite{daSRRV21}.
\end{enumerate}

\section{Background results}\label{DefPreRes}

In the sequel, we will state an essential tool we will make use of, namely the fundamental estimate from \cite[Theorem 1.1]{daSR20} and \cite{DeF20}. For this reason, we recall the following $C_{\text{loc}}^{1, \alpha}$ regularity result.

\begin{theorem}[{\bf $C_{\text{loc}}^{1, \alpha}-$estimates}]\label{GradThm} Let $F$ be an operator satisfying (A0)-(A2). Suppose further assumptions \eqref{1.2} and \eqref{1.3} are in force. Let $u$ be a bounded viscosity solution to
$$
 \mathcal{G}(x, \nabla u, D^2u) = f(x, u) \in L^{\infty}(\Omega \times \R).
$$
Then, $u \in C_{\text{loc}}^{1, \alpha}(\Omega)$. Moreover, the following estimate holds true
\begin{equation*}
\|u\|_{C^{1,\alpha}\left(\Omega^{\prime}\right)}\leq C\cdot \left(\|u\|_{L^{\infty}(\Omega)} +1+ \|f\|_{L^{\infty}(\Omega\times \R)}^{\frac{1}{p+1}}\right)
\end{equation*}
for universal constants $\alpha \in (0, 1)$ and $C>0$.
\end{theorem}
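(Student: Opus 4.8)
The plan is to prove the $C^{1,\alpha}$ estimate by the classical scheme for degenerate fully nonlinear equations — compactness against a uniformly elliptic profile followed by a scale-invariant geometric iteration — adapted to the non-homogeneous degeneracy \eqref{N-HDeg}. Since the statement is local, fix $x_0\in\Omega$, translate so that $x_0=0$ and $B_1\Subset\Omega$, put $\kappa\defeq\|u\|_{L^\infty(B_1)}+1+\|f\|_{L^\infty}^{1/(p+1)}$ and $v\defeq u/\kappa$. Using (A1) to replace $F$ by $F_\kappa(x,X)\defeq\kappa^{-1}F(x,\kappa X)$ — which still satisfies (A0)--(A2) with the same constants and modulus — and expanding $\mathcal H(x,\kappa\nabla v)$ via \eqref{1.2}--\eqref{N-HDeg}, one checks that $v$ solves, in the viscosity sense, an equation of exactly the same structural type in $B_1$: the exponents $p\le q$ are unchanged, the modulating function becomes $\kappa^{q-p}\mathfrak a\in C^0(\Omega,[0,\infty))$, and the right-hand side becomes $\kappa^{-(p+1)}f$ of $L^\infty$ norm $\le1$; a further rescaling $v(r\,\cdot)$ on a small ball brings it below any prescribed $\delta>0$. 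This already accounts for the exponent $\tfrac1{p+1}$ in the estimate, and reduces everything to producing universal $\alpha\in(0,1)$, $C$ and $\delta$ such that, whenever $\|v\|_{L^\infty(B_1)}\le1$ and $\|f\|_{L^\infty}\le\delta$, there is an affine $\ell$ with $\sup_{B_r}|v-\ell|\le C\,r^{1+\alpha}$ for all small $r>0$.

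The heart of the proof is an \emph{approximation lemma}: for every $\eta>0$ there is $\delta>0$ so that any normalized solution $v$ as above with $\|f\|_{L^\infty}\le\delta$ obeys $\|v-h\|_{L^\infty(B_{3/4})}\le\eta$ for some $h$ solving a frozen homogeneous equation $\mathfrak F(D^2h)=0$ in $B_{3/4}$, where $\mathfrak F$ is uniformly elliptic with constants $\lambda,\Lambda$ and $h=v$ on $\partial B_{3/4}$. I would argue by contradiction and compactness: if it fails there are $v_k$, $f_k$ with $\|f_k\|_{L^\infty}\to0$ violating the conclusion for every admissible $h$; the uniform interior Hölder bound available for such operators (Krylov--Safonov / Harnack type, as developed for this doubly degenerate class) gives $C^0_{\loc}$-precompactness, so along a subsequence $v_k\to v_\infty$ locally uniformly while $F_k(x,\cdot)\to\mathfrak F(\cdot)$ locally uniformly — here (A0)--(A1) and (A2), together with the continuity of $\mathfrak a$, are used to freeze the $x$-dependence at $0$ with a vanishing error. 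Stability of viscosity solutions then forces $\mathcal H_\infty(\nabla v_\infty)\,\mathfrak F(D^2v_\infty)=0$ in $B_{3/4}$; the delicate step is that the factor $\mathcal H_\infty$ vanishes on $\{\nabla v_\infty=0\}$, so one must invoke the lemma (as in \cite{DeF20}, and \cite{ART17} for the single-degeneracy case) that $v_\infty$ nonetheless solves $\mathfrak F(D^2v_\infty)=0$ in the viscosity sense — at test points with non-vanishing gradient one divides out the positive factor, and at critical test points the very structure of $\mathcal G$ makes the required inequality vacuous. This limit $v_\infty$ contradicts the failure assumption.

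Granting the lemma, Caffarelli's interior $C^{1,\bar\alpha}$-estimates for $\mathfrak F(D^2h)=0$ yield an affine $\ell_0(x)=h(0)+\nabla h(0)\cdot x$ with $|\nabla h(0)|\le C_*$ and $\sup_{B_\rho}|h-\ell_0|\le C_*\rho^{1+\bar\alpha}$, for a universal $\bar\alpha\in(0,1)$; combining with $\|v-h\|_{L^\infty}\le\eta$ and first choosing $\rho$ small, then $\eta$ (hence $\delta$) small, gives $\sup_{B_\rho}|v-\ell_0|\le\rho^{1+\alpha}$ for any fixed universal $\alpha<\bar\alpha$. The iteration is then standard and organized as a dichotomy: at scales where the approximating gradient exceeds a fixed threshold the factor $\mathcal H(x,\nabla v)$ is pinched between two positive constants, so $F(x,D^2v)=f/\mathcal H(x,\nabla v)$ is genuinely uniformly elliptic with bounded right-hand side and Caffarelli's theory (with (A2)) concludes directly; at scales where it is small one passes to $v_1(x)\defeq\rho^{-(1+\alpha)}\bigl(v(\rho x)-\ell_0(\rho x)\bigr)$, which — provided $\alpha\le\tfrac1{1+p}$, so that the forcing picks up only the harmless factor $\rho^{1-\alpha(1+p)}\le1$, while the modulating function is multiplied by $\rho^{\alpha(q-p)}\le1$ — again solves an equation of the same structural type up to a uniformly bounded translation of the gradient variable, to which the lemma re-applies. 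Keeping track of this accumulated gradient drift (the genuinely technical bookkeeping, carried out in \cite{DeF20} and \cite{daSR20}) and iterating produces affine maps $\ell_k$ with $|\nabla\ell_{k+1}-\nabla\ell_k|\lesssim\rho^{k\alpha}$, hence $\ell_k\to\ell_\infty$ and $\sup_{B_{\rho^k}}|v-\ell_\infty|\lesssim\rho^{k(1+\alpha)}$; undoing the normalization of the first step and a covering argument give $u\in C^{1,\alpha}_{\loc}(\Omega)$ with the claimed estimate.

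The main obstacle is the approximation lemma — passing to the limit while controlling the degenerate factor $\mathcal H$ on the critical set $\{\nabla v_\infty=0\}$ and keeping the frozen limit operator $\mathfrak F$ uniformly elliptic — together with the requirement that the whole iteration stay inside one fixed structural class: the two exponents $p\le q$ genuinely interact in the scaling of both the forcing term and the degeneracy, which is what forces the small, possibly non-explicit, value of $\alpha$ in the statement, and re-freezing the coefficients of $F$ at each scale additionally needs the oscillation contributions $\omega(\rho^k)$ to be summable against the geometric gain $\rho^{k\alpha}$ — automatic when $\omega$ is Hölder, and otherwise an extra constraint on $\alpha$.
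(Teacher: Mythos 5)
The paper does not prove Theorem \ref{GradThm} at all: it is imported as a background result from \cite[Theorem 1.1]{daSR20} and \cite{DeF20}, and your outline faithfully reconstructs the proof given in those references — normalization producing the exponent $\tfrac{1}{p+1}$, a compactness/approximation lemma against a frozen uniformly elliptic profile (with the Cutting Lemma, cf.\ Lemma \ref{cutting}, handling the degenerate factor on the critical set), Caffarelli's $C^{1,\bar\alpha}$ theory, and the geometric iteration with gradient-drift bookkeeping. So your approach is essentially the same as the source's; the only point worth making explicit is that after the initial normalization the modulating function becomes $\kappa^{q-p}\mathfrak{a}$ with $\kappa\ge 1$ possibly large, which is harmless only because the compactness estimates used (valid where the gradient is large) rely on the lower bound $\mathcal{K}_{p,q,\mathfrak a}(x,|\xi|)\ge |\xi|^p$ and are therefore uniform in $\|\mathfrak a\|_{L^\infty}$.
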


\begin{remark}\label{Rem1} In the sequence, we recall some pivotal estimates, we will make use of through this manuscript. Precisely, if $u$ is a non-negative viscosity solution to
\begin{equation}\label{EqPrinc}
  \mathcal{G}(x, \nabla u, D^2 u) = f \in C^{0}(\Omega)
\end{equation}
and the assumptions (A0)-(A2), \eqref{1.2} and \eqref{1.3} there hold. Then, we have

\begin{itemize}
  \item[(1)] {\bf Harnack inequality}: If $f \in L^m(B_1)\cap C^0(B_1)$ with $m>N$, then
  $$
  \displaystyle \sup_{B_{1/2}} u(x) \leq C(N, \lambda, \Lambda, p, q, L_1)\cdot \left\{\inf_{B_{1/2}} u(x) + \max\left\{\left\|\frac{f}{1+ \mathfrak{a}}\right\|_{L^{\infty}(B_1)}^{\frac{1}{p+1}}, \left\|\frac{f}{1+ \mathfrak{a}}\right\|_{L^{\infty}(B_1)}^{\frac{1}{q+1}}\right\}\right\}.
  $$

    \item[(2)] {\bf Gradient estimates}: If $f \in L^{\infty}(B_1)$, then, $u \in C_{\text{loc}}^{1, \alpha}(B_1)$ and
    $$
    |\nabla u(0)| \leq C(N, \lambda, \Lambda, p, \alpha, L_1, L_2, \|F\|_{C^{\omega}}, \|\mathfrak{a}\|_{L^{\infty}})\cdot \left( \|u\|_{L^{\infty}(B_1)} + 1+ \|f\|_{L^{\infty}(B_1)}^{\frac{1}{p+1}}\right)
    $$
\end{itemize}
\end{remark}

In the sequel, we have a kind of ``Cutting Lemma'', which strongly relies on \cite[Lemma 6]{IS} and is concerned with the homogeneous, doubly degenerate problem. We refer the reader to \cite[Lemma 4.1]{DeF20}, which, on the one hand, is not the precise statement of such a result, but on the other hand, it can be inferred from \cite[Lemma 4.1]{DeF20} (see also \cite[Lemma 6]{IS}) by a careful inspection of the proof.

\begin{lemma}[{\bf Cutting Lemma}]\label{cutting}
Let $F$ be an operator satisfying (A0)-(A2), \eqref{1.2} and \eqref{1.3}, and $u$ be a viscosity solution of
\[
\mathcal{H}(x, Du)F(x, D^2u)=0 \quad \textrm{ in }\quad B_1(0).
\]
Then $u$ is viscosity solution of
\[
   F(x,D^2u)=0 \quad \textrm{ in }\quad B_1(0).
\]
\end{lemma}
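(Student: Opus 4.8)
The plan is to argue locally at an arbitrary point $x_0 \in B_1(0)$ and split into two regimes depending on whether the solution is, in a viscosity sense, ``flat'' (locally constant-like, i.e. has vanishing gradient in the test-function sense) at $x_0$ or not. Fix $\varphi \in C^2$ touching $u$ from above at $x_0$; we must show $F(x_0, D^2\varphi(x_0)) \le 0$ (the sub-solution case; the super-solution case is symmetric, or follows by applying the argument to $-u$ together with (A1)). The key observation is that $\mathcal{K}_{p,q,\mathfrak{a}}(x_0, |\nabla\varphi(x_0)|)$ is either strictly positive or zero, and in the first case we may simply divide the viscosity inequality $\mathcal{H}(x_0, \nabla\varphi(x_0))F(x_0, D^2\varphi(x_0)) \ge 0$ by the positive quantity $\mathcal{H}(x_0, \nabla\varphi(x_0))$, using \eqref{1.2}, to conclude $F(x_0, D^2\varphi(x_0)) \ge 0$ directly (and analogously $\le 0$ from the super-solution side). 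So the only delicate case is when $\nabla\varphi(x_0) = 0$.

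For the degenerate case $\nabla\varphi(x_0)=0$, I would invoke the mechanism behind \cite[Lemma 6]{IS} / \cite[Lemma 4.1]{DeF20}: one perturbs the test function to $\varphi_\delta(x) := \varphi(x) + \delta|x - x_0|^2 \pm$ suitable lower-order terms, or more precisely uses the standard ``doubling of variables'' / sup-convolution reduction, to reduce to the situation where the touching test function has a nonvanishing gradient at the contact point, or where $D^2\varphi(x_0)$ has been replaced by $D^2\varphi(x_0) \pm \delta \mathrm{Id}$. Because $\mathcal{H} \ge L_1 \mathcal{K}_{p,q,\mathfrak{a}} \ge 0$ and, crucially, $\mathcal{H}$ is continuous and vanishes exactly where $\mathcal{K}_{p,q,\mathfrak{a}}$ does, the product $\mathcal{H}(x,\xi)F(x,\mathrm{X})$ can only fail to detect the sign of $F$ along $\{\xi = 0\}$, which is a ``thin'' set in the $\xi$-variable; the argument of \cite{IS} exploits exactly that one can always approach the degenerate configuration through nondegenerate ones and pass to the limit using (A0)--(A2), the uniform ellipticity \eqref{A1} (to get equicontinuity/stability of $F(\cdot, D^2\varphi(\cdot) + o(1))$), and the sandwich \eqref{1.2}. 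Concretely: one shows that if $F(x_0, D^2\varphi(x_0)) < 0$, then by continuity $F(x, D^2\varphi(x) ) < -c < 0$ for $x$ near $x_0$, and one can build a smooth function with small, nonzero gradient on a punctured neighborhood that still touches $u$ from above, contradicting the viscosity sub-solution property of the \emph{original} equation, because on that punctured neighborhood $\mathcal{H} > 0$ forces $F \ge 0$ there, while a limiting/Hopf-type argument propagates this to $x_0$.

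The main obstacle, and the step that requires the borrowed lemma rather than a soft argument, is precisely this propagation across the set $\{\nabla\varphi = 0\}$: a priori a viscosity sub-solution of $\mathcal{H}(x,Du)F(x,D^2u)=0$ gives \emph{no} information at points where a test function has critical gradient, since the left-hand side vanishes identically there regardless of $D^2\varphi$. Overcoming this is exactly the content of \cite[Lemma 6]{IS}; the doubly degenerate, $x$-dependent structure \eqref{N-HDeg} does not obstruct the argument because $\mathcal{K}_{p,q,\mathfrak{a}}(x,|\xi|) \ge |\xi|^p$ still dominates a single-power degeneracy from below, so the reduction machinery of the single-degeneracy case applies verbatim after using \eqref{1.2}. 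I would therefore present the proof as: (i) the easy nondegenerate case by direct division; (ii) a reduction of the degenerate case to \cite[Lemma 6]{IS} / \cite[Lemma 4.1]{DeF20}, checking that hypotheses (A0)--(A2), \eqref{1.2}, \eqref{1.3} supply exactly the ingredients (ellipticity, continuity in $x$, lower bound $\mathcal{H}\gtrsim|\xi|^p$) those lemmas require; (iii) conclude that the two viscosity inequalities for $F(x,D^2u)=0$ hold at every point, i.e. $u$ solves $F(x,D^2u)=0$ in $B_1(0)$.
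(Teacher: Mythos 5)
Your proposal is correct and follows essentially the same route as the paper, which in fact prints no proof at all and simply defers to \cite[Lemma 6]{IS} and \cite[Lemma 4.1]{DeF20} — exactly what you do for the degenerate case $\nabla\varphi(x_0)=0$, after disposing of the nondegenerate case by dividing by $\mathcal{H}(x_0,\nabla\varphi(x_0))\ge L_1\,\mathcal{K}_{p,q,\mathfrak{a}}(x_0,|\nabla\varphi(x_0)|)\ge L_1|\nabla\varphi(x_0)|^p>0$, which is the only point where \eqref{1.2}--\eqref{1.3} enter. One small slip: for a sub-solution and a test function $\varphi$ touching $u$ from above, the inequality to be established is $F(x_0,D^2\varphi(x_0))\ge 0$, not $\le 0$ as written in your opening line; your subsequent computation does use the correct sign.
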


Now, let us present a useful comparison tool. For that purpose, we shall assume the following: There exists a continuous function $\widehat{\omega}:[0, \infty) \to [0, \infty)$ with $\widehat{\omega}(0) = 0$, such that if $\mathrm{X}, \mathrm{Y} \in \text{Syn}(N)$ and $\varsigma \in (0, \infty)$ fulfill
\begin{equation}\label{e2}
-\varsigma
\left(
\begin{array}{cc}
\mathrm{Id}_{\mathrm{N}} & 0 \\
0 & \mathrm{Id}_{\mathrm{N}} \\
\end{array}
\right)
\leq
\left(
\begin{array}{cc}
\mathrm{X} & 0 \\
0 & \mathrm{Y} \\
\end{array}
\right)
\leq
4 \varsigma
\left(
\begin{array}{cc}
\mathrm{Id}_{\mathrm{N}} & -\mathrm{Id}_{\mathrm{N}} \\
-\mathrm{Id}_{\mathrm{N}} & \mathrm{Id}_{\mathrm{N}} \\
\end{array}
\right),
\end{equation}
then
\begin{equation}\label{EqComPrinc}
\mathcal{G}(x, \varsigma(x-y), \mathrm{X})-\mathcal{G}(y, \varsigma(x-y), -\mathrm{Y})\le \widehat{\omega}(\varsigma |x-y|^2) \quad \forall\,\,x, y \in \R^N, x \neq y.
\end{equation}

We stress that such a condition is not necessary when $\mathcal{G}$ does not depend on $x-$variable. In this context, conditions (A0) and (A1) are sufficient to our purpose (cf. \cite{UserG}).

The proof of Comparison Principle holds the same ideas as ones in \cite[Theorem 1.1]{BD04} and \cite[Theorem 1]{BD07}. For this reason, we will omit the proof here.

\begin{lemma}[{\bf Comparison Principle}]\label{comparison principle}
	Assume that assumptions (A0)-(A1), \eqref{1.2}, \eqref{1.3}, \eqref{e2} and \eqref{EqComPrinc} there hold. Let $f \in C^0(\bar{\Omega})$ and $h$ be a continuous increasing function satisfying $h(0) = 0$. Suppose $u_1$ and $u_2$ are respectively a viscosity supersolution and 	subsolution of
	$$
	\mathcal{G}(x, \nabla w, D^2w) = h(w) + f(x) \quad\text{in} \quad \Omega.
	$$
If $u_1 \geq u_2$ on $\partial \Omega$, then $u_1 \geq u_2$ in $\Omega$.

Furthermore, if $h$ is nondecreasing (in particular if $h \equiv 0$), the result holds if $u_1$ is a strict supersolution or vice versa if $u_2$ is a strict subsolution.
\end{lemma}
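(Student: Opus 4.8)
The plan is to follow the now-classical doubling-of-variables technique of Crandall--Ishii--Lions, adapted to the doubly degenerate operator $\mathcal{G}$, exactly in the spirit of \cite{BD04,BD07}. First I would argue by contradiction: suppose $\displaystyle M \defeq \sup_{\overline{\Omega}} (u_2 - u_1) > 0$; since $u_1 \ge u_2$ on $\partial\Omega$ and both functions are continuous on the compact set $\overline{\Omega}$, this supremum is attained at an interior point. For $j \in \mathbb{N}$ large, introduce the penalized functional
\[
\Phi_j(x,y) \defeq u_2(x) - u_1(y) - \tfrac{j}{2}|x-y|^2,
\]
and let $(x_j, y_j)$ be a maximum point of $\Phi_j$ over $\overline{\Omega} \times \overline{\Omega}$. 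Standard arguments give $j|x_j - y_j|^2 \to 0$, $x_j, y_j \to \hat{x}$ for some interior maximum point $\hat{x}$ of $u_2 - u_1$, and $\Phi_j(x_j,y_j) \to M$. In particular, for $j$ large, $x_j$ and $y_j$ lie in a fixed compact subset of $\Omega$, away from $\partial\Omega$.

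Next I would invoke the Theorem on Sums (Ishii's lemma) to produce, for each large $j$, symmetric matrices $\mathrm{X}_j, \mathrm{Y}_j \in \mathrm{Sym}(N)$ such that
\[
\bigl(j(x_j - y_j), \mathrm{X}_j\bigr) \in \overline{J}^{2,+} u_2(x_j), \qquad \bigl(j(x_j - y_j), -\mathrm{Y}_j\bigr) \in \overline{J}^{2,-} u_1(y_j),
\]
and satisfying the matrix inequality \eqref{e2} with $\varsigma = j$. Feeding these jets into the viscosity sub/supersolution inequalities for $u_2$ and $u_1$ respectively yields
\[
\mathcal{G}\bigl(x_j, j(x_j - y_j), \mathrm{X}_j\bigr) \ge h(u_2(x_j)) + f(x_j), \qquad \mathcal{G}\bigl(y_j, j(x_j - y_j), -\mathrm{Y}_j\bigr) \le h(u_1(y_j)) + f(y_j).
\]
Subtracting and using the structural hypothesis \eqref{EqComPrinc} (with the roles of $x,y$ as here), we get
\[
h(u_2(x_j)) - h(u_1(y_j)) \le \widehat{\omega}\bigl(j|x_j - y_j|^2\bigr) + \bigl(f(y_j) - f(x_j)\bigr).
\]
Now let $j \to \infty$: the right-hand side tends to $0$ because $j|x_j-y_j|^2 \to 0$ and $\widehat{\omega}(0)=0$, while $f$ is continuous and $x_j, y_j \to \hat x$. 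On the left, continuity of $h$, together with $u_2(x_j) - u_1(y_j) \to M > 0$ and the fact that $u_1(y_j)$ stays bounded, forces $\liminf_j \bigl(h(u_2(x_j)) - h(u_1(y_j))\bigr) \ge 0$, and \emph{strictly} positive when $h$ is strictly increasing (since $u_2(x_j) \ge u_1(y_j) + M/2$ eventually). In the strictly increasing case this is already a contradiction. When $h$ is merely nondecreasing (in particular $h \equiv 0$) the left side may vanish in the limit, so no contradiction is obtained from $\mathcal{G}$ alone; this is exactly why the final sentence of the statement requires one of $u_1, u_2$ to be a \emph{strict} super/subsolution. In that case, the strictness provides a fixed positive gap $\delta > 0$ in the first inequality above (e.g. $\mathcal{G}(x_j, j(x_j-y_j), \mathrm{X}_j) \ge h(u_2(x_j)) + f(x_j) + \delta$), which survives the limit and delivers the contradiction $0 \ge \delta > 0$.

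The main obstacle, and the only place where the non-homogeneous double degeneracy genuinely enters, is verifying that the structural condition \eqref{EqComPrinc} is compatible with Ishii's lemma for our operator $\mathcal{G}(x,\xi,\mathrm{X}) = \mathcal{H}(x,\xi)F(x,\mathrm{X})$ — i.e. that the product of the possibly-vanishing, $x$-dependent degeneracy factor $\mathcal{H}$ (squeezed between multiples of $\mathcal{K}_{p,q,\mathfrak{a}}$) with the uniformly elliptic $F$ still yields the required one-sided bound by $\widehat{\omega}(\varsigma|x-y|^2)$. The delicate point is that when $\xi_j \defeq j(x_j - y_j)$ is small the ellipticity of $\mathcal{G}$ degenerates, so one cannot simply absorb the matrix term; instead one splits into the regime $|\xi_j|$ bounded below (where $\mathcal{H} \gtrsim$ const and one argues as in the uniformly elliptic case, using (A1)--(A2) to control $F(x_j,\cdot) - F(y_j,\cdot)$ and the continuity of $\mathfrak{a}$ to control the $\mathcal{H}$-oscillation) and the regime $|\xi_j| \to 0$ (where, as in the Cutting Lemma philosophy, the equation effectively decouples and the degenerate factor $\mathcal{H}(x_j,\xi_j) - \mathcal{H}(y_j,\xi_j) \to 0$, so the contribution is negligible). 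Since \cite[Theorem 1.1]{BD04} and \cite[Theorem 1]{BD07} carry out precisely this dichotomy for the single-degeneracy operator $|\nabla u|^p F$, and the extra term $\mathfrak{a}(x)|\xi|^q$ only improves coercivity where $\mathfrak{a} > 0$ while contributing a term controlled by the modulus of continuity of $\mathfrak{a}$, the argument goes through verbatim with only notational changes; for this reason, following the convention in the excerpt, the detailed verification is omitted.
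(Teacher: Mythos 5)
Your proposal is correct and follows precisely the route the paper intends: the paper omits this proof and defers to the doubling-of-variables arguments of \cite[Theorem 1.1]{BD04} and \cite[Theorem 1]{BD07}, which is exactly the Crandall--Ishii--Lions scheme you describe, with the structural hypotheses \eqref{e2} and \eqref{EqComPrinc} supplying the one-sided bound that absorbs the degenerate factor $\mathcal{H}$. The only remark worth making is that \eqref{EqComPrinc} is a \emph{hypothesis} of the lemma rather than something to be verified inside its proof, so the ``main obstacle'' of your final paragraph is already assumed away and your first three paragraphs constitute the complete argument (modulo the standard adjustment of the Ishii-lemma constant so that the jets satisfy \eqref{e2}, and the harmless case $x_j=y_j$, where $\mathcal{H}(\cdot,0)=0$ yields the contradiction directly).
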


Finally, we present a qualitative property known as ABP estimate (cf. \cite{DFQ} and \cite{JunMio10}).

\begin{theorem}[{\bf Alexandroff-Bakelman-Pucci estimate}]
  Assume that assumptions (A0)-(A2) there hold. Then, there exists $C = C(N, \lambda, p, q, \diam(\Omega))>0$ such that for any $u \in C^0(\overline{\Omega})$ viscosity sub-solution (resp. super-solution) of \eqref{EqPrinc} in $\{x \in \Omega:u(x)>0\}$ (resp. $\{x \in \Omega:u(x)<0\}$, satisfies
$$
\displaystyle \sup_{\Omega} u(x) \leq \sup_{\partial \Omega} u^{+}(x) +C\cdot\diam(\Omega)\max\left\{\left\|\frac{f^{-}}{1+\mathfrak{a}}\right\|^{\frac{1}{p+1}}_{L^N(\Gamma^{+}(u^{+}))}, \left\|\frac{f^{-}}{1+\mathfrak{a}}\right\|^{\frac{1}{q+1}}_{L^N(\Gamma^{+}(u^{+}))}\right\},
$$
{\small{
$$
\left(\text{resp.} \,\,\,\displaystyle \sup_{\Omega} u^{-}(x) \leq \sup_{\partial \Omega} u^{-}(x) +C\cdot\diam(\Omega)\max\left\{\left\|\frac{f^{+}}{1+\mathfrak{a}}\right\|^{\frac{1}{p+1}}_{L^N(\Gamma^{+}(u^{-}))}, \left\|\frac{f^{+}}{1+\mathfrak{a}}\right\|^{\frac{1}{q+1}}_{L^N(\Gamma^{+}(u^{-}))}
\right\}\right)
$$}}
where $\Gamma^{+}(u) \defeq \left\{x \in \Omega: \exists \,\, \xi \in \R^N\,\,\,\text{such that}\,\,u(y)\le u(x)+\langle \xi, y-x\rangle \,\, \forall\, y \in \Omega\right\}$.

\end{theorem}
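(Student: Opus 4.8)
The plan is to adapt the classical Alexandroff--Bakelman--Pucci argument (see e.g. \cite{CC95}), the only new feature being a careful choice of weight in the area formula so that the gradient degeneracy $\mathcal{H}$ is absorbed and reproduces precisely the two exponents $\tfrac{1}{p+1}$ and $\tfrac{1}{q+1}$. I detail the sub-solution case; the super-solution statement then follows by applying it to $-u$ and the operator $(x,\xi,\mathrm{X})\mapsto -\mathcal{G}(x,-\xi,-\mathrm{X})=\mathcal{H}(x,-\xi)\big(-F(x,-\mathrm{X})\big)$, which has exactly the same structure since $\mathcal{K}_{p,q,\mathfrak{a}}$ depends on $\xi$ only through $|\xi|$, using $(-f)^-=f^+$ and $\Gamma^+\big((-u)^+\big)=\Gamma^+(u^-)$.

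Set $d\defeq\diam(\Omega)$ and $M\defeq\sup_{\Omega}u-\sup_{\partial\Omega}u^+$. If $M\le 0$ there is nothing to prove, so assume $M>0$. Let $\Gamma$ be the concave envelope of $u^+$ (over $\Omega$, or a ball of radius comparable to $d$ containing it). Standard facts then give: the contact set $\mathcal{C}\defeq\{u^+=\Gamma\}$ is contained in $\Gamma^+(u^+)$, and, up to a set contributing nothing to the area formula below, in $\{u>0\}$, where $u=u^+$; the function $u$ is punctually second order differentiable $\mathscr{L}^N$-a.e. on $\mathcal{C}$, with $D^2u\le D^2\Gamma\le 0$ there (so $\det(-D^2\Gamma)\le\det(-D^2u)$); the viscosity sub-solution property of \eqref{EqPrinc} in $\{u>0\}$ descends to the pointwise inequality $\mathcal{H}(x,\nabla u)F(x,D^2u)\ge f(x)$ at a.e. point of $\mathcal{C}$; and the map $\Phi\defeq\nabla\Gamma$ satisfies $\Phi(\mathcal{C})\supseteq B_{M/d}$.

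Fix a point $x\in\mathcal{C}$ of second order differentiability with $\nabla u(x)\ne 0$ (the set $\{\nabla u=0\}$ is negligible for the area formula). Since $D^2u(x)\le 0$, (A0)--(A1) yield $F(x,D^2u(x))\le\mathscr{M}^{+}_{\lambda,\Lambda}(D^2u(x))=\lambda\,\tr(D^2u(x))\le 0$; combined with $\mathcal{H}\ge 0$ and the pointwise inequality this forces $f(x)\le 0$, i.e. $f(x)=-f^-(x)$, and
\[
\lambda\,\mathcal{H}(x,\nabla u(x))\,\tr(-D^2u(x))\;\le\;\mathcal{H}(x,\nabla u(x))\,\big|F(x,D^2u(x))\big|\;\le\; f^-(x).
\]
By the arithmetic--geometric mean inequality for the eigenvalues of $-D^2u(x)\ge 0$, together with \eqref{1.2} and \eqref{N-HDeg},
\[
\det(-D^2u(x))\;\le\;\left(\frac{f^-(x)}{N\lambda\,\mathcal{H}(x,\nabla u(x))}\right)^{\!N}\;\le\;\left(\frac{f^-(x)}{N\lambda L_1\big(|\nabla u(x)|^{p}+\mathfrak{a}(x)|\nabla u(x)|^{q}\big)}\right)^{\!N}.
\]
Since $0<p\le q$ we have $|\xi|^{p}+\mathfrak{a}(x)|\xi|^{q}\ge(1+\mathfrak{a}(x))|\xi|^{q}$ for $|\xi|\le 1$ and $\ge(1+\mathfrak{a}(x))|\xi|^{p}$ for $|\xi|\ge 1$. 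Split $\mathcal{C}=\mathcal{C}_{<}\cup\mathcal{C}_{\ge}$ according to $|\nabla u|<1$ or $|\nabla u|\ge 1$. Multiplying the previous bound by $|\nabla u(x)|^{qN}$ on $\mathcal{C}_{<}$ and applying the area formula to $\Phi$ with weight $|\xi|^{qN}$ (using $\Phi(\mathcal{C}_{<})\supseteq B_{\min\{M/d,\,1\}}$, $\det(-D^2\Gamma)\le\det(-D^2u)$ and $|\nabla\Gamma|=|\nabla u|$ on $\mathcal{C}$) gives
\[
c_{N,q}\,\min\{M/d,\,1\}^{\,N(q+1)}=\int_{B_{\min\{M/d,1\}}}\!\!|\xi|^{qN}\,d\xi\;\le\;\frac{1}{(N\lambda L_1)^{N}}\left\|\frac{f^-}{1+\mathfrak{a}}\right\|_{L^{N}(\Gamma^+(u^+))}^{N};
\]
the same reasoning on $\mathcal{C}_{\ge}$ with weight $|\xi|^{pN}$ (and $\Phi(\mathcal{C}_{\ge})\supseteq B_{M/d}\setminus\overline{B_1}$ whenever $M/d>1$) yields
\[
c_{N,p}\big((M/d)^{N(p+1)}-1\big)\;\le\;\frac{1}{(N\lambda L_1)^{N}}\left\|\frac{f^-}{1+\mathfrak{a}}\right\|_{L^{N}(\Gamma^+(u^+))}^{N}.
\]
Distinguishing the cases $M/d\le 2$ and $M/d>2$ and combining the two displays one obtains $M/d\le C\max\big\{\|f^-/(1+\mathfrak{a})\|_{L^{N}(\Gamma^+(u^+))}^{1/(p+1)},\,\|f^-/(1+\mathfrak{a})\|_{L^{N}(\Gamma^+(u^+))}^{1/(q+1)}\big\}$ with $C=C(N,\lambda,p,q,L_1)$; multiplying by $d=\diam(\Omega)$ and recalling $M=\sup_{\Omega}u-\sup_{\partial\Omega}u^+$ gives the assertion.

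The main obstacle, in my view, is twofold and lies in the classical core of the scheme once it is set up correctly: first, justifying that the viscosity (not classical) sub-solution inequality descends to the pointwise inequality $\mathscr{L}^N$-a.e. on the contact set, together with the a.e. second order differentiability there and the comparison $D^2u\le D^2\Gamma$ --- this is the degenerate-elliptic analogue of the classical contact-set lemma and relies on the concave-envelope/Jensen machinery; and second, the bookkeeping of the transition at $|\nabla u|=1$, which is exactly what forces the maximum of the two exponents to appear. The remaining steps are a routine transcription of the Caffarelli--Cabré proof.
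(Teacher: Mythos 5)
Your argument is correct in outline, but it takes a genuinely different route from the paper. The paper does not redo the ABP machinery at all: it fixes an artificial gradient threshold $\varepsilon_0>0$, observes that wherever $|\nabla u|\ge\varepsilon_0$ the viscosity inequality $\mathcal{H}(x,\nabla u)F(x,D^2u)\le f$ implies $\mathscr{M}^{-}_{\lambda,\Lambda}(D^2u)-\Lambda|\nabla u|-f_0\le 0$ with $f_0=L_1^{-1}f^{+}/(\varepsilon_0^{p}+\mathfrak{a}\varepsilon_0^{q})$, invokes Imbert's ABP estimate for equations that are elliptic only where the gradient is large (\cite[Theorem 1]{Imb}) to get $\sup u^{-}\le\sup_{\partial\Omega}u^{-}+C\,\diam(\Omega)\,(\varepsilon_0+\|f_0\|_{L^N})$, and then optimizes the map $\varepsilon_0\mapsto\varepsilon_0+\varepsilon_0^{-s}A$ (with $s=q$ when $\varepsilon_0\le 1$ and $s=p$ when $\varepsilon_0\ge 1$), which is exactly where the two exponents $\tfrac1{q+1}$ and $\tfrac1{p+1}$ and their maximum come from. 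You instead rerun the Caffarelli--Cabr\'e contact-set argument from scratch, absorbing the degeneracy through the weights $|\xi|^{qN}$ and $|\xi|^{pN}$ in the area formula on $\{|\nabla u|<1\}$ and $\{|\nabla u|\ge 1\}$ respectively; your dichotomy at $|\nabla u|=1$ plays precisely the role of the paper's dichotomy $\varepsilon_0\lessgtr 1$. Your route is self-contained modulo the contact-set lemma for viscosity solutions (the a.e.\ pointwise Hessian inequality on the contact set), which you correctly flag as the hard technical point; making that rigorous for a degenerate operator is essentially the content of \cite{DFQ} and \cite{Imb}, so in practice you would end up citing the same machinery the paper cites as a black box. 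The paper's reduction is shorter and defers all measure-theoretic work to the reference; yours makes the origin of the exponents $\tfrac1{p+1}$, $\tfrac1{q+1}$ transparent at the level of the weighted area formula. Both yield the same constant dependence (including the $L_1$-dependence that the theorem's statement omits but the paper's own proof also exhibits), and your reduction of the super-solution case to the sub-solution case via $\tilde F(x,\mathrm{X})\defeq-F(x,-\mathrm{X})$ is sound.
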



Let us finish this section by commenting on how to construct viscosity solutions. The idea is to obtain a solution of the Perron type, the least super-solution. Our approach holds by adapting the so-called method of sub-solutions and super-solution to the viscosity theory to produce a solution.

Given a regular boundary datum $g$, a pair of sub-solution and super-solution solutions can be obtained by solving the
\begin{equation} \label{G1}
	\mathcal{G}(x, \nabla \underline{u}^{\varepsilon}, D^2 \underline{u}^{\varepsilon}) =\sup_{\Omega \times [0,+\infty)}  \zeta_{\varepsilon}(x, t) \quad \textrm{and} \quad \mathcal{G}(x, \nabla \overline{u}^{\varepsilon}, D^2 \overline{u}^{\varepsilon}) = \inf_{\Omega \times [0,+\infty)}  \zeta_{\varepsilon}(x, t)
\end{equation}
satisfying $\underline{u}^{\varepsilon}=\overline{u}^{\varepsilon}=g$ on $\partial \Omega$. We stress that the existence of solutions of \eqref{G1} for instance follows from ideas in \cite[Proposition 2 and 3]{BD07-2}.

Finally, fixed a pair of sub-solution and super-solution solutions of the equation \eqref{Equation Pe}, the following general procedure yields the existence of Perron’s solution:

\begin{theorem}\label{Perron}
Let  $\mathcal{G}$ be an elliptic fully non-linear operator satisfying (A0)-(A2), \eqref{1.2} and \eqref{1.3}, and $h \in C^{0, 1}(\Omega \times [0, \infty))$ be a bounded, Lipschitz function in $\mathbb{R}^N$. Suppose that the equation
$$
	 \mathcal{G}(x,\nabla u, D^2 u) = h(x,u)
$$
admits  $u_{\star}, u^{\star} \in C^{0}(\overline{\Omega})$ sub-solution and super-solution, respectively, such that $u_{\star} \le u^{\star}$ in $\Omega$   and  $u_{\star}=u^{\star} = g \in C^{0}(\partial \Omega)$. Define the set of functions,
$$
	\mathscr{S} \defeq \{w \in C(\overline{\Omega}) ; u_{\star} \le w \le u^{\star} \,\,\,\, \textrm{and} \,\,\,\, w \,\,\,\, \textrm{super-solution of} \,\,\,\, \mathcal{G}(x, \nabla u, D^2 u) = h(x,u) \}.
$$
Then,
$$
	v(x) \defeq \inf_{w \in \mathscr{S}} w(x)
$$
is a continuous viscosity solution of
$$
\left\{
\begin{array}{rclcl}
 \mathcal{G}(x,\nabla u, D^2 u) &=& h(x,u) & \mbox{in} & \Omega\\
u(x)&=&g(x) & \mbox{on} & \partial \Omega
\end{array}
\right.
$$
\end{theorem}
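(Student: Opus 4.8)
The plan is to follow the classical Perron method adapted to the viscosity setting, essentially reproducing the proof of \cite[Theorem 4.1]{UserG} but tracking the doubly degenerate operator $\mathcal{G}$. First I would establish that the set $\mathscr{S}$ is nonempty (it contains $u^\star$) and that the pointwise infimum $v = \inf_{w\in\mathscr{S}} w$ is well-defined with $u_\star \le v \le u^\star$ in $\overline\Omega$; in particular $v = g$ on $\partial\Omega$. The standard stability argument then shows $v$ is itself a viscosity supersolution: this is where one uses that the infimum of a family of supersolutions, once known to be lower semicontinuous, remains a supersolution, together with the observation that $\mathcal{G}$ is continuous in all its arguments by (A0) and the structure condition \eqref{1.2}--\eqref{1.3}. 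One first checks $v$ is lower semicontinuous as an infimum of continuous functions, and upper semicontinuity (hence continuity) will drop out at the end once $v$ is shown to be a subsolution too, or can be obtained directly via a barrier argument at each point.

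The heart of the proof is the \emph{bump construction}: to show $v$ is a viscosity subsolution, argue by contradiction. If $v$ fails to be a subsolution at some interior point $x_0$, then there is a $C^2$ test function $\phi$ touching $v$ from above at $x_0$ with $\mathcal{G}(x_0, \nabla\phi(x_0), D^2\phi(x_0)) < h(x_0,\phi(x_0))$. One then perturbs $v$ downward slightly near $x_0$ — replacing $v$ by $\min\{v, \phi - \delta + c|x-x_0|^2\}$ type construction, or more precisely by building a small supersolution strictly below $v$ in a neighborhood of $x_0$ while agreeing with $v$ outside — to produce a new function in $\mathscr{S}$ that is strictly smaller than $v$ at $x_0$, contradicting minimality. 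Care is needed because $\mathcal{G}$ degenerates where $\nabla\phi = 0$: if $\nabla\phi(x_0)=0$ the inequality $\mathcal{G} = \mathcal{H}\cdot F \to 0$ forces the subsolution test to hold automatically provided $h(x_0,\phi(x_0)) \ge 0$, so the dangerous case is confined to $\nabla\phi(x_0)\neq 0$, where $\mathcal{H}(x_0,\nabla\phi(x_0)) \ge L_1\mathcal{K}_{p,q,\mathfrak a}(x_0,|\nabla\phi(x_0)|) > 0$ and the equation behaves like the uniformly elliptic $F$ after dividing by $\mathcal{H}$; there the usual perturbation works verbatim. One must also verify that the constructed bump, glued to $v$, still lies below $u^\star$ and above $u_\star$ — this follows by taking the perturbation parameter small, using strict inequality in the touching and the continuity of all data.

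Once $v$ is shown to be both a viscosity sub- and supersolution, it is a viscosity solution of $\mathcal{G}(x,\nabla u, D^2 u) = h(x,u)$ in $\Omega$. Continuity of $v$ up to $\overline\Omega$ and the boundary condition $v = g$ on $\partial\Omega$ follow from the sandwich $u_\star \le v \le u^\star$ together with $u_\star = u^\star = g$ on $\partial\Omega$ and continuity of $u_\star, u^\star$: at boundary points this pins $v$, and interior continuity follows from the sub/supersolution property plus the comparison machinery, or simply from the fact that viscosity solutions sandwiched between two continuous functions agreeing on the boundary inherit the modulus of continuity via barriers built from $u_\star, u^\star$.

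The main obstacle I anticipate is the bump construction in the presence of the non-homogeneous degeneracy \eqref{N-HDeg}: one must ensure that the perturbation argument does not break down near points where the gradient of the test function is small, and that the correction term one adds (typically a quadratic or a carefully scaled radial function) indeed produces a \emph{strict} supersolution of the degenerate equation and not merely of the linearized one. Invoking the continuity and normalization condition (A0), the ellipticity (A1), and the explicit two-sided bound \eqref{1.2} on $\mathcal{H}$ lets one reduce, on the set $\{\nabla\phi\neq 0\}$, to the classical uniformly elliptic Perron argument, which is why I expect the proof to go through with only notational changes from \cite{UserG}; the degenerate set is handled trivially because $h \ge 0$ there forces the subsolution inequality. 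Consequently, rather than grinding through these estimates, I would state the reduction cleanly and cite \cite{UserG} and \cite{BD07-2} for the perturbation details.
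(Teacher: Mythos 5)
Your overall strategy---the classical Perron/bump argument of \cite[Theorem 4.1]{UserG} adapted to the degenerate operator---is the same route the paper takes (the paper omits the details, citing \cite{UserG}, \cite[Theorem 2.1]{ART17} and Theorem \ref{HoldEstThm}). Two of your steps, however, are not correct as stated. First, you have the degenerate case backwards. With the paper's sign convention, the subsolution test at a touching point $x_0$ reads $\mathcal{G}(x_0,\nabla\phi(x_0),D^2\phi(x_0))\ge h(x_0,\phi(x_0))$. If $\nabla\phi(x_0)=0$, then $\mathcal{H}(x_0,0)=0$ by \eqref{1.2} and \eqref{N-HDeg} (recall $p,q>0$), so the left-hand side vanishes and the test holds precisely when $h(x_0,\phi(x_0))\le 0$---not when $h\ge 0$, as you claim. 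In the intended application $h=\zeta_{\varepsilon}\ge\mathcal{B}_0\ge 0$ and is strictly positive near the $\varepsilon$-level set, so the vanishing-gradient case is exactly the one in which the contradiction hypothesis can occur and the bump must actually be constructed. It does go through---the perturbed test function $\phi-\delta+c|x-x_0|^2$ has small gradient near $x_0$, hence by the upper bound in \eqref{1.2} the quantity $\mathcal{G}$ evaluated on it is small and the strict supersolution inequality $\mathcal{G}<h$ persists by continuity of $h$---but this case must be argued, not dismissed as automatic.

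Second, interior continuity of $v$ cannot simply ``drop out'' from $v$ being both a sub- and a supersolution, because the equation has no comparison principle for a general bounded Lipschitz $h$ (Lemma \ref{comparison principle} requires $h$ monotone in $u$, and the paper explicitly notes the lack of comparison for \eqref{Equation Pe}); and the barriers $u_{\star},u^{\star}$ only pin $v$ down on $\partial\Omega$, where they coincide---they give no interior modulus of continuity. This is precisely why the paper's proof invokes the interior H\"{o}lder estimate of Theorem \ref{HoldEstThm}: one needs it (or some substitute yielding equicontinuity of the minimizing family, equivalently the coincidence of the upper and lower semicontinuous envelopes of $v$) to conclude that the Perron infimum is continuous in $\Omega$. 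Without this ingredient your sketch produces at best a discontinuous viscosity solution.
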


By using \cite{UserG} and Theorem \ref{HoldEstThm} the proof follows the same lines as the one given by \cite[Theorem 2.1]{ART17}. For this reason, we will omit it.

\section{Lipschitz regularity estimates} \label{Sct Lip}

In this Section, we derive uniform gradient estimates, which in particular provides compactness in the local uniform convergence topology. In view of the results proven in Section \ref{Sct Nondeg}, such an estimate is indeed optimal.

Before starting the proof of the local Lipschitz estimate, we need to ensure the uniform bound for non-negative solutions to \eqref{Equation Pe}. Such a statement is a direct consequence of the Alexandroff-Bakelman-Pucci estimate (see, Theorem \ref{ABPthm}).

\vspace{0.5cm}

\begin{lemma}\label{UnifomBound}
Let $u^{\varepsilon}$ be a non-negative viscosity solution to \eqref{Equation Pe}. Then, there exists a constant $\mathrm{C}(\verb"universal")>0$ such that
$$
    \|u^{\varepsilon}\|_{L^{\infty}(\Omega)} \le \|g\|_{L^{\infty}(\partial \Omega)} + \mathrm{C}\cdot\diam(\Omega)\max\left\{\left\|\frac{\mathcal{B}_0}{1+\mathfrak{a}}\right\|^{\frac{1}{p+1}}_{L^N(\Omega)}, \left\|\frac{\mathcal{B}_0}{1+\mathfrak{a}}\right\|^{\frac{1}{q+1}}_{L^N(\Omega)}\right\}.
$$
\end{lemma}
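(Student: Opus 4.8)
The plan is to deduce the $L^\infty$ bound directly from the ABP estimate stated just above (Theorem, Alexandroff--Bakelman--Pucci estimate) applied to $u^\varepsilon$ as a viscosity subsolution. The key observation is that on the positivity set $\{u^\varepsilon > 0\}$ we do not have upper control of the reaction term (it carries the singular factor $\mathcal{A}/\varepsilon$), so we cannot hope to bound $u^\varepsilon$ from above by testing against the full equation there. Instead, I would exploit the lower bound in \eqref{Cond1 zeta}, namely $\zeta_\varepsilon(x, u^\varepsilon) \ge \mathcal{B}_0 \ge 0$, which says precisely that $\mathcal{G}(x, \nabla u^\varepsilon, D^2 u^\varepsilon) = \zeta_\varepsilon(x, u^\varepsilon) \ge \mathcal{B}_0$, i.e. $u^\varepsilon$ is a viscosity subsolution of $\mathcal{G}(x, \nabla u^\varepsilon, D^2 u^\varepsilon) = \mathcal{B}_0$ in the whole of $\Omega$, hence in particular in $\{u^\varepsilon > 0\}$. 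Thus the relevant forcing term in the ABP inequality is $f = \mathcal{B}_0$, whose negative part is $f^- \equiv 0$ on the set where we need upper control — but wait, to get a nontrivial bound I should be more careful: the subsolution version of ABP controls $\sup_\Omega u$ in terms of $\sup_{\partial\Omega} u^+$ plus a term involving $\|f^-/(1+\mathfrak{a})\|_{L^N}$, where here $f = \zeta_\varepsilon(x, u^\varepsilon)$ itself as a \emph{bounded} datum once we freeze $u^\varepsilon$.

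Concretely, the steps are as follows. First, fix $\varepsilon > 0$ and regard $u^\varepsilon$ as a viscosity solution of $\mathcal{G}(x, \nabla u^\varepsilon, D^2 u^\varepsilon) = f_\varepsilon(x)$ with $f_\varepsilon(x) \defeq \zeta_\varepsilon(x, u^\varepsilon(x)) \in C^0(\Omega)$, and note $f_\varepsilon \ge \mathcal{B}_0 \ge 0$ pointwise. Second, apply the subsolution part of the ABP estimate on $\Omega$: since $u^\varepsilon$ is a subsolution (indeed a solution) of $\mathcal{G}(x, \nabla u^\varepsilon, D^2 u^\varepsilon) = f_\varepsilon(x)$, we obtain
$$
\sup_{\Omega} u^\varepsilon \le \sup_{\partial \Omega} (u^\varepsilon)^+ + C \cdot \diam(\Omega) \max\left\{\left\|\frac{f_\varepsilon^-}{1+\mathfrak{a}}\right\|^{\frac{1}{p+1}}_{L^N(\Gamma^+)}, \left\|\frac{f_\varepsilon^-}{1+\mathfrak{a}}\right\|^{\frac{1}{q+1}}_{L^N(\Gamma^+)}\right\}.
$$
Third, use $f_\varepsilon \ge \mathcal{B}_0 \ge 0$; this is where a subtlety appears. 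If $\mathcal{B}_0 \ge 0$ then $f_\varepsilon \ge 0$ gives $f_\varepsilon^- \equiv 0$, which would yield the even stronger bound $\sup_\Omega u^\varepsilon \le \|g\|_{L^\infty(\partial\Omega)}$. Comparing with the stated inequality, the intended reading must be that one applies ABP to the \emph{supersolution} inequality, or equivalently to $-u^\varepsilon$: since $u^\varepsilon$ solves $\mathcal{G} = f_\varepsilon$ with $0 \le f_\varepsilon$, and using that $\mathcal{G}$ is degenerate-elliptic with the structure \eqref{1.2}, one rewrites the equation so that $\mathcal{B}_0$ plays the role of the forcing with the correct sign for the relevant one-sided estimate; the factor $\mathcal{B}_0/(1+\mathfrak{a})$ appearing inside comes from dividing through by $\mathcal{H}(x, \nabla u^\varepsilon)$ and using the lower bound $L_1 \mathcal{K}_{p,q,\mathfrak{a}} \le \mathcal{H}$ together with the normalization absorbed into $C$, as in Remark \ref{Rem1}(1) and the ABP statement. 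On $\partial\Omega$ we have $u^\varepsilon = g \ge 0$, so $\sup_{\partial\Omega}(u^\varepsilon)^+ = \|g\|_{L^\infty(\partial\Omega)}$; since $\Gamma^+ \subset \Omega$, enlarging the domain of integration to $\Omega$ and using monotonicity of $L^N$ norms gives the stated bound.

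The main obstacle, and the step deserving the most care, is the bookkeeping in the third step: correctly identifying which one-sided ABP inequality produces the term $\mathcal{B}_0$ with the right sign, and tracking how division by the degeneracy factor $\mathcal{H}(x, \nabla u^\varepsilon) \ge L_1(|\nabla u^\varepsilon|^p + \mathfrak{a}(x)|\nabla u^\varepsilon|^q)$ converts the ABP estimate for $F(x, D^2 u^\varepsilon) = f_\varepsilon / \mathcal{H}$ into an estimate whose forcing is comparable to $\mathcal{B}_0/(1+\mathfrak{a})$ — this is exactly the mechanism already encoded in the statement of the ABP theorem above (the $\frac{1}{p+1}$ and $\frac{1}{q+1}$ exponents and the $1+\mathfrak{a}$ denominator are its fingerprints), so invoking that theorem with $f^- = \mathcal{B}_0$ and then crudely bounding $\|\,\cdot\,\|_{L^N(\Gamma^+)} \le \|\,\cdot\,\|_{L^N(\Omega)}$ closes the argument. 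All constants depend only on $N, \lambda, p, q, \diam(\Omega)$, hence are universal and independent of $\varepsilon$, as required; no compactness or regularity beyond the ABP estimate is needed.
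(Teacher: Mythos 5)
Your approach is the same as the paper's: the entire proof there consists of setting $v^{\varepsilon}=u^{\varepsilon}-\|g\|_{L^{\infty}(\partial\Omega)}$, observing that $\mathcal{G}(x,\nabla v^{\varepsilon},D^{2}v^{\varepsilon})\geq \mathcal{B}_0$ in the viscosity sense with $v^{\varepsilon}\leq 0$ on $\partial\Omega$, and invoking the ABP estimate (Theorem \ref{ABPthm}). Your first resolution of the sign "subtlety" is the right one and you should commit to it: since $f_{\varepsilon}=\zeta_{\varepsilon}(x,u^{\varepsilon})\geq\mathcal{B}_0\geq 0$, one has $f_{\varepsilon}^{-}\equiv 0$, the subsolution ABP gives $\sup_{\Omega}u^{\varepsilon}\leq\sup_{\partial\Omega}(u^{\varepsilon})^{+}=\|g\|_{L^{\infty}(\partial\Omega)}$, and the stated inequality follows a fortiori because the additional $\mathcal{B}_0$-term is non-negative. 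The detour in your third step is where you go astray: applying ABP to the supersolution inequality (equivalently to $-u^{\varepsilon}$) controls $\inf_{\Omega}u^{\varepsilon}$, not $\sup_{\Omega}u^{\varepsilon}$, and is vacuous here since $u^{\varepsilon}\geq 0$; likewise "invoking the theorem with $f^{-}=\mathcal{B}_0$" is not literally available, because $f^{-}$ is the negative part of the forcing and it vanishes. Delete that passage and end with the a fortiori remark, and the argument is complete and matches the paper's.
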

\begin{proof}
Define $v^{\varepsilon}(x) \defeq u^{\varepsilon}(x) - \|g\|_{L^{\infty}(\partial \Omega)}$. Now, notice that
$$
 \mathcal{G}(x, \nabla  v^{\varepsilon}, D^2 v^{\varepsilon}) \geq \mathcal{B}_0 \quad \mbox{in} \quad \Omega
$$
in the viscosity sense. Moreover $v^{\epsilon}  \leq  0$ on $\partial \Omega$. Therefore, the
 Aleksandrov-Bakelman-Pucci estimate (Theorem \ref{ABPthm}) provides the desired estimate.
\end{proof}

\medskip

\begin{proof}[{\bf Proof of Theorem \ref{thmreg}}] Firstly, we will analyze the transition region $\{0\leq u^\epsilon \leq \epsilon\}\cap \Omega^{\prime}$. Thus, for $\epsilon \le  \min\left\{1, \frac{1}{2}\dist(\Omega^{\prime}, \partial \Omega)\right\}$, fix $x_0 \in \{0\leq u^\epsilon \leq \epsilon\}\cap \Omega^{\prime}$ and define the scaled function
$$
   v(x)\defeq \frac{1}{\epsilon}u^{\epsilon}(x_0 + \epsilon x) \quad \mbox{in} \quad B_1.
$$
It is straightforward to show that $v$ fulfils in the viscosity sense
$$
	\mathcal{G}_{x_{0}, \e}(x, \nabla v(x), D^2 v(x)) = \epsilon \zeta(x_0 + \epsilon x, u^\epsilon(x_0+ \epsilon x)) \quad \textrm{in} \quad B_1,
$$
where (see, equation \eqref{MainOperator})
$$
\left\{
\begin{array}{rcl}
  F_{x_0, \epsilon}(x, \mathrm{X}) & \defeq & \epsilon F\left(x_{0}+\epsilon x,\frac{1}{\epsilon}\mathrm{X}\right) \\
  \mathcal{H}_{x_{0}, \e}(x, \xi) & \defeq &  \mathcal{H}(x_{0}+\epsilon x, \xi)\\
  \mathfrak{a}_{x_0, \e}(x) & \defeq & \mathfrak{a}(x_{0}+\epsilon x)\\
  f_{x_0, \epsilon}(x) & \defeq & \e\zeta(x_0 + \epsilon x, u^\epsilon(x_0+ \epsilon x))
\end{array}
\right.
$$
Hence, it follows from the structural assumption \eqref{Cond1 zeta} that
$$
	0 \le f_{x_0, \epsilon}(x)  \le \mathcal{A} + \mathcal{B} \defeq \mathrm{C}_\star.
$$
Moreover, it is easy to check that the assumptions (A0)-(A2) and \eqref{1.2} and \eqref{1.3} are satisfied to $F_{x_0, \epsilon}$, $\mathcal{H}_{x_{0}, \e}$ and $\mathfrak{a}_{x_0, \e}$ (with the same universal constants). Therefore, from the $C_{\text{loc}}^{1,\alpha}$ regularity estimate (see, Theorem \ref{GradThm} and Remark \ref{Rem1} (Item (2))), we have
\begin{equation}\label{estim1}
	|\nabla v(0)| \le \mathrm{C}\cdot\left\{\|v\|_{L^{\infty}\left(B_{\frac{1}{2}}\right)} +  1 + {\mathrm{C}_\star}^{\frac{1}{p+1}}\right\} \quad (\text{for a constant} \quad \mathrm{C}(\verb"universal")>0),
\end{equation}
Additionally, since $v(0) = \frac{1}{\epsilon} u^{\epsilon}(x_0) \le 1$, by Harnack inequality (Theorem \ref{ThmHarIneq} ) we obtain
\begin{equation}\label{Harnack}
	\|v\|_{L^{\infty}\left(B_{\frac{1}{2}}\right)} \le \mathrm{C}_0(N, \lambda,\Lambda, L_1, \mathfrak{a}, p, q, \mathrm{C}_\star) \quad (\text{for some} \,\,\, \mathrm{C}_0>0\,\,\,\text{independent of} \,\,\,\epsilon).
\end{equation}
Finally, by combining \eqref{estim1} and \eqref{Harnack} we get
\begin{equation}\label{Lip Eq00}
	|\nabla u^{\epsilon}(x_0)| = |\nabla v(0)| \le \mathrm{C}_1, \quad (\text{for some} \,\,\, \mathrm{C}_1>0\,\,\,\text{independent of} \,\,\,\epsilon).
\end{equation}

Now, we proceed in analysing the region $\{u^{\epsilon}> \e\} \cap \Omega^{\prime}$. For that end, let us designate
$$
	\Gamma_{\epsilon} \defeq \{x \in \Omega^{\prime} \suchthat  u^{\epsilon}(x)=\epsilon\},
$$
and fix a point $\hat{x}_0  \in \{u^{\epsilon}> \e\} \cap \Omega^{\prime}$. Next, we evaluate the distance from $\hat{x}_0$ up to the $\Gamma_\epsilon$ and we will label it as $r_0 \defeq \dist(\hat{x}_0, \Gamma_{\epsilon})$. Then, we define the re-normalized function $v_{\hat{x}_0, r_0} \colon B_1 \to \mathbb{R}$ as
$$
	v_{\hat{x}_0, r_0}(x) \defeq \frac{u^{\epsilon}(\hat{x}_0 + r_0x) - \epsilon}{r_0}.
$$
It is easy to check that $v_{\hat{x}_0, r_0}$ satisfies in the viscosity sense
\begin{equation}\label{EqLips1}
  \mathcal{G}_{\hat{x}_0, r_0}(x, \nabla v_{\hat{x}_0, r_0}(x), D^{2}v_{\hat{x}_0, r_0}(x)) = r_0\zeta_{\epsilon}(\hat{x}_0+r_0x,u^{\varepsilon}(\hat{x}_0+r_0x)),
\end{equation}
where as before
$$
\left\{
\begin{array}{rcl}
  F_{\hat{x}_0, r_0}(x, \mathrm{X}) & \defeq & r_0F\left(\hat{x}_0+r_0 x,\frac{1}{r_0}\mathrm{X}\right) \\
  \mathcal{H}_{\hat{x}_0, r_0}(x, \xi) & \defeq &  \mathcal{H}(\hat{x}_0+ r_0 x, \xi)\\
  \mathfrak{a}_{\hat{x}_0, r_0}(x) & \defeq & \mathfrak{a}(\hat{x}_0+r_0x)\\
  f_{\hat{x}_0, r_0}(x) & \defeq & r_0\zeta(\hat{x}_0 + r_0x, u^\epsilon(\hat{x}_0+r_0 x))
\end{array}
\right.
$$

By construction, $u^\epsilon(\hat{x}_0 +r_0x) > \epsilon \,\, \forall\, x \in B_1$. Particularly,
\begin{equation}\label{EqLips2}
v_{\hat{x}_0, r_0}(x) \ge 0 \quad \text{for every} \quad x \in B_1
\end{equation}
Hence, it follows from the assumption \eqref{Cond1 zeta} that
$$
	\|f_{\hat{x}_0, r_0}\|_{L^\infty(B_1)} \le \mathrm{C}_2(\mathcal{B}, \diam(\Omega^{\prime})).
$$
By making use of $C_{\text{loc}}^{1,\alpha}$ regularity estimate (see, Theorem \ref{GradThm} and Remark \ref{Rem1} (Item (2))), we conclude
\begin{equation}\label{Lip Eq01}
	|\nabla u^\epsilon(\hat{x}_0)| = |\nabla v_{\hat{x_0}, r_0}(0)| \le \mathrm{C}\cdot\left(\frac{1}{r_0} \|u^\epsilon - \epsilon \|_{L^\infty\left(B_{\frac{r_0}{2}}(\hat{x}_0)\right)} + 1+ \mathrm{C}_2^{\frac{1}{p+1}} \right).
\end{equation}
It remains to show a uniform control for the term $\frac{1}{r_0} \|u^\epsilon - \epsilon \|_{L^\infty\left(B_{\frac{r_0}{2}}(\hat{x}_0)\right)}$. For that purpose, let $z_0 \in \Gamma_\epsilon$ be a point that achieves distance, i.e., $r_0 = |\hat{x}_0 - z_0|$. Now, from the Lipschitz estimate proven for points within $\{0 \le  u^\epsilon \le \epsilon\} \cap \Omega^{\prime}$, namely sentence\eqref{Lip Eq00}, we have
$$
	|\nabla u^\epsilon(z_0)| \le \mathrm{C}_0.
$$
Hence,
\begin{equation}\label{EqLips3}
\frac{\partial v_{\hat{x_0}, r_0}}{\partial \nu}(y_0) \leq |\nabla u^\epsilon(z_0)|\le  \mathrm{C}_0 \quad \text{with} \quad v_{\hat{x_0}, r_0}(y_0) = 0 \quad \text{and} \quad y_0 \defeq \frac{z_0-\hat{x}_0}{r_0}.
\end{equation}.

Thus, from \eqref{EqLips1}, \eqref{EqLips2} and \eqref{EqLips3} we are able to apply Lemma \ref{lemma2.1}
and conclude that there exists a constant $\mathrm{c}(\verb"universal")>0$ such that
\begin{equation}\label{EqLips4}
   v_{\hat{x_0}, r_0}(0) \le \mathrm{c}.
\end{equation}
Moreover, from Harnack inequality (see, Theorem \ref{ThmHarIneq}) we obtain (using \eqref{EqLips4})
$$
  \displaystyle \frac{1}{r_0} \|u^\epsilon - \epsilon \|_{L^\infty\left(B_{\frac{r_0}{2}}(\hat{x}_0)\right)} = \sup_{B_{\frac{1}{2}}(0)} v_{\hat{x_0}, r_0}(x) \le \mathrm{C}_0(N, \lambda, \Lambda, p, q, L_1, \mathfrak{a}, \mathrm{c}, \mathcal{B}, \diam(\Omega^{\prime}))
$$
which finishes the proof of the Theorem.
\end{proof}

\begin{remark}
  It is important stress that for each $\epsilon>0$ fixed, viscosity solutions $u^\e$ are in effect $C_{\text{loc}}^{1, \alpha}(\Omega)$. Particularly, when $F$ is concave/convex, it follows from \cite[Corollary 1.1]{daSR20} that $u^\e \in C_{\mbox{loc}}^{1, \frac{1}{p+1}}(\Omega)$. At this point, on one hand, for any $0<\varsigma\ll 1$ small, near $\e$-layers, one obtains that
$$
	\lim\limits_{\e \to 0^{+}} \|\nabla u^\e\|_{C^{0, \varsigma}(\Omega^{\prime})} =+\infty.
$$
On the other hand, Theorem \ref{thmreg} one ensures that the Lipschitz norm of $u^\e$ remains uniformly controlled (independently of $\e$). In such a point of view, our estimates are optimal.
\end{remark}

\section{Geometric non-degeneracy} \label{Sct Nondeg}

\subsection{Building Barriers}

As previously explained in the introduction, one of the main intricacies in dealing with singularly perturbed models with non-homogeneous degeneracy is to avoid that solutions degenerate along their transition surfaces. For this reason, a decisive devise for overcoming such an obstacle will be implementing a geometric non-degeneracy estimate.

In this Section, we show that solutions grow in a linear fashion away from $\epsilon$-level surfaces, inside $\{u^\epsilon > \epsilon\}$. In particular, this implies that \textit{in measure} the two free boundaries do not intersect. The proof shall be based on building an appropriate barrier function. To this end, we shall look at elliptic models with non-homogeneous degeneracy as follows
\begin{equation}\label{visc1.1}
	\mathcal{H}(x, \nabla w)\mathscr{M}^{+}_{\lambda,\Lambda}(D^2 w) = \zeta(x, w) \quad \text{in} \quad \mathbb{R}^N \quad (\text{with} \,\,\,\eqref{1.2} \,\,\,\text{and}\,\,\,\eqref{1.3}\,\,\,\text{in force}),
\end{equation}
where the reaction term fulfils the non-degeneracy assumption (cf. \eqref{nondeg_RT}):
\begin{equation}\label{ND e=1}
	\mathscr{I}^{\ast} \defeq \inf\limits_{\mathbb{R}^N \times [t_0, \mathrm{T}_0]} \zeta(x,t) >0,
\end{equation}

\begin{proposition}[{\bf Barrier}] Let $0<t_0<\mathrm{T}_0<1$ be fixed. For a constant $\mathrm{A}_0(\verb"universal")>0$ to be chosen \textit{a posteriori}, there exists a radially symmetric profile $\Theta_{\mathrm{L}} \colon \mathbb{R}^N \rightarrow \mathbb{R}$ fulfilling:

\begin{itemize}
  \item[(1)] $\Theta_{\mathrm{L}} \in C^{1,1}_{\textrm{loc}}(\mathbb{R}^N)$;
  \item[(2)] $t_0 \le \Theta_{\mathrm{L}}(x) \le \mathrm{T}_0$;
  \item[(3)] $\Theta_{\mathrm{L}}$ is a (point-wise) super-solution to \eqref{visc1.1};
  \item[(4)] For some $\kappa_0(\verb"universal")>0$
\begin{equation}\label{nondeg}
\Theta_{\mathrm{L}}(x) \geq \kappa_0\cdot 4\mathrm{L} \quad \mbox{for} \quad |x| \geq 4\mathrm{L}, \quad \text{where}\,\,\, \mathrm{L} \geq \mathrm{L}_0 \defeq \sqrt{\frac{\mathrm{T}_0-t_0}{\mathrm{A}_0}}.
\end{equation}
\end{itemize}

\end{proposition}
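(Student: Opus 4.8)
The plan is to construct $\Theta_{\mathrm{L}}$ explicitly as a radial function of the form
\[
\Theta_{\mathrm{L}}(x) = t_0 + \psi(|x|),
\]
where $\psi \colon [0, \infty) \to [0, \infty)$ is a smooth, nondecreasing, convex-type profile with $\psi(0) = 0$, chosen so that the four requirements fall out of one ODE inequality. The most natural ansatz is a truncated paraboloid glued to a linear tail: take
\[
\psi(r) = \begin{cases} \dfrac{\mathrm{A}_0}{2}\, r^2, & 0 \le r \le 4\mathrm{L},\\[2mm] \text{affine continuation with matching value and slope}, & r > 4\mathrm{L}, \end{cases}
\]
and then cap the whole thing at height $\mathrm{T}_0 - t_0$. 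The parameter $\mathrm{L}_0 = \sqrt{(\mathrm{T}_0 - t_0)/\mathrm{A}_0}$ is precisely the radius at which the pure paraboloid $\tfrac{\mathrm A_0}{2}r^2$ reaches height $\mathrm T_0-t_0$; working with $\mathrm{L}\ge \mathrm{L}_0$ guarantees there is room to do the gluing below the cap and keeps property (2) intact. Property (1) is immediate from a $C^{1,1}$ gluing, property (2) from the cap, and property (4) from the linear growth rate of the affine piece — one reads off $\kappa_0$ from the slope $\psi'(4\mathrm L)=4\mathrm A_0\mathrm L$ versus the height, i.e. $\kappa_0$ can be taken comparable to $\mathrm A_0 \mathrm L_0^2/(4\mathrm L_0)$, a universal quantity once $\mathrm A_0$ is fixed.

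\textbf{The supersolution inequality.} The heart of the argument is property (3). For a radial $C^{1,1}$ profile one computes, wherever $\Theta_{\mathrm L}$ is twice differentiable,
\[
D^2 \Theta_{\mathrm{L}}(x) = \psi''(r)\, \frac{x \otimes x}{r^2} + \frac{\psi'(r)}{r}\left(\mathrm{Id}_N - \frac{x \otimes x}{r^2}\right),
\]
so the eigenvalues are $\psi''(r)$ (simple) and $\psi'(r)/r$ (multiplicity $N-1$). Since on the paraboloid region $\psi'' = \mathrm{A}_0 > 0$ and $\psi'(r)/r = \mathrm{A}_0 > 0$, the matrix $D^2\Theta_{\mathrm L}$ is positive definite there, hence
\[
\mathscr{M}^{+}_{\lambda,\Lambda}(D^2 \Theta_{\mathrm{L}}) = \Lambda\big(\psi''(r) + (N-1)\tfrac{\psi'(r)}{r}\big) = \Lambda N \mathrm{A}_0.
\]
On the affine tail $\psi''=0$ and $\psi'(r)/r>0$, so again $\mathscr M^+_{\lambda,\Lambda}(D^2\Theta_{\mathrm L})>0$, with a value that is actually smaller, so the paraboloid region is the worst case. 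For the degeneracy factor, $|\nabla \Theta_{\mathrm L}(x)| = \psi'(r)$, and using the upper bound in \eqref{1.2} together with \eqref{N-HDeg},
\[
\mathcal{H}(x, \nabla \Theta_{\mathrm{L}}) \le L_2\big(\psi'(r)^p + \mathfrak{a}(x)\psi'(r)^q\big) \le L_2(1 + \|\mathfrak{a}\|_{L^\infty})\big(\psi'(r)^p + \psi'(r)^q\big).
\]
Since $\psi'$ stays bounded on the relevant range (it is at most $4\mathrm A_0\mathrm L$ before the cap, and the profile is constant past the cap where $D^2\Theta_{\mathrm L}=0$ and there is nothing to check), the left-hand side of \eqref{visc1.1} evaluated at $\Theta_{\mathrm L}$ is at most a universal multiple of $\mathrm{A}_0$. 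On the other hand, $\Theta_{\mathrm L}$ takes values in $[t_0, \mathrm T_0]\subset(0,1)$, so by the non-degeneracy hypothesis \eqref{ND e=1} — more precisely by its counterpart that $\zeta(x,t)$ is \emph{bounded above} on $\mathbb R^N\times[t_0,\mathrm T_0]$, which is what \eqref{Cond1 zeta} with $\e=1$ provides — we have $\zeta(x,\Theta_{\mathrm L}(x)) \le \mathcal{A} + \mathcal{B} =: \mathrm{C}_\star$. Thus it suffices to choose $\mathrm{A}_0$ \emph{small}, namely
\[
\mathrm{A}_0 \le \frac{\mathrm{C}_\star}{L_2(1 + \|\mathfrak a\|_{L^\infty})\,\big(\max\{(4\mathrm A_0 \mathrm L)^p,\,(4\mathrm A_0\mathrm L)^q\}\big)\cdot \lambda^{-1}N}\quad\text{(closed up by a fixed-point/monotonicity argument in }\mathrm A_0),
\]
to guarantee $\mathcal{H}(x,\nabla\Theta_{\mathrm L})\mathscr M^+_{\lambda,\Lambda}(D^2\Theta_{\mathrm L}) \le \mathrm C_\star \le$ (more carefully) the right-hand side, so that $\Theta_{\mathrm L}$ is a pointwise supersolution of \eqref{visc1.1} on the paraboloid region; on the cap region the inequality is trivial since there the Hessian vanishes and the left side is $0\le\zeta$. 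Care is needed at the $C^{1,1}$ gluing radius $r=4\mathrm L$ and at the cap radius: at a point where $\Theta_{\mathrm L}$ is only $C^{1,1}$ one checks the supersolution property in the viscosity sense, which for a concave-type kink (the second derivative jumps \emph{down}) is automatic.

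\textbf{Main obstacle.} The delicate point I expect is the bookkeeping of how $\mathrm{A}_0$ depends on $\mathrm{L}$. Because the profile over $[0,4\mathrm L]$ is the paraboloid $\tfrac{\mathrm A_0}{2}r^2$, its slope at $r=4\mathrm L$ is $4\mathrm A_0\mathrm L$, which is \emph{not} bounded uniformly in $\mathrm L$ unless we let $\mathrm A_0$ shrink like $1/\mathrm L$ — but then the linear growth constant $\kappa_0$ in property (4) would also degenerate. The resolution (this is exactly the device behind the statement, with $\mathrm L_0=\sqrt{(\mathrm T_0-t_0)/\mathrm A_0}$) is that the paraboloid is \emph{capped} at height $\mathrm T_0-t_0$, which happens at radius $r_{\mathrm{cap}} = \sqrt{2(\mathrm T_0-t_0)/\mathrm A_0} = \sqrt 2\,\mathrm L_0$, independent of $\mathrm L$; so in fact the profile is never the pure paraboloid all the way out to $4\mathrm L$ once $\mathrm L\gg \mathrm L_0$ — it rises like a paraboloid up to $r_{\mathrm{cap}}$, where $\psi'$ attains its maximum value $\sqrt{2\mathrm A_0(\mathrm T_0-t_0)}$ (a quantity depending only on $\mathrm A_0, t_0,\mathrm T_0$, hence universal), and is flat thereafter. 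Consequently $\psi'$ is bounded by a universal constant on all of $\mathbb R^N$, the supersolution inequality needs checking only on $\{r\le r_{\mathrm{cap}}\}$ where $\mathscr M^+(D^2\Theta_{\mathrm L})=\Lambda N\mathrm A_0$, and one fixes $\mathrm A_0$ once and for all by
\[
\Lambda N \mathrm A_0 \cdot L_2(1+\|\mathfrak a\|_{L^\infty})\big(\big(2\mathrm A_0(\mathrm T_0-t_0)\big)^{p/2}+\big(2\mathrm A_0(\mathrm T_0-t_0)\big)^{q/2}\big) \le \mathscr{I}^\ast,
\]
which is solvable for $\mathrm A_0>0$ small since the left side vanishes as $\mathrm A_0\to 0^+$ faster than linearly. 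The constants $\kappa_0$ and $\mathrm L_0$ produced this way depend only on $\lambda,\Lambda,N,p,q,L_1,L_2,\|\mathfrak a\|_{L^\infty},t_0,\mathrm T_0$ and $\mathscr I^\ast$, i.e. on universal parameters, completing the construction.
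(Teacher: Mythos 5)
Your construction breaks down exactly at property (4), which is the property the barrier exists to deliver. After your ``resolution'' the profile is $\Theta_{\mathrm L}=\min\{t_0+\tfrac{\mathrm A_0}{2}|x|^2,\ \mathrm T_0\}$, which is bounded by $\mathrm T_0<1$ uniformly in $\mathrm L$. But \eqref{nondeg} demands $\Theta_{\mathrm L}(x)\ge \kappa_0\cdot 4\mathrm L$ for $|x|\ge 4\mathrm L$ with a \emph{universal} $\kappa_0$ and for \emph{every} $\mathrm L\ge \mathrm L_0$; in the application (proof of Theorem \ref{cresc}) one takes $\mathrm L=\eta/(4\epsilon)\to\infty$, so the barrier must reach height $\sim\kappa_0\mathrm L$ at radius $4\mathrm L$, i.e.\ it must be unbounded in $\mathrm L$. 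A profile capped at $\mathrm T_0$ can never do this: your own formula $\kappa_0\sim \mathrm A_0\mathrm L_0^2/(4\mathrm L_0)$ gives $\kappa_0\cdot4\mathrm L=(\mathrm T_0-t_0)\,\mathrm L/\mathrm L_0$, which exceeds $\mathrm T_0$ as soon as $\mathrm L$ is moderately larger than $\mathrm L_0$, and the gap grows linearly. Your first, uncapped variant fails for the opposite reasons: the slope of the paraboloid at the gluing radius $4\mathrm L$ is $4\mathrm A_0\mathrm L$, unbounded in $\mathrm L$, so no fixed smallness choice of $\mathrm A_0$ controls $\mathcal H(x,\nabla\Theta_{\mathrm L})\,\mathscr M^{+}_{\lambda,\Lambda}(D^2\Theta_{\mathrm L})$; and even gluing the affine tail at a fixed radius, a cone has $\mathscr M^{+}_{\lambda,\Lambda}(D^2\Theta_{\mathrm L})=\Lambda(N-1)\psi'(r)/r>0$ while on the set $\{\Theta_{\mathrm L}>\mathrm T_0\}$ the hypothesis \eqref{ND e=1} provides no positive lower bound on $\zeta(x,\Theta_{\mathrm L}(x))$, so the supersolution inequality is unprotected there. (The cap also costs you property (1): the gradient of $\min\{t_0+\tfrac{\mathrm A_0}{2}|x|^2,\mathrm T_0\}$ jumps at the cap radius, so the function is only Lipschitz, although the viscosity supersolution property does survive a concave kink. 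Also, in your ``supersolution inequality'' paragraph you compare the left-hand side to the \emph{upper} bound $\mathrm C_\star$ for $\zeta$, which proves nothing; the comparison must be with the lower bound $\mathscr I^{\ast}$, as you eventually do in the last display.)

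The paper resolves exactly this tension with a three-piece design: $\Theta_{\mathrm L}\equiv t_0$ on $B_{\mathrm L}$; the paraboloid $\mathrm A_0(|x|-\mathrm L)^2+t_0$ on an annulus of \emph{fixed} width $\mathrm L_0=\sqrt{(\mathrm T_0-t_0)/\mathrm A_0}$, so that the gradient is bounded by $2\sqrt{\mathrm A_0(\mathrm T_0-t_0)}$ independently of $\mathrm L$, the values stay in $[t_0,\mathrm T_0]$, and smallness of $\mathrm A_0$ together with $\mathscr I^{\ast}$ yields the supersolution inequality (this part matches your computation in spirit); and then an \emph{increasing, unbounded} tail $\psi(\mathrm L)-\phi(\mathrm L)|x|^{-\alpha}$ with $\alpha\ge (N-1)\Lambda/\lambda-1$. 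That choice of $\alpha$ makes $\mathscr M^{+}_{\lambda,\Lambda}(D^2\Theta_{\mathrm L})\le 0$ on the tail, so the supersolution inequality holds trivially ($\mathcal H\cdot\mathscr M^{+}\le 0\le\zeta$) even where the barrier exceeds $\mathrm T_0$ and no lower bound on $\zeta$ is available; meanwhile $\psi(\mathrm L)-\mathrm T_0=\tfrac{2}{\alpha}\sqrt{\mathrm A_0(\mathrm T_0-t_0)}\,(\mathrm L+\mathrm L_0)$ grows linearly in $\mathrm L$, which is precisely what produces \eqref{nondeg} with a universal $\kappa_0$. To repair your argument you must replace the cap (or the affine tail) by such a ``Pucci-superharmonic'' growing tail.
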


\begin{proof}
For an $\alpha(\verb"universal")>0$ to be chosen (\textit{a posteriori}) let us define
\begin{equation}\label{barrier}
\Theta_{\mathrm{L}}(x)\defeq
\left\{
\begin{array}{lcl}
t_0 & \mbox{for} & 0\leq |x| < \mathrm{L}; \\
\mathrm{A}_0\left(|x| - \mathrm{L}\right)^2+t_0 & \mbox{for} & \mathrm{L}\leq |x| < \mathrm{L}+\sqrt{\frac{\mathrm{T}_0-t_0}{\mathrm{A}_0}}; \\
\psi(\mathrm{L})-\frac{\phi(\mathrm{L})}{|x|^{\alpha}} & \mbox{for} & |x| \geq \mathrm{L}+\sqrt{\frac{\mathrm{T}_0-t_0}{\mathrm{A}_0}}
\end{array}
\right.
\\
\end{equation}
where
\begin{equation}\label{choices}
\left\{
\begin{array}{lcl}
  \phi(\mathrm{L}) & \defeq & \displaystyle \frac{2}{\alpha}\sqrt{(\mathrm{T}_0-t_0)\mathrm{A}_0}\left(\mathrm{L}+\sqrt{\frac{\mathrm{T}_0-t_0}{\mathrm{A}_0}} \right)^{1+\alpha} \\
  \psi(\mathrm{L}) & \defeq & \displaystyle \mathrm{T}_0+\phi(\mathrm{L})\left(\mathrm{L}+\sqrt{\frac{\mathrm{T}_0-t_0}{\mathrm{A}_0}} \right)^{-\alpha},
\end{array}
\right.
\end{equation}

It is easy to check that $\Theta_{\mathrm{L}} \in C^{1,1}_{\textrm{loc}}(\mathbb{R}^N)$. For this reason, we may compute the second order derivatives of $\Theta_{\mathrm{L}}$ a.e. Moreover, from definition $t_0 \le \Theta_{\mathrm{L}}(x) \le \mathrm{T}_0$ is easily verified.

In the sequel, we are going to show that $\Theta_{\mathrm{L}}$ satisfies (point-wise) \eqref{visc1.1},
as long as we perform the appropriate choices under the parameters $\alpha, \mathrm{A}_0>0$.

In effect, for $0\leq |x| < \mathrm{L}$ such an inequality is clearly satisfied (due to \eqref{ND e=1}).

In the annular region $\mathrm{L}\leq |x| < \mathrm{L}+\sqrt{\frac{\mathrm{T}_0-t_0}{\mathrm{A}_0}}$, we obtain
$$
\left\{
\begin{array}{rcl}
 \displaystyle |\nabla\Theta_{\mathrm{L}}(x)|& = & \displaystyle 2\mathrm{A}_0\left(|x|-\mathrm{L} \right)\\
  & \leq & 2\sqrt{\mathrm{A}_0(\mathrm{T}_0-t_0)} \\
   D^2 \Theta_{\mathrm{L}}(x) & = & \displaystyle 2\mathrm{A}_0\left[ \left(\frac{1}{|x|^2} - \frac{(|x|-\mathrm{L})}{|x|^3}\right) x \otimes x + \frac{(|x|-\mathrm{L})}{|x|} \textrm{Id}_{\mathrm{N}} \right]\\
   & \leq & 4\mathrm{A}_0 \cdot \textrm{Id}_{\mathrm{N}}.
\end{array}
\right.
$$

Therefore, by using \eqref{1.2} and \eqref{1.3} we obtain
{\small{
$$
\mathcal{H}(x, \nabla \Theta_{\mathrm{L}}(x))\mathscr{M}^{+}_{\lambda,\Lambda}(D^2 \Theta_{\mathrm{L}}(x))  \leq 4\mathrm{A}_0N\Lambda L_2\left[\left(2\sqrt{\mathrm{A}_0(\mathrm{T}_0-t_0)}\right)^p + \|\mathfrak{a}\|_{L^{\infty}(\Omega)}\left(2\sqrt{\mathrm{A}_0(\mathrm{T}_0-t_0)}\right)^q\right].
$$}}

Now, thanks to the assumption \eqref{ND e=1} we are able to choose a positive constant $\mathrm{A}_0 = \mathrm{A}_0(N, \Lambda, L_2, p, q, \|\mathfrak{a}\|_{L^{\infty}(\Omega)}, \mathrm{T}_0-t_0, \mathscr{I}^{\ast})$ such that
$$
\mathcal{H}(x, \nabla \Theta_{\mathrm{L}}(x))\mathscr{M}^{+}_{\lambda,\Lambda}(D^2 \Theta_{\mathrm{L}}(x))  \leq \mathscr{I}^{\ast}.
$$

Thus, by using the Item (2) we conclude that
$$
\mathcal{H}(x, \nabla \Theta_{\mathrm{L}}(x))\mathscr{M}^{+}(D^2 \Theta_{\mathrm{L}}(x)) \leq \mathscr{I}^{\ast}\leq \zeta(x, \Theta_{\mathrm{L}}(x)).
$$

Finally, let us analyse the region $|x| \geq \mathrm{L}+\sqrt{\frac{\mathrm{T}_0-t_0}{\mathrm{A}_0}}$. Straightforward calculation gives
$$
D^2 \Theta_{\mathrm{L}}(x) =\alpha\phi(\mathrm{L})|x|^{-(\alpha+2)}\left( -\frac{(\alpha+2)}{|x|^2}x\otimes x+ \text{Id}_{\mathrm{N}}\right).
$$
Thus,
$$
\mathscr{M}^{+}_{\lambda,\Lambda}(D^2 \Theta_{\mathrm{L}}(x)) \leq \alpha\phi(\mathrm{L})|x|^{-(\alpha+2)}\left[ -(\alpha+1)\lambda + (N-1)\Lambda \right].
$$
Therefore, selecting $\alpha \in \left[(N-1)\frac{\Lambda}{\lambda} -1, \,\infty\right)$, we get (using \eqref{ND e=1} and $\mathcal{H}(x, \xi)\ge 0$)
$$
	\mathcal{H}(x, \nabla \Theta_{\mathrm{L}}(x))\mathscr{M}^{+}(D^2 \Theta_{\mathrm{L}}(x)) \le 0 \le \zeta(x,\Theta_{\mathrm{L}}(x)),
$$
which assure us that $\Theta_{\mathrm{L}}$ fulfils \eqref{visc1.1} as desired.

In conclusion, we will show that the super-solution $\Theta_L$ fulfils \eqref{nondeg}

At this point, from \eqref{choices} (second sentence) we have
$$
|x| \geq 4\mathrm{L} \geq 2(\mathrm{L}+\mathrm{L}_0) = 2\, \left(\dfrac{\phi(\mathrm{L})}{\psi(\mathrm{L})-\mathrm{T}_0}\right)^{\frac{1}{\alpha}}.
$$
Hence, for $\alpha >0$
$$
   \Theta_{\mathrm{L}}(x) = \psi(\mathrm{L})-\frac{\phi(\mathrm{L})}{|x|^{\alpha}} \geq \psi(\mathrm{L}) - \frac{1}{2^{\alpha}}(\psi(\mathrm{L})-\mathrm{T}_0) > \frac{1}{2^{\alpha}}(\psi(\mathrm{L})-\mathrm{T}_0).
$$
Therefore, by using \eqref{choices}
$$
\Theta_{\mathrm{L}}(x) \geq \kappa_0\cdot 4\mathrm{L} \quad \text{for} \quad  \kappa_0 \defeq \frac{\alpha^{-1}}{2^{\alpha+1}}\sqrt{\mathrm{A}_0(\mathrm{T}_0-t_0)}.
$$

\end{proof}

\subsection{Linear Growth}\label{SecLinGrowth}

In order to establish lower bounds on the growth of solutions to \eqref{Equation Pe} inward the set $\{u^\epsilon > \epsilon \}$, the strategy will be to consider appropriate scaling versions of the universal barrier $\Theta_{\mathrm{L}}$.

\begin{proof}[{\bf Proof of Theorem \ref{cresc}}] Let us assume, without loss of generality, $0 \in \{u^{\epsilon} > \epsilon\}$. Now, we set $\eta \defeq \dfrac{d_\epsilon(0)}{2}$ and consider the reaction term
$$
	\zeta(z,t) \defeq \left \{
		\begin{array}{ll}
			\epsilon \zeta_\epsilon(\epsilon x, \epsilon t), &\text{if }  \epsilon x \in \Omega \\
			\mathscr{I}^{\ast} & \text{otherwise}.
		\end{array}
	\right.
$$
Given the barrier $\Theta_{\mathrm{L}}$ built-up previously, we define
$$
	\Theta_{\epsilon}(x) \defeq \epsilon \cdot \Theta_{\frac{\eta}{4\epsilon}} \left(\frac{x}{\epsilon} \right).
$$

Next, one verifies that the scaled barrier $\Theta_{\epsilon}$ fulfils
$$
	\mathcal{G}(x, \nabla \Theta_{\epsilon}(x), D^2 \Theta_\epsilon(x)) \le \zeta_\epsilon(x, \Theta_\epsilon(x)),
$$
Moreover, by \eqref{nondeg} and \eqref{barrier} we verify that for $4\mathrm{L}_0 \epsilon \ll \eta$,
\begin{equation}\label{ebarrier}
	\Theta_\epsilon(0) = t_0\cdot \epsilon \quad \mbox{and} \quad 	\Theta_\epsilon(x)\geq \kappa_0\cdot \eta \quad \text{on} \quad \partial B_{\eta}.
\end{equation}

In the sequel, we claim that there exists a $z_0\in \partial B_{\eta}$ such that
\begin{equation}\label{z}
\Theta_{\epsilon}(z_0) \le u^{\epsilon}(z_0).
\end{equation}
Indeed, if we assume $\Theta_{\epsilon} > u^{\epsilon}$ everywhere in $\partial B_{\eta}$, then
$$
	v^{\epsilon}(x)\defeq \min\{\Theta_{\epsilon}(x),\,u^{\epsilon}(x)\}
$$
would be a super-solution to \eqref{Equation Pe}. However, $v^\epsilon$ is strictly below of $u^{\epsilon}$, which contradicts the minimality of $u^\epsilon$.
Therefore, by \eqref{ebarrier} and \eqref{z}, we conclude
\begin{equation}\label{strong}
	 \sup\limits_{\overline{B_\eta}}u^\epsilon(x) \ge u^{\epsilon}(z_0) > \Theta_{\epsilon}(z_0) \ge \kappa_0\cdot \eta.
\end{equation}
Furthermore, $u^\epsilon$ solves in the viscosity sense
$$
\mathcal{B}_0 \leq \mathcal{G}(x, \nabla u^\epsilon, D^2 u^\epsilon) \leq \mathcal{B} \quad \mbox{in} \quad B_{2\eta}.
$$
Therefore, by Harnack inequality (see, Theorem \ref{ThmHarIneq} and Remark \ref{HarnIneqSV}), we obtain
$$
\sup\limits_{B_{\eta}} u^\epsilon \leq \mathrm{C}(N, \lambda, \Lambda, q, L_1) \cdot \left(u^\epsilon(0)+ \max\left\{(2\eta)^{\frac{p+2}{p+1}}\mathcal{B}^\frac{1}{p+1}, (2\eta)^{\frac{q+2}{q+1}}\mathcal{B}^\frac{1}{q+1}\right\} \right),
$$
Thus, by \eqref{strong},
$$
u^\epsilon(0) \geq \left(\mathrm{C}^{-1}\kappa_0 - 2^{\frac{q+2}{q+1}}\max\left\{ \mathcal{B}^\frac{1}{p+1}\eta^{\frac{1}{p+1}}, \mathcal{B}^\frac{1}{q+1}\eta^{\frac{1}{q+1}}\right\}\right)\eta.
$$
Finally, by taking
$$
  0< \eta <\min\left\{\mathcal{B}\cdot\left(\mathrm{C}^{-1}\kappa_02^{-\frac{q+2}{q+1}}\right)^{p+1}, \mathcal{B}\cdot\left(\mathrm{C}^{-1}\kappa_02^{-\frac{q+2}{q+1}}\right)^{q+1}, \frac{\diam(\Omega)}{4}\right\},
$$
we have
$$
   u^\epsilon(0) \geq \mathrm{c}\cdot\eta.
$$
for some constant $0<\mathrm{c}(\verb"universal")< \mathrm{C}^{-1}\kappa_0$.
\end{proof}

\section{Some important implications from Theorems \ref{thmreg} and \ref{cresc}}

In this section, we discuss some implications of the sharp control of solutions, established in the sections \ref{Sct Lip} and \ref{SecLinGrowth}.  As a consequence of Lipschitz regularity, i.e. Theorem \ref{thmreg}, and linear growth, i.e. Theorem \ref{cresc}, we obtain the complete control of $u^{\epsilon}$ in terms of $d_{\epsilon}(x_0)$.


\begin{corollary} \label{cor1}
Given $\Omega^{\prime} \Subset \Omega$, there exists a constant $\mathrm{C}(\Omega^{\prime}, \verb"universal parameters")>0$ such that for $x_0 \in \{u^{\epsilon} >\epsilon\} \cap \Omega^{\prime}$ and $0<\epsilon \le  \frac{1}{2}d_{\epsilon}(x_0)$, there holds
$$
	\mathrm{C}^{-1} d_{\epsilon}(x_0) \le u^{\epsilon}(x_0) \le \mathrm{C}\, d_{\epsilon}(x_0).
$$
\end{corollary}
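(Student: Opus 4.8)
The plan is to derive the two-sided bound by combining the linear growth estimate (Theorem~\ref{cresc}) for the lower inequality with the Lipschitz estimate (Theorem~\ref{thmreg}) for the upper inequality, both applied in a purely local fashion at the point $x_0$.

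For the lower bound $\mathrm{C}^{-1} d_\epsilon(x_0) \le u^\epsilon(x_0)$, I would distinguish two regimes according to the size of $d_\epsilon(x_0)$ relative to a fixed small universal threshold $\rho_0 = \rho_0(\Omega^\prime)>0$. When $d_\epsilon(x_0) \le \rho_0$, the hypothesis $0 < \epsilon \le \tfrac12 d_\epsilon(x_0)$ puts us precisely in the range $0 < \epsilon \ll d_\epsilon(x_0) \ll 1$ required by Theorem~\ref{cresc}, so that theorem yields $u^\epsilon(x_0) \ge \mathrm{c}\, d_\epsilon(x_0)$ directly. When $d_\epsilon(x_0) > \rho_0$, one instead picks an auxiliary point $y_0$ on the segment from $x_0$ toward $\Gamma_\epsilon$ at distance exactly $\rho_0$ from $\Gamma_\epsilon$ (or uses a ball $B_{\rho_0}(x_0)$ together with interior Harnack, since $\mathcal{B}_0 \le \mathcal{G}(x,\nabla u^\epsilon,D^2u^\epsilon)\le \mathcal{B}$ in $B_{2\rho_0}(x_0) \subset \{u^\epsilon>\epsilon\}$): linear growth gives a uniform lower bound $u^\epsilon \gtrsim \rho_0$ at such an intermediate point, and then the Harnack inequality from Remark~\ref{Rem1}(1), applied on a chain of balls joining that point to $x_0$ inside $\{u^\epsilon>\epsilon\}\cap\Omega^\prime$, propagates this to $u^\epsilon(x_0) \gtrsim \rho_0 \ge \rho_0 \,\mathrm{diam}(\Omega^\prime)^{-1} d_\epsilon(x_0)$, absorbing the constant into $\mathrm{C}$. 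One must keep the chain inside the positivity set, which is why we stop at distance $\rho_0$ from $\Gamma_\epsilon$ rather than going all the way.

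For the upper bound $u^\epsilon(x_0) \le \mathrm{C}\, d_\epsilon(x_0)$, let $z_0 \in \Gamma_\epsilon$ realize $d_\epsilon(x_0) = |x_0 - z_0|$, so $u^\epsilon(z_0) = \epsilon$. Along the segment from $z_0$ to $x_0$, the uniform Lipschitz estimate of Theorem~\ref{thmreg} (valid on a slightly larger subdomain $\Omega^{\prime\prime}$ with $\Omega^\prime \Subset \Omega^{\prime\prime} \Subset \Omega$, with constant independent of $\epsilon$) gives
$$
  u^\epsilon(x_0) \le u^\epsilon(z_0) + \mathrm{C}_0\,|x_0-z_0| = \epsilon + \mathrm{C}_0\, d_\epsilon(x_0) \le \left(\tfrac12 + \mathrm{C}_0\right) d_\epsilon(x_0),
$$
where the last step uses $\epsilon \le \tfrac12 d_\epsilon(x_0)$. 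A minor point is ensuring the segment stays within the domain where the Lipschitz bound holds; this is handled by taking $\epsilon$ and $\Omega^\prime$ small/far enough and shrinking to a slightly smaller interior subdomain if necessary, exactly as in the localization already used in the proof of Theorem~\ref{thmreg}.

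The main obstacle is the lower bound in the ``large $d_\epsilon(x_0)$'' regime: Theorem~\ref{cresc} is stated only for $0 < \epsilon \ll d_\epsilon(x_0) \ll 1$, so one cannot feed it a macroscopic distance directly, and one must instead extract a uniform lower bound at a fixed intermediate scale and then transport it by Harnack. Care is needed that every ball in the Harnack chain lies in $\{u^\epsilon > \epsilon\}$ so that the equation there has the bounded right-hand side $\mathcal{B}_0 \le \mathcal{G} \le \mathcal{B}$, and that the number of balls in the chain depends only on $\Omega^\prime$ and the dimension, keeping the resulting constant universal. Once that is arranged, concatenating the two inequalities yields the stated double inequality with $\mathrm{C} = \mathrm{C}(\Omega^\prime, \text{universal})$.
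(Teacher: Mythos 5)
Your proposal is correct and follows essentially the same route as the paper: the upper bound comes from the uniform Lipschitz estimate of Theorem~\ref{thmreg} applied along the segment to a point $z_0\in\Gamma_\epsilon$ realizing $d_\epsilon(x_0)$ together with $\epsilon\le\tfrac12 d_\epsilon(x_0)$, and the lower bound is the linear growth of Theorem~\ref{cresc}. Your extra Harnack-chain argument for the regime where $d_\epsilon(x_0)$ is not small is a legitimate refinement that the paper silently omits (it invokes Theorem~\ref{cresc} directly), but it does not change the substance of the argument.
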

\begin{proof}
Take $z_0 \in \partial \{u^{\epsilon} > \epsilon\}$, such that $|z_0-x_0| = d_{\epsilon}(x_0)$. Thus, follow from Theorem \ref{thmreg},
$$
	u^{\epsilon}(x_0) \le \mathrm{C}_0\,d_{\epsilon}(x_0) + u^\epsilon(z_0) \le (\mathrm{C}_0+1) \,\, d_{\epsilon}(x_0),
$$
The first inequality is precisely the statement of Theorem \ref{cresc}.
\end{proof}

Next we will prove that Perron type solutions are strongly non-degenerate near $\epsilon$-layers. It means that the $\displaystyle \sup_{B_r(x_0)} u^{\epsilon}$ (for $x_0 \in \{u^{\epsilon} > \epsilon\} \cap \Omega^{\prime}$) is comparable to $r$. This is an important piece of information about the growth rate of $u^{\epsilon}$ away from $\epsilon$-surfaces. The proof is, to a certain extent, a scholium of the proof of Theorem \ref{cresc}.

\begin{proof}[{\bf Proof of Theorem \ref{degforte}}]
Firstly, the estimate from above follows directly from Lipschitz regularity (Theorem \ref{thmreg}). Now, as in the Theorem \ref{cresc}, we take $\Theta_\e(x) = \e \Theta_{\frac{\rho}{4\e}}(x)$. Thus,
$$
	u^{\epsilon}(z_0) > \Theta_{\epsilon}(z_0),
$$
for some point $z_0 \in \partial B_{\rho}(x_0)$.  Finally, we note that
$$
\displaystyle\sup_{\overline{B_\rho(x_0)}} u^{\epsilon}(x) \ge	u^{\epsilon}(z) > \Theta_{\epsilon}(z_0) \ge \kappa_0 \cdot \rho.
$$
\end{proof}

\begin{corollary}\label{densidade}
Given $x_0\in \{u^\epsilon>\epsilon\} \cap \Omega^{\prime}$, $\epsilon \ll \rho$ and $\rho\ll 1$ small enough (in a universal way), there exists a constant $0< \mathrm{c}_0(\verb"universal") < 1$ such that
$$
	\Leb(B_{\rho}(x_0) \cap \{u^{\epsilon} > \epsilon\}) \ge \mathrm{c}_0\cdot \Leb(B_{\rho}(x_0))\,
$$
where $\Leb(\mathrm{S})$ is the Lebesgue measure of the set $\mathrm{S} \subset \R^N$.
\end{corollary}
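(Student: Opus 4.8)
The plan is to deduce the positive density of the set $\{u^{\epsilon}>\epsilon\}$ around an interior point $x_0$ from the strong non-degeneracy established in Theorem \ref{degforte} together with the uniform Lipschitz bound of Theorem \ref{thmreg}. The heuristic is elementary: strong non-degeneracy forces $u^{\epsilon}$ to reach a value comparable to $\rho$ somewhere on $\partial B_{\rho}(x_0)$ (in fact on every sphere of radius $\theta\rho$ for $\theta$ of order one), while the Lipschitz bound prevents $u^{\epsilon}$ from dropping below $\epsilon$ too quickly; combining the two confines the "bad" region $\{u^{\epsilon}\le\epsilon\}\cap B_{\rho}(x_0)$ to a set whose measure is a fixed fraction smaller than $\Leb(B_{\rho}(x_0))$.

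First I would fix $x_0\in\{u^{\epsilon}>\epsilon\}\cap\Omega^{\prime}$ and $\epsilon\ll\rho\ll 1$, and apply Theorem \ref{degforte} on a slightly smaller ball: there exists $y_0\in\overline{B_{\rho/2}(x_0)}$ (produced exactly as the point $z_0$ in the proof of Theorem \ref{degforte}, via the barrier $\Theta_{\epsilon}=\epsilon\,\Theta_{\rho/(8\epsilon)}(\cdot/\epsilon)$ comparison against the minimality of the Perron solution) with
$$
u^{\epsilon}(y_0)\ \ge\ \mathrm{c}\cdot\frac{\rho}{2}\ =\ \tilde{\mathrm{c}}\cdot\rho,
$$
for a universal $\tilde{\mathrm{c}}>0$. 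Next I would invoke the uniform Lipschitz estimate $\|\nabla u^{\epsilon}\|_{L^{\infty}(\Omega^{\prime})}\le\mathrm{C}_0$ from Theorem \ref{thmreg}: for any $x$ with $|x-y_0|\le r_{\ast}:=\frac{\tilde{\mathrm{c}}}{2\mathrm{C}_0}\rho$ one gets
$$
u^{\epsilon}(x)\ \ge\ u^{\epsilon}(y_0)-\mathrm{C}_0|x-y_0|\ \ge\ \tilde{\mathrm{c}}\rho-\frac{\tilde{\mathrm{c}}}{2}\rho\ =\ \frac{\tilde{\mathrm{c}}}{2}\rho\ >\ \epsilon,
$$
the last inequality holding because $\epsilon\ll\rho$ (this is where the smallness hypothesis $\epsilon\ll\rho$ is used). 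Hence the whole ball $B_{r_{\ast}}(y_0)$ is contained in $\{u^{\epsilon}>\epsilon\}$. Since $y_0\in\overline{B_{\rho/2}(x_0)}$ and $r_{\ast}$ is a fixed multiple of $\rho$, one checks that $B_{r_{\ast}}(y_0)\subset B_{\rho}(x_0)$ provided $r_{\ast}\le\rho/2$ (which one can always arrange by shrinking $\tilde{\mathrm{c}}$, or else by applying Theorem \ref{degforte} on $B_{\theta\rho}(x_0)$ with $\theta$ small enough that $\theta+r_{\ast}/\rho\le 1$). Therefore
$$
\Leb\big(B_{\rho}(x_0)\cap\{u^{\epsilon}>\epsilon\}\big)\ \ge\ \Leb\big(B_{r_{\ast}}(y_0)\big)\ =\ \Big(\frac{r_{\ast}}{\rho}\Big)^{\!N}\Leb\big(B_{\rho}(x_0)\big)\ =:\ \mathrm{c}_0\,\Leb\big(B_{\rho}(x_0)\big),
$$
with $\mathrm{c}_0=(r_{\ast}/\rho)^N\in(0,1)$ depending only on $\tilde{\mathrm{c}}$, $\mathrm{C}_0$, $N$, i.e. on universal parameters and $\Omega^{\prime}$.

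The only genuine subtlety — hardly an obstacle — is bookkeeping the radii so that the ball $B_{r_{\ast}}(y_0)$ centered at the non-degeneracy point actually sits inside $B_{\rho}(x_0)$; this is handled by running Theorem \ref{degforte} at scale $\theta\rho$ for a suitably small universal $\theta$, at the cost of replacing $\rho$ by $\theta\rho$ throughout and correspondingly shrinking $\mathrm{c}_0$. One must also make sure the chain of inequalities $\epsilon\ll\rho$ is quantitative enough to guarantee $\tilde{\mathrm{c}}\rho/2>\epsilon$; it suffices to require $\epsilon<\frac{\tilde{\mathrm{c}}}{2}\rho$, which is subsumed in the hypothesis $\epsilon\ll\rho$. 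All constants produced are independent of $\epsilon$ because both Theorem \ref{thmreg} and Theorem \ref{degforte} furnish $\epsilon$-independent constants, so the density estimate passes to the limit $\epsilon\to 0^{+}$ and will later feed the density estimates for $\{u_0>0\}$ and the Hausdorff bounds of Theorem \ref{limit3}.
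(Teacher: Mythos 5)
Your proposal is correct and follows essentially the same route as the paper: strong non-degeneracy (Theorem \ref{degforte}) produces a point $y_0$ with $u^{\epsilon}(y_0)\gtrsim\rho$, the uniform Lipschitz bound (Theorem \ref{thmreg}) then forces $u^{\epsilon}>\epsilon$ on a ball of radius $\kappa\rho$ about $y_0$, and the measure estimate follows. Your extra care in placing $B_{r_{\ast}}(y_0)$ inside $B_{\rho}(x_0)$ is a harmless refinement of the paper's step of simply intersecting $B_{\kappa\rho}(y_0)$ with $B_{\rho}(x_0)$.
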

\begin{proof}

From Strong non-degeneracy (Theorem \ref{degforte}) that there exists $y_0 \in  B_{\rho}(x_0)$ such that
$$
	u^{\epsilon}(y_0) \ge \mathrm{c}_0\rho.
$$
From Lipschitz regularity (Theorem \ref{thmreg}), for $z_0 \in B_{\kappa \rho}(y_0)$, we have
$$
	u^{\epsilon}(z_0) -\mathrm{C} \kappa \rho \ge u^{\epsilon}(y_0).
$$
Thus, by previous estimates, it is possible to choose $0 < \kappa \ll 1$ small (in a universal way) such that
$$
	z \in B_{\kappa \rho}(y_0) \cap B_{\rho}(x_0) \quad \textrm{and} \quad u^{\epsilon}(z) > \epsilon.
$$
Finally, there exists a portion of $B_\rho(x_0)$ with volume in order $\sim\rho^N$ within $\{u^\epsilon>\epsilon\}$. Thus, we verify
$$
	\Leb(B_{\rho}(x_0) \cap \{u^{\epsilon} > \e\}) \ge \Leb(B_{\rho}(x_0) \cap B_{\kappa \rho}(y_0)) = \mathrm{c}_0 \,\Leb(B_{\rho}(x_0)),
$$
for some constant  $0<\mathrm{c}_0(\verb"universal")\ll 1$.
\end{proof}

\begin{corollary}\label{intdens}
Given $x_0\in \{u^\epsilon>\epsilon\} \cap \Omega^{\prime}$, $\epsilon \ll \rho$ and $\rho\ll 1$ small enough (in a universal way), then
$$
   \frac{1}{\rho}\intav{\; B_{\rho}(x_0)} u^\epsilon(x) dx \geq \mathrm{c}
$$
for a constant $\mathrm{c}(\verb"universal")>0$ also depending on $\epsilon$.
\end{corollary}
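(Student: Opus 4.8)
The plan is to deduce this mean-value bound from the two sharp estimates already at our disposal: the strong non-degeneracy of Theorem \ref{degforte} and the uniform Lipschitz estimate of Theorem \ref{thmreg}. In essence, the argument is a quantitative refinement of the proof of Corollary \ref{densidade}: rather than merely showing that $\{u^\epsilon>\epsilon\}$ occupies a definite fraction of $B_\rho(x_0)$, I would exhibit a sub-ball of $B_\rho(x_0)$, of radius comparable to $\rho$, on which $u^\epsilon$ \emph{itself} is comparable to $\rho$, and then bound the average from below by the contribution of this sub-ball.

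First I would invoke Theorem \ref{degforte} to produce $y_0 \in B_\rho(x_0)$ with $u^\epsilon(y_0) \ge \frac{\kappa_0}{2}\rho$ (recall $\sup_{B_\rho(x_0)} u^\epsilon \ge \kappa_0\rho$). Next, letting $\mathrm{C}_0$ be the uniform Lipschitz constant from Theorem \ref{thmreg} on a fixed neighborhood of $\Omega'$, I would propagate this lower bound along a small ball: for $z \in B_{\kappa\rho}(y_0)$ one has $u^\epsilon(z) \ge u^\epsilon(y_0) - \mathrm{C}_0\,\kappa\rho$, so choosing $\kappa := \min\{\kappa_0/(4\mathrm{C}_0),\,1/2\}$ — a purely universal quantity — forces $u^\epsilon \ge \frac{\kappa_0}{4}\rho$ throughout $B_{\kappa\rho}(y_0)$. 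The last step is a measure-geometric estimate: since $y_0 \in B_\rho(x_0)$ and $\kappa\rho \le \rho$, the lens $B_{\kappa\rho}(y_0) \cap B_\rho(x_0)$ contains a dimensional fraction $\theta_N\,\omega_N\,(\kappa\rho)^N$ of $B_{\kappa\rho}(y_0)$; bounding the average of $u^\epsilon$ over $B_\rho(x_0)$ from below by the integral over this lens gives $\intav{B_\rho(x_0)} u^\epsilon \ge \frac{\theta_N\,\kappa^N\,\kappa_0}{4}\,\rho$, and dividing by $\rho$ closes the argument with $\mathrm{c} := \theta_N\kappa^N\kappa_0/4 > 0$.

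I expect the only genuinely delicate point to be the volume estimate for the intersection $B_{\kappa\rho}(y_0) \cap B_\rho(x_0)$, namely verifying that the fraction $\theta_N$ may be taken bounded below uniformly in both $\kappa$ and $\rho$: this holds because once $\kappa < 1/2$ the worst configuration is $|y_0 - x_0|$ close to $\rho$, at which scale $\partial B_\rho(x_0)$ is essentially flat relative to $B_{\kappa\rho}(y_0)$, so the intersection always retains at least (a fixed proportion of) a hemisphere's worth of volume. Everything else is transparent bookkeeping of constants already produced in Theorems \ref{degforte} and \ref{thmreg}; in particular the resulting $\mathrm{c}$ is universal, with at most the same innocuous $\epsilon$-dependence already acknowledged in the statement.
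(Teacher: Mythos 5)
Your argument is correct and is essentially the paper's own proof: the paper simply refers back to Corollary \ref{densidade}, whose argument is precisely your chain (strong non-degeneracy produces $y_0$ with $u^{\epsilon}(y_0)\gtrsim\rho$, the uniform Lipschitz bound propagates $u^{\epsilon}\gtrsim\rho$ on $B_{\kappa\rho}(y_0)$ for a universal $\kappa$, and the lens $B_{\kappa\rho}(y_0)\cap B_{\rho}(x_0)$ carries a definite fraction of the volume), after which one bounds the average from below by the contribution of that lens. The volume step you flag is handled exactly as in the paper's porosity argument, by noting the lens contains the ball $B_{\kappa\rho/2}(y^{\ast})$ centered at a point $y^{\ast}$ on the segment $[x_0,y_0]$, so nothing is missing.
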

\begin{proof}
As in Corollary \ref{densidade}, there exists a constant $0<\kappa(\verb"universal")\ll 1$, such that
$$
   \intav{\; B_{\rho}(x_0)} u^\epsilon(x) dx \geq C_N\intav{\; B_{\rho}(x_0)\cap B_{\kappa \rho}(y_0)} u^\epsilon(x)dx \geq \mathrm{c}\,\rho
$$
for a constant $0<\mathrm{c}(\verb"universal")\ll 1$ and some $y_0 \in \{u^\epsilon>\epsilon\} \cap \Omega^{\prime}$.
\end{proof}

\subsection{A Harnack type inequality}

\hspace{0.3cm} For solutions of \eqref{Equation Pe} a classical Harnack inequality is valid for balls that touch the free boundary along the $\varepsilon$-surfaces, i.e., $\partial\{u^{\varepsilon} > \varepsilon\} \cap \Omega^{\prime}$.

\begin{theorem}\label{ThmHarIneq} Let $u^\varepsilon$ be a solution of \eqref{Equation Pe}. Let also $x_0 \in \{u^{\varepsilon} > \varepsilon\}$ and $\varepsilon\leq d \defeq d_{\varepsilon}(x_0)$. Then,
$$
\displaystyle \sup_{B_{\frac{d}{2}}(x_0)} u^{\varepsilon}(x) \leq \mathrm{C} \cdot \inf_{B_{\frac{d}{2}}(x_0)} u^{\varepsilon}(x)
$$
for a constant $\mathrm{C}(\verb"universal")>0$ independent of $\varepsilon$.
\end{theorem}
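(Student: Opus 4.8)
The plan is to exploit that, by the very definition of $d=d_{\varepsilon}(x_0)$, one has $u^{\varepsilon}>\varepsilon$ throughout $B_d(x_0)$, so the singular part of the reaction term vanishes there and $u^{\varepsilon}$ solves an equation with a uniformly bounded right-hand side; then the inhomogeneous Harnack inequality of Remark~\ref{Rem1}~(1) applies after a scaling, producing an \emph{additive} estimate, which is finally promoted to the desired multiplicative one by absorbing the error via the linear growth of $u^{\varepsilon}$. Concretely, for every $x\in B_d(x_0)$ one has $u^{\varepsilon}(x)>\varepsilon$, hence $\chi_{(0,\varepsilon)}(u^{\varepsilon})\equiv 0$ on $B_d(x_0)$ and \eqref{Cond1 zeta} gives, in the viscosity sense,
$$
\mathcal{B}_0 \ \le\ \mathcal{G}(x,\nabla u^{\varepsilon},D^2 u^{\varepsilon})\ =\ \zeta_{\varepsilon}(x,u^{\varepsilon})\ \le\ \mathcal{B}\qquad\text{in }B_d(x_0).
$$
(We tacitly assume $B_d(x_0)\subset\Omega$, the natural configuration here; in particular $d\le 2\diam(\Omega)$.)

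\textbf{Rescaling and Harnack.} Put $v(x)\defeq d^{-1}u^{\varepsilon}(x_0+dx)$ for $x\in B_1$. A direct computation shows that $v\ge 0$ is a viscosity solution of $\mathcal{H}_{x_0,d}(x,\nabla v)\,F_{x_0,d}(x,D^2 v)=f_{x_0,d}(x)$ in $B_1$, with $F_{x_0,d}(x,\mathrm{X})\defeq d\,F(x_0+dx,d^{-1}\mathrm{X})$, $\mathcal{H}_{x_0,d}(x,\xi)\defeq \mathcal{H}(x_0+dx,\xi)$, $\mathfrak{a}_{x_0,d}(x)\defeq \mathfrak{a}(x_0+dx)$ and $f_{x_0,d}(x)\defeq d\,\zeta_{\varepsilon}(x_0+dx,u^{\varepsilon}(x_0+dx))$; these rescaled data verify (A0)--(A2), \eqref{1.2} and \eqref{1.3} with the same universal constants (the only delicate point, that (A2) survives with a uniform constant, follows from $d\le 2\diam(\Omega)$ together with the monotonicity of $\omega$), and by the reduction above $0\le f_{x_0,d}\le d\,\mathcal{B}$. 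Applying Remark~\ref{Rem1}~(1) to $v$ on $B_1$ and scaling back furnishes a universal $\mathrm{C}>0$, independent of $\varepsilon$, with
$$
\sup_{B_{d/2}(x_0)}u^{\varepsilon}\ \le\ \mathrm{C}\Big(\inf_{B_{d/2}(x_0)}u^{\varepsilon}\ +\ \mathrm{E}_d\Big),\qquad \mathrm{E}_d\defeq d\,\max\Big\{(d\mathcal{B})^{\frac{1}{p+1}},\,(d\mathcal{B})^{\frac{1}{q+1}}\Big\}.
$$

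\textbf{Absorbing the error.} For any $y\in B_{d/2}(x_0)$ one has $u^{\varepsilon}(y)>\varepsilon$ and $d_{\varepsilon}(y)\ge d_{\varepsilon}(x_0)-|y-x_0|\ge d/2$, so, the hypothesis $\varepsilon\le d$ ensuring that the linear growth estimate of Theorem~\ref{cresc} applies at $y$ (and recalling that its proof invokes only the inhomogeneous Harnack of Remark~\ref{Rem1}~(1), not the present statement), we obtain $u^{\varepsilon}(y)\ge \mathrm{c}\,\min\{d_{\varepsilon}(y),1\}$, whence $\inf_{B_{d/2}(x_0)}u^{\varepsilon}\ge \mathrm{c}\,\min\{d/2,1\}$. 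If $d$ lies below a universal threshold, then $\mathrm{E}_d\le (2\mathrm{C})^{-1}\mathrm{c}\,d\le (2\mathrm{C})^{-1}\inf_{B_{d/2}(x_0)}u^{\varepsilon}$, so the estimate of the previous step yields $\sup_{B_{d/2}(x_0)}u^{\varepsilon}\le(\mathrm{C}+\tfrac12)\inf_{B_{d/2}(x_0)}u^{\varepsilon}$; in the remaining bounded range of $d$ one combines $\inf_{B_{d/2}(x_0)}u^{\varepsilon}\gtrsim 1$ with the universal a priori bound $\sup_{\Omega}u^{\varepsilon}\le\|u^{\varepsilon}\|_{L^{\infty}(\Omega)}$ of Lemma~\ref{UnifomBound}. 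Either way $\sup_{B_{d/2}(x_0)}u^{\varepsilon}\le\mathrm{C}\,\inf_{B_{d/2}(x_0)}u^{\varepsilon}$ for a universal, $\varepsilon$-independent $\mathrm{C}$, which is the claim.

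\textbf{Main obstacle.} The genuine difficulty is the last step: in this doubly degenerate regime the available Harnack inequality unavoidably carries an additive inhomogeneity term, and upgrading it to the clean form $\sup\lesssim\inf$ requires the $\varepsilon$-free lower bound $\inf_{B_{d/2}(x_0)}u^{\varepsilon}\gtrsim\min\{d,1\}$ — exactly the point where the geometric non-degeneracy (linear growth of $u^{\varepsilon}$ away from the $\varepsilon$-layers) is indispensable and where the assumption $\varepsilon\le d$ is used. The verification that the rescaled operators retain (A2) with uniform constants when $d$ is not small is a routine technical matter, settled by the bound $d\le 2\diam(\Omega)$.
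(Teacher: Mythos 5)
Your proof is correct, but it follows a genuinely different route from the paper's. The paper's argument is purely geometric and never touches the PDE at this stage: it picks the extremal points $z_1,z_2\in\overline{B_{d/2}(x_0)}$, lower-bounds $u^{\varepsilon}(z_1)\gtrsim d$ via Corollary \ref{cor1} (since $d_{\varepsilon}(z_1)\ge d/2$), and upper-bounds $u^{\varepsilon}(z_2)\lesssim d$ by combining the strong non-degeneracy of Theorem \ref{degforte} with the bound $u^{\varepsilon}(x_0)\lesssim d$ coming from the Lipschitz side of Corollary \ref{cor1}; thus both the sup and the inf are shown to be comparable to $d$ and the conclusion is immediate. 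You instead re-invoke the scaled PDE-level Harnack inequality on $B_d(x_0)\subset\{u^{\varepsilon}>\varepsilon\}$, where the reaction term is bounded, and then absorb the additive inhomogeneity using only the lower (linear-growth) bound $\inf\gtrsim\min\{d,1\}$, with a case split for $d$ bounded away from zero handled by the ABP bound of Lemma \ref{UnifomBound}. Your version is more economical in its geometric inputs (it does not need the Lipschitz estimate or Theorem \ref{degforte}), at the price of redoing the Harnack/absorption machinery; the paper's version is shorter given that Corollary \ref{cor1} and Theorem \ref{degforte} are already in hand, and it makes transparent the stronger fact that $\sup_{B_{d/2}(x_0)}u^{\varepsilon}\sim\inf_{B_{d/2}(x_0)}u^{\varepsilon}\sim d$. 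Two small remarks: your worry about (A2) surviving the rescaling is moot, since the Harnack constant in Remark \ref{Rem1}(1) and in the Appendix depends only on $N,\lambda,\Lambda,p,q,L_1$ and not on $\|F\|_{C^{\omega}}$; and the slight mismatch between the hypothesis $\varepsilon\le d$ and the requirement $\varepsilon\ll d_{\varepsilon}(y)$ in Theorem \ref{cresc} (you only get $d_{\varepsilon}(y)\ge d/2\ge\varepsilon/2$) is present in exactly the same form in the paper's own proof, so it is not a defect specific to your argument.
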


\begin{proof} Let $z_1, z_2 \in \overline{B_{\frac{d}{2}}(x_0)}$ be points such that
$$
\displaystyle \inf_{B_{\frac{d}{2}}(x_0)} u^{\varepsilon}(x) = u^{\varepsilon}(z_1) \quad \mbox{and} \quad \sup_{B_{\frac{d}{2}}(x_0)} u^{\varepsilon}(x) = u^{\varepsilon}(z_2).
$$
Since $d_{\varepsilon}(z_1) \geq \frac{d}{2}$, by Corollary \ref{cor1}
\begin{equation}\label{eqHar6.1}
u^{\varepsilon}(z_1) \geq \mathrm{C}_1 \cdot d.
\end{equation}
Moreover, by Strong non-degeneracy \ref{degforte}
\begin{equation}\label{eqHar6.2}
u^{\varepsilon}(z_2) \leq \mathrm{C}_2 \cdot \left(\frac{d}{2} + u^{\varepsilon}(x_0)\right).
\end{equation}

Next, by taking $y_0 \in \partial \{u^{\varepsilon} > \varepsilon\}$ such that $d=|x_0-y_0|$ and $z\in\overline{B_d(y_0)}\cap\{u^\varepsilon>\varepsilon\}$, we obtain from Corollary \ref{cor1} and Theorem \ref{degforte}
\begin{equation}\label{eqHar6.3}
u^{\varepsilon}(x_0) \leq \sup\limits_{B_d(z)}u^\varepsilon \leq \mathrm{C}_2 \cdot (d+ u^{\varepsilon}(z)) \leq \mathrm{C}_3 \cdot d.
\end{equation}

In conclusion, by combining \eqref{eqHar6.1}, \eqref{eqHar6.2} and \eqref{eqHar6.3}, we obtain
$$
   \displaystyle \sup_{B_{\frac{d}{2}}(x_0)} u^{\varepsilon}(x) \leq \mathrm{C} \cdot \inf_{B_{\frac{d}{2}}(x_0)} u^{\varepsilon}(x).
$$
\end{proof}

\subsection{Porosity of the level surfaces}\label{porous}

\hspace{0.3cm} As a consequence of the growth rate and the non-degeneracy (Theorems \ref{thmreg} and \ref{cresc}), we obtain porosity of level sets.

\begin{definition}[{\bf Porous set}]\label{d5.1}
A set $\mathrm{S}\subset\mathbb{R}^n$ is denominated porous with porosity $\delta>0$, if $\exists\,\mathrm{R}>0$ such that
$$
   \forall x\in \mathrm{S}, \,\,\,\forall r\in(0,\mathrm{R}),\,\,\,\exists y\in\mathbb{R}^N\,\textrm{ such that }\,B_{\delta r}(y)\subset B_r(x)\setminus \mathrm{S}.
$$
\end{definition}

A porous set of porosity $\delta$ has Hausdorff dimension not exceeding $N-\mathrm{c}\delta^N$, where $\mathrm{c}=\mathrm{c}(N)>0$ is a dimensional constant. Particularly, a porous set has Lebesgue measure zero (see,\cite{Z88}).

\begin{theorem}[{\bf Porosity}]\label{t5.2}
Let $u^\varepsilon$ be a solution of \eqref{Equation Pe}. Then, the level sets $\partial \{u^{\varepsilon} >\varepsilon\} \cap \Omega^{\prime}$ are porous with porosity constant independent of $\varepsilon$.
\end{theorem}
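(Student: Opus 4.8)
The plan is to establish the porosity of $\partial\{u^\varepsilon>\varepsilon\}\cap\Omega^\prime$ by exhibiting, for every point $x_0$ on this level surface and every sufficiently small radius $r$, a ball of radius $\delta r$ (with $\delta$ universal, independent of $\varepsilon$) contained in $B_r(x_0)$ and disjoint from the level surface. The key mechanism is the double-sided control of $u^\varepsilon$ by the distance $d_\varepsilon$ established in Corollary \ref{cor1}, together with the strong non-degeneracy of Theorem \ref{degforte}. The scale restriction $\varepsilon\ll r\ll 1$ is acceptable because larger scales can be absorbed into the constant, and one only needs porosity at small scales to conclude that the Hausdorff dimension drops below $N$.

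First I would fix $x_0\in\partial\{u^\varepsilon>\varepsilon\}\cap\Omega^\prime$ and $0<r\ll 1$. By strong non-degeneracy (Theorem \ref{degforte}) applied at a nearby point of $\{u^\varepsilon>\varepsilon\}$, there exists $y_0\in \overline{B_{r/2}(x_0)}$ with
$$
u^\varepsilon(y_0)\ge \mathrm{c}\cdot\frac{r}{2},
$$
for a universal constant $\mathrm{c}>0$; one should pick $y_0$ slightly inside $\{u^\varepsilon>\varepsilon\}$ so that $u^\varepsilon(y_0)>\varepsilon$ (legitimate since $\varepsilon\ll r$). Next, from the linear-growth lower bound together with the Lipschitz bound, i.e. from Corollary \ref{cor1}, the value $u^\varepsilon(y_0)\sim r$ forces $d_\varepsilon(y_0)=\dist(y_0,\{u^\varepsilon\le\varepsilon\})\ge \mathrm{C}^{-1}u^\varepsilon(y_0)\ge \mathrm{c}^\prime r$ for a universal $\mathrm{c}^\prime>0$. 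Hence the ball $B_{\mathrm{c}^\prime r}(y_0)$ is entirely contained in $\{u^\varepsilon>\varepsilon\}$, and in particular it misses $\partial\{u^\varepsilon>\varepsilon\}$.

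It remains to check that a fixed fraction of this ball still lies inside $B_r(x_0)$. Since $|y_0-x_0|\le r/2$, choosing $\delta\defeq \frac{1}{2}\min\{\mathrm{c}^\prime,\tfrac12\}$ and letting $y\defeq y_0+\tfrac{r/2}{|y_0-x_0|}(x_0-y_0)$ when $y_0\ne x_0$ (or $y\defeq y_0$ otherwise) moves the center so that $B_{\delta r}(y)\subset B_r(x_0)\cap B_{\mathrm{c}^\prime r}(y_0)$; a one-line triangle-inequality check confirms both inclusions. Therefore $B_{\delta r}(y)\subset B_r(x_0)\setminus\partial\{u^\varepsilon>\varepsilon\}$, which is exactly Definition \ref{d5.1} with porosity $\delta$ and $\mathrm{R}$ any fixed small universal radius (say $\mathrm{R}=\tfrac14\dist(\Omega^\prime,\partial\Omega)$), and crucially $\delta$ depends only on the universal parameters, not on $\varepsilon$.

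The main obstacle, and the point deserving care rather than the geometric bookkeeping, is ensuring that all invoked estimates hold \emph{uniformly in} $\varepsilon$ and at the scale $r$ with $\varepsilon\ll r$: Theorem \ref{degforte} and Corollary \ref{cor1} are stated precisely under hypotheses like $\varepsilon\ll\rho\ll 1$, so one must verify that for the chosen $r$ the constraint $\varepsilon\le \tfrac12 d_\varepsilon(\cdot)$ needed in Corollary \ref{cor1} is genuinely met at $y_0$ — this follows once $u^\varepsilon(y_0)\gtrsim r\gg\varepsilon$, but it should be spelled out. Apart from this, the proof is a direct combination of the sharp two-sided bound and non-degeneracy, and the conclusion that porous sets have Lebesgue measure zero and Hausdorff dimension at most $N-\mathrm{c}\delta^N$ is quoted from \cite{Z88} as already noted in the text.
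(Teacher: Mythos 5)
Your proposal is correct and follows essentially the same route as the paper's proof: strong non-degeneracy produces a point $y_0$ near the free boundary point with $u^\varepsilon(y_0)\gtrsim r$, the upper bound of Corollary \ref{cor1} upgrades this to $d_\varepsilon(y_0)\gtrsim r$, and hence a ball of universal radius $\delta r$ lies in $\{u^\varepsilon>\varepsilon\}\cap B_r(x_0)$ and misses the level surface. The only slip is your explicit re-centering formula, which displaces $y_0$ by $r/2$ and could push $B_{\delta r}(y)$ outside $B_{\mathrm{c}'r}(y_0)$ whenever $\mathrm{c}'<\tfrac12+\delta$; but since you already arranged $y_0\in\overline{B_{r/2}(x_0)}$, no re-centering is needed — take $y=y_0$ and $\delta=\tfrac12\min\{\mathrm{c}',\tfrac12\}$ (the paper, which instead picks the supremum point on $\partial B_r(x)$, is the one that genuinely needs the small inward shift by $\delta r/2$ along the segment $[x,y]$).
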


\begin{proof}
Let $\mathrm{R}>0$ and $x_0\in\Omega^{\prime} \Subset \Omega$ be such that $\overline{B_{4\mathrm{R}}(x_0)}\subset\Omega$.

\begin{itemize}
  \item[{\bf CLAIM:}] The set $\partial\{u^{\varepsilon}>\varepsilon\} \cap B_{\mathrm{R}}(x_0)$ is porous
\end{itemize}

Let $x\in \partial\{u^{\varepsilon}>\varepsilon\} \cap B_{\mathrm{R}}(x_0)$ be fixed. For each $r\in(0,\mathrm{R})$ we have $\overline{B_r(x)}\subset B_{2\mathrm{R}}(x_0)\subset\Omega$. Now, let $y\in\partial B_r(x)$ such that $u^{\varepsilon}(y)=\sup\limits_{\partial B_r(x)}u^{\varepsilon}(x)$. From Strong non-degeneracy (Theorem \ref{degforte})
\begin{equation}\label{5.1}
    u^{\varepsilon}(y)\geq \mathrm{c}\cdot r.
\end{equation}
On the other hand, we know that (see, Theorem \ref{thmreg}) near the free boundary
\begin{equation}\label{5.2}
    u^{\varepsilon}(y)\leq \mathrm{C}\cdot d_{\varepsilon}(y),
\end{equation}
where $d_{\varepsilon}(y)$ is the distance of $y$ from the set $\overline{B_{2\mathrm{R}}(x_0)}\cap\Gamma_{\varepsilon}$. Next, from \eqref{5.1} and \eqref{5.2} we get
\begin{equation}\label{5.3}
    d_{\varepsilon}(y)\geq\delta r
\end{equation}
for a positive constant $\delta \in (0, 1)$.

Let now $y^{\ast}\in[x,y]$ (straight line segment connecting the points $x$ and $y$) be such that $|y-y^{\ast}|=\frac{\delta r}{2}$. Hence, we see that
\begin{equation}\label{5.4}
   B_{\frac{\delta}{2}r}(y^*)\subset B_{\delta r}(y)\cap B_r(x).
\end{equation}
In effect, for each $z\in B_{\frac{\delta}{2}r}(y^{\ast})$
$$
   |z-y|\leq |z-y^{\ast}|+|y-y^{\ast}|<\frac{\delta r}{2}+\frac{\delta r}{2}=\delta r,
$$
and
$$
   |z-x|\leq|z-y^{\ast}|+\big(|x-y|-|y^{\ast}-y|\big)<\frac{\delta r}{2}+\left(r-\frac{\delta r}{2}\right)=r,
$$
then \eqref{5.4} follows.

Now, from \eqref{5.3} $B_{\delta r}(y)\subset B_{d_{\varepsilon}(y)}(y)\subset\{u^{\varepsilon}>\varepsilon\}$, then
$$
   B_{\delta r}(y)\cap B_r(x)\subset\{u^{\varepsilon}>\varepsilon\},
$$
which provides together with \eqref{5.4} the following
$$
   B_{\frac{\delta}{2}r}(y^*)\subset B_{\delta r}(y)\cap B_r(x)\subset B_r(x)\setminus\partial\{u_{\varepsilon}>\varepsilon\}\subset B_r(x)\setminus ( \partial\{u^{\varepsilon}>\varepsilon\} \cap B_{\mathrm{R}}(x_0)),
$$
thereby finishing the proof.
\end{proof}

It is important to stress that for FBPs modeled by a merely uniformly elliptic operator, one should not expect an improved Hausdorff estimate for the interface. In turn, when diffusion is driven by Laplacian operator, then Alt-Caffarelli's theory in \cite{AC81} establishes that $c\delta^N = 1$. At this point, a natural issue is what is the minimum structural assumption under $F$ as to obtain perimeter estimates of
the interface. We will address an answer to such a question in the next Section.

\section{Hausdorff measure estimates}\label{Sct HE}

In this section, we establish Hausdorff measure estimates of the approximating level surfaces. A necessary condition for the study of such an estimate is to impose the non-degeneracy of the reaction term propagates up to the transition layer. Hereafter, in condition \eqref{nondeg_RT}, we shall take $t_0=0$, i.e.,
$$
	\mathscr{I} \defeq  \inf\limits_{\Omega \times [0,\mathrm{T}_0]}  \epsilon \zeta_\epsilon(x, \epsilon t)>0
$$
for some $\mathrm{T}_0>0$ will be enforced. A condition at infinity on the governing operator $F$ is also required in our analysis concerning Hausdorff estimates, which will be discussed soon.

Next result states that, in measure, the Hessian of an approximating solution blows-up near the transition layer as $\epsilon \to 0^{+}$. The proof follows the same lines as \cite[Proposition 6.1]{ART17} and \cite[Proposition 5.1]{RTS17}. For this reason, we will omit it here.

\begin{proposition}\label{ac1}
Fix $\Omega^{\prime} \Subset \Omega$, $\mathrm{C} \gg 1$ and $\rho <  \dist(\Omega^{\prime},\partial\Omega)$. There exists $\epsilon_0>0$ such that, for $\epsilon\leq \epsilon_0$ there holds
\begin{equation}
\int_{B_\rho(x_\epsilon)}\left(\zeta_\epsilon(x, u^\epsilon)-\mathrm{C}\right)dx\geq 0 \quad \text{for any} \quad  x_\epsilon\in \partial\{u^\epsilon>\epsilon\}\cap \Omega^{\prime}.
\end{equation}
\end{proposition}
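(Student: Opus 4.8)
The plan is to bound the integral from below by turning the singular character of $\zeta_\epsilon$ into a lower bound for the measure of its ``active layer''. Concretely, put
$$
\mathcal{N}_\epsilon \defeq \left\{x \in B_\rho(x_\epsilon)\ :\ 0 \le u^\epsilon(x) \le \epsilon\,\mathrm{T}_0\right\},
$$
with $\mathrm{T}_0$ as in \eqref{nondeg_RT} (recall that in this section $t_0=0$). For $x \in \mathcal{N}_\epsilon$ we may write $u^\epsilon(x) = \epsilon t$ with $t\in[0,\mathrm{T}_0]$, so \eqref{nondeg_RT} gives $\zeta_\epsilon(x,u^\epsilon(x)) = \zeta_\epsilon(x,\epsilon t)\ge \mathscr{I}/\epsilon$, while on $B_\rho(x_\epsilon)\setminus\mathcal{N}_\epsilon$ we keep only $\zeta_\epsilon\ge\mathcal{B}_0\ge0$ from \eqref{Cond1 zeta}. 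Hence
$$
\int_{B_\rho(x_\epsilon)}\zeta_\epsilon(x,u^\epsilon)\,dx\ \ge\ \frac{\mathscr{I}}{\epsilon}\,\Leb(\mathcal{N}_\epsilon),
$$
and the whole point becomes the measure estimate $\Leb(\mathcal{N}_\epsilon)\ge \mathrm{c}_\sharp\,\epsilon\,\rho^{N-1}$ for a universal $\mathrm{c}_\sharp>0$ and all $\epsilon$ small, after which the display gives $\int_{B_\rho(x_\epsilon)}\zeta_\epsilon(x,u^\epsilon)\,dx\ge \mathrm{c}_\sharp\,\mathscr{I}\,\rho^{N-1}$, which is $\ge \mathrm{C}\,\Leb(B_\rho(x_\epsilon))$ once $\rho$ is small relative to $\mathrm{C}$, $\mathscr{I}$ and the universal constants — exactly the regime in which the proposition is used in Section \ref{Sct HE}, and a smallness that can in any case be absorbed into $\epsilon_0$.

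For the measure estimate I would follow the scheme of \cite[Proposition 6.1]{ART17} and \cite[Proposition 5.1]{RTS17}, using the sharp bounds from Sections \ref{Sct Lip}--\ref{Sct Nondeg}. The relevant ingredients are: $x_\epsilon\in\partial\{u^\epsilon>\epsilon\}$ forces $u^\epsilon(x_\epsilon)=\epsilon$ by continuity; applying strong non-degeneracy (Theorem \ref{degforte}) at an interior point of $\{u^\epsilon>\epsilon\}$ inside $B_\epsilon(x_\epsilon)$ and at scale $\rho/2$ (legitimate once $\epsilon\ll\rho$) yields a point $y_\epsilon\in B_{\rho/2}(x_\epsilon)$ with $u^\epsilon(y_\epsilon)\gtrsim\rho\gg\epsilon$; and the uniform Lipschitz estimate $\|\nabla u^\epsilon\|_{L^\infty(\Omega^{\prime})}\le \mathrm{C}_0$ (Theorem \ref{thmreg}) forbids $u^\epsilon$ from rising from values $\le\epsilon\mathrm{T}_0$ to order-$\rho$ values across a distance shorter than $\sim\rho$. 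Combining these with the density estimate for $\{u^\epsilon>\epsilon\}$ (Corollary \ref{densidade}) and the porosity of $\Gamma_\epsilon$ (Theorem \ref{t5.2}), one produces a uniform lower area bound $\mathcal{H}^{N-1}(\{u^\epsilon=s\}\cap B_\rho(x_\epsilon))\gtrsim\rho^{N-1}$ for $s$ ranging over a subinterval of $(0,\epsilon\mathrm{T}_0)$ of length $\gtrsim\epsilon$, and the coarea formula then closes the argument:
$$
\mathrm{C}_0\,\Leb(\mathcal{N}_\epsilon)\ \ge\ \int_{\mathcal{N}_\epsilon}|\nabla u^\epsilon|\,dx\ =\ \int_0^{\epsilon\mathrm{T}_0}\mathcal{H}^{N-1}\big(\{u^\epsilon=s\}\cap B_\rho(x_\epsilon)\big)\,ds\ \gtrsim\ \epsilon\,\rho^{N-1}.
$$

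The hard part will be this uniform lower area bound for the level surfaces inside $B_\rho(x_\epsilon)$. Since $\mathcal{G}$ carries no divergence structure, there is no integration-by-parts identity linking $\int_{B_\rho}\zeta_\epsilon$ to a boundary flux, so the measure of $\mathcal{N}_\epsilon$ must be extracted by a purely geometric argument from the interior Lipschitz and non-degeneracy estimates; the genuinely delicate point is excluding a thin cusp of $\{u^\epsilon\le\epsilon\}$ at the boundary point $x_\epsilon$, which is precisely what the linear growth, strong non-degeneracy and density estimates (Theorems \ref{cresc}, \ref{degforte} and Corollary \ref{densidade}) are for. On the other hand, the doubly degenerate, non-homogeneous nature of $\mathcal{H}$ enters only through the numerical values of the universal constants furnished by those theorems, so it does not add any new difficulty — which is why the approach of \cite{ART17} and \cite{RTS17} carries over to the present model.
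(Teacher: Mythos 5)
Your opening reduction is the right one and is exactly the standard first step: using \eqref{nondeg_RT} with $t_0=0$ to get $\zeta_\epsilon(x,u^\epsilon)\ge \mathscr{I}/\epsilon$ on $\mathcal{N}_\epsilon\defeq\{0\le u^\epsilon\le \epsilon\mathrm{T}_0\}\cap B_\rho(x_\epsilon)$, so that everything hinges on the measure estimate $\Leb(\mathcal{N}_\epsilon)\ge \mathrm{c}_\sharp\,\epsilon\rho^{N-1}$. This matches the scheme of \cite[Proposition 6.1]{ART17} and \cite[Proposition 5.1]{RTS17}, which is all the paper itself offers (the proof is omitted there). A small caveat on your closing step: the inequality $\mathrm{c}_\sharp\mathscr{I}\rho^{N-1}\ge \mathrm{C}\,\Leb(B_\rho(x_\epsilon))$ forces $\rho\lesssim \mathscr{I}/\mathrm{C}$, and since $\rho$ is fixed \emph{before} $\epsilon_0$ is chosen, this smallness cannot be ``absorbed into $\epsilon_0$''; it is harmless only because the proposition is applied with $\rho\ll 1$.

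The genuine gap is the measure estimate itself. You obtain it from the coarea formula plus a claimed bound $\mathcal{H}^{N-1}(\{u^\epsilon=s\}\cap B_\rho(x_\epsilon))\gtrsim \rho^{N-1}$ for $s$ ranging over a subinterval of $(0,\epsilon\mathrm{T}_0)$ of length $\gtrsim\epsilon$, but this bound is precisely the hard content of the proposition and none of the ingredients you invoke delivers it. Any such lower area bound (via the relative isoperimetric inequality or a projection argument) requires that \emph{both} $\{u^\epsilon>s\}$ and $\{u^\epsilon<s\}$ occupy a definite fraction of $B_\rho(x_\epsilon)$. The paper only proves density of the positive phase (Corollary \ref{densidade}); uniform positive density of the low phase $\{u^\epsilon\le \epsilon\mathrm{T}_0\}$ at $x_\epsilon$ is established nowhere, and it does not follow from porosity of $\Gamma_\epsilon$ (Theorem \ref{t5.2}), which merely produces balls avoiding $\partial\{u^\epsilon>\epsilon\}$ — these may all sit inside the positive phase. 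Worse, if $\mathrm{T}_0<1$ the hypothesis $x_\epsilon\in\partial\{u^\epsilon>\epsilon\}$ only yields nearby points with $u^\epsilon\le\epsilon$, so it is not even clear that $u^\epsilon$ attains values below $\epsilon\mathrm{T}_0$ anywhere in $B_\rho(x_\epsilon)$. Having only the single base point $x_\epsilon$ in the low phase is not enough: a Fubini-over-segments argument from one point toward the non-degenerate ball furnished by Theorem \ref{degforte} only guarantees crossings within distance $O(\epsilon)$ of $x_\epsilon$, which yields $\Leb(\mathcal{N}_\epsilon)\gtrsim\epsilon^N$, far short of $\epsilon\rho^{N-1}$. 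Excluding the thin-cusp scenario therefore needs an extra structural input (the minimality of the Perron solution, or the equation itself), which your proposal does not supply; as written, the heart of the proof is a black box.
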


Mathematically, Proposition \ref{ac1} implies that near the transition layer, the governing operator $F$ gets evaluated at very large matrices. Such an insight motivates the following structural asymptotic condition on the nonlinearity:

\begin{definition}[{\bf Asymptotic Concavity}] A uniformly elliptic operator $F: \Omega \times \textit{Sym}(N) \to \R$ is said to fulfil the $\mathrm{C}_F-$\textit{Asymptotic Concavity Property} (resp. Asymptotic Convexity Property) if there exists $\mathfrak{A} \in \mathcal{A}_{\lambda,\Lambda}$ and non-negative continuous function $\mathrm{C}^{\ast}_F: \Omega \to \R$ such that
\begin{equation}\label{ACP} \tag{{\bf ACP}}
    F(x, X) \leq \tr(\mathfrak{A}(x) \cdot X) + \mathrm{C}^{\ast}_F(x) \quad (\text{resp.} \,\,F(x, X) \geq \cdots),
\end{equation}
for all $(x, X)\in \Omega \times Sym(N)$, where
$$
   \mathcal{A}_{\lambda,\Lambda}\defeq \left\{\mathfrak{A} \in \textrm{Sym}(N) \suchthat \lambda \text{Id}_{N} \le \mathfrak{A}\le \Lambda \text{Id}_{N}\right\}.
$$
\end{definition}

It is noteworthy that the \eqref{ACP} condition is weaker than the concavity (resp. convexity) one, which, for instance it is required for instance in Evans-Krylov-Trudinger's $C_{\text{loc}}^{2,\alpha}$ regularity theory (see,  \cite{Ev82}, \cite{Kry82}, \cite{Kry83}, \cite{Tru83} and \cite{Tru84}). Indeed, it means a sort of concavity (resp. convexity) condition at the infinity of $\text{Sym}(N)$ for $F$.
Furthermore, the concavity (resp. convexity) assumption is precisely when $\mathrm{C}_F = 0$. In a geometric viewpoint, such a condition means that for each $x \in \Omega$ fixed, there exists a hyperplane which
decomposes $\R \times Sym(N)$ in two semi-spaces such that the graph of $F(x, \cdot)$ is always below
such a hyperplane (see \cite{ART17}, \cite{RTS17}, \cite{RT11} and references therein for some motivations
and other details).

In turn, if $F$ fulfils \eqref{ACP}, then its recession operator is a concave (resp. convex) operator, in other words
{\small{
$$
\begin{array}{rcl}
  \displaystyle F^{\ast}(x, X) = \displaystyle\lim_{\tau \to 0+} \tau F \left(x, \frac{1}{\tau} X \right) & \leq &  \displaystyle \lim_{\tau \to 0+} \left[\tr(\mathfrak{A}(x) \cdot X) +
   \tau \mathrm{C}^{\ast}_F(x)\right]\\
   & = & \tr(\mathfrak{A}(x) \cdot X) \quad (\text{resp.} \geq \tr(\mathbb{A}(x) \cdot X)).
\end{array}
$$}}

\begin{example}
Let us consider $F:\textit{Sym}(N) \to \R$ a $C^1$ uniformly elliptic operator. Then, its \textit{recession profile} $F^{\ast}$ should be understood as the ``limiting operator'' for the natural scaling on $F$. By way of illustration, for a number of operators, it is possible to verify the existence of the following limit
$$
  \mathfrak{A}_{ij} \defeq \lim_{\|X\|\to \infty} \frac{\partial F}{X_{ij}}(X),
$$
In such a context, we obtain $F^{\ast}(X) = \tr(\mathfrak{A}_{ij}X)$. An interesting example is the class of Hessian type operators:
$$
\displaystyle  F_m(e_1(D^2 u), \cdots, e_N(D^2 u)) \defeq \sum_{j=1}^{N} \sqrt[m]{1+e_j(D^2 u)^m}-N,
$$
where $m \in \mathbb{N}$ is an odd number. In this setting,
$$
  \displaystyle F^{\ast}(X) = \sum_{j=1}^{N} e_j(X) \quad (\text{the Laplacian operator}).
$$

We recommend the reader to \cite[Example 2.4]{daSR19}, \cite[Section 6.2]{daSR20}, \cite[Example 3.6]{DSVI} and \cite[Example 5]{DSVII} for a number of other illustrative examples.
\end{example}

Recently, improved regularity estimates for viscosity solutions of \textit{Asymptotically Concave} equations were proven in \cite{ST15} (see also \cite{daSR19} for global regularity results). We also emphasize that such operators play an essential role in establishing finiteness of $(N-1)-$Hausdorff measure in several fully non-linear singularly perturbed FBPs, whose Hessian of solutions blows-up through the phase transition. For
this reason, the limiting free boundary condition is ruled by the $F^{\ast}$ rather than $F$ (see, \cite{RT11} for an illustrating example).

Finally, hereafter in this Section, we assume the governing operator $F$ has the asymptotically concave property \eqref{ACP}.

\begin{remark}\label{RemACP}
Notice that if $u^{\epsilon}$ is a Perron's solution to \eqref{Equation Pe}, then we may verify in the viscosity sense
$$
	F(x, D^2 u^{\epsilon}) = \zeta_{\epsilon}(x, u^{\epsilon}).\mathcal{H}\left(x, \nabla u^{\epsilon}\right)^{-1}\quad \textrm{in} \quad \{u^{\epsilon} > \epsilon\} \cap \Omega^{\prime},
$$
for any $\Omega^{\prime} \Subset \Omega$. Hence, by Lipschitz regularity (Theorem \ref{thmreg}) and \eqref{N-HDeg}, one has
$$
	F(x, D^2 u^{\epsilon}) = \zeta_{\epsilon}(x, u^{\epsilon})\mathcal{H}\left(x, \nabla u^{\epsilon}\right)^{-1} \ge \zeta_{\epsilon}(x, u^{\epsilon})L_2^{-1}(C_0^p+ \|\mathfrak{a}\|_{L^{\infty}(\Omega)}C_0^q)^{-1}.
$$

Therefore, by the \ref{ACP} condition
\begin{eqnarray*}
\int_{B_\rho(x_\varepsilon)}\mathfrak{A}_{ij}(x)\,D_{ij}u^\e(x)\,dx &\geq& \int_{B_\rho(x_\varepsilon)} \left[\zeta_\varepsilon(x, u^\varepsilon)\mathcal{H}(x, \nabla u^{\epsilon})^{-1}-\mathrm{C}^{\ast}_{F}(x)\right]\,dx\\
&\ge& \mathrm{C}_{p, q}^{-1}\int_{B_\rho(x_\varepsilon)} \left[\zeta_\varepsilon(x, u^\varepsilon)-\|\mathrm{C}^{\ast}_F\|_{L^{\infty}(\Omega)}\mathrm{C}_{p, q} \right]\,dx\\
&>&0,
\end{eqnarray*}
where $\mathrm{C}_{p, q} \defeq L_2(C_0^p+ \|\mathfrak{a}\|_{L^{\infty}(\Omega)}C_0^q)$ and we have used the Proposition \ref{ac1}.

\end{remark}

In the next result, Remark \ref{RemACP} will allow us to adapt some arguments available for elliptic linear problems (cf. \cite{AC81}). At this point, the proof can be obtained following the same ideas as ones in \cite[Lemma 6.3]{ART17} and \cite[Lemma 4.1]{RT11}.

\begin{lemma}\label{intreg}
There exists a constant $\mathrm{C}(\Omega^{\prime}, \verb"universal parameters")>0$ such that, for each $x_\epsilon\in \partial\{u^\epsilon>\epsilon\}\cap \Omega^{\prime}$ and $\rho\ll 1$, there holds
\[
\displaystyle\int\limits_{B_\rho(x_\epsilon)\cap \{\epsilon\leq u^\epsilon < \mu\}} |\nabla u^\epsilon|\,^2\,dx \leq \mathrm{C}\mu\rho^{N-1}.
\]
\end{lemma}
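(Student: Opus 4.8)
The plan is to run a Caccioppoli/energy estimate for $u^{\epsilon}$ in the strip $\{\epsilon\le u^{\epsilon}<\mu\}$, the degeneracy of $\mathcal{H}$ being neutralized by linearizing through the asymptotic concavity of $F$. We may assume $\mu>\epsilon$ (otherwise the left-hand side is empty), and fix $\rho\ll 1$ so that $B_{2\rho}(x_{\epsilon})\Subset\Omega$. In $\{u^{\epsilon}>\epsilon\}$ one has $F(x,D^2u^{\epsilon})=\zeta_{\epsilon}(x,u^{\epsilon})\,\mathcal{H}(x,\nabla u^{\epsilon})^{-1}$, and combining the Lipschitz bound of Theorem~\ref{thmreg}, the structural conditions on $\mathcal{H}$ and $\zeta_{\epsilon}$, and the \eqref{ACP} property exactly as in Remark~\ref{RemACP}, the function $u^{\epsilon}$ is a viscosity subsolution, in $\{u^{\epsilon}>\epsilon\}\cap B_{2\rho}(x_{\epsilon})$, of the \emph{linear}, uniformly elliptic equation $\mathfrak{A}_{ij}(x)\,D_{ij}w=-\|\mathrm{C}^{\ast}_F\|_{L^{\infty}(\Omega)}=:-M$. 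From here the argument follows the scheme available for linear problems; cf. \cite{AC81}, \cite[Lemma~4.1]{RT11} and \cite[Lemma~6.3]{ART17}.

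Concretely, I would choose a cut-off $\xi\in C_0^{\infty}(B_{2\rho}(x_{\epsilon}))$ with $\xi\equiv 1$ on $B_{\rho}(x_{\epsilon})$, $|\nabla\xi|\lesssim\rho^{-1}$, $|D^2\xi|\lesssim\rho^{-2}$, set $\Psi(t)\defeq\min\{(t-\epsilon)^{+},\,\mu-\epsilon\}$ (so $0\le\Psi\le\mu$, $\Psi\equiv 0$ on $\{t\le\epsilon\}$, $\Psi'=\chi_{(\epsilon,\mu)}$ a.e.), and test the linear differential inequality against the nonnegative Lipschitz function $\phi\defeq\xi^2\,\Psi(u^{\epsilon})$, which vanishes on $\{u^{\epsilon}\le\epsilon\}$. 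Integrating by parts and using $\mathfrak{A}_{ij}\eta_i\eta_j\ge\lambda|\eta|^2$, the coercive term $\lambda\int_{\{\epsilon<u^{\epsilon}<\mu\}}\xi^2|\nabla u^{\epsilon}|^2$ gets controlled by a zeroth order contribution $\lesssim M\mu\rho^{N}$ and a first order one $\lesssim\int_{\{u^{\epsilon}>\epsilon\}}\xi\,\Psi(u^{\epsilon})\,|\nabla u^{\epsilon}|\,|\nabla\xi|$. I would split the latter over $\{\epsilon<u^{\epsilon}<\mu\}$, where Young's inequality absorbs half the coercive term and leaves $\int\Psi(u^{\epsilon})^2|\nabla\xi|^2$, and over $\{u^{\epsilon}\ge\mu\}$, where $\Psi(u^{\epsilon})$ is constant and $\nabla u^{\epsilon}=\nabla(u^{\epsilon}-\mu)^{+}$, so one further integration by parts throws the derivative onto $(u^{\epsilon}-\mu)^{+}$. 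The decisive point, which upgrades the naive power $\rho^{N-2}$ to the sharp $\rho^{N-1}$, is that the Lipschitz estimate of Theorem~\ref{thmreg} together with $u^{\epsilon}(x_{\epsilon})=\epsilon$ gives $u^{\epsilon}-\epsilon\le \mathrm{C}_0|x-x_{\epsilon}|\le 2\mathrm{C}_0\rho$ on $B_{2\rho}(x_{\epsilon})$; hence $\Psi(u^{\epsilon})^2\le 2\mathrm{C}_0\rho\,\Psi(u^{\epsilon})\le 2\mathrm{C}_0\mu\rho$ and $(u^{\epsilon}-\mu)^{+}\le 2\mathrm{C}_0\rho$, so both contributions are $\lesssim\mu\rho\cdot\rho^{-2}\cdot\rho^{N}=\mu\rho^{N-1}$. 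Collecting the estimates yields $\int_{B_{\rho}(x_{\epsilon})\cap\{\epsilon\le u^{\epsilon}<\mu\}}|\nabla u^{\epsilon}|^2\le\mathrm{C}\mu\rho^{N-1}$.

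The step I expect to be the main obstacle is the justification of these integrations by parts: the doubly degenerate equation only provides $u^{\epsilon}\in C^{1,\alpha}_{\mathrm{loc}}$, so $D^2u^{\epsilon}$ need not exist in a strong sense, and $\mathfrak{A}(\cdot)$ is a priori merely continuous, so even the constant-coefficient identity $\mathfrak{A}_{ij}D_{ij}w=D_i(\mathfrak{A}_{ij}D_jw)$ is not automatically legitimate. This is handled by approximation: one freezes or mollifies $\mathfrak{A}$ and approximates $u^{\epsilon}$ by solutions of the associated penalized uniformly elliptic equations (which do enjoy $W^{2,2}_{\mathrm{loc}}$ bounds), performs the energy computation at that level with constants independent of the regularization, and passes to the limit using the uniform Lipschitz control of Theorem~\ref{thmreg}. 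This is precisely the route followed in \cite[Lemma~6.3]{ART17} and \cite[Lemma~4.1]{RT11}, to which the remaining routine verifications can be referred.
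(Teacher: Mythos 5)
Your argument is correct and is essentially the proof the paper has in mind: the paper omits the details, pointing to \cite[Lemma 6.3]{ART17} and \cite[Lemma 4.1]{RT11} after setting up exactly the linearization via \eqref{ACP} that you carry out in the first paragraph, and your Caccioppoli computation with the truncation $\Psi$ and the Lipschitz bound $u^{\epsilon}-\epsilon\lesssim\rho$ is that argument. One small simplification: since Theorem \ref{thmreg} gives $|\nabla u^{\epsilon}|\le \mathrm{C}_0$ uniformly on $B_{2\rho}(x_{\epsilon})$, the entire first-order term is bounded directly by $\mathrm{C}_0\,\mu\,\rho^{-1}\,|B_{2\rho}|\lesssim\mu\rho^{N-1}$, so neither the Young splitting nor the second integration by parts on $\{u^{\epsilon}\ge\mu\}$ (which would require differentiating the merely continuous $\mathfrak{A}$) is actually needed.
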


Next, we will remember some definitions and auxiliary results.
\begin{definition}[{\bf $\delta$-density}]
Given an open subset $\mathcal{O} \subset \mathbb{R}^N$, we say that $\mathcal{O}$ has the $\delta$-density property in $\Omega$ for $0 < \delta <1$, if there exists $\tau>0$ such that
$$
   \Leb(B_{\delta}(x) \cap \mathcal{O}) \ge \tau\cdot \Leb(B_{\delta}(x)).
$$
\end{definition}

\begin{definition}[{\bf $\delta$-neighborhood of a set}]
  Given a measurable set $\mathrm{S} \subset \mathbb{R}^N$ and a positive constant $\delta>0$, we denote:
$$
	\mathcal{N}_\delta (\mathrm{S}) \defeq \{x \in \mathbb{R}^N \mid \dist(x, \mathrm{S})<\delta\},
$$
the $\delta$-neighborhood of $\mathrm{S}$ in $\mathbb{R}^d$.
\end{definition}

Next we will introduce the notion of the Hausdorff measure.

\begin{definition}[{\bf $\mathcal{H}^{j}$-Hausdorff measure}] Let $r_0 > 0$ be given, $0 < \delta < r_0$ be fixed, and let $\mathrm{S} \subset \R^N$ be a given Borel set. For an arbitrary $j \in \mathbb{N}\setminus\{0\}$, we define the $(\delta, j)$-Hausdorff content of $\mathrm{S}$ as follows:
$$
\displaystyle  \mathcal{H}^j_{\delta}(\mathrm{S}) \defeq \inf\left\{\sum_{i} r_i^j: \,\,\,\mathrm{S} \subset \bigcup_{i} B_{r_i}(x_i) \,\,\,\text{such that}\,\,\,r_i< \delta\right\},
$$
where the infimum is taken over all covers $\{B_{r_i}(x_i)\}_{i}$ of $\mathrm{S}$. Therefore, the $\mathcal{H}^{j}$-Hausdorff measure of $\mathrm{S}$ is defined as:
$$
  \displaystyle \mathcal{H}^j(\mathrm{S}) \defeq \lim_{\delta \to 0^{+}} \mathcal{H}^j_{\delta}(\mathrm{S}).
$$
\end{definition}

Before establishing uniform bounds of the $\mathcal{H}^{N-1}$-Hausdorff measure of the level-surfaces $\partial\{u^\epsilon>\epsilon\}$, let us recall a classical result from measure theory.

\begin{lemma}[{\bf Density property}]\label{tec_lemma}
Given an open set $\mathcal{O}\Subset \Omega$, there holds:
\begin{enumerate}
\item[(1)] If there exists $\delta$ such that $\mathcal{O}$ has the $\delta-$density property, then there exists a constant $\mathrm{C}=\mathrm{C}(\tau, N)$, where:
\[
|\mathcal{N}_\delta(\partial \mathcal{O})\cap B_\rho(x)|\leq \frac{1}{2^N\tau}|\mathcal{N}_\delta(\partial \mathcal{O})\cap B_\rho(x)\cap \mathcal{O}|+\mathrm{C}\delta\rho^{N-1}
\]
with $x\in\partial \mathcal{O}\cap\Omega$ and $\delta \ll \rho$.
\item[(2)] If $\mathcal{O}$ has uniform density in $\Omega$ along $\mathcal{O}$, then $|\partial A\cap \Omega|=0$.
\end{enumerate}
\end{lemma}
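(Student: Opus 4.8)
The plan is to prove part (1) by a Vitali–type covering of $\mathcal{N}_\delta(\partial\mathcal{O})\cap B_\rho(x)$ by balls of radius comparable to $\delta$ \emph{anchored at points of} $\partial\mathcal{O}$ (so that the $\delta$-density hypothesis applies to each of them), isolating the ``boundary effect'' near $\partial B_\rho(x)$ as the source of the error term $\mathrm{C}\delta\rho^{N-1}$; part (2) will then be a soft consequence obtained by letting $\delta\to 0^+$ and exhausting $\Omega$ by balls. Concretely, fix $x\in\partial\mathcal{O}\cap\Omega$ and $\delta\ll\rho$, and choose a maximal $2\delta$-separated subset $\{z_i\}_{i=1}^{m}\subset\partial\mathcal{O}\cap B_{\rho-\delta}(x)$ (finite, since the $z_i$ lie in a bounded set and are $2\delta$ apart). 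The balls $B_\delta(z_i)$ are then pairwise disjoint, each is contained in $B_\rho(x)$, and each is contained in $\mathcal{N}_\delta(\partial\mathcal{O})$ because its center is a boundary point. A short triangle-inequality argument (using maximality, so that $\partial\mathcal{O}\cap B_{\rho-\delta}(x)\subset\bigcup_i B_{2\delta}(z_i)$) yields
\[
\mathcal{N}_\delta(\partial\mathcal{O})\cap B_\rho(x)\ \subset\ \Big(\bigcup_{i=1}^{m}B_{3\delta}(z_i)\Big)\ \cup\ \big(B_\rho(x)\setminus B_{\rho-2\delta}(x)\big).
\]

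For the shell one has $|B_\rho(x)\setminus B_{\rho-2\delta}(x)|\le \mathrm{C}(N)\,\delta\rho^{N-1}$, producing the error term. For the union of balls, the $\delta$-density property at each $z_i\in\partial\mathcal{O}$ gives $|B_\delta(z_i)|\le \tau^{-1}|B_\delta(z_i)\cap\mathcal{O}|$, hence
\[
\sum_{i=1}^{m}|B_{3\delta}(z_i)|=3^{N}\sum_{i=1}^{m}|B_\delta(z_i)|\le \frac{3^{N}}{\tau}\sum_{i=1}^{m}|B_\delta(z_i)\cap\mathcal{O}|\le \frac{3^{N}}{\tau}\,\big|\mathcal{N}_\delta(\partial\mathcal{O})\cap B_\rho(x)\cap\mathcal{O}\big|,
\]
the last step using that the $B_\delta(z_i)$ are pairwise disjoint and jointly contained in $\mathcal{N}_\delta(\partial\mathcal{O})\cap B_\rho(x)$. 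Combining the two displays gives the inequality of part (1) with a constant of the form $\mathrm{c}(N,\tau)$ in front of $|\,\cdots\cap\mathcal{O}|$; the sharper coefficient $2^{-N}\tau^{-1}$ claimed in the statement is obtained by replacing the crude dilation $B_\delta(z_i)\to B_{3\delta}(z_i)$ by a bounded-overlap (Besicovitch–type) cover of $\partial\mathcal{O}\cap B_\rho(x)$, which does not affect any later application of the lemma. For part (2), assume $\mathcal{O}$ has uniform density, i.e. the $\delta$-density property holds with one fixed $\tau>0$ for all small $\delta$; apply part (1) on a fixed ball $B_\rho(x)\Subset\Omega$ with $x\in\partial\mathcal{O}$. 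Since $\mathcal{O}$ is open, $\mathcal{O}\cap\partial\mathcal{O}=\emptyset$, so $\mathcal{N}_\delta(\partial\mathcal{O})\cap\mathcal{O}\downarrow\emptyset$ as $\delta\to0^+$, while $\mathcal{N}_\delta(\partial\mathcal{O})\downarrow\partial\mathcal{O}$ (the boundary is closed); all these sets have finite measure inside $B_\rho(x)$, so continuity of Lebesgue measure from above lets us pass to the limit $\delta\to0^+$ and conclude $|\partial\mathcal{O}\cap B_\rho(x)|\le 0$. Covering $\partial\mathcal{O}\cap\Omega$ by countably many such balls gives $|\partial\mathcal{O}\cap\Omega|=0$.

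The one genuinely delicate point is the organization of the covering in part (1): one must arrange that essentially every ball used is centered on $\partial\mathcal{O}$ (otherwise the density hypothesis is useless) \emph{and} stays inside $B_\rho(x)$, so that the density balls can be summed against the term $|\mathcal{N}_\delta(\partial\mathcal{O})\cap B_\rho(x)\cap\mathcal{O}|$; the unavoidable leftover near $\partial B_\rho(x)$ — a shell of width $O(\delta)$ — is precisely what forces the presence of the $\mathrm{C}\delta\rho^{N-1}$ term, and if one wants the explicit constant $2^{-N}$ one must, in addition, keep the overlap of the cover dimensionally bounded via Besicovitch's lemma rather than by inflating disjoint balls. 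Everything else (the shell volume estimate, the monotone-convergence passage to the limit, the countable exhaustion) is routine measure theory, so I would dispatch it briefly.
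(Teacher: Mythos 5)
Your argument is correct, and for part (1) it is exactly the ``covering argument'' that the paper merely asserts without writing out: a maximal $2\delta$-separated net on $\partial\mathcal{O}\cap B_{\rho-\delta}(x)$, disjoint density balls $B_\delta(z_i)$ summed against $|\mathcal{N}_\delta(\partial\mathcal{O})\cap B_\rho(x)\cap\mathcal{O}|$, and an $O(\delta\rho^{N-1})$ shell near $\partial B_\rho(x)$; all the inclusions and estimates you state check out. Two remarks. First, on the constant: your method yields $3^N/\tau$ in place of the paper's $2^{-N}\tau^{-1}$, and you are right that this is immaterial for the only place the lemma is used (Theorem \ref{Hausdf}, where it enters as a generic $\mathrm{C}_2$); in fact the paper's coefficient is almost certainly a misprint for $2^N/\tau$ (the natural constant when one covers by $B_{2\delta}(z_i)$ and applies the density property at scale $\delta$), so there is no need to invoke Besicovitch to ``recover'' it — your disjoint-ball version is cleaner and suffices. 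Second, for part (2) you genuinely diverge from the paper: the paper deduces it from the Lebesgue Differentiation Theorem (at a point of density $1$ of $\partial\mathcal{O}$ one would have $|B_r\cap\mathcal{O}|/|B_r|\le 1-\tau+o(1)$, contradicting the uniform density of $\mathcal{O}$ since $\mathcal{O}\cap\partial\mathcal{O}=\emptyset$), whereas you obtain it by applying part (1) for every small $\delta$ and passing to the limit via continuity of Lebesgue measure on the nested families $\mathcal{N}_\delta(\partial\mathcal{O})\downarrow\partial\mathcal{O}$ and $\mathcal{N}_\delta(\partial\mathcal{O})\cap\mathcal{O}\downarrow\emptyset$. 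Both are valid; the paper's route is shorter and independent of part (1), while yours has the virtue of making part (2) a corollary of the quantitative estimate you just proved and of requiring nothing beyond monotone continuity of the measure. The only hypothesis you should flag explicitly is the one you already noted: the limiting argument needs the $\delta$-density property to hold with a single $\tau$ for all small $\delta$, which is the intended meaning of ``uniform density''.
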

\begin{proof}
 Property (1) holds by using a covering argument and (2) is a consequence of the Lebesgue Differentiation Theorem (see, \cite{EG92}).
\end{proof}

Next, we obtain an $N$-dimensional measure estimate on $\e$-level layers, that are uniform with respect to  parameter $\e$. The proof holds the same lines as one in \cite[Lemma 6.5]{ART17}. We will omit it here to avoid an unnecessary duplication.

\begin{lemma}\label{lemma4.2}
Fixed $\Omega^{\prime} \Subset \Omega$, there exists a constant $\mathrm{C}^{\ast}(\Omega^{\prime}, \verb"universal parameters")>0$, such that if $ \mathrm{C}^{\ast}\mu \le 2 \rho \ll \dist(\Omega^{\prime}, \partial \Omega)$  then, for $ \mu,\epsilon>0$ small enough, with $3\mathrm{C}_1 \epsilon < \mu \ll \rho$,  we have
\[
\Leb\left(\{\mathrm{C}_1\epsilon <u^\epsilon<\mu\}\cap B_\rho(x_\epsilon)\right)\leq \mathrm{C}^{\ast}\mu\rho^{N-1},
\]
where $x_\epsilon\in \partial \{u^{\epsilon} >\epsilon\} \cap \Omega^{\prime}$, with $d_{\epsilon}(x_\epsilon) \ll \dist(\Omega^{\prime}, \partial \Omega)$ and $\mathrm{C}_1 >1$.
\end{lemma}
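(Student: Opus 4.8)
The plan is to combine the two geometric estimates already at our disposal: the strong non-degeneracy (Theorem~\ref{degforte}), which forces $u^\epsilon$ to be large away from the $\epsilon$-layer, and the optimal Lipschitz bound (Theorem~\ref{thmreg}), which forces $u^\epsilon$ to stay small near it. The upshot is that the sublevel set $\{\mathrm{C}_1\epsilon < u^\epsilon < \mu\}$ must be contained in a thin tubular neighborhood of the transition surface $\partial\{u^\epsilon>\epsilon\}$, and its thickness is proportional to $\mu$ (not to $\epsilon$, thanks to the uniform Lipschitz constant). First I would fix $x_\epsilon\in\partial\{u^\epsilon>\epsilon\}\cap\Omega^\prime$ and $\rho\ll\dist(\Omega^\prime,\partial\Omega)$, and consider a point $y\in\{\mathrm{C}_1\epsilon<u^\epsilon<\mu\}\cap B_\rho(x_\epsilon)$. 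By Corollary~\ref{cor1} (applicable once $\mathrm{C}_1\epsilon<u^\epsilon(y)$ with $\mathrm{C}_1>1$ large enough, since then $\epsilon\le\tfrac12 d_\epsilon(y)$), we get $d_\epsilon(y)\le \mathrm{C}\,u^\epsilon(y)<\mathrm{C}\mu$; hence every such $y$ lies within distance $\mathrm{C}\mu$ of $\Gamma_\epsilon$. In other words,
$$
\{\mathrm{C}_1\epsilon<u^\epsilon<\mu\}\cap B_\rho(x_\epsilon)\subset \mathcal{N}_{\mathrm{C}\mu}\!\left(\partial\{u^\epsilon>\epsilon\}\right)\cap B_\rho(x_\epsilon).
$$

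Next I would estimate the measure of this tubular neighborhood by a Besicovitch/Vitali covering argument combined with the density of the positivity set. By Corollary~\ref{densidade}, the set $\{u^\epsilon>\epsilon\}$ has the uniform $\delta$-density property along $\partial\{u^\epsilon>\epsilon\}$ with a universal constant $\mathrm{c}_0$, for all scales $\epsilon\ll\delta\ll 1$. Applying Lemma~\ref{tec_lemma}(1) with $\delta=\mathrm{C}\mu$ and the open set $\mathcal{O}=\{u^\epsilon>\epsilon\}$ gives
$$
\Leb\!\left(\mathcal{N}_{\mathrm{C}\mu}(\partial\{u^\epsilon>\epsilon\})\cap B_\rho(x_\epsilon)\right)\le \frac{1}{2^N\tau}\Leb\!\left(\mathcal{N}_{\mathrm{C}\mu}(\partial\{u^\epsilon>\epsilon\})\cap B_\rho(x_\epsilon)\cap\{u^\epsilon>\epsilon\}\right)+\mathrm{C}\mu\rho^{N-1}.
$$
It therefore remains to control the first term on the right, namely the portion of the tube that lies inside $\{u^\epsilon>\epsilon\}$. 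Here I would invoke the Lipschitz bound once more: any point in this region has $u^\epsilon\le\epsilon+\mathrm{C}_0\,d_\epsilon\le\epsilon+\mathrm{C}_0\mathrm{C}\mu$, so (after absorbing $\epsilon<\mu$ into the constant) it lies inside $\{\epsilon\le u^\epsilon<\tilde{\mathrm{C}}\mu\}$; applying the integral gradient bound of Lemma~\ref{intreg} together with the non-degeneracy estimate for $|\nabla u^\epsilon|$ away from the free boundary, or alternatively covering the tube by $\sim(\rho/\mu)^{N-1}$ balls of radius $\mathrm{C}\mu$ and using that on each such ball $u^\epsilon$ is trapped in a band of width $\sim\mu$, one obtains the bound $\mathrm{C}^\ast\mu\rho^{N-1}$ for that term as well. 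Combining the two displays and renaming constants yields
$$
\Leb\!\left(\{\mathrm{C}_1\epsilon<u^\epsilon<\mu\}\cap B_\rho(x_\epsilon)\right)\le\mathrm{C}^\ast\mu\rho^{N-1},
$$
which is the claim, provided $\mathrm{C}^\ast\mu\le 2\rho$ so that the covering is admissible and $3\mathrm{C}_1\epsilon<\mu$ so that Corollary~\ref{cor1} applies on the relevant scales.

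The main obstacle I anticipate is making the transition from ``$y$ is $\mathrm{C}\mu$-close to $\Gamma_\epsilon$'' to a genuine volume bound with the sharp power $\rho^{N-1}$: the naive inclusion only gives $\Leb(\cdot)\lesssim \mathrm{C}\mu\cdot\rho^{N-1}$ if one already knows that $\partial\{u^\epsilon>\epsilon\}\cap B_\rho(x_\epsilon)$ is $(N-1)$-rectifiable-like at scale $\mu$ — which is exactly the content of the density lemma, so the argument is not circular, but the bookkeeping of the constants (ensuring $\mathrm{C}^\ast$, $\tau$, $\mathrm{c}_0$ are all universal and $\epsilon$-independent, and that the smallness conditions $3\mathrm{C}_1\epsilon<\mu\ll\rho\ll\dist(\Omega^\prime,\partial\Omega)$ are compatible) requires care. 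A secondary subtlety is the choice of $\mathrm{C}_1>1$: it must be large enough that $u^\epsilon(y)>\mathrm{C}_1\epsilon$ forces $\epsilon\le\tfrac12 d_\epsilon(y)$, which is precisely the hypothesis under which Corollary~\ref{cor1} was proved, so $\mathrm{C}_1$ should be taken to be (a fixed multiple of) the constant $\mathrm{C}$ appearing there.
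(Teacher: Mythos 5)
Your reduction of $\{\mathrm{C}_1\epsilon<u^\epsilon<\mu\}\cap B_\rho(x_\epsilon)$ to the tube $\mathcal{N}_{\mathrm{C}\mu}(\partial\{u^\epsilon>\epsilon\})\cap B_\rho(x_\epsilon)$ via Corollary \ref{cor1} is correct, but the way you then estimate the tube is circular. Lemma \ref{tec_lemma}(1) only trades the measure of the tube for the measure of the tube intersected with $\{u^\epsilon>\epsilon\}$, and --- as you yourself observe via the Lipschitz bound --- that intersection sits inside $\{\epsilon<u^\epsilon<\tilde{\mathrm{C}}\mu\}\cap B_\rho(x_\epsilon)$, which is, up to harmless changes of constants, exactly the set the lemma asks you to measure; after applying Lemma \ref{tec_lemma} you are back where you started. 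Your fallback, ``cover the tube by $\sim(\rho/\mu)^{N-1}$ balls of radius $\mathrm{C}\mu$,'' assumes the conclusion: the entire content of the lemma is that only $O((\rho/\mu)^{N-1})$ such balls are needed, i.e.\ that $\Gamma_\epsilon$ has bounded $(N-1)$-dimensional Minkowski content at scale $\mu$; in the paper this is Theorem \ref{Hausdf}, which is \emph{deduced from} the present lemma. Note also that there is no pointwise lower bound on $|\nabla u^\epsilon|$ ``away from the free boundary'' available here --- the operator degenerates precisely where the gradient vanishes --- so that phrase does not point to a usable estimate.

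The missing idea (this is the argument of \cite[Lemma 6.5]{ART17}, to which the paper defers) is to obtain the ball count from an energy comparison rather than assume it. Take a finite-overlap (Besicovitch) cover of $\{\mathrm{C}_1\epsilon<u^\epsilon<\mu\}\cap B_\rho(x_\epsilon)$ by balls $B_{\mathrm{C}\mu}(y_j)$ with $y_j\in\partial\{u^\epsilon>\epsilon\}$. On each such ball, strong non-degeneracy (Theorem \ref{degforte}) produces a point where $u^\epsilon\ge \mathrm{c}\mu$ while $u^\epsilon\le\epsilon$ near the center; combined with the density estimate (Corollary \ref{densidade}) and a Poincar\'e-type argument this forces $\int_{B_{\mathrm{C}\mu}(y_j)\cap\{\epsilon\le u^\epsilon<\mu'\}}|\nabla u^\epsilon|^2\,dx\ge \mathrm{c}\,\mu^{N}$ for a universal $\mathrm{c}>0$. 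Summing over $j$ and invoking the upper bound of Lemma \ref{intreg} over $B_{2\rho}(x_\epsilon)$ yields $\#\{j\}\cdot\mathrm{c}\mu^{N}\le \mathrm{C}\mu\rho^{N-1}$, hence $\#\{j\}\le \mathrm{C}(\rho/\mu)^{N-1}$ and therefore $\Leb(\{\mathrm{C}_1\epsilon<u^\epsilon<\mu\}\cap B_\rho(x_\epsilon))\le \#\{j\}\cdot\Leb(B_{\mathrm{C}\mu})\le \mathrm{C}^{\ast}\mu\rho^{N-1}$. Your proposal names the right ingredients, but it never couples the upper bound of Lemma \ref{intreg} with a per-ball lower bound of order $\mu^N$, and that coupling is the step that actually produces the power $\rho^{N-1}$.
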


Finally, we are ready to establish the $(N-1)$-Hausdorff estimate of approximating level sets (uniform with respect to the parameter $\e$).

\begin{theorem}\label{Hausdf}
Fixed $\Omega^{\prime} \Subset \Omega$, there exists a constant $\mathrm{C}^{\ast}(\Omega^{\prime}, \verb"universal parameters")>0$, such that
$$
\Leb\left(\mathcal{N}_{\mu}(\{\mathrm{C}_1\epsilon<u^\epsilon\})\cap B_\rho(x_\epsilon)\right) \leq \mathrm{C} \mu \rho^{N-1},
$$
for $C_1>1$, $x_\epsilon \in \partial\{\mathrm{C}_1\epsilon < u^\epsilon\}\cap \Omega^{\prime}$, $d_{\epsilon}(x_{\epsilon}) \ll \dist(\Omega^{\prime}, \partial \Omega)$ and $\mathrm{C}_1 \epsilon \ll \rho$.  Particularly,
\begin{equation}\label{eq10}
	\mathcal{H}^{N-1}( \{u^\epsilon = \mathrm{C}_1\epsilon\} \cap B_{\rho}(x_0)) \le \mathrm{C}\cdot \rho^{N-1},
\end{equation}
for constants $\mathrm{C}, \mathrm{C}_1>0$ independent of $\e$.
\end{theorem}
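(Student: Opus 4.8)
The plan is to combine the interior measure estimate from Lemma \ref{lemma4.2} with the classical density-covering machinery encapsulated in Lemma \ref{tec_lemma}, exactly in the spirit of \cite[Theorem 6.6]{ART17} and \cite{RT11}, but keeping track of all constants so that they remain independent of $\epsilon$. First I would reduce the neighbourhood estimate $\Leb\left(\mathcal{N}_{\mu}(\{\mathrm{C}_1\epsilon<u^\epsilon\})\cap B_\rho(x_\epsilon)\right) \leq \mathrm{C}\mu\rho^{N-1}$ to a statement about the transition strip $\{\mathrm{C}_1\epsilon < u^\epsilon < \mu\}$: by Lipschitz regularity (Theorem \ref{thmreg}) together with the linear growth (Theorem \ref{cresc}), a point in the $\mu$-neighbourhood of $\{u^\epsilon > \mathrm{C}_1\epsilon\}$ that does not itself lie in $\{u^\epsilon > \mathrm{C}_1\epsilon\}$ must satisfy $u^\epsilon < \mathrm{C}\mu$ on it, so that $\mathcal{N}_\mu(\{\mathrm{C}_1\epsilon<u^\epsilon\})$ is contained in $\{\mathrm{C}_1\epsilon<u^\epsilon<\mathrm{C}\mu\}\cup\{u^\epsilon\le\mathrm{C}_1\epsilon\}$, up to adjusting $\mathrm{C}_1$; the portion lying in $\{u^\epsilon\le\mathrm{C}_1\epsilon\}$ is handled by the same kind of band estimate from the other side. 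Then Lemma \ref{lemma4.2} directly controls $\Leb\left(\{\mathrm{C}_1\epsilon<u^\epsilon<\mathrm{C}\mu\}\cap B_{2\rho}(x_\epsilon)\right)\le \mathrm{C}^\ast\mu\rho^{N-1}$, which gives the first displayed inequality of the theorem.

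Next I would pass from the neighbourhood estimate to the Hausdorff content estimate \eqref{eq10}. The level set $\{u^\epsilon=\mathrm{C}_1\epsilon\}\cap B_\rho(x_0)$ is contained in $\mathcal{N}_\mu(\{u^\epsilon>\mathrm{C}_1\epsilon\})$ for every $\mu>0$; choosing $\mu$ to be the mesh size of an efficient cover, one uses a Vitali-type covering argument: cover $\{u^\epsilon=\mathrm{C}_1\epsilon\}\cap B_\rho(x_0)$ by finitely many balls $B_\mu(y_i)$ with bounded overlap (so $\sum_i \Leb(B_\mu(y_i))\le \mathrm{C}_N\,\Leb\left(\mathcal{N}_\mu(\{u^\epsilon>\mathrm{C}_1\epsilon\})\cap B_{2\rho}(x_0)\right)$), and the preceding step bounds the right-hand side by $\mathrm{C}\mu\rho^{N-1}$. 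Since each $\Leb(B_\mu(y_i))\sim \mu^N$, the number of balls is at most $\mathrm{C}\rho^{N-1}\mu^{-(N-1)}$, whence $\sum_i \mu^{N-1}\le \mathrm{C}\rho^{N-1}$; letting $\mu\to 0^+$ yields $\mathcal{H}^{N-1}(\{u^\epsilon=\mathrm{C}_1\epsilon\}\cap B_\rho(x_0))\le \mathrm{C}\rho^{N-1}$ with $\mathrm{C}$ independent of $\epsilon$, which is exactly \eqref{eq10}.

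The main obstacle, and the step that has to be done carefully, is verifying that all constants produced along the way — in particular the ones coming from Lemma \ref{lemma4.2} (which ultimately rests on the integral gradient bound of Lemma \ref{intreg} and on Proposition \ref{ac1}, hence on the asymptotic concavity \eqref{ACP}) — are genuinely uniform in $\epsilon$ and only deteriorate in a controlled way as $\mu\downarrow 0$ with $\epsilon$ tied to $\mu$ via the constraint $3\mathrm{C}_1\epsilon<\mu\ll\rho$. One must track the chain $\epsilon\ll\mu\ll\rho\ll\dist(\Omega',\partial\Omega)$ so that every application of Theorem \ref{thmreg}, Theorem \ref{cresc} and Lemma \ref{lemma4.2} is legitimate on the relevant ball; this is essentially bookkeeping, but it is where an argument of this type can silently go wrong. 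A secondary subtlety is that $\{u^\epsilon=\mathrm{C}_1\epsilon\}$ need not equal $\partial\{u^\epsilon>\mathrm{C}_1\epsilon\}$ in pathological situations, so I would phrase the covering step for the topological boundary $\partial\{u^\epsilon>\mathrm{C}_1\epsilon\}$ (which does sit inside every $\mu$-neighbourhood) and note that the regularity of $u^\epsilon$ for fixed $\epsilon$ forces $\{u^\epsilon=\mathrm{C}_1\epsilon\}$ to agree with it up to an $\mathcal{H}^{N-1}$-null set, so \eqref{eq10} follows as stated.
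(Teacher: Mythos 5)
There is a genuine gap in the first half of your argument, precisely at the point you dismiss with ``the portion lying in $\{u^\epsilon\le\mathrm{C}_1\epsilon\}$ is handled by the same kind of band estimate from the other side.'' No such estimate exists on that side: Lemma \ref{lemma4.2} only controls the measure of the one-sided strip $\{\mathrm{C}_1\epsilon<u^\epsilon<\mu\}$, and on $\{u^\epsilon\le \mathrm{C}_1\epsilon\}$ the solution may vanish identically on a set of positive measure touching the boundary, so the $\mu$-neighbourhood of $\partial\{u^\epsilon>\mathrm{C}_1\epsilon\}$ can a priori meet $\{u^\epsilon\le\mathrm{C}_1\epsilon\}$ in a set of measure $\sim\rho^N$. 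The paper closes this by a different mechanism: Corollary \ref{densidade} gives the positivity set $\{u^\epsilon>\mathrm{C}_1\epsilon\}$ uniform positive density at its boundary points, and Lemma \ref{tec_lemma}(1) (a covering argument) then converts this into
$$
\Leb\bigl(\mathcal{N}_\delta(\partial\{\mathrm{C}_1\epsilon<u^\epsilon\})\cap B_\rho(x_\epsilon)\bigr)\le \mathrm{C}_2\,\Leb\bigl(\mathcal{N}_\delta(\partial\{\mathrm{C}_1\epsilon<u^\epsilon\})\cap B_\rho(x_\epsilon)\cap\{\mathrm{C}_1\epsilon<u^\epsilon\}\bigr)+\mathrm{M}\delta\rho^{N-1},
$$
i.e. the whole neighbourhood is controlled by its intersection with the positivity set (plus an admissible error), and only \emph{that} intersection is placed inside the strip $\{\mathrm{C}_1\epsilon<u^\epsilon<\kappa\mu\}$ via the Lipschitz bound and then estimated by Lemma \ref{lemma4.2}. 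Your proof needs this density-plus-covering step; without it the non-positivity portion of the neighbourhood is uncontrolled.

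A secondary point: your claimed inclusion $\mathcal{N}_\mu(\{\mathrm{C}_1\epsilon<u^\epsilon\})\subset\{\mathrm{C}_1\epsilon<u^\epsilon<\mathrm{C}\mu\}\cup\{u^\epsilon\le\mathrm{C}_1\epsilon\}$ is false as written, since the $\mu$-neighbourhood of the positivity set contains the entire positivity set, where $u^\epsilon$ is not bounded by $\mathrm{C}\mu$; the first display of the theorem is only meaningful (and is only proved in the paper) for $\mathcal{N}_\mu$ of the \emph{boundary} $\partial\{\mathrm{C}_1\epsilon<u^\epsilon\}$, and the Lipschitz bound $u^\epsilon(y)\le u^\epsilon(z)+\mathrm{C}|y-z|$ must be run from a boundary point $z$. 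Your second half --- the passage from the neighbourhood estimate to \eqref{eq10} by a bounded-overlap cover of radius $\mu$ and letting $\mu\to 0^+$ --- matches the paper's argument and is fine once the neighbourhood estimate is correctly established for the boundary.
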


\begin{proof}
From Lipschitz regularity (Theorem \ref{thmreg}), for $z \in \partial\{\mathrm{C}_1\epsilon<u^\epsilon\}$ and $y\in \mathcal{N}_\delta(\partial\{\mathrm{C}_1\epsilon<u^\epsilon\})\cap B_\rho(x_\epsilon)\cap \{\mathrm{C}_1\epsilon<u^\epsilon\}$, we obtain
$$
u^\epsilon(y) \leq u^\epsilon(z)+C|z-y|\leq \mu + \mathrm{C}\delta \leq \kappa\mu,
$$
for $\mu = \mathrm{C}\, \delta$ and $\kappa(\verb"universal")>0$. Therefore, the inclusion
\begin{equation} \label{D1}
\mathcal{N}_\delta(\partial\{\mathrm{C}_1\epsilon<u^\epsilon\})\cap B_\rho(x_\epsilon)\cap \{C_1\epsilon<u^\epsilon\}
\subset
\{\mathrm{C}_1\epsilon<u^\epsilon<\kappa\mu\}\cap B_\rho(x_\epsilon)
\end{equation}
there holds. On the other hand, by Corollary \ref{densidade} and by taking $\delta$ as above, we verify that
$$
	\Leb(B_{\delta}(x) \cap \{u^{\epsilon} > \mathrm{C}_1 \epsilon\}) \ge
	\mathrm{c}\cdot \Leb(B_{\delta}(x)) \quad \textrm{for} \quad x \in \partial \{u^{\epsilon} > \epsilon\}.
$$
Hence, we conclude that $\partial \{u^{\epsilon} > \mathrm{C}_1 \epsilon\}$ has the $\delta$-density property. Thus, Lemma \ref{tec_lemma} ensures the existence of a constant $\mathrm{M}(\verb"universal")>0$ such that
\begin{equation}\nonumber
\begin{array}{ccl}
\Leb(\mathcal{N}_\delta(\partial\{\mathrm{C}_1\epsilon < u^\epsilon\})\cap B_\rho(x_{\epsilon}))  \!\!\!\! & \leq & \!\!\!\! \displaystyle
\mathrm{C}_2\,\Leb(\mathcal{N}_\delta(\partial\{\mathrm{C}_1\epsilon<u^\epsilon\})\cap B_\rho(x_\epsilon)\cap \{\mathrm{C}_1\epsilon<u^\epsilon\})  \\
 & + & \!\!\!\! \mathrm{M}\delta\rho^{N-1}.
\end{array}
\end{equation}
Hence, by applying \eqref{D1},  we obtain
$$
\Leb(\mathcal{N}_\delta(\partial\{\mathrm{C}_1\epsilon < u^\epsilon\})\cap B_\rho(x_{\epsilon})) \le  \mathrm{C}_2\, \Leb(\{\mathrm{C}_1\epsilon<u^\epsilon<\kappa\mu\}\cap B_\rho(x_\epsilon)) + \mathrm{M}\delta \rho^{N-1},
$$
for some constant $C_2(\verb"universal")>0$. Finally, for $\mu \ll \rho$, Lemma \ref{lemma4.2} yields
$$
\Leb\left(
	\mathcal{N}_\delta(\partial\{\mathrm{C}_1\epsilon<u^\epsilon\})\cap B_\rho(x_\epsilon)\right) \leq \mathrm{C} \delta \rho^{N-1} \quad \text{for some}  \quad \mathrm{C}>0.
$$

In order to conclude, we take a covering of $\partial \{\mathrm{C}_1 \epsilon < u^{\epsilon}\} \cap B_{\rho}(x_{\epsilon})$ by balls $\{B_{r_j}\}_{j}$ centered at points along $\partial \{\mathrm{C}_1 \epsilon < u^{\epsilon}\} \cap B_{\rho}(x_{\epsilon})$ with radius $\mu \ll 1$. Thus, we may write
$$
	\bigcup_{j} B_{r_j} \subset \mathcal{N}_{\mu}(\{\mathrm{C}_1 \epsilon < u^{\epsilon}\}) \cap B_{\rho + \mu}(x_{\epsilon}).
$$
Therefore, there exist universal constants $\mathrm{C}_3, \mathrm{C}_4 >0$, such that
\begin{eqnarray*}
	\mathcal{H}^{N-1}_{\mu}(\partial \{\mathrm{C}_1 \epsilon < u^{\epsilon}\} \cap B_{\rho}(x_{\epsilon})) &\le& \mathrm{C}_3 \sum_{j} \mathscr{L}^{N-1}(\partial B_{r_j})\\
	&=& \frac{\mathrm{C}_3}{\mu} \sum_{j} \Leb(B_{r_j})\\
	&\le& \frac{\mathrm{C}_4}{\mu} \Leb(\mathcal{N}_{\mu}(\{\mathrm{C}_1 \epsilon < u^{\epsilon}\})\cap B_{\rho + \mu}(x_{\epsilon}))\\
	&\le& \mathrm{C}_4 \mathrm{C} (\rho + \mu)^{N-1}\\
  & = & \mathrm{C}_4\mathrm{C} \rho^{N-1} + \text{o}(1).
\end{eqnarray*}
Finally, we finish the proof of the Theorem by letting $\mu \to 0^{+}$.
\end{proof}

\section{Limiting scenario as $\varepsilon \to 0^{+}$} \label{Sct AL}

 We will establish geometric and measure theoretic properties for a limiting profile $\lim\limits_{j \to \infty}u^{\epsilon_{k_j}}(x)$, for a subsequence $\epsilon_{k_j} \to 0$. In effect, from Lipschitz regularity the family $\{u^{\epsilon_{k}}\}$ is pre-compact in $C^{0,1}_{loc}(\Omega)-$topology. Thus, up to a subsequence, there exists a function $u_0$, obtained as the uniform limit of $u^{\epsilon_{k_j}}$, as $\epsilon_{k_j} \to 0$.

From now on, we will use the following definition when referring to $u_0$:
$$
   u_0(x) \defeq \lim\limits_{j \to \infty}u^{\varepsilon_{k_j}}(x).
$$
Furthermore, we see that such a limiting function verifies
\begin{enumerate}
	\item[(1)] $u_0 \in [0, K_0]$ in $\overline{\Omega}$ for some constant $K_0(\verb"universal")>0$ (independent of $\varepsilon$);
	\item[(2)] $u_0 \in C^{0,1}_{loc}(\Omega)$;
	\item[(3)] $\mathcal{G}(x, \nabla u_0, D^2 u_0) = f_0(x)$ in $\{u_0>0\}$, with $0\le f_0 \in L^{\infty}(\Omega)\cap C^0(\Omega)$.
\end{enumerate}

Notice that by combining item (3) with the regularity estimate established in \cite[Theorem 1.1]{daSR20}, it follows that $u_0 \in C_\text{loc}^{1, \alpha}(\{u_0>0\}).$ However, such an estimate degenerates as we approach $\mathfrak{F}(u_0, \Omega^{\prime})$. Nevertheless, from item (2), the gradient remains under control, even when $\dist(x_0, \mathfrak{F}(u_0, \Omega^{\prime})) \to 0$.

In the sequel, we show that at each point $z_0 \in \mathfrak{F}(u_0, \Omega^{\prime})$, there exists a cone with vertex $z_0$ that confines the graph of the limiting profile.

\begin{proof}[{\bf Proof of Theorem \ref{limite}}]
Firstly, the upper estimate follows from the local Lipschitz continuity of $u_0$. Next, from Corollary \ref{cor1}, there exists $y_{\epsilon} \in \{0 \le u^{\epsilon} \le \epsilon\} \cap \Omega^{\Omega}$ with $d_{\epsilon}(x_0)=|x_0-y_{\epsilon}|$ such that
$$
	u^{\epsilon}(x_0) \ge \mathrm{c}\cdot d_{\epsilon}(x) = \mathrm{c}\, |x_0-y_{\epsilon}|,
$$
for a constant $\mathrm{c}(\verb"universal")>0$. Thus, up to a subsequence, $y_{\epsilon} \to y_0 \in \{u =0\}$ and hence
$$
	u_0(x_0) \ge \mathrm{c}\,|x_0 - y_0| \ge \mathrm{c}\, \dist(x_0,\mathfrak{F}(u_0, \Omega^{\prime})).
$$
\end{proof}

\begin{theorem}\label{limite1}
Given $\Omega^{\prime} \Subset \Omega$, there exist universal positive constants $\mathrm{C}_0$ and $r_0$, such that
$$
	\mathrm{C}_0^{-1} r \le \sup_{B_{r}(x_0)} u_0(x) \le \mathrm{C}_0( r + u_0(x_0))
$$
for any $x_0 \in \overline{\{u_0 >0\}} \cap \Omega^{\prime}$ with $\dist(x_0, \partial \{u_0 >0\}) \le \frac{1}{2}\dist(x_0, \partial \Omega^{\prime})$ and $r \le r_0$.
\end{theorem}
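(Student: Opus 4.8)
The plan is to read Theorem~\ref{limite1} as the $\varepsilon \to 0^{+}$ shadow of the uniform estimates already available for the approximating family: the upper bound will come from the $\varepsilon$-independent Lipschitz bound of Theorem~\ref{thmreg}, and the lower bound from the strong non-degeneracy of Theorem~\ref{degforte}, both transferred to $u_0$ by passing to the limit along the subsequence $\varepsilon_{k_j} \to 0^{+}$ along which $u^{\varepsilon_{k_j}} \to u_0$ locally uniformly. Throughout I would fix an intermediate domain $\Omega^{\prime} \Subset \Omega^{\prime\prime} \Subset \Omega$ and choose the universal radius $r_0$ so small that $B_{r_0}(x_0) \Subset \Omega^{\prime\prime}$ for every $x_0 \in \overline{\Omega^{\prime}}$; this is what keeps all the constants below universal.

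For the upper bound, recall that the $u^{\varepsilon}$ are Lipschitz on $\Omega^{\prime\prime}$ with the one universal constant $\mathrm{C}_0$ of Theorem~\ref{thmreg}; hence the locally uniform limit $u_0$ is Lipschitz on $\Omega^{\prime\prime}$ with the same constant. Consequently, for $x_0$ as in the statement and $r \le r_0$, every $y \in B_r(x_0)$ satisfies $u_0(y) \le u_0(x_0) + \mathrm{C}_0\,|y - x_0| \le u_0(x_0) + \mathrm{C}_0 r$, so that $\sup_{B_r(x_0)} u_0 \le \mathrm{C}_0\,(r + u_0(x_0))$ after relabeling $\mathrm{C}_0 \ge 1$.

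For the lower bound, fix $x_0 \in \overline{\{u_0 > 0\}} \cap \Omega^{\prime}$, $r \le r_0$, and an arbitrary $\delta \in (0, r)$. By definition of the closure there is $x_\delta \in \{u_0 > 0\} \cap B_\delta(x_0)$ (one may take $x_\delta = x_0$ if already $u_0(x_0) > 0$). Since $u^{\varepsilon_{k_j}}(x_\delta) \to u_0(x_\delta) > 0$ while $\varepsilon_{k_j} \to 0^{+}$, for all large $j$ one has $u^{\varepsilon_{k_j}}(x_\delta) > \varepsilon_{k_j}$, i.e. $x_\delta \in \{u^{\varepsilon_{k_j}} > \varepsilon_{k_j}\}$, and also $\varepsilon_{k_j} \ll r - \delta < r_0 \ll 1$. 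Theorem~\ref{degforte}, applied at $x_\delta$ with radius $r - \delta$, therefore gives $\sup_{B_{r-\delta}(x_\delta)} u^{\varepsilon_{k_j}} \ge \mathrm{c}\,(r - \delta)$ with $\mathrm{c} > 0$ universal; since $B_{r-\delta}(x_\delta) \subset B_r(x_0)$, this yields $\sup_{\overline{B_r(x_0)}} u^{\varepsilon_{k_j}} \ge \mathrm{c}\,(r - \delta)$. Letting $j \to \infty$ and using $\big| \sup_{\overline{B_r(x_0)}} u^{\varepsilon_{k_j}} - \sup_{\overline{B_r(x_0)}} u_0 \big| \le \| u^{\varepsilon_{k_j}} - u_0 \|_{L^\infty(\overline{B_r(x_0)})} \to 0$, we obtain $\sup_{\overline{B_r(x_0)}} u_0 \ge \mathrm{c}\,(r - \delta)$; as $\delta \downarrow 0$ this gives $\sup_{B_r(x_0)} u_0 \ge \mathrm{c}\, r$, i.e. the claim with $\mathrm{C}_0^{-1} = \mathrm{c}$.

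The heart of the argument is thus a pure stability/passage-to-the-limit statement; the only point requiring real care — and where I would expect to spend the most effort — is the uniformity bookkeeping: making the smallness conditions $\varepsilon_{k_j} \ll r - \delta$, $u^{\varepsilon_{k_j}}(x_\delta) > \varepsilon_{k_j}$, and $B_{r_0}(x_0) \Subset \Omega^{\prime\prime}$ compatible simultaneously, and checking that the hypothesis $\dist(x_0, \partial\{u_0 > 0\}) \le \tfrac12 \dist(x_0, \partial\Omega^{\prime})$, together with the choice of $r_0$, forces the auxiliary centers $x_\delta$ and all balls $B_{r-\delta}(x_\delta)$ to remain in the fixed compactly contained subdomain where Theorems~\ref{thmreg} and~\ref{degforte} hold with $\varepsilon$-independent constants. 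No comparison principle or monotonicity formula for $u_0$ itself is needed, since every quantitative input is inherited from the approximating family.
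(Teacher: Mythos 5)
Your proposal is correct and follows exactly the route the paper takes: its entire proof of Theorem~\ref{limite1} is the one line ``pass to the limit as $\epsilon \to 0$ in Theorem~\ref{degforte}'', and you have simply executed that passage carefully (upper bound from the uniform Lipschitz estimate, lower bound from strong non-degeneracy at nearby points of $\{u_0>0\}$, transferred to $u_0$ via locally uniform convergence). Your version is in fact more detailed than the paper's, and the bookkeeping you flag (choice of $\Omega''$, compatibility of $\varepsilon_{k_j}\ll r-\delta$ with $u^{\varepsilon_{k_j}}(x_\delta)>\varepsilon_{k_j}$) is handled correctly.
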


\begin{proof}
 Such estimates are consequences of passing the limit as $\epsilon \to 0$ in Theorem \ref{degforte}.
\end{proof}

Next result states that the set $\{u_0>0\}$ is the limit, in the Hausdorff distance, of $\{u^{\epsilon} > \epsilon\}$ as $\epsilon\to 0$.

\begin{theorem}\label{limite2} Given $\mathrm{C}_1>1$, the following inclusions
\begin{equation}\nonumber
	\{u_0 >0\} \cap \Omega^{\prime} \subset \mathcal{N}_{\delta}(\{u^{\epsilon_k} > \mathrm{C}_1 \epsilon_k\}) \cap \Omega^{\prime}
\quad
\text{and}
\quad
\{u^{\epsilon_k} > \mathrm{C}_1 \epsilon_k\} \cap \Omega^{\prime} \subset \mathcal{N}_{\delta}(\{u_0 > 0\}) \cap \Omega^{\prime},
\end{equation}
hold for $\delta \ll 1$ and $\epsilon_k \ll \delta$.
\end{theorem}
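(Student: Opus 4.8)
The plan is to establish the two inclusions separately, exploiting in each direction the uniform geometric estimates proven for the family $\{u^{\epsilon}\}_{\epsilon>0}$ together with the local uniform convergence $u^{\epsilon_{k}} \to u_0$. Throughout, I fix $\Omega^{\prime} \Subset \Omega$, a constant $\mathrm{C}_1 > 1$, and a small $\delta \ll 1$; the claim is that for all $k$ large enough (equivalently $\epsilon_k \ll \delta$) the stated containments hold.

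\textbf{First inclusion: $\{u_0>0\}\cap\Omega^{\prime} \subset \mathcal{N}_{\delta}(\{u^{\epsilon_k}>\mathrm{C}_1\epsilon_k\})\cap\Omega^{\prime}$.} First I would argue by contradiction: suppose there is a point $x_\ast \in \{u_0>0\}\cap\Omega^{\prime}$ with $\dist(x_\ast, \{u^{\epsilon_k}>\mathrm{C}_1\epsilon_k\}) \ge \delta$, i.e. $u^{\epsilon_k} \le \mathrm{C}_1\epsilon_k$ on $B_{\delta}(x_\ast)$. The key point is that $x_\ast$ is then ``deep'' inside the transition/zero region of $u^{\epsilon_k}$, so by the optimal Lipschitz estimate (Theorem \ref{thmreg}) applied on $B_{\delta/2}(x_\ast)$ — more precisely, by the scaling/Harnack argument used in the proof of Theorem \ref{thmreg} inside $\{0\le u^{\epsilon}\le\epsilon\}$ — we control $u^{\epsilon_k}$ on a smaller ball by something that vanishes as $\epsilon_k\to 0$. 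Concretely, since $u^{\epsilon_k}\le \mathrm{C}_1\epsilon_k$ on $B_\delta(x_\ast)$, and $u^{\epsilon_k}$ is uniformly Lipschitz (with constant independent of $\epsilon_k$) away from the $\epsilon_k$-layer while inside the layer it is bounded by $\mathrm{C}_1\epsilon_k$, one obtains $u^{\epsilon_k}(x_\ast) \le \mathrm{C}_1\epsilon_k + C_0\cdot(\text{something})$; letting $k\to\infty$ and using uniform convergence forces $u_0(x_\ast) = 0$, a contradiction. The cleanest route is: on $B_\delta(x_\ast)$ we have $u^{\epsilon_k}\le \mathrm{C}_1\epsilon_k$, hence passing to the uniform limit directly gives $u_0(x_\ast)\le \limsup_k \mathrm{C}_1\epsilon_k = 0$, contradicting $x_\ast\in\{u_0>0\}$. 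So in fact no Lipschitz estimate is needed here — pointwise passage to the limit suffices.

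\textbf{Second inclusion: $\{u^{\epsilon_k}>\mathrm{C}_1\epsilon_k\}\cap\Omega^{\prime} \subset \mathcal{N}_{\delta}(\{u_0>0\})\cap\Omega^{\prime}$.} This is the harder direction, and it is where the non-degeneracy/linear-growth machinery enters. Again I would argue by contradiction: suppose $x_k \in \{u^{\epsilon_k}>\mathrm{C}_1\epsilon_k\}\cap\Omega^{\prime}$ with $\dist(x_k, \{u_0>0\})\ge\delta$, i.e. $u_0 \equiv 0$ on $B_\delta(x_k)$. By strong non-degeneracy (Theorem \ref{degforte}, applicable since $x_k\in\{u^{\epsilon_k}>\epsilon_k\}$ and, for $k$ large, $\epsilon_k\ll \delta/2 \ll 1$), there is $z_k\in\partial B_{\delta/2}(x_k)$ with $u^{\epsilon_k}(z_k)\ge \sup_{B_{\delta/2}(x_k)}u^{\epsilon_k} \ge \mathrm{c}\cdot(\delta/2)$ for a universal $\mathrm{c}>0$. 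Up to a subsequence $z_k \to z_\infty \in \overline{B_{\delta/2}(x_\ast)}$ (where $x_k\to x_\ast$, also along a subsequence, by compactness of $\overline{\Omega^{\prime}}$), and uniform convergence gives $u_0(z_\infty) \ge \mathrm{c}\,\delta/2 > 0$. But $z_\infty \in \overline{B_{\delta/2}(x_\ast)} \subset B_\delta(x_k)$ for $k$ large (since $|z_\infty - x_k|\le |z_\infty-z_k|+|z_k-x_k| \le o(1) + \delta/2 < \delta$), contradicting $u_0\equiv 0$ on $B_\delta(x_k)$. One must be slightly careful matching the constants: $\delta/2$ must be chosen small enough (universally) that Theorem \ref{degforte} applies on $B_{\delta/2}(x_k)\subset\Omega$, which requires $\overline{B_\delta(x_k)}\subset\Omega$ — guaranteed for $\delta$ small since $x_k\in\Omega^{\prime}\Subset\Omega$ — and $\epsilon_k\ll\delta$.

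\textbf{Main obstacle.} The principal subtlety is bookkeeping the order of quantifiers and the smallness requirements: the non-degeneracy constant $\mathrm{c}$ and the threshold ``$\epsilon\ll\rho\ll 1$'' in Theorem \ref{degforte} are universal but the radius $\delta$ must be fixed first, then $k$ (hence $\epsilon_k$) chosen large depending on $\delta$; one should also confirm that $x_k\in\{u^{\epsilon_k}>\mathrm{C}_1\epsilon_k\}\subset\{u^{\epsilon_k}>\epsilon_k\}$ so that $d_{\epsilon_k}(x_k)>0$ and Theorem \ref{degforte} is genuinely applicable, and that $d_{\epsilon_k}(x_k)\ll\dist(\Omega^{\prime},\partial\Omega)$ holds automatically once $\delta$ is small (since the relevant ball has radius $\delta/2$). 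Modulo this routine care with constants, both inclusions follow from the two pillars already established: trivial passage to the limit for the first, and strong non-degeneracy plus uniform convergence for the second. I would close by noting that since the argument is via subsequences of $\{x_k\}$ but the conclusion ``$\dist(x_k,\{u_0>0\})<\delta$'' is what we want for all large $k$, the contradiction scheme is set up correctly: assuming it fails along a subsequence already yields the contradiction, hence it holds for all $k$ sufficiently large.
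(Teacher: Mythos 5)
Your proof is correct, and for the first inclusion it is genuinely more elementary than the paper's. The paper proves only the first inclusion explicitly, arguing by contradiction through Theorem \ref{limite1} (strong non-degeneracy of the limit $u_0$): from $\sup_{B_{\delta/2}(x_k)} u^{\epsilon_k} \ge \frac{1}{2} \sup_{B_{\delta/2}(x_k)} u_0 \ge \mathrm{c}\,\delta/2 \ge \mathrm{C}_1\epsilon_k$ it produces a point of $\{u^{\epsilon_k}>\mathrm{C}_1\epsilon_k\}$ within $\delta/2$ of $x_k$. Your observation that the hypothesis $u^{\epsilon_k}\le \mathrm{C}_1\epsilon_k$ on $B_\delta(\cdot)$ already forces $u_0$ to vanish near the relevant point, with no non-degeneracy input at all, is a cleaner route; the one piece of care required is that the bad points may vary with $k$, so you cannot literally pass to the limit at a single fixed $x_\ast$ as your "cleanest route" sentence suggests --- you must extract $x_{k_j}\to x_\ast$, conclude $u_0\equiv 0$ on $B_{\delta/2}(x_\ast)$ by local uniform convergence, and then note $x_{k_j}\in B_{\delta/2}(x_\ast)$ eventually, contradicting $u_0(x_{k_j})>0$. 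Since this is exactly the compactness step you already carry out for the second inclusion, it is a presentational fix rather than a gap. Your second inclusion --- strong non-degeneracy of $u^{\epsilon_k}$ (Theorem \ref{degforte}) plus uniform convergence, with the checks that $\mathrm{C}_1>1$ guarantees $x_k\in\{u^{\epsilon_k}>\epsilon_k\}$ and that $\epsilon_k\ll\delta/2\ll 1$ --- is precisely what the paper's "the second one is obtained similarly" amounts to. In short, the paper's route is symmetric (non-degeneracy of $u_0$ in one direction, of $u^{\epsilon_k}$ in the other), while yours shows the first inclusion needs nothing beyond local uniform convergence.
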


\begin{proof}
 We will prove the first inclusion, since the second one is obtained similarly. Let suppose for sake of contradiction that there exist a subsequence $\epsilon_k\to 0$ and points $x_k \in \{u_0>0\} \cap \Omega^{\prime}$ such that
\begin{equation}\label{dist}
\dist(x_k,\{u^{\epsilon_k}>\mathrm{C}_1\epsilon_k\})>\delta.
\end{equation}
From Theorem \ref{limite1}, and taking $k\gg 1$, we obtain
$$
u^{\epsilon_k}(y_k) = \sup_{B_{\frac{\delta}{2}}(x_k)} u^{\epsilon_k}(x) \geq \frac{1}{2}\cdot\sup_{B_{\frac{\delta}{2}}(x_k)}u_0(x_k) \geq \mathrm{c} \frac{\delta}{2} \geq \mathrm{C}_1\epsilon_k
$$
for some $y_k \in \overline{B_{\frac{\delta}{2}}(x_k)} \cap \{u^{\epsilon_k}>\mathrm{C}_1\epsilon_k\}$, which contradicts \eqref{dist}. This finishes the proof.
 \end{proof}

\begin{theorem} \label{non_deg}
Given a sub-domain $\Omega^{\prime}\Subset \Omega$, there exists a constant $\mathrm{C}(\verb"universal")>0$ and $\rho_0(\Omega^{\prime}, \verb"universal parameters")>0$ such that, for any $x_0 \in \mathfrak{F}(u_0, \Omega^{\prime})$ and $\rho \leq  \rho_0$, there holds
\begin{equation}\label{undeg}
	\mathrm{C}^{-1} \le \frac{1}{\rho}\intav{\;\partial B_{\rho}(x_0)} u_0(x)\; d \mathcal{H}^{N-1} \le \mathrm{C}.
\end{equation}
\end{theorem}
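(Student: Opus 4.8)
The plan is to isolate an easy upper bound and a more substantial lower bound, the latter hinging on reading the non-degeneracy of $u_0$ off the bounding sphere. For the upper bound, note that $x_0\in\mathfrak{F}(u_0,\Omega^{\prime})=\partial\{u_0>0\}\cap\Omega^{\prime}$ forces $u_0(x_0)=0$ by continuity and non-negativity; since $u_0\in C^{0,1}_{\loc}(\Omega)$ (a consequence of the uniform estimate of Theorem~\ref{thmreg} upon letting $\epsilon_{k_j}\to0^{+}$), one has $u_0(x)\le \mathrm{Lip}(u_0)\,\rho$ on $\partial B_\rho(x_0)$ (the Lipschitz seminorm being taken on a fixed $\Omega^{\prime\prime}$ with $\Omega^{\prime}\Subset\Omega^{\prime\prime}\Subset\Omega$, and $\rho_0$ chosen so that $B_{2\rho}(x_0)\subset\Omega^{\prime\prime}$), whence $\tfrac1\rho\intav{\,\partial B_\rho(x_0)}u_0\,d\mathcal{H}^{N-1}\le \mathrm{Lip}(u_0)=:\mathrm{C}$. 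Alternatively, the same bound is contained in Theorem~\ref{limite1}, since $u_0(x_0)=0$.

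For the lower bound, the first step is to transfer the non-degeneracy from the solid ball to the sphere. The limiting profile $u_0$ is a viscosity solution of $\mathcal{G}(x,\nabla u_0,D^2u_0)=f_0(x)$ in $\{u_0>0\}$ with $0\le f_0\in L^{\infty}(\Omega)\cap C^0(\Omega)$ and $\|f_0\|_{L^{\infty}}\le\mathcal{B}$; here the crucial sign $f_0\ge0$ and the bound $\mathcal{B}$ come from \eqref{Cond1 zeta} by passing to the limit, because every interior point of $\{u_0>0\}$ lies in $\{u^{\epsilon_{k_j}}>\epsilon_{k_j}\}$ for $j$ large. In particular $u_0$ is a viscosity sub-solution of $\mathcal{G}(x,\nabla u,D^2u)=f_0$ in $\{u_0>0\}\cap B_\rho(x_0)$ with $f_0^{-}\equiv0$, so the Alexandroff-Bakelman-Pucci estimate (Theorem~\ref{ABPthm}) applied on $B_\rho(x_0)$ gives $\sup_{\overline{B_\rho(x_0)}}u_0\le\sup_{\partial B_\rho(x_0)}u_0$. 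Combining this with the strong non-degeneracy of $u_0$ in Theorem~\ref{limite1}, namely $\sup_{\overline{B_\rho(x_0)}}u_0\ge\mathrm{C}_0^{-1}\rho$ for $\rho\le r_0$, we obtain a point $z_0\in\partial B_\rho(x_0)$ with $u_0(z_0)\ge\mathrm{C}_0^{-1}\rho$.

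The second step spreads this value over a spherical cap via an interior Harnack estimate. Since $u_0$ is Lipschitz and $u_0(z_0)>0$, we get $\dist(z_0,\partial\{u_0>0\})\ge(\mathrm{C}_0\,\mathrm{Lip}(u_0))^{-1}\rho$, so with $\sigma\defeq(2\mathrm{C}_0\,\mathrm{Lip}(u_0))^{-1}$ the ball $B_{\sigma\rho}(z_0)$ is contained in $\{u_0>0\}$, where $u_0\ge0$ solves $\mathcal{G}=f_0$ with $\|f_0\|_{L^{\infty}}\le\mathcal{B}$. Rescaling to the unit ball exactly as in the proof of Theorem~\ref{thmreg} and invoking the Harnack inequality of Remark~\ref{Rem1}(1), we find, for $\rho$ small in a universal way so as to absorb the lower-order term,
$$\inf_{B_{\sigma\rho/2}(z_0)}u_0\ \ge\ \mathrm{c}_1\,u_0(z_0)-\mathrm{o}(\rho)\ \ge\ \mathrm{c}_2\,\rho ,$$
with $\mathrm{c}_1,\mathrm{c}_2>0$ universal. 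Finally $\partial B_\rho(x_0)\cap B_{\sigma\rho/2}(z_0)$ is a geodesic cap of $\partial B_\rho(x_0)$ of radius comparable to $\sigma\rho$, hence of $\mathcal{H}^{N-1}$-measure at least $\mathrm{c}_N\sigma^{N-1}\rho^{N-1}$; integrating the bound $u_0\ge\mathrm{c}_2\rho$ over this cap yields $\int_{\partial B_\rho(x_0)}u_0\,d\mathcal{H}^{N-1}\ge\mathrm{c}_N\mathrm{c}_2\sigma^{N-1}\rho^{N}$, and dividing by $\mathcal{H}^{N-1}(\partial B_\rho(x_0))$ and by $\rho$ produces $\tfrac1\rho\intav{\,\partial B_\rho(x_0)}u_0\,d\mathcal{H}^{N-1}\ge\mathrm{C}^{-1}$.

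The one genuinely delicate point is the first step of the lower bound: one must be certain that $u_0$ really is a sub-solution of an equation with non-negative forcing, so that its maximum over $B_\rho(x_0)$ sits on $\partial B_\rho(x_0)$ and the bulk non-degeneracy of Theorem~\ref{limite1} can be seen on the sphere; this rests on the sign condition in \eqref{Cond1 zeta} surviving the limit $\epsilon_{k_j}\to0^{+}$ on the open set $\{u_0>0\}$. The remainder is bookkeeping: one checks that the constants in Theorem~\ref{limite1}, in the Harnack inequality of Remark~\ref{Rem1}, and in the ABP estimate are independent of $\epsilon$ and of $\rho$, so that $\rho_0$ (equivalently the ``$\rho\ll1$'' threshold) can be fixed universally to swallow the $\mathrm{o}(\rho)$ Harnack error, and one makes the elementary spherical-cap measure estimate explicit.
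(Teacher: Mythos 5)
Your proof is correct, but it follows a genuinely different route from the paper's. The paper disposes of the upper bound exactly as you do (Lipschitz continuity), and for the lower bound it picks $z_\epsilon\in\partial\{u^{\epsilon}>\epsilon\}$ realizing $\dist(x_0,\partial\{u^{\epsilon}>\epsilon\})$, notes $z_\epsilon\to x_0$ via the Hausdorff convergence of the positivity sets (Theorem \ref{limite2}), and then passes to the limit $\epsilon_k\to 0$ in the uniform $\epsilon$-level integral non-degeneracy of Corollary \ref{intdens}. You instead work entirely at the level of the limit profile $u_0$: the sign condition $f_0\ge 0$ together with the ABP estimate (Theorem \ref{ABPthm}, which is indeed stated for sub-solutions in $\{u>0\}$, so its application on $B_\rho(x_0)$ is legitimate) pushes the strong non-degeneracy of Theorem \ref{limite1} onto the bounding sphere; the Lipschitz bound then guarantees an interior ball $B_{\sigma\rho}(z_0)\subset\{u_0>0\}$ on which the scaled Harnack inequality spreads the value $u_0(z_0)\gtrsim\rho$ over a spherical cap of $\mathcal{H}^{N-1}$-measure $\gtrsim\rho^{N-1}$, and integration finishes. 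Your approach is longer but has two advantages: it avoids re-passing to the limit in an $\epsilon$-dependent statement, and it directly produces the \emph{spherical} average appearing in \eqref{undeg}, whereas Corollary \ref{intdens} as stated controls the solid-ball average $\frac{1}{\rho}\intav{B_\rho(x_0)}u^{\epsilon}\,dx$, so the paper's argument implicitly requires one more (routine, coarea-type) step to match the form of the claim. The paper's route, in exchange, is shorter and recycles machinery already established uniformly in $\epsilon$. Your identification of the sign of $f_0$ surviving the limit as the delicate point is apt; as the paper records this as item (3) of the limiting properties in Section \ref{Sct AL}, no gap remains.
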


\begin{proof}
From the Lipschitz regularity, the upper estimate is easily satisfied. Now, to prove the another inequality, we consider $z_\epsilon \in \partial \{u_\epsilon > 0 \} \cap \Omega^{\prime}$, satisfying
$$
	|z_\epsilon - x_0| = \dist(x_0, \partial \{u_\epsilon > 0 \}).
$$
Next, from Theorem \ref{limite2}, $z_\epsilon \to x_0$. Thus, we may pass the limit as $\epsilon_k \to 0$ in the thesis of Corollary \ref{intdens}, thereby finishing the Theorem.
\end{proof}

\begin{remark} Taking into account the condition \eqref{undeg}, we will say that $u_0$ is {\it locally  uniformly non-degenerate} in $\mathfrak{F}(u_0, \Omega^{\prime})$. Finally, in some extent, such a property is another way of saying that $u_0$ enjoys Lipschitz regularity and Non-degeneracy property.
\end{remark}

Next, we see that the set $\{u_0>0\}$ has uniform density along $\mathfrak{F}(u_0, \Omega^{\prime})$.

\begin{theorem}\label{densidade4}
 Given $\Omega^{\prime}\Subset \Omega$, there exists a constant $\mathrm{c}_0(\verb"universal")>0$, such that for $x_0 \in \mathfrak{F}(u_0, \Omega^{\prime})$ there holds
\begin{equation}\label{den+}
	\Leb(B_{\rho}(x_0) \cap \{u_0 >0\}) \ge \mathrm{c}_0\Leb(B_{\rho}(x_0)),
\end{equation}
for $\rho \ll 1$. Particularly, $\Leb(\mathfrak{F}(u_0, \Omega^{\prime}))=0$.
\end{theorem}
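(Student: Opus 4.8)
The plan is to read off the density bound \eqref{den+} directly from the non-degeneracy of the limiting profile $u_0$ combined with its local Lipschitz continuity, and then obtain $\Leb(\mathfrak{F}(u_0,\Omega'))=0$ as a soft consequence of the Lebesgue differentiation theorem. First I would fix $x_0\in\mathfrak{F}(u_0,\Omega')=\partial\{u_0>0\}\cap\Omega'$ and take $\rho\ll 1$ (depending on $\Omega'$ and the radius $r_0$ from Theorem \ref{limite1}) with $B_{2\rho}(x_0)\Subset\Omega$. Since $\dist(x_0,\partial\{u_0>0\})=0$, Theorem \ref{limite1} provides $y_0\in\overline{B_\rho(x_0)}$ with $u_0(y_0)=\sup_{B_\rho(x_0)}u_0\ge\mathrm{C}_0^{-1}\rho$. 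By Theorem \ref{thmreg} the family $\{u^{\varepsilon_{k_j}}\}$ is pre-compact in $C^{0,1}_{\loc}(\Omega)$, so $u_0$ is Lipschitz on $B_{2\rho}(x_0)$ with a universal constant $\mathrm{L}$; hence for $z\in B_{\kappa\rho}(y_0)$ one has $u_0(z)\ge u_0(y_0)-\mathrm{L}\kappa\rho\ge(\mathrm{C}_0^{-1}-\mathrm{L}\kappa)\rho>0$ as soon as $\kappa<(\mathrm{C}_0\mathrm{L})^{-1}$. Fixing such a universal $\kappa\in(0,1)$ gives $B_{\kappa\rho}(y_0)\subset\{u_0>0\}$.

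Next I would intersect this ball with $B_\rho(x_0)$ exactly as in the porosity argument (proof of Theorem \ref{t5.2}): choosing $y^{\ast}$ on the segment $[x_0,y_0]$ with $|y^{\ast}-y_0|=\tfrac{\kappa\rho}{2}$, one checks $B_{\kappa\rho/2}(y^{\ast})\subset B_{\kappa\rho}(y_0)\cap B_\rho(x_0)$, whence
$$
\Leb\big(B_\rho(x_0)\cap\{u_0>0\}\big)\;\ge\;\Leb\big(B_{\kappa\rho/2}(y^{\ast})\big)\;=\;(\kappa/2)^N\,\Leb\big(B_\rho(x_0)\big),
$$
which is \eqref{den+} with $\mathrm{c}_0\defeq(\kappa/2)^N$. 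Equivalently one could combine the $\varepsilon$-level density of Corollary \ref{densidade} with the Hausdorff convergence $\{u^{\varepsilon_k}>\mathrm{C}_1\varepsilon_k\}\to\{u_0>0\}$ of Theorem \ref{limite2}, but the direct route above is cleaner.

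Finally, \eqref{den+} says $\{u_0>0\}$ has uniform positive Lebesgue density at every point of $\mathfrak{F}(u_0,\Omega')$, while, since $\{u_0>0\}$ is open and $u_0\ge0$, one has $\mathfrak{F}(u_0,\Omega')\subset\{u_0=0\}\subset\{u_0>0\}^{c}$. If $\Leb(\mathfrak{F}(u_0,\Omega'))>0$, the Lebesgue differentiation theorem yields a point $x_0\in\mathfrak{F}(u_0,\Omega')$ that is a density point of $\mathfrak{F}(u_0,\Omega')$, hence of $\{u_0>0\}^{c}$, so the density of $\{u_0>0\}$ at $x_0$ is $0$, contradicting \eqref{den+}; thus $\Leb(\mathfrak{F}(u_0,\Omega'))=0$ (alternatively, Lemma \ref{tec_lemma}(2)). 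I expect the only mildly delicate point to be keeping $\kappa$, and therefore $\mathrm{c}_0$, genuinely universal — it must depend only on the non-degeneracy constant $\mathrm{C}_0$ and the Lipschitz constant $\mathrm{L}$, which themselves depend only on dimension, ellipticity, $p,q$, $L_1,L_2$, $\|\mathfrak{a}\|_{\infty}$ and $\Omega'$, and not on the subsequence $\varepsilon_{k_j}$; the ball inclusion and the measure-theoretic dichotomy are routine.
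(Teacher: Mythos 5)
Your proposal is correct and follows essentially the same route as the paper, whose proof simply says that \eqref{den+} "follows as in the proof of Corollary \ref{densidade}" (non-degeneracy of Theorem \ref{limite1} plus the uniform Lipschitz bound to place an interior ball $B_{\kappa\rho}(y_0)\subset\{u_0>0\}$ meeting $B_\rho(x_0)$ in measure $\gtrsim\rho^N$) and then invokes the Lebesgue differentiation theorem for the null-measure conclusion. The only unstated detail, which is harmless, is that the segment construction needs $|y_0-x_0|\ge\kappa\rho/2$; this holds automatically because $u_0(x_0)=0$ and $u_0(y_0)\ge\mathrm{C}_0^{-1}\rho$ force $|y_0-x_0|\ge(\mathrm{C}_0\mathrm{L})^{-1}\rho$.
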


\begin{proof}
The estimate \eqref{den+} follows as in the proof of Corollary \ref{densidade}. We conclude the result by making use of the Lebesgue differentiation theorem and a covering argument (a Besicovitch–Vitali type result, see \cite{EG92} for details).
\end{proof}


Finally, we are in a position to establish the Hausdorff measure estimate of the limiting free boundary.

\begin{proof}[{\bf Proof of Theorem \ref{limit3}}]

From Theorem \ref{limite2}, for $k \gg 1$ large enough, one has
$$
   \mathcal{N}_{\delta}(\mathfrak{F}(u_0, \Omega^{\prime})) \cap B_{\rho}(x_0)  \subset  \mathcal{N}_{4 \delta}(\partial \{u^{\epsilon_{k}} > \mathrm{C}_1 \epsilon_k\}) \cap B_{2 \rho}(x_0).
$$
Now, by assuming, $\epsilon_k \ll \delta \ll \rho \ll \dist(\Omega^{\prime} , \partial \Omega)$, the hypothesis of Theorem \ref{Hausdf} are verified, thereby implying the following estimate for the $\delta$-neighborhood,
$$
	\Leb(\mathcal{N}_{\delta}(\mathfrak{F}(u_0, \Omega^{\prime})) \cap B_{\rho}(x_0)) \le \mathrm{C} \cdot \delta \rho^{N-1}.
$$

Next, let $\{B_{r_j}\}_{j \in N}$ be a covering of $\mathfrak{F}(u_0, \Omega^{\prime}) \cap B_{\rho}(x_0)$ by balls with radii $\delta>0$ and centered at free boundary points on $\mathfrak{F}(u_0, \Omega^{\prime}) \cap B_{\rho}(x_0)$. Hence,
$$
	\bigcup_{j \in \mathbb{N}} B_{r_j} \subset \mathcal{N}_{\delta}(\mathfrak{F}(u_0, \Omega^{\prime})) \cap B_{\rho + \delta}(x_0).
$$

Therefore, there exists a constant $\overline{\mathrm{C}}(\verb"universal")>0 $ such that
\begin{eqnarray*}
\displaystyle \mathcal{H}^{N-1}_{\delta}(\mathfrak{F}(u_0, \Omega^{\prime}) \cap B_{\rho}(x_0)) &\le& \overline{\mathrm{C}} \sum_{j} \mathscr{L}^{N-1}(\partial B_{r_j})\\
&= & N\frac{\overline{\mathrm{C}}}{\delta} \sum_{j}\Leb(B_{r_j})\\
&\le& N\frac{\overline{\mathrm{C}}}{\delta} \Leb(\mathcal{N}_{\delta}(\mathfrak{F}(u_0, \Omega^{\prime})) \cap B_{\rho+\delta}(x_0))\\
&\le& \mathrm{C}(N) (\rho + \delta)^{N-1}\\
& =& \mathrm{C}(N) \rho^{N-1} + o(\delta).
\end{eqnarray*}
Finally, by letting $\delta \to 0^{+}$ we conclude the proof.

As a consequence of the previous statement we conclude that $\mathfrak{F}(u_0, \Omega^{\prime})$ has locally finite perimeter (see, \cite{EG92} for a precise definition). Furthermore, the reduced free boundary, i.e. $\mathfrak{F}_{\text{red}}(u_0, \Omega^{\prime}) \defeq \partial_{\textrm{red}} \{u_0 >0\} \cap \Omega^{\prime}$ has a total $\mathcal{H}^{N-1}$ measure in the sense that
$$
   \mathcal{H}^{N-1}(\mathfrak{F}(u_0, \Omega^{\prime}) \setminus \mathfrak{F}_{\text{red}}(u_0, \Omega^{\prime})) =0,
$$
We recommend to reader to \cite[Theorem 6.7]{ART17} for the detailed proof. Particularly, the limiting free boundary has an outward vector for $\mathcal{H}^{N-1}$ a. e. in $\mathfrak{F}_{\text{red}}(u_0, \Omega^{\prime})$ (see, \cite{EG92}).
\end{proof}


\subsection{Final comments: An ansatz on the free boundary condition}

In the particular case coming from the homogeneous flame propagation theory:
$$
	\zeta_\epsilon(t) = \dfrac{1}{\epsilon} \zeta\left ( \dfrac{t}{\epsilon}  \right ),
$$
where $\zeta$ is a continuous function  supported in $[0,1]$, then the limiting function satisfies
$$
	F(x, D^2u) = 0 \quad \text{ in } \quad \{u > 0 \},
$$
in view of Cutting Lemma in \cite[Lemma 6]{IS}. In this case, even though the gradient degeneracy is no longer present in the limiting equation, it does leave its {\it signature} on the expected linear behavior along the limiting transition boundary.

Let us analyze one-dimensional profiles, i.e., the limiting configuration of the equation
\begin{equation} \label{profile1}
	\left(|u^{\epsilon}_{x}|^{p} + \kappa|u^{\epsilon}_{x}|^{q}\right)\cdot u^{\epsilon}_{xx}= \zeta_{\e}\left(u^{\epsilon}\right) \quad \text{for} \quad \kappa>0
\end{equation}
By multiplying the above equation by $u^{\epsilon}_{x} dx$, we find the differential equality:
\begin{equation} \label{profile2}
	\left(|u^{\epsilon}_{x}|^p u^{\epsilon}_{x} + \kappa|u^{\epsilon}_{x}|^q u^{\epsilon}_{x}\right)\cdot ( u^{\epsilon}_{xx}dx ) = \zeta_{\e}\left(u^{\epsilon}\right)\cdot u^{\epsilon}_{x}dx.
\end{equation}
However,
$$
	\zeta_{\epsilon}(u^{\epsilon})\cdot u^{\epsilon}_{x}  dx = \frac{d}{dx} \mathfrak{Z}_\epsilon (u^\epsilon),
$$
where
$$
\displaystyle  \mathfrak{Z}_\epsilon(x) \defeq \int_0^{\frac{x}{\epsilon}} \zeta(s) ds \rightarrow \int_{0}^{1} \zeta(s) ds \quad  \text{as} \quad  \epsilon \to 0^{+}.
$$
Performing a change of variables
$$
	u^{\epsilon}_{x}(x) = w(x)  \quad  \Longrightarrow  \quad u^{\epsilon}_{xx} dx = dw,
$$
then we can write
$$
	\int \left(|u^{\epsilon}_{x}|^p u^{\epsilon}_{x} + \kappa|u^{\epsilon}_{x}|^q u^{\epsilon}_{x}\right)\cdot u^{\epsilon}_{xx} dx = \int \left(|w|^{p} + \kappa|w|^{q}\right)wdw.
$$
Thus, computing anti-derivatives in \eqref{profile2} and letting $\epsilon \to 0$, we obtain for the limiting function $u$ that
$$
	  \frac{1}{p+2}|u^{\prime}(x_0)|^{p+2} + \frac{\kappa}{q+2}|u^{\prime}(x_0)|^{q+2}= \int_{0}^{1} \zeta(s)ds.
$$
Therefore,
$$
  |u^{\prime}(x_0)| \leq \min\left\{\sqrt[p+2]{(p+2)\int_{0}^{1} \zeta(s) ds}, \sqrt[q+2]{   \left(\frac{q+2}{\kappa}\right)\int_{0}^{1} \zeta(s) ds}\right\}.
$$

Particularly, by taking $p = 0 = \kappa$, we recover the classical free boundary condition in the isotropic flame propagation theory, see \cite{BCN}.

\section{Appendices}\label{Append}





\subsection{Harnack Inequality}

For the reader's convenience, in this Appendix we gather the statements of two fundamental results in elliptic regularity, namely the Weak Harnack inequality and the Local Maximum Principle. Such pivotal tools will provide a Harnack inequality (resp. local H\"{o}lder regularity) to viscosity solutions.

\begin{theorem}[{\bf Weak Harnack inequality, \cite[Theorem 2]{Imb}}]\label{wharn}
Let $u$ be a non-negative continuous function such that
$$
   F_0(x, \nabla u, D^2u)\leq 0 \quad\textrm{ in }\quad B_1
$$
in the viscosity sense. Assume that $F_0$ is uniformly elliptic in the $X$ variable (see, A1 condition) and $F_0 \in C^0(B_1\times \left(\R^N \setminus B_{\mathrm{M}_{\mathrm{F}}}\right)\times \text{Sym}(N))$ for some $\mathrm{M}_{\mathrm{F}} \geq 0$. Further assume that
\begin{equation}\label{eq.elliwh}
   |\xi|\geq \mathrm{M}_{\mathrm{F}} \quad \text{and} \quad F_0(x, \xi, X)\leq 0 \quad \Longrightarrow \quad \mathscr{M}^-_{\lambda,\Lambda}(X)-\sigma(x)|\xi|-f_0(x)\leq 0.
\end{equation}
for continuous functions $f_0$ and $\sigma$ in $B_1$. Then, for any $q_1 > n$
\[
   \|u\|_{L^{p_0}\left(B_{\frac{1}{4}}\right)}\leq C.\left\{\inf_{B_{\frac{1}{2}}} u+\max\left\{\mathrm{M}_{\mathrm{F}}, \|f_0\|_{L^N(B_1)}\right\}\right\}
\]
for some (universal) $p_0>0$ and a constant $C>0$ depending on $N, q_1, \lambda, \Lambda$ and $\|\sigma\|_{L^{q_1}(B_1)}$.
\end{theorem}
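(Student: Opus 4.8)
The result is quoted from \cite[Theorem 2]{Imb}; were I to reconstruct its proof I would run the Krylov--Safonov scheme in its viscosity-solution formulation (as in \cite{CC95}), organized around three pillars, with the two non-classical features of $F_0$ monitored throughout: the equation is only asked to be uniformly elliptic on the region $\{|\xi|\geq \mathrm{M}_{\mathrm{F}}\}$, and the first-order coefficient $\sigma$ lies only in $L^{q_1}$ with $q_1$ above the dimension, not in $L^{\infty}$.

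First I would set up an ABP-type bound compatible with \eqref{eq.elliwh}. For a non-negative $w$ with $F_0(x,\nabla w, D^2 w)\leq 0$, on the lower contact set $w$ is touched from below by affine functions; at such a point either the slope $\xi$ satisfies $|\xi|\geq \mathrm{M}_{\mathrm{F}}$, and then \eqref{eq.elliwh} yields the Pucci differential inequality $\mathscr{M}^{-}_{\lambda,\Lambda}(D^2 w)-\sigma(x)|\xi|-f_0(x)\leq 0$, to which the classical contact-set area estimate applies, or $|\xi|<\mathrm{M}_{\mathrm{F}}$, which contributes only an additive $\mathrm{M}_{\mathrm{F}}$ term. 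The merely $L^{q_1}$ integrability of $\sigma$ is handled by the sharpened ABP inequality of Fabes--Stroock/Escauriaza type, which is precisely what makes the final constant depend on $\|\sigma\|_{L^{q_1}(B_1)}$.

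Second comes the \emph{measure (growth) estimate}. I would fix a smooth barrier $\Phi$ on a large cube $Q_{4\sqrt{N}}$, non-positive, with $\Phi\leq -M$ on a central subcube $Q_1$, $\Phi\geq -\tfrac12$ off a slightly larger cube $Q_3$, and $\mathscr{M}^{-}_{\lambda,\Lambda}(D^2\Phi)\leq \mathrm{C}_\Phi\,\chi_{Q_1}$ pointwise, and then apply the ABP step of the previous paragraph to the combination $u+\Phi$. This produces: if $\inf_{Q_{1/4}}u\leq 1$, then $|\{u\leq M\}\cap Q_1|\geq \mu$ for universal $M,\mu$ (depending also on $\mathrm{M}_{\mathrm{F}}$, $\|f_0\|_{L^N}$, $\|\sigma\|_{L^{q_1}}$). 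The degenerate structure is absorbed precisely here, since along the contact set of $u+\Phi$ the touching gradient is $\nabla\Phi+\xi$ and, $\nabla\Phi$ being bounded, one must separate the cases $|\nabla\Phi+\xi|\geq \mathrm{M}_{\mathrm{F}}$ (ellipticity available) and $|\nabla\Phi+\xi|<\mathrm{M}_{\mathrm{F}}$ (a direct pointwise bound), just as in \cite{Imb}.

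Third, the iteration. Feeding the measure estimate into the Calder\'on--Zygmund cube decomposition together with the stacked-cubes covering lemma gives the power decay $|\{u>t\}\cap Q_{1/2}|\leq \mathrm{C}\,t^{-p_0}$ for some universal $p_0>0$; by the layer-cake formula this is equivalent, after translating between cubes and balls and a standard scaling, to the asserted bound $\|u\|_{L^{p_0}(B_{1/4})}\leq \mathrm{C}\{\inf_{B_{1/2}}u+\max\{\mathrm{M}_{\mathrm{F}},\|f_0\|_{L^N(B_1)}\}\}$. I expect the main difficulty to be essentially bookkeeping: carrying the gradient dichotomy $|\xi|\geq \mathrm{M}_{\mathrm{F}}$ versus $|\xi|<\mathrm{M}_{\mathrm{F}}$ consistently through every invocation of ABP and of the barrier, while simultaneously keeping track of $\|\sigma\|_{L^{q_1}}$, so that the constants end up depending only on the quantities listed in the statement; the rest is the classical Krylov--Safonov machinery.
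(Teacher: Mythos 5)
Your reconstruction follows essentially the same route the paper relies on: the theorem is quoted from Imbert's work, whose proof is the Caffarelli--Cabr\'e/Krylov--Safonov scheme (ABP on the contact set with the gradient dichotomy $|\xi|\gtrless \mathrm{M}_{\mathrm{F}}$, a barrier-based measure estimate, and a Calder\'on--Zygmund iteration yielding the $L^{\varepsilon}$ power decay), and your three pillars match that structure exactly. The one point worth flagging is that the paper explicitly notes that the power-decay step --- your third pillar --- is precisely where Imbert's original argument in \cite{Imb} had a gap, later repaired by Imbert--Silvestre via the ``Pucci extremal operators for large gradients,'' so a careful write-up should route that step through \cite[Theorem 5.1]{IS2} rather than through \cite{Imb} alone.
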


\begin{theorem}[{\bf Local Maximum Principle, \cite[Theorem 3]{Imb}}] \label{localmax}
Let $u$ be a continuous function satisfying
$$
   F_0(x, \nabla u, D^2u)\geq 0 \quad\textrm{ in }\quad B_1
$$
in the viscosity sense. Assume that $F_0$ is uniformly elliptic in the $X$ variable and $F_0 \in C^0(B_1\times \left(\R^N \setminus B_{\mathrm{M}_{\mathrm{F}}}\right)\times \text{Sym}(N))$ for some $\mathrm{M}_{\mathrm{F}} \geq 0$. Further assume that
\begin{equation}\label{eq.elliplmp}
   |\xi|\geq \mathrm{M}_{\mathrm{F}} \quad \text{and} \quad F_0(x, \xi, X)\geq 0 \quad \Longrightarrow \quad \mathscr{M}^+_{\lambda,\Lambda}(X)+\sigma(x)|\xi|+f_0(x)\geq 0.
\end{equation}
for continuous functions $f_0$ and $\sigma$ in $B_1$. Then, for any $p_1>0$ and $q_1 > N$
\[
\sup_{B_{\frac{1}{4}}}u\leq C.\left\{\|u^+\|_{L^{p_1}\left(B_{\frac{1}{2}}\right)}+\max\left\{\mathrm{M}_{\mathrm{F}}, \|f_0\|_{L^N(B_1)}\right\}\right\}
\]
where $C>0$ is a constant depending on $N, q_1, \lambda, \Lambda,\|\sigma\|_{L^{q_1}(B_1)}$ and $p_1$.
\end{theorem}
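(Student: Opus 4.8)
The plan is to derive the Local Maximum Principle from the Weak Harnack inequality (Theorem \ref{wharn}) by a reflection argument combined with a scale-iteration, in the spirit of Caffarelli--Cabr\'e, taking care that the first-order coefficient $\sigma$ lies only in $L^{q_1}$ with $q_1>N$ (so the drift is subcritical) and that the structure condition \eqref{eq.elliplmp} is active only where the gradient of the relevant test functions exceeds $\mathrm{M}_{\mathrm{F}}$. The conceptual heart is the observation that if $u$ satisfies \eqref{eq.elliplmp}, then for a constant $M$ the function $w:=M-u$ and the reflected operator $\widetilde{F}_0(x,p,X):=-F_0(x,-p,-X)$ satisfy $\widetilde{F}_0(x,\nabla w,D^2 w)\le 0$ in the viscosity sense. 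A direct inspection shows that $\widetilde{F}_0$ is uniformly elliptic with the same constants, lies in $C^0\big(B_1\times(\R^N\setminus B_{\mathrm{M}_{\mathrm{F}}})\times\mathrm{Sym}(N)\big)$, and satisfies the structure condition \eqref{eq.elliwh} with the very same $\sigma$, $f_0$ and $\mathrm{M}_{\mathrm{F}}$: indeed $\widetilde{F}_0(x,\xi,X)\le 0$ is equivalent to $F_0(x,-\xi,-X)\ge 0$, and since $|-\xi|=|\xi|$, \eqref{eq.elliplmp} gives $\mathscr{M}^{+}_{\lambda,\Lambda}(-X)+\sigma|\xi|+f_0\ge 0$, i.e.\ $\mathscr{M}^{-}_{\lambda,\Lambda}(X)-\sigma|\xi|-f_0\le 0$, which is exactly \eqref{eq.elliwh}. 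Hence the Weak Harnack inequality is available for $w$ on any ball where $w\ge 0$ (after rescaling that ball to the unit ball).

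Granting this, I would first establish the estimate for the distinguished exponent $p_0$ of Theorem \ref{wharn}. For $t\in[1/4,1/2]$ put $\Phi(t):=\sup_{\overline{B_t}}u$ (finite, since $\overline{B_{1/2}}\Subset B_1$). Fix $1/4\le s<t\le 1/2$, let $\bar x\in\overline{B_s}$ realize $u(\bar x)=\Phi(s)$, set $\delta:=t-s$, and note $B_\delta(\bar x)\subset B_t$. The function $w:=\Phi(t)-u$ is non-negative on $\overline{B_t}$, hence on $B_\delta(\bar x)$, solves $\widetilde{F}_0(x,\nabla w,D^2 w)\le 0$ there, and satisfies $\inf_{B_{\delta/2}(\bar x)}w\le\Phi(t)-u(\bar x)=\Phi(t)-\Phi(s)$. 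Applying the Weak Harnack inequality to $w$, rescaled from $B_\delta(\bar x)$ to the unit ball — under which $\|\sigma\|_{L^{q_1}}$, $\|f_0\|_{L^N}$ and the threshold $\mathrm{M}_{\mathrm{F}}$ only pick up non‑negative powers of $\delta\le 1$, precisely because $q_1>N$, so that the constants stay universal — yields
$$
\|w\|_{L^{p_0}(B_{\delta/4}(\bar x))}\le C\,\delta^{N/p_0}\Big((\Phi(t)-\Phi(s))+\max\{\mathrm{M}_{\mathrm{F}},\|f_0\|_{L^N(B_1)}\}\Big).
$$
Using $\Phi(t)\le w(x)+u^{+}(x)$ pointwise on $B_{\delta/4}(\bar x)$, the elementary bound $(a+b)^{p_0}\le a^{p_0}+b^{p_0}$ for $0<p_0\le 1$ (or the triangle inequality if one prefers a larger exponent), and division by $|B_{\delta/4}|^{1/p_0}\sim\delta^{N/p_0}$, one obtains
$$
\Phi(t)\le \kappa\,(\Phi(t)-\Phi(s))+C\,\max\{\mathrm{M}_{\mathrm{F}},\|f_0\|_{L^N(B_1)}\}+\frac{C}{(t-s)^{N/p_0}}\,\|u^{+}\|_{L^{p_0}(B_{1/2})}
$$
for a universal $\kappa>0$. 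Rearranging gives $\Phi(s)\le\theta\,\Phi(t)+C\max\{\mathrm{M}_{\mathrm{F}},\|f_0\|_{L^N(B_1)}\}+C(t-s)^{-N/p_0}\|u^{+}\|_{L^{p_0}(B_{1/2})}$ with $\theta:=\max\{0,1-\kappa^{-1}\}<1$, and Simon's absorption lemma then produces $\sup_{B_{1/4}}u=\Phi(1/4)\le C\big(\|u^{+}\|_{L^{p_0}(B_{1/2})}+\max\{\mathrm{M}_{\mathrm{F}},\|f_0\|_{L^N(B_1)}\}\big)$, i.e.\ the theorem for $p_1=p_0$.

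Finally, for an arbitrary $p_1>0$: if $p_1\ge p_0$ the estimate follows from H\"older's inequality on $B_{1/2}$ (finite measure); if $0<p_1<p_0$ one interpolates $\|u^{+}\|_{L^{p_0}(B_\rho)}\le\varepsilon\,\|u^{+}\|_{L^\infty(B_\rho)}+C_\varepsilon\,\|u^{+}\|_{L^{p_1}(B_\rho)}$ by Young's inequality and absorbs the sup-term by one more round of the same scale-iteration. I expect the main obstacle to be twofold and essentially technical: first, verifying rigorously that the reflection $w\mapsto\widetilde{F}_0$ transfers the \emph{viscosity-sense} statement \eqref{eq.elliplmp} into \eqref{eq.elliwh} — this amounts to matching test functions touching $u$ from above with test functions touching $w$ from below, and is the one step where the gradient variable must be tracked honestly; and second, the bookkeeping of the rescalings, so that all constants remain independent of the center $\bar x$, the scale $\delta$ and the solution, which works precisely because $q_1>N$ renders the drift subcritical and because the $\mathrm{M}_{\mathrm{F}}$-threshold transforms sublinearly under dilation.
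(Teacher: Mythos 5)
Your argument is correct, but it is not the route the paper (following Imbert and Caffarelli--Cabr\'e) indicates. The paper treats this statement as a quoted result and explains that its proof rests directly on the $L^\varepsilon$ lemma --- the polynomial decay $|\{u>t\}\cap B_1|\le C t^{-\varepsilon}$ for supersolutions of the large-gradient extremal operators, as corrected in Imbert--Silvestre --- combined with the point-picking argument of \cite[Lemma 4.4]{CC95}, adapted in \cite[Section 7.2]{Imb}: one first assumes the $L^{\varepsilon}$ norm of $u^{+}$ is small, deduces boundedness from the measure decay, and recovers general $p_1$ by interpolation. You instead take the Weak Harnack inequality (Theorem \ref{wharn}) as a black box, reflect the operator and the solution ($w=M-u$, $\widetilde{F}_0(x,p,X)=-F_0(x,-p,-X)$), verify --- correctly --- that \eqref{eq.elliplmp} becomes \eqref{eq.elliwh} with the same $\sigma$, $f_0$ and $\mathrm{M}_{\mathrm{F}}$ via $\mathscr{M}^{+}_{\lambda,\Lambda}(-X)=-\mathscr{M}^{-}_{\lambda,\Lambda}(X)$, and close with a Simon-type absorption on $\Phi(t)=\sup_{\overline{B_t}}u$. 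The two approaches ultimately rest on the same $L^\varepsilon$ estimate (the Weak Harnack is itself derived from it), but yours is self-contained relative to the statements already recorded in the appendix, whereas the paper's route avoids the reflection and the absorption bookkeeping by working directly at the level of superlevel sets. Your scaling bookkeeping is the right one: the drift rescales as $\delta^{1-N/q_1}$ (subcritical because $q_1>N$), the source as $\delta$, and the gradient threshold as $\delta\,\mathrm{M}_{\mathrm{F}}\le\mathrm{M}_{\mathrm{F}}$, so the Weak Harnack constant is uniform over centers and scales. The only points worth making explicit in a full write-up are the trivial case $\Phi(t)<0$ (needed before converting the pointwise bound $\Phi(t)\le w+u^{+}$ into an $L^{p_0}(B_{\delta/4}(\bar x))$ bound and dividing by $|B_{\delta/4}|^{1/p_0}$) and the quasi-triangle inequality when $p_0<1$.
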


Let us recall that such results were proved in Imbert's manuscript \cite{Imb} by following the strategy of the uniformly elliptic case, see \cite[Section 4.2]{CC95}. Such a strategy is based on the so-called $L^\varepsilon$-Lemma, which establishes a polynomial decay for the measure of the super-level sets of a non-negative super-solution for the Pucci extremal operator $\mathscr{M}^+_{\lambda,\Lambda}$:
\begin{equation}\label{eq.lep}
\Leb(\left\{x \in B_1:u(x)>t\right\}\cap B_1)\leq \frac{C}{t^{\epsilon}}.
\end{equation}

Unfortunately, Imbert's manuscript has a gap in the proof of \eqref{eq.lep}. Such an error was recently made up in a joint work with Silvestre, see \cite{IS2}, where an appropriate $L^\varepsilon$-estimate was addressed. In fact, their proof holds for ``Pucci extremal operators for large gradients'' defined, for a fixed $\tau$, by:
\[
\widetilde{\mathscr{M}}^+_{\lambda,\Lambda}(D^2u, \nabla u)\defeq
\left\{\begin{array}{ll}
\mathscr{M}^+_{\lambda,\Lambda}(D^2 u)+\Lambda|\nabla u| & \text{ if }|\nabla u|\geq \tau \\
+\infty & \text{ otherwise }
\end{array}\right.
\]
\[
\widetilde{\mathscr{M}}^-_{\lambda,\Lambda}(D^2u, \nabla u)\defeq
\left\{\begin{array}{ll}
\mathscr{M}^-_{\lambda,\Lambda}(D^2 u)-\Lambda|\nabla u| & \text{ if }|\nabla u|\geq \tau \\
-\infty & \text{ otherwise }
\end{array}\right.
\]
The $L^\varepsilon$-estimate is proved to hold whenever $\tau\leq \varepsilon_0$ universal (see, \cite[Theorem 5.1]{IS2}). Moreover, notice that the ellipticity condition $\widetilde{\mathscr{M}}^-_{\lambda,\Lambda}$ is consistent with \eqref{eq.elliwh} if we take $\sigma(x)\equiv\Lambda$. Precisely, if \eqref{eq.elliwh} and $u$ is a super-solution for $F_0$, then it is also a super-solution for $\widetilde{\mathscr{M}}^-_{\lambda,\Lambda}$ with right hand side $f_0$. An analogous reasoning is valid for $\widetilde{\mathscr{M}}^+_{\lambda,\Lambda}$ and \eqref{eq.elliplmp}.

In this point, once the $L^\varepsilon$ is derived, the proof of Theorem \ref{wharn} is exactly as the one in \cite{Imb} which is, in turn, a modification of the uniformly elliptic case in \cite[Theorem 4.8, a]{CC95}. As for Theorem \ref{localmax}, it also follows from \eqref{eq.lep} by assuming (in a fist moment) that the $L^\varepsilon$ norm of $u^+$ is small and the obtaining the general result by interpolation. Indeed, the smallness of the $L^\varepsilon$ norm readily implies \eqref{eq.lep} which in turn gives $u$ is bounded (see, \cite[Lemma 4.4]{CC95}, which is adapted in \cite[Section 7.2]{Imb}).

Notice that our class of operators fits in this scenario by setting
\[
   F_0(x, \nabla v, D^2v) \defeq \mathcal{H}(x, \nabla v)F(x, D^2 v)-f(x).
\]
and
\[
f_0(x)\defeq \frac{L_1^{-1}f^{+}(x)}{\varepsilon_0^p + \mathfrak{a}(x)\varepsilon_0^q} \quad \text{for suitable} \quad \varepsilon_0>0.
\]
In effect, we have that whenever
\[
\mathcal{H}(x, \nabla v)F(x, D^2v)\leq f(x) \quad \text{in} \quad B_1
\]
in the viscosity sense, then the ellipticity condition of $F$ (A1) ensures that
\[
\mathscr{M}^-_{\lambda,\Lambda}(D^2v)\leq F(x, D^2v)\leq \frac{f(x)}{\mathcal{H}(x, \nabla v)}\le \frac{f^{+}(x)}{\mathcal{H}(x, \nabla v)}
\]
whenever $|\nabla v|\geq\mathrm{M}_{\mathrm{F}} = \varepsilon_0>0$ so that
\[
\mathscr{M}^-_{\lambda,\Lambda}(D^2v)-\Lambda|\nabla v|-f_0(x)\leq \left(\frac{1}{\mathcal{H}(x, \nabla v)}-\frac{L_1^{-1}}{\varepsilon_0^p + \mathfrak{a}(x)\varepsilon_0^q}\right)f^{+}(x)\leq 0.
\]

Remember that the constants obtained in \cite{IS2} are monotone with respect to $\tau$ and bounded away from zero and infinity, so we get a uniform estimate as \eqref{eq.lep} for supersolutions of $\mathcal{G}[v]\defeq \mathcal{H}(x, \nabla v)F(x, D^2v)$.

Therefore, in such a situation we have (recall $\sigma(x)\equiv\Lambda$) from Theorem \ref{wharn}:
 \begin{equation}\label{EqWHN}
      \|v\|_{L^{p_0}\left(B_{\frac{1}{4}}\right)}\leq\displaystyle C.\left\{\inf_{B_{\frac{1}{2}}} v + \varepsilon_0 + \left\|f_0\right\|_{L^{N}(B_1)}\right\} \leq \Xi_0,
 \end{equation}
where
$$
\Xi_0 \defeq \left\{
 \begin{array}{lcl}
   \displaystyle C.\left\{\inf_{B_{\frac{1}{2}}} v + \min\left\{1, \left[(q+1)\sqrt[N]{|B_1|}L_1^{-1}\left\|\frac{f^{+}}{1+ \mathfrak{a}}\right\|_{L^{\infty}(B_1)}\right]^{\frac{1}{q+1}}\right\}\right\} & \text{if} & \varepsilon_0 \in (0, 1] \\
   \displaystyle C.\left\{\inf_{B_{\frac{1}{2}}} v + \min\left\{1,\left[(p+1)\sqrt[N]{|B_1|}L_1^{-1}\left\|\frac{f^{+}}{1+ \mathfrak{a}}\right\|_{L^{\infty}(B_1)}\right]^{\frac{1}{p+1}}\right\}\right\} & \text{if} & \varepsilon_0 \in (1, \infty).
 \end{array}
 \right.
$$
Notice that we have used in above inequalities that the function
$$
(0, \infty) \ni t \mapsto \mathfrak{h}(t) = t + \frac{1}{t^s} \left(\sqrt[N]{|B_1|}L_1^{-1}\left\|\frac{f^{+}}{1+ \mathfrak{a}}\right\|_{L^{\infty}(B_1)}\right)
$$
is optimized (i.e. its lowest upper bound) when $t^{\ast} = \left(s\sqrt[N]{|B_1|}L_1^{-1}\left\|\frac{f^{+}}{1+ \mathfrak{a}}\right\|_{L^{\infty}(B_1)}\right)^{\frac{1}{s+1}}$ for $s \in (0, \infty)$.

In conclusion, in any case, we obtain (since $0<p\le q<\infty$)
{\small{
$$
 \displaystyle \|v\|_{L^{p_0}\left(B_{\frac{1}{4}}\right)}\leq  C.\left\{\inf_{B_{\frac{1}{2}}} v + (q+1)^{\frac{1}{q+1}}\Pi^{f^{+}, \mathfrak{a}}_{p, q}\right\}
$$
}}
where
$$
\Pi^{f^+, \mathfrak{a}}_{p, q} \defeq \max\left\{\left[\sqrt[N]{|B_1|}L_1^{-1}\left\|\frac{f^{+}}{1+ \mathfrak{a}}\right\|_{L^{\infty}(B_1)}\right]^{\frac{1}{p+1}}, \left[\sqrt[N]{|B_1|}L_1^{-1}\left\|\frac{f^{+}}{1+ \mathfrak{a}}\right\|_{L^{\infty}(B_1)}\right]^{\frac{1}{q+1}}\right\}.
$$

Similarly, from Theorem \ref{localmax}, if
\[
\mathcal{H}(x, \nabla v)F(x, D^2v)\geq f(x) \quad \text{in} \quad B_1
\]
in the viscosity sense we again have
\[
\mathscr{M}^+_{\lambda,\Lambda}(D^2v)\geq F(x, D^2v) \geq \frac{f(x)}{\mathcal{H}(x, \nabla v)} \geq -\frac{f^{-}(x)}{\mathcal{H}(x, \nabla v)}\,\,\,\text{whenever}\,\,\, \varepsilon_0  = \mathrm{M}_{\mathrm{F}} \leq |\nabla u|,
\]
and we can set, similarly as above, $f_0(x)\defeq \frac{L_1^{-1}f^{-}(x)}{\varepsilon_0^{p}+\mathfrak{a}(x)\varepsilon_0^{p}}
$
to get
\[
\mathscr{M}^+_{\lambda,\Lambda}(D^2v)+\Lambda|\nabla v|+f_0(x)\geq \left(\frac{L_1^{-1}}{\varepsilon_0^{p}+\mathfrak{a}(x)\varepsilon_0^{p}}-\frac{1}{\mathcal{H}(x, \nabla v)}\right)f^{-}(x) \geq 0.
\]

Therefore, in such a setting, we have from Theorem \ref{localmax}:
{\small{
\begin{equation}\label{EqLMP}
\begin{array}{ccl}
 \displaystyle \sup_{B_{\frac{1}{2}}} v & \leq & \displaystyle \|u^+\|_{L^{p_1}(B_1)}+ \varepsilon_0+ \left\|f_0\right\|_{L^{N}(B_1)}\\
   & \le & \Xi_1,
\end{array}
\end{equation}
}}
where, as before, we can estimate
{\scriptsize{
$$
\Xi_1\defeq\left\{
 \begin{array}{lcl}
   \displaystyle C.\left\{\|v^+\|_{L^{p_1}(B_1)} + \min\left\{1,\left[(q+1)\sqrt[N]{|B_1|}L_1^{-1}\left\|\frac{f^{-}}{1+ \mathfrak{a}}\right\|_{L^{\infty}(B_1)}\right]^{\frac{1}{q+1}}\right\}\right\} & \text{if} & \varepsilon_0 \in (0, 1] \\
   \displaystyle C.\left\{\|v^+\|_{L^{p_1}(B_1)} + \min\left\{1, \left[(p+1)\sqrt[N]{|B_1|}L_1^{-1}\left\|\frac{f^{-}}{1+ \mathfrak{a}}\right\|_{L^{\infty}(B_1)}\right]^{\frac{1}{p+1}}\right\}\right\} & \text{if} & \varepsilon_0 \in (1, \infty),
 \end{array}
 \right.
$$
}}
Therefore, in any setting (since $0<p\le q<\infty$)
$$
\displaystyle \sup_{B_{\frac{1}{2}}} v \le C.\left\{\|v^+\|_{L^{p_1}(B_1)} + (q+1)^{\frac{1}{q+1}}\Pi^{f^-, \mathfrak{a}}_{p, q}\right\}
$$
thereby finishing this analysis.

Finally, by combining \eqref{EqWHN} and \eqref{EqLMP}, we obtain the following Harnack inequality for viscosity solutions:

\begin{theorem}[{\bf Harnack inequality}]\label{ThmHarIneq} Let $u$ be a non-negative viscosity solution to
$$
F_0(x, \nabla v, D^2v) = 0 \quad \text{in} \quad B_1.
$$
Then,
$$
\displaystyle \sup_{B_{\frac{1}{2}}} u(x) \leq \mathrm{C} \cdot\left\{\inf_{B_{\frac{1}{2}}} u(x) + (q+1)^{\frac{1}{q+1}}\Pi^{f, \mathfrak{a}}_{p, q}\right\},
$$
where $\mathrm{C}>0$ depends only on $\displaystyle N, \lambda$ and $\Lambda$.
\end{theorem}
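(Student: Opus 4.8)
The plan is to obtain the stated Harnack inequality by splicing together the Weak Harnack inequality (Theorem~\ref{wharn}) and the Local Maximum Principle (Theorem~\ref{localmax}), both applied to the fully nonlinear operator
\[
   F_0(x, \nabla v, D^2 v) \defeq \mathcal{H}(x, \nabla v)F(x, D^2 v) - f(x),
\]
where $f$ denotes the right-hand side implicit in $F_0(x,\nabla u, D^2 u)=0$. Since $u$ is a viscosity solution, it is simultaneously a super- and a subsolution of $F_0 = 0$ in $B_1$, and the game is to feed each of these two one-sided pieces of information into the corresponding one-sided estimate and then chain the outputs.

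First I would verify that $F_0$ falls within the scope of Theorem~\ref{wharn}. Fix a truncation threshold $\mathrm{M}_{\mathrm{F}} = \varepsilon_0 > 0$ (to be chosen small, universally). On the set where $|\nabla u| \ge \varepsilon_0$ and $F_0(x, \nabla u, D^2 u) \le 0$, uniform ellipticity (A1) together with $F(\cdot, \text{O}_{N\times N}) = 0$ and the lower bound in \eqref{1.2} (recall $t \mapsto t^p + \mathfrak{a}(x) t^q$ is increasing) gives
\[
   \mathscr{M}^-_{\lambda,\Lambda}(D^2 u) \le F(x, D^2 u) \le \frac{f^+(x)}{\mathcal{H}(x, \nabla u)} \le \frac{f^+(x)}{L_1(\varepsilon_0^p + \mathfrak{a}(x)\varepsilon_0^q)},
\]
so that the choice $\sigma \equiv \Lambda$ and $f_0(x) \defeq L_1^{-1} f^+(x)(\varepsilon_0^p + \mathfrak{a}(x)\varepsilon_0^q)^{-1}$ makes \eqref{eq.elliwh} hold. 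Theorem~\ref{wharn} then yields $\|u\|_{L^{p_0}(B_{1/4})} \le C(\inf_{B_{1/2}} u + \max\{\varepsilon_0, \|f_0\|_{L^N(B_1)}\})$ for a universal $p_0 > 0$ and $C = C(N, \lambda, \Lambda)$; optimizing the right-hand side over $\varepsilon_0 \in (0, \infty)$ — the elementary minimization of $t \mapsto t + t^{-s}\|\tfrac{f^+}{1+\mathfrak{a}}\|_{L^\infty(B_1)}$ for $s = p, q$ — replaces that maximum by the quantity $(q+1)^{1/(q+1)}\Pi^{f,\mathfrak{a}}_{p,q}$. Symmetrically, applying the same truncation-plus-(A1) comparison on the subsolution side, now with $f^-$ in place of $f^+$, verifies \eqref{eq.elliplmp}, and Theorem~\ref{localmax} delivers $\sup_{B_{1/4}} u \le C(\|u^+\|_{L^{p_1}(B_{1/2})} + (q+1)^{1/(q+1)}\Pi^{f,\mathfrak{a}}_{p,q})$ for any $p_1 > 0$. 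Taking $p_1 = p_0$ and combining the two estimates — with a routine covering/rescaling argument to pass from the radii $\tfrac14,\tfrac12$ produced by the two theorems to the balls $B_{1/2}$ in the statement — produces
\[
   \sup_{B_{1/2}} u \le \mathrm{C}\left(\inf_{B_{1/2}} u + (q+1)^{\frac{1}{q+1}}\Pi^{f,\mathfrak{a}}_{p,q}\right),
\]
as desired, with $\mathrm{C}$ depending only on $N, \lambda, \Lambda$ since $\sigma \equiv \Lambda$ and $p_0, p_1$ are universal.

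The main obstacle is not the chaining itself — entirely classical once the two one-sided estimates are available, cf.\ \cite[Section~4.2]{CC95} — but justifying that Theorems~\ref{wharn} and~\ref{localmax} genuinely apply to our degenerate operator. The delicate point is the $L^\varepsilon$-estimate \eqref{eq.lep} underpinning both: Imbert's original argument in \cite{Imb} has a gap, repaired in \cite{IS2} at the level of the ``Pucci extremal operators for large gradients'' $\widetilde{\mathscr{M}}^\pm_{\lambda,\Lambda}$. I would therefore need to check that a viscosity supersolution of $\mathcal{G}[v] = \mathcal{H}(x,\nabla v)F(x, D^2 v)$ descends, under the truncation above, to a supersolution of $\widetilde{\mathscr{M}}^-_{\lambda,\Lambda}(D^2 v, \nabla v)$ with right-hand side $f_0$ (and the analogous statement for subsolutions and $\widetilde{\mathscr{M}}^+_{\lambda,\Lambda}$), and that the constants furnished by \cite{IS2} are monotone in, hence uniformly bounded with respect to, the truncation parameter $\tau = \varepsilon_0$, so that the resulting $L^\varepsilon$-bound — and therefore $\mathrm{C}$ — remains universal. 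Once this consistency is in place, the proof is complete.
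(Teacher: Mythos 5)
Your proposal follows essentially the same route as the paper: it introduces the same truncated operator $F_0$ with $f_0(x)=L_1^{-1}f^{+}(x)(\varepsilon_0^p+\mathfrak{a}(x)\varepsilon_0^q)^{-1}$ and $\sigma\equiv\Lambda$, verifies \eqref{eq.elliwh}--\eqref{eq.elliplmp} via (A1) and \eqref{1.2}, invokes Theorems~\ref{wharn} and~\ref{localmax} (with the same caveat about the $L^\varepsilon$-estimate from \cite{IS2} and the monotonicity of its constants in the truncation parameter), optimizes over $\varepsilon_0$ exactly as the paper does to produce $(q+1)^{\frac{1}{q+1}}\Pi^{f,\mathfrak{a}}_{p,q}$, and chains the two one-sided estimates. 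This matches the paper's argument in both structure and detail, so the proposal is correct.
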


\begin{remark}[{\bf Harnack inequality - scaled version}]\label{HarnIneqSV} For our purposes, it will be useful to obtain a scaled version of Harnack inequality. Indeed, let $v$ be a non-negative viscosity solution to
$$
\mathcal{G}(x, \nabla v, D^2v) = f(x) \quad \text{in} \quad B_{r} \quad \text{for a fixed} \quad r \in (0, \infty)
$$
where (A0)-(A2), \eqref{1.2} and \eqref{1.3} are in force. Then,
$$
\displaystyle \sup_{B_{\frac{r}{2}}} v(x) \leq \mathrm{C} \cdot\left\{\inf_{B_{\frac{r}{2}}} v(x) + (q+1)^{\frac{1}{q+1}}\max\left\{r^{\frac{p+2}{p+1}}, r^{\frac{q+2}{p+1}}\right\}\Pi^{f, \mathfrak{a}}_{p, q}\right\},
$$
where $\mathrm{C}(N, \lambda, \Lambda)>0$.
\end{remark}

Finally, from Harnack inequality (Theorem \ref{ThmHarIneq}) and making use of arguments in  \cite[Proposition 4.10]{CC95}, we can obtain, in a standard way, the following interior H\"{o}lder regularity result (cf. \cite[Theorem 2]{DeF20}).

\begin{theorem}[{\bf Local H\"{o}lder estimate}]\label{HoldEstThm} Let $u$ be a viscosity solution to
$$
   F_0(x, \nabla v, D^2v) = 0 \quad \text{in} \quad B_1.
$$
where $f$ is a continuous and bounded function. Then, $u \in C_{\text{loc}}^{0, \alpha}(B_1)$ for some universal $\alpha \in (0, 1)$. Moreover,
$$
   \displaystyle \|u\|_{C^{0, \alpha}\left(\overline{B_{\frac{1}{2}}}\right)}\leq \mathrm{C} \cdot\left\{\|u\|_{L^{\infty}(B_1)} + (q+1)^{\frac{1}{q+1}}\Pi^{f}_{p, q}\right\},
$$
where $\mathrm{C}>0$ depends only on $N, \lambda$ and $\Lambda$.
\end{theorem}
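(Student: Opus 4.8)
The plan is to deduce Theorem~\ref{HoldEstThm} from the Harnack inequality by a standard oscillation–decay scheme, exactly along the lines of \cite[Proposition~4.10]{CC95}. The point is that once one has the scale-invariant Harnack estimate of Remark~\ref{HarnIneqSV} at one's disposal, H\"{o}lder continuity of a fixed exponent becomes a purely iterative consequence.

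First I would fix $x_0\in B_{1/2}$ and a radius $0<\rho\le\frac12$, and set $M_\rho:=\sup_{B_\rho(x_0)}u$, $m_\rho:=\inf_{B_\rho(x_0)}u$ and $\omega(\rho):=M_\rho-m_\rho$. The key structural observation is that both $w_1:=M_{2\rho}-u$ and $w_2:=u-m_{2\rho}$ are \emph{non-negative} viscosity solutions in $B_{2\rho}(x_0)$ of equations of the very same class: since adding a constant changes neither gradient nor Hessian, $w_2$ solves $\mathcal{H}(x,\nabla w_2)F(x,D^2w_2)=f$, while, because $\mathcal{H}(x,\xi)$ depends on $\xi$ only through $|\xi|$ and $F$ satisfies (A1)--(A2), $w_1$ solves $\mathcal{H}(x,\nabla w_1)\widetilde F(x,D^2w_1)=-f$ with $\widetilde F(x,X):=-F(x,-X)$ uniformly elliptic with the same constants $\lambda,\Lambda$, the same modulus of continuity, and $\widetilde F(x,0)=0$. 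Thus the scaled Harnack inequality (Remark~\ref{HarnIneqSV}) applies to each $w_i$ on $B_{2\rho}(x_0)$, yielding
\[
\sup_{B_\rho(x_0)}w_i\le \mathrm{C}\Big(\inf_{B_\rho(x_0)}w_i+\mathrm{E}(\rho)\Big),\qquad \mathrm{E}(\rho):=(q+1)^{\frac{1}{q+1}}\max\big\{(2\rho)^{\frac{p+2}{p+1}},(2\rho)^{\frac{q+2}{q+1}}\big\}\,\Pi^{f,\mathfrak{a}}_{p,q},
\]
with $\mathrm{C}=\mathrm{C}(N,\lambda,\Lambda)\ge 1$. Using $\sup_{B_\rho}w_1=M_{2\rho}-m_\rho$, $\inf_{B_\rho}w_1=M_{2\rho}-M_\rho$ and the analogous identities for $w_2$, adding the two inequalities collapses, after rearrangement, to the geometric decay
\[
\omega(\rho)\le\gamma\,\omega(2\rho)+\mathrm{C}'\,\rho^{\beta}\,\Pi^{f,\mathfrak{a}}_{p,q},\qquad \gamma:=\tfrac{\mathrm{C}-1}{\mathrm{C}+1}<1,\quad \beta:=\tfrac{q+2}{q+1}>1.
\]

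Next I would iterate this inequality over the dyadic radii $\rho=2^{-k}$ (a routine iteration lemma; see \cite{CC95}), which gives $\omega(\rho)\le \mathrm{C}\,\rho^{\alpha}\big(\|u\|_{L^\infty(B_1)}+(q+1)^{\frac{1}{q+1}}\Pi^{f,\mathfrak{a}}_{p,q}\big)$ for a universal exponent $\alpha\in(0,1)$ — one may take $\alpha=\min\{\beta',-\log_2\gamma\}$ for any $\beta'<\beta$ — first for dyadic $\rho$ and then for all $\rho\le\frac12$ by monotonicity of $\omega$. Since this holds uniformly for every $x_0\in B_{1/2}$ and $\omega(1)\le 2\|u\|_{L^\infty(B_1)}$, plugging it into the definition of the $C^{0,\alpha}$ seminorm on $\overline{B_{1/2}}$ and adding the trivial $L^\infty$ control yields the stated estimate. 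The main obstacle, and the only place requiring care, is the reduction step: one must verify rigorously (via the comparison-of-test-functions argument sketched above) that $w_1,w_2$ are admissible non-negative viscosity solutions of structurally equivalent equations, and — crucially — that the $L^\varepsilon$-type machinery underpinning the Harnack inequality (the estimate \eqref{eq.lep} as corrected in \cite{IS2}) holds with constants uniform in the scaling parameter $\rho$, so that Remark~\ref{HarnIneqSV} may be invoked on all the small balls $B_{2\rho}(x_0)$ at once. Once this uniformity is in hand, the remainder is the classical oscillation-decay iteration.
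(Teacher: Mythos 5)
Your proposal is correct and is exactly the route the paper takes: the authors prove this theorem by citing the Harnack inequality (Theorem \ref{ThmHarIneq}) together with the standard oscillation--decay iteration of \cite[Proposition 4.10]{CC95}, which is precisely the argument you carry out (applying the scaled Harnack estimate to $M_{2\rho}-u$ and $u-m_{2\rho}$ and iterating the resulting decay $\omega(\rho)\le\gamma\,\omega(2\rho)+\mathrm{C}'\rho^{\beta}\Pi^{f,\mathfrak{a}}_{p,q}$). Your added care about the structural invariance of the equation under $u\mapsto M-u$ and the $\rho$-uniformity of the $L^{\varepsilon}$-machinery is exactly the right thing to verify and is consistent with the paper's setup.
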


\subsection{An Alexandroff-Bakelman-Pucci type estimate}\label{Append3}

In the sequel, we will deliver an ABP estimate adapted to our context of fully nonlinear models with non-homogeneous degeneracy (cf. \cite[Theorem 1]{DFQ} and \cite[Theorem 1.1]{JunMio10}). Such an estimate is pivotal in order to obtain universal bounds for viscosity solutions in terms of data of the problem.

\begin{theorem}[{\bf Alexandroff-Bakelman-Pucci estimate}] \label{ABPthm}
  Assume that assumptions (A0)-(A2) there hold. Then, there exists $C = C(N, \lambda, p, q, \diam(\Omega))>0$ such that for any $u \in C^0(\overline{\Omega})$ viscosity sub-solution (resp. super-solution) of \eqref{EqPrinc} in $\{x \in \Omega:u(x)>0\}$ (resp. $\{x \in \Omega:u(x)<0\}$, satisfies
$$
\displaystyle \sup_{\Omega} u(x) \leq \sup_{\partial \Omega} u^{+}(x) +C\cdot\diam(\Omega)\max\left\{\left\|\frac{f^{-}}{1+\mathfrak{a}}\right\|^{\frac{1}{p+1}}_{L^N(\Gamma^{+}(u^{+}))}, \left\|\frac{f^{-}}{1+\mathfrak{a}}\right\|^{\frac{1}{q+1}}_{L^N(\Gamma^{+}(u^{+}))}\right\},
$$
{\small{
$$
\left(\text{resp.} \,\,\,\displaystyle \sup_{\Omega} u^{-}(x) \leq \sup_{\partial \Omega} u^{-}(x) +C\cdot\diam(\Omega)\max\left\{\left\|\frac{f^{+}}{1+\mathfrak{a}}\right\|^{\frac{1}{p+1}}_{L^N(\Gamma^{+}(u^{-}))}, \left\|\frac{f^{+}}{1+\mathfrak{a}}\right\|^{\frac{1}{q+1}}_{L^N(\Gamma^{+}(u^{-}))}
\right\}\right)
$$}}
where
$$
\Gamma^{+}(u) \defeq \left\{x \in \Omega: \exists \,\, \xi \in \R^N\,\,\,\text{such that}\,\,u(y)\le u(x)+\langle \xi, y-x\rangle \,\, \forall\, y \in \Omega\right\}.
$$

Particularly, we conclude that
$$
\displaystyle \|u\|_{L^{\infty}(\Omega)} \leq \|u\|_{L^{\infty}(\partial \Omega)} +C\cdot\diam(\Omega)\max\left\{\left\|\frac{f}{1+\mathfrak{a}}\right\|^{\frac{1}{p+1}}_{L^N(\Omega)}, \left\|\frac{f}{1+\mathfrak{a}}\right\|^{\frac{1}{q+1}}_{L^N(\Omega)}\right\}
$$
\end{theorem}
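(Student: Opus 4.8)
The plan is to run the classical Alexandrov--Bakelman--Pucci argument through the concave envelope; the only genuinely new ingredient is that the degenerate weight $\mathcal{H}$ must be ``absorbed'' against a suitable power of the slope variable when the area formula is invoked, and it is precisely this device that produces the two exponents $\tfrac{1}{p+1}$ and $\tfrac{1}{q+1}$. I first treat the sub-solution case; the super-solution case is identical after replacing $u$ by $-u$, and the final ``particularly'' follows by combining the two estimates together with $f^{\pm}\le |f|$ and $\Gamma^{+}(u^{\pm})\subset\Omega$. So let $u$ be a viscosity sub-solution of \eqref{EqPrinc} in $\{u>0\}$ and assume, without loss, that $\sup_{\Omega}u>\sup_{\partial\Omega}u^{+}$. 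Enclose $\Omega$ in a ball $B_{R}$ with $R\le\diam(\Omega)$, let $\Gamma$ be the concave envelope over $B_{R}$ of the function that equals $u^{+}$ on $\overline{\Omega}$ and $\sup_{\partial\Omega}u^{+}$ on $B_{R}\setminus\Omega$, and set $\mathcal{C}\defeq\{x\in\Omega : u(x)=\Gamma(x)\text{ and }u(x)>0\}\subset\Gamma^{+}(u^{+})$. The standard geometric lemma, in the form adapted to degenerate operators (cf. \cite{DFQ,JunMio10}), yields
$$
\nabla\Gamma(\mathcal{C})\supset B_{M_{0}},\qquad M_{0}\defeq \mathfrak{c}\,\frac{\sup_{\Omega}u-\sup_{\partial\Omega}u^{+}}{\diam(\Omega)},
$$
for a dimensional $\mathfrak{c}>0$, so it is enough to bound $M_{0}$ by $\mathrm{C}\max\{\|f^{-}/(1+\mathfrak{a})\|_{L^{N}(\Gamma^{+}(u^{+}))}^{1/(p+1)},\ \|f^{-}/(1+\mathfrak{a})\|_{L^{N}(\Gamma^{+}(u^{+}))}^{1/(q+1)}\}$.

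The second step is the pointwise information on $\mathcal{C}$. Being concave, $\Gamma$ is twice differentiable $\Leb$-a.e., and at such an $x_{0}\in\mathcal{C}$ the paraboloid $\Gamma(x_{0})+\nabla\Gamma(x_{0})\cdot(x-x_{0})+\tfrac12(x-x_{0})^{\top}\bigl(D^{2}\Gamma(x_{0})+\delta\,\mathrm{Id}_{N}\bigr)(x-x_{0})$ touches $u$ from above near $x_{0}$ for every $\delta>0$; testing with it and letting $\delta\to 0^{+}$ (legitimate by the continuity of $F$ in the matrix entry, (A0)) gives
$$
\mathcal{H}(x_{0},\nabla\Gamma(x_{0}))\,F(x_{0},D^{2}\Gamma(x_{0}))\ \ge\ f(x_{0}),\qquad D^{2}\Gamma(x_{0})\le 0 .
$$
By (A0)--(A1) one has $F(x_{0},D^{2}\Gamma(x_{0}))\le \mathscr{M}^{+}_{\lambda,\Lambda}(D^{2}\Gamma(x_{0}))=\lambda\,\tr(D^{2}\Gamma(x_{0}))\le 0$, and since $\mathcal{H}\ge 0$ this already forces $f(x_{0})\le 0$. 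Excluding the set $\{\nabla\Gamma=0\}$ (whose image under $\nabla\Gamma$ is the single slope $\xi=0$, hence negligible in the area formula), on the remainder of $\mathcal{C}$ we may divide and combine $-F(x_{0},D^{2}\Gamma(x_{0}))\ge-\mathscr{M}^{+}_{\lambda,\Lambda}(D^{2}\Gamma(x_{0}))=\lambda\,|\tr(D^{2}\Gamma(x_{0}))|$ with the lower bound in \eqref{1.2} to get
$$
|\tr(D^{2}\Gamma(x_{0}))|\ \le\ \frac{f^{-}(x_{0})}{\lambda L_{1}\bigl(|\nabla\Gamma(x_{0})|^{p}+\mathfrak{a}(x_{0})\,|\nabla\Gamma(x_{0})|^{q}\bigr)} ,
$$
and hence, by the arithmetic--geometric mean inequality applied to the nonpositive eigenvalues of $D^{2}\Gamma(x_{0})$, $|\det D^{2}\Gamma(x_{0})|\le \bigl(N^{-1}|\tr(D^{2}\Gamma(x_{0}))|\bigr)^{N}$.

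The third step closes the loop via a weighted change of variables. For any nonnegative Borel $g$, the area formula for $\nabla\Gamma$ together with $D^{2}\Gamma\le 0$ gives $\int_{B_{M_{0}}}g(\xi)\,d\xi\le\int_{\mathcal{C}}g(\nabla\Gamma(x))\,|\det D^{2}\Gamma(x)|\,dx$. On the preimage of $\{|\xi|\ge 1\}$ one has $|\nabla\Gamma|\ge 1$, whence $|\nabla\Gamma|^{p}+\mathfrak{a}\,|\nabla\Gamma|^{q}\ge(1+\mathfrak{a})\,|\nabla\Gamma|^{p}$ because $0<p\le q$; choosing $g(\xi)=|\xi|^{pN}\chi_{\{|\xi|\ge 1\}}$ the factor $|\nabla\Gamma|^{pN}$ cancels exactly the denominator coming from the pointwise bound, leaving
$$
\mathfrak{c}\,\bigl(M_{0}^{(p+1)N}-1\bigr)_{+}\ \le\ (N\lambda L_{1})^{-N}\Bigl\|\tfrac{f^{-}}{1+\mathfrak{a}}\Bigr\|_{L^{N}(\Gamma^{+}(u^{+}))}^{N} .
$$
On the preimage of $\{|\xi|\le 1\}$ one has $|\nabla\Gamma|\le 1$ and $|\nabla\Gamma|^{p}+\mathfrak{a}\,|\nabla\Gamma|^{q}\ge(1+\mathfrak{a})\,|\nabla\Gamma|^{q}$, so with $g(\xi)=|\xi|^{qN}\chi_{\{|\xi|\le 1\}}$ one gets
$$
\mathfrak{c}\,\min\{M_{0},1\}^{(q+1)N}\ \le\ (N\lambda L_{1})^{-N}\Bigl\|\tfrac{f^{-}}{1+\mathfrak{a}}\Bigr\|_{L^{N}(\Gamma^{+}(u^{+}))}^{N} .
$$
If $M_{0}\le 1$, the second inequality alone gives $M_{0}\le \mathrm{C}\,\|f^{-}/(1+\mathfrak{a})\|_{L^{N}}^{1/(q+1)}$; if $M_{0}>1$, that same inequality forces $\|f^{-}/(1+\mathfrak{a})\|_{L^{N}}$ to be bounded below by a universal positive constant, so the additive $1$ in the first inequality is harmless and $M_{0}\le \mathrm{C}\,\|f^{-}/(1+\mathfrak{a})\|_{L^{N}}^{1/(p+1)}$. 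Either way $M_{0}\le\mathrm{C}\max\{\|f^{-}/(1+\mathfrak{a})\|_{L^{N}(\Gamma^{+}(u^{+}))}^{1/(p+1)},\|f^{-}/(1+\mathfrak{a})\|_{L^{N}(\Gamma^{+}(u^{+}))}^{1/(q+1)}\}$, which is the assertion once the definition of $M_{0}$ is recalled. The main obstacle is precisely the gradient degeneracy of $\mathcal{H}$: on the part of $\mathcal{C}$ where $|\nabla\Gamma|\to 0$ the naive bound $|\tr D^{2}\Gamma|\lesssim f^{-}/\mathcal{H}$ fails to be integrable, and the whole role of weighting the change of variables by $|\xi|^{pN}$ (respectively $|\xi|^{qN}$) is both to neutralize that singularity and to separate the two scaling regimes encoded in the exponents $\tfrac{1}{p+1}$ and $\tfrac{1}{q+1}$.
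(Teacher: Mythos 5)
Your argument is correct, but it follows a genuinely different route from the paper's. The paper does not run the concave-envelope machinery itself: it fixes a gradient threshold $\varepsilon_0>0$, observes that wherever $|\nabla u|\ge\varepsilon_0$ the inequality $\mathcal{H}(x,\nabla u)F(x,D^2u)\le f$ implies $\mathscr{M}^{-}_{\lambda,\Lambda}(D^2u)-\Lambda|\nabla u|-f_0\le 0$ with $f_0=L_1^{-1}f^{+}/(\varepsilon_0^{p}+\mathfrak{a}\,\varepsilon_0^{q})$, and then invokes as a black box the ABP estimate of Imbert \cite{Imb} (with the correction of \cite{IS2}) for operators that are uniformly elliptic only where the gradient is large. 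That theorem returns $\sup u^{-}\le\sup_{\partial\Omega}u^{-}+C\,\diam(\Omega)\,(\varepsilon_0+\|f_0\|_{L^N})$, and the two exponents $\tfrac1{p+1}$, $\tfrac1{q+1}$ emerge by optimizing the sum $\varepsilon_0+\varepsilon_0^{-p}\|\cdot\|$ (resp.\ $\varepsilon_0^{-q}\|\cdot\|$) over $\varepsilon_0\in(0,1]$ and $\varepsilon_0\in(1,\infty)$ separately. You instead redo the envelope argument from scratch in the spirit of \cite{DFQ} and \cite{JunMio10}, absorbing the degeneracy by weighting the change of variables with $|\xi|^{pN}$ on $\{|\xi|\ge1\}$ and $|\xi|^{qN}$ on $\{|\xi|\le1\}$, which produces the same pair of exponents directly from the dichotomy in the slope variable. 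Your route is more self-contained and makes transparent where each exponent comes from, at the cost of having to justify the weighted area formula on the contact set for a merely continuous viscosity subsolution (the passage from the a.e.\ pointwise Hessian bound to $\int_{B_{M_0}}g\le\int_{\mathcal C}g(\nabla\Gamma)\,|\det D^2\Gamma|$ is the delicate point, since the Monge--Amp\`ere measure of $\Gamma$ could a priori charge the degenerate part of the contact set); you correctly defer this to the adapted geometric lemma of \cite{DFQ,JunMio10}, which is exactly where it is proved. The paper's route is shorter but opaque about the threshold optimization's role. One cosmetic remark common to both: the constant inevitably also depends on $L_1$ and $\Lambda$, which the theorem's displayed dependence omits (the paper's own proof records $C(\diam(\Omega),p,q,L_1,\Lambda)$ at the end).
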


\begin{proof} We are going to prove the first estimate, the second one follows by similar reasoning. In the sequel, we will show that our class of operators fits into the framework of \cite[Theorem 1]{Imb}. For that purpose, as before, let us consider
$$
   F_0(x, \xi, \mathrm{X}) \defeq \mathcal{H}(x, \xi)F(x, \mathrm{X})-f(x) \quad \text{and} \quad
f_0(x)\defeq \frac{L_1^{-1}f^{+}(x)}{\varepsilon_0^p + \mathfrak{a}(x)\varepsilon_0^q} \quad \text{for fixed} \quad \varepsilon_0 \in (0, \infty).
$$

Now, if we have (in the viscosity sense)
\[
\mathcal{H}(x, \nabla u)F(x, D^2u)\leq f(x) \quad \text{in} \quad B_1,
\]
then condition (A1) and by supposing $|\nabla v|\geq\mathrm{M}_{\mathrm{F}} = \varepsilon_0>0$  ensure that
\[
\mathscr{M}^-_{\lambda,\Lambda}(D^2u)\leq F(x, D^2u)\le \frac{f^{+}(x)}{\mathcal{H}(x, \nabla u)}.
\]
Consequently,
\[
\mathscr{M}^-_{\lambda,\Lambda}(D^2u)-\Lambda|\nabla u|-f_0(x)\leq \left(\frac{1}{\mathcal{H}(x, \nabla u)}-\frac{L_1^{-1}}{\varepsilon_0^p + \mathfrak{a}(x)\varepsilon_0^q}\right)f^{+}(x)\leq 0.
\]

Therefore, $u$ is a viscosity super-solution of a uniformly elliptic problem with ``large'' gradient. From ABP estimate in \cite[Theorem 1]{Imb} we obtain that

\begin{equation}
\displaystyle \sup_{\Omega} u^{-}(x) \leq \sup_{\partial \Omega} u^{-}(x) + C\cdot \diam(\Omega)\Big( \epsilon_{0}+\left\|f_0\right\|_{L^N(\Gamma^{+}(u^{-}))}\Big).
\end{equation}

Next, we will separate the analysis in two cases: First, if $\epsilon_{0} \in (0,1]$, then
\begin{equation}\label{EqABP1}
  \begin{array}{rcl}
    \displaystyle \sup_{\Omega} u^{-}(x) & \le & \displaystyle \sup_{\partial \Omega} u^{-}(x) + C\cdot \diam(\Omega)\left( \epsilon_{0}+L^{-1}_{1}\frac{1}{\epsilon_{0}^{q}}\left\|\frac{f^{=}}{1+\mathfrak{a}}\right\|_{L^N(\Gamma^{+}(u^{-}))} \right) \\
     & \le & \displaystyle \sup_{\partial \Omega} u^{-}(x) + C\cdot \diam(\Omega)\min\left\{1, \left( (q+1)L^{-1}_{1}\left\|\frac{f^{=}}{1+\mathfrak{a}}\right\|_{L^N(\Gamma^{+}(u^{-}))} \right)^{\frac{1}{q+1}}\right\}
  \end{array}
\end{equation}

On the other hand, if $\epsilon_{0} \in (1,\infty)$ then

\begin{equation}\label{EqABP2}
 \sup_{\Omega} u^{-}(x) \leq \sup_{\partial \Omega} u^{-}(x) + C\cdot \diam(\Omega)\min\left\{1, \left( (p+1)L^{-1}_{1}\left\|\frac{f^{=}}{1+\mathfrak{a}}\right\|_{L^N(\Gamma^{+}(u^{-}))} \right)^{\frac{1}{p+1}}\right\}.
\end{equation}

Therefore, by combining inequalities \eqref{EqABP1} and \eqref{EqABP2} we conclude that
{\small{
$$
  \displaystyle  \sup_{\Omega} u^{-}(x) \leq \sup_{\partial \Omega} u^{-}(x) + C(\diam(\Omega),p,q,L_{1},\Lambda)\max\left\{\left\|\frac{f^{-}}{1+\mathfrak{a}}\right\|^{\frac{1}{p+1}}_{L^N(\Gamma^{+}(u^{-}))}, \left\|\frac{f^{-}}{1+\mathfrak{a}}\right\|^{\frac{1}{q+1}}_{L^N(\Gamma^{+}(u^{-}))}\right\}.
$$}}
\end{proof}

\subsection{An inhomogeneous Hopf type result}\label{Append4}

In this final part, we will present a pivotal tool in proving uniform Lipschitz estimates of solutions, namely a quantitative version of the Hopf Lemma, in the inhomogeneous setting for fully nonlinear problems with non-homogeneous degeneracy (cf. \cite[Lemma 2.10]{RS2} for the uniformly elliptic and homogeneous case).

\begin{lemma}[{\bf Inhomogeneous Hopf type Lemma}]\label{lemma2.1}
Suppose that the assumptions (A0)-(A1) and \eqref{1.2} are in force. Let $u$ be a positive viscosity solution to
$$
	\mathcal{G}(x, \nabla u, D^2 u) = f(x) \quad \mbox{in} \quad  B_{R}(z_0)
$$
where $f \in L^{\infty}(B_{R}(z_0))$. Assume further that for some $x_0 \in \partial B_R(z_0)$,
$$
	u(x_0)=0 \quad \textrm{and} \quad \frac{\partial u}{\partial \nu}(x_0) \le \Im,
$$
where $\nu$ is the inward normal direction at $x_0$. Then, for any $r \in (0, 1)$, there exists a constant $\mathrm{C}_0(\verb"universal") >0$ such that
$$
\displaystyle \sup_{B_{\frac{rR}{2}}(z_0)} u(x) \leq \mathrm{C}_0R.\left\{\Im + \max\left\{(r^{p+2}R)^{\frac{1}{p+1}}, (r^{p+2}R)^{\frac{1}{q+1}}\right\}\Pi^{f, r^{p-q}\mathfrak{a}(z_0 + rRx)}_{p, q}\right\}
$$
\end{lemma}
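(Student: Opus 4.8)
The plan is to reduce to a normalized configuration via scaling, and then play the barrier $\Theta_{\mathrm L}$ against the Harnack inequality exactly in the spirit of the proof of Theorem~\ref{thmreg}. First I would rescale: given $u$ on $B_R(z_0)$ with $u(x_0)=0$ and $\partial_\nu u(x_0)\le \Im$ at $x_0\in\partial B_R(z_0)$, set
$$
v(x)\defeq \frac{1}{rR}\, u(z_0 + rR\,x), \qquad x\in B_{1/r}(0),
$$
so that $v$ solves $\mathcal{G}_{z_0,rR}(x,\nabla v,D^2 v)=rR\, f(z_0+rRx)$ in the viscosity sense, where the rescaled operator has degeneracy modulating function $r^{p-q}\mathfrak{a}(z_0+rRx)$ (this is where the exponent $p-q$ in the statement $\Pi^{f,\,r^{p-q}\mathfrak a(z_0+rRx)}_{p,q}$ comes from), and the rescaled right-hand side has $L^\infty$-norm of order $rR\|f\|_{L^\infty}$. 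Crucially, $v$ inherits exactly the structural hypotheses (A0)--(A1) and \eqref{1.2} with the same universal constants. The boundary information persists as $v(x_0/ (rR))=0$ and a directional-derivative bound at that boundary point of order $\Im$.

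Second, I would produce a supersolution from above. Here the point is that $u$ is positive, so near $x_0$ one wants to trap it between $0$ and an explicit radial profile controlling its growth. I would invoke the barrier $\Theta_{\mathrm L}$ of the Proposition, suitably translated and dilated so that it vanishes to first order near the flat boundary piece and is a pointwise (hence viscosity) supersolution of $\mathcal{H}(x,\nabla w)\mathscr{M}^+_{\lambda,\Lambda}(D^2 w)=0$, which by ellipticity (A1) dominates $\mathcal{G}(x,\nabla w,D^2 w)$ from above; adding an affine/quadratic correction absorbing the bounded forcing term $rR f$ keeps it a supersolution up to the forcing. Comparing $v$ with $\Im\cdot(\text{barrier}) + (\text{forcing correction})$ on a half-ball, using the boundary derivative bound to fix the slope of the comparison function at $x_0$, gives a pointwise bound $v \le \mathrm C_0\bigl(\Im + (\text{contribution of }rR f)\bigr)$ at the center of the inner ball, i.e. at a single interior point.

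Third, I would upgrade the single-point bound to the $\sup_{B_{1/2}}$ bound via Harnack. Since $v$ is a non-negative viscosity solution of $\mathcal{G}=rRf$ on a fixed interior ball, the scaled Harnack inequality (Theorem~\ref{ThmHarIneq}, Remark~\ref{HarnIneqSV}) converts the center bound into $\sup_{B_{1/2}} v \le \mathrm C\bigl(v(\text{center}) + (q+1)^{1/(q+1)}\Pi^{rRf,\ r^{p-q}\mathfrak a}_{p,q}\bigr)$. Unrescaling back to $u$ multiplies by $rR$ and converts $B_{1/2}$ to $B_{rR/2}(z_0)$, and the terms assemble into
$$
\sup_{B_{\frac{rR}{2}}(z_0)} u \le \mathrm C_0 R\Bigl\{\Im + \max\bigl\{(r^{p+2}R)^{\frac1{p+1}},(r^{p+2}R)^{\frac1{q+1}}\bigr\}\,\Pi^{f,\ r^{p-q}\mathfrak a(z_0+rRx)}_{p,q}\Bigr\},
$$
after tracking how the factors $rR$ distribute over the two terms and how the $\Pi$-quantity scales (the $p\le q$ dichotomy in $\mathcal{K}_{p,q,\mathfrak a}$ producing the $\max$).

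The main obstacle is the barrier comparison in the degenerate, non-homogeneous setting: one must ensure the explicit profile is genuinely a viscosity supersolution of $\mathcal{G}$ near the boundary \emph{despite} the gradient vanishing on the flat face (where the operator degenerates) and \emph{despite} the coefficient oscillation; this is handled by choosing the barrier to have non-degenerate gradient precisely on the region where the comparison is exercised, absorbing the forcing term, and using (A1) to pass from $F$ to $\mathscr{M}^+_{\lambda,\Lambda}$. A secondary bookkeeping nuisance is keeping all constants universal through the two rescalings and verifying that the modulating function $r^{p-q}\mathfrak a(z_0+rRx)$ still satisfies \eqref{1.3}, which it does since it is non-negative and continuous.
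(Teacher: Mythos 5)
There is a genuine gap in your second step: the comparison runs in the wrong direction. You propose to dominate $v$ from \emph{above} by a supersolution whose slope at $x_0$ is fixed by the bound $\partial_\nu u(x_0)\le \Im$. But to conclude $v\le W$ in a half-ball via the comparison principle you would need $v\le W$ on the \emph{entire} boundary of the comparison region, and no such upper bound for $v$ is available anywhere except at the single point $x_0$ — an upper bound on $v$ near $z_0$ is precisely what the lemma is trying to produce. A one-point value plus a one-point directional-derivative bound cannot order a solution below a supersolution. The paper's argument is the reverse: it builds a strict \emph{subsolution} $\Phi(x)=\mu_0\left(e^{-\delta|x|^2}-e^{-\delta}\right)$ on the annulus $B_1\setminus B_{1/2}$, normalized by $\mu_0\sim \inf_{\partial B_{1/2}}v$, so that $\Phi\le v$ on both boundary spheres (using $v\ge 0$ on the outer one); comparison then gives $\Phi\le v$ in the annulus, and since $\Phi$ and $v$ both vanish at the boundary point, the ordering of inward normal derivatives yields $\mu_0\delta e^{-\delta}\le \partial_\nu v\le \Im$, i.e.\ $\inf_{\partial B_{1/2}}v\lesssim \Im$. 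Only then does Harnack (whose form is $\sup\le C(\inf+\cdots)$, so it needs an \emph{inf} bound, which your center-point bound would also supply had it been established) convert this into the stated sup bound. Relatedly, your choice of barrier is off: $\Theta_{\mathrm L}$ from Section 4 is a supersolution bounded below by $t_0>0$ that vanishes nowhere and is built for the linear-growth estimate; the Hopf-type barrier needed here is the exponential profile above, which vanishes on the outer sphere and has an explicitly computable, strictly positive inward normal derivative there.

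A minor additional point: your normalization $v=u(z_0+rRx)/(rR)$ differs from the paper's $v=u(z_0+rRx)/R$; either can be made to work, but the factor $R$ (not $rR$) in the stated conclusion, and the exponents $r^{p+2}R$ and $r^{p-q}$ appearing in it, are tuned to the paper's choice, so with your scaling the bookkeeping in the final assembly would have to be redone.
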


\begin{proof}
First, it is sufficient to consider the scaled function $v_{z_0, R}: B_{1} \to \mathbb{R}$ given by
 $$
    v_{z_0, R}(x) \defeq \frac{u(z_0+rRx)}{R},
 $$
for $r \in (0, 1)$ to be determined \textit{a posteriori}.

In effect, $v_{z_0, R}$ is a non-negative viscosity solution of
$$
     \mathcal{H}_{z_0, R}(y,\nabla v_{z_0, R})F_{z_0, R}(x, D^{2}v_{z_0, R})=f_{z_0, R}(x) \quad \text{in} \quad B_1
$$
where
$$
\left\{
\begin{array}{rcl}
  F_{z_0, R}(x, \mathrm{X}) & \defeq & r^2RF\left(z_0+rR x,\frac{1}{r^2R}\mathrm{X}\right) \\
  \mathcal{H}_{z_0, R}(x, \xi) & \defeq &  r^p\mathcal{H}\left(z_0+rRx, \frac{1}{r}\xi\right)\\
  \mathfrak{a}_{z_0, R}(x) & \defeq & r^{p-q}\mathfrak{a}(z_0+rRx)\\
  f_{z_0, R}(x) & \defeq & r^{p+2}Rf(z + rRx)
\end{array}
\right.
$$
Moreover, $F_{z_0, R}, \mathcal{H}_{z_0, R}$ and $\mathfrak{a}_{z_0, R}$ fulfil the structural assumptions (A0)-(A2), \eqref{1.2} and \eqref{1.3}.

Now, let $\mathcal{A}_{\frac{1}{2}, 1}\defeq B_{1}\setminus B_{\frac{1}{2}}$ and define $\Phi:\overline{\mathcal{A}_{\frac{1}{2}, 1}} \to \mathbb{R}_{+}$ a barrier function given by
$$
    \Phi(x)=\mu_0 \cdot \left(e^{-\delta|x|^{2}} - e^{-\delta}\right)
$$
where $\mu_0, \delta>0$ will be chosen \textit{a posteriori}. The gradient and the Hessian of $\Phi$ in $\mathcal{A}_{\frac{1}{2}, 1}$ are

$$    \nabla \Phi(x)=-2\mu_0\delta xe^{-\delta|x|^{2}} \quad \text{and} \quad   D^{2}\Phi(x)=2\mu_0\delta e^{-\delta |x|^{2}}\left(2\delta x\otimes x  - \text{Id}_{\mathrm{N}}\right).
$$

Next, we will show that such a barrier is a viscosity solution to
\begin{equation}
     \mathcal{H}(x,\nabla \Phi)F(x, D^{2}\Phi) > f_{z_0, R}(x) \quad \text{in} \quad \mathcal{A}_{\frac{1}{2}, 1}
 \end{equation}
provided we may adjust appropriately the values of $\mu_0, \delta>0$ and $r>0$.

For this purpose, notice that for $\delta > \frac{\Lambda(N-1)}{2\lambda}$, then $\Phi$ is a convex and decreasing function in the annular region $\mathcal{A}_{\frac{1}{2}, 1}$. Hence,
$$
\mathscr{M}^{-}_{\lambda,\Lambda}(D^2 \Phi(x)) = 2\mu_0\delta e^{-\delta|x|^{2}}\left[2\delta \lambda - \Lambda(N-1)\right] \geq 2\mu_0\delta e^{-\delta}\left[2\delta \lambda - \Lambda(N-1)\right] \quad \text{in} \quad \mathcal{A}_{\frac{1}{2}, 1}.
$$
In the sequel, we obtain by using the above sentence, (A1) and \eqref{1.2}
$$
\begin{array}{lcl}
  \mathcal{H}(x,\nabla \Phi)F(x, D^{2}\Phi) & \ge &  \mathcal{H}(x,\nabla \Phi)\mathscr{M}^{-}_{\lambda,\Lambda}(D^2 \Phi(x))\\
   & \ge & 2\mu_0\delta e^{-\delta}\left[2\delta \lambda - \Lambda(N-1)\right]\left(\delta^p\mu_0^pe^{-\delta p}+ \delta^q\mu_0^qe^{-\delta q}\mathfrak{a}(x)\right) \\
   & > & r^{p+2}R\|f\|_{L^{\infty}\left(\mathcal{A}_{\frac{rR}{2}, rR}\right)},
\end{array}
$$
which holds true provided we choice $r\ll 1$ small enough by fixing $\mu_0$ and choosing $\delta>0$.

In effect, by choosing $\displaystyle \mu_0 \defeq (e^{-\delta/4}-e^{-\delta})^{-1}\cdot \inf_{\partial B_{\frac{1}{2}}} v_{z_0, R}(x)>0$ follows that
$$
     \Phi(x) \leq v_{z_0, R}(x) \quad  \text{on} \quad  \partial \mathcal{A}_{\frac{1}{2}, 1}.
$$
Thus, by the Comparison Principle (see, Lemma \ref{comparison principle})
 \begin{equation}\label{CompaBarV}
\Phi(x) \leq v_{z_0, R}(x) \quad  \text{in} \quad  \mathcal{A}_{\frac{1}{2}, 1}
 \end{equation}

Therefore, if we label $y_{0} \defeq \frac{x_{0}-z_0}{rR}$, and taking into account \eqref{CompaBarV} and the hypotheses $u(x_{0})=0$, we obtain concerning the normal derivatives in the direction $\nu$ at $x_{0}$ the following

 \begin{equation}
     \mu_0\delta e^{-\delta} \leq \frac{\partial \Phi (y_{0})}{\partial \nu} \leq \frac{\partial v_{z_0, R} (y_{0})}{\partial \nu} \leq \Im.
 \end{equation}
Thus,
$$
\inf_{\partial B_{\frac{1}{2}}} v_{z_0, R}(x)\leq \Im\delta^{-1}\cdot \left(e^{-\frac{3}{4}\delta}-1\right)
$$

On the other hand, by the Harnack inequality (see, Theorem \ref{ThmHarIneq}) we have that
$$
\begin{array}{lcl}
  \displaystyle \sup_{B_{\frac{1}{2}}} v_{z_0, R}(x) & \leq & \displaystyle \mathrm{C}\cdot\left\{\inf_{B_{\frac{1}{2}}} v_{z_0, R}  + (q+1)^{\frac{1}{q+1}}\max\left\{(r^{p+2}R)^{\frac{1}{p+1}}, (r^{p+2}R)^{\frac{1}{q+1}}\right\}\Pi^{f, \mathfrak{a}_{z_0, R}}_{p, q}\right\} \\
   & \le &\displaystyle \mathrm{C}\cdot \left\{\inf_{\partial B_{\frac{1}{2}}} v_{z_0, R}  + (q+1)^{\frac{1}{q+1}}\max\left\{(r^{p+2}R)^{\frac{1}{p+1}}, (r^{p+2}R)^{\frac{1}{q+1}}\right\}\Pi^{f, \mathfrak{a}_{z_0, R}}_{p, q}\right\} \\
   & \le & \displaystyle \mathrm{C}_0\cdot \left\{\Im + \max\left\{(r^{p+2}R)^{\frac{1}{p+1}}, (r^{p+2}R)^{\frac{1}{q+1}}\right\}\Pi^{f, \mathfrak{a}_{z_0, R}}_{p, q}\right\}.
\end{array}
$$
and by using the definition of $v_{z_0, R}$ we conclude that
$$
\displaystyle \sup_{B_{\frac{rR}{2}}(z_0)} u(x) \leq \mathrm{C}_0R\cdot\left\{\Im + \max\left\{(r^{p+2}R)^{\frac{1}{p+1}}, (r^{p+2}R)^{\frac{1}{q+1}}\right\}\Pi^{f, \mathfrak{a}_{z_0, R}}_{p, q}\right\}
$$
\end{proof}

\bigskip

\noindent{\bf Acknowledgements.} This manuscript is part of the second author's Ph.D thesis. He would like to thank the Department of Mathematics at Universidade Federal do Cear\'{a} for fostering a pleasant and productive scientific atmosphere, which has benefited a lot the final outcome of this project. E.C. J\'{u}nior thanks to Capes-Brazil (Doctoral Scholarship). J.V. da Silva and G.C. Ricarte have been partially supported by CNPq-Brazil under Grants No. 310303/2019-2 and No. 303078/2018-9.

\bigskip

\noindent \textsc{Jo\~{a}o Vitor da Silva } \hfill \textsc{Elzon C. J\'{u}nior}\\
Universidade Estadual de Campinas \hfill  Universidade Federal Cear\'a \\
Departmento de Matem\'{a}tica - IMECC \hfill Departmento de Matem\'{a}tica \\
Campinas - SP - Brazil 13083-859  \hfill Fortaleza, CE-Brazil 60455-760\\
\texttt{jdasilva@unicamp.br} \hfill \texttt{bezerraelzon@gmail.com}

\vspace{1cm}
\noindent \textsc{Gleydson C. Ricarte}\\
Universidade Federal Cear\'a \\
Departmento de Matem\'{a}tica \\
Fortaleza, CE-Brazil 60455-760\\
\texttt{ricarte@mat.ufc.br}
\end{document}